\numberwithin{figure}{section}
\numberwithin{table}{section}
\numberwithin{equation}{section}
\newtheorem{theorem}{Theorem}[section]
\newtheorem{lemma}[theorem]{Lemma}
\newtheorem{proposition}[theorem]{Proposition}
\theoremstyle{definition}
\newtheorem{remark}[theorem]{Remark}
\newcommand{\Z}{{\mathbb Z}}
\newcommand{\C}{{\mathbb C}}
\newcommand{\e}{{\mathrm e}}
\def\ve{{\varepsilon}}
\newcommand\trop{{\mathrm{trop}}}
\newcommand\rY{{\mathsf{Y}}}
\newcommand\scY{{\mathscr{Y}}}
\newcommand\Ad{{\mathrm{Ad}}}
\newcommand{\qR}{\mathcal{R}}
\newcommand{\qK}{\mathcal{K}}
\newcommand\qarrow[2]{\draw[->,shorten >=2pt,shorten <=2pt] (#1) -- (#2) [thick];} 
\newcommand\qdarrow[2]{\draw[->,dashed,shorten >=2pt,shorten <=2pt] (#1) -- (#2) [thick];} 
\newcommand\qsarrow[2]{\draw[->,shorten >=4pt,shorten <=4pt] (#1) -- (#2) [thick];}
\newcommand\qarrowsa[2]{\draw[->,shorten >=2pt,shorten <=4pt] (#1) -- (#2) [thick];} 
\newcommand\qarrowsb[2]{\draw[->,shorten >=4pt,shorten <=2pt] (#1) -- (#2) [thick];} 
\newcommand\maruni[1]{\path (#1) node[circle]{2}; \draw (#1) circle[radius=0.15];} 
\begin{document}


\title[3D reflection equation from symmetric butterfly quiver]
{Solutions of 3D Reflection Equation from 
\\
Quantum Cluster Algebra Associated with 
\\
Symmetric Butterfly Quiver}

\author[Rei Inoue]{Rei Inoue}
\address{Rei Inoue, Department of Mathematics and Informatics,
   Faculty of Science, Chiba University,
   Chiba 263-8522, Japan.}
\email{reiiy@math.s.chiba-u.ac.jp}

\author[Atsuo Kuniba]{Atsuo Kuniba}
\address{Atsuo Kuniba, Institute of Physics, Graduate School
of Arts and Sciences, University of Tokyo, Komaba, Tokyo, Japan.}
\email{atsuo.s.kuniba@gmail.com}

\dedicatory{Dedicated to the memory of Professor Rodney James Baxter}

\date{December 26, 2025}


\begin{abstract}
We construct a new solution $(R,K)$ to the three-dimensional
reflection equation, a boundary analogue of the tetrahedron equation.
The $R$-operator is the one obtained by Sun, Terashima, Yagi, and the
authors in 2024, involving four quantum dilogarithms with arguments in
the $q$-Weyl algebra.  The new $K$-operator similarly involves 
ten such quantum dilogarithms.
Our approach is based on
the quantum cluster algebra associated with the symmetric butterfly
quiver on the wiring diagram of type~C.
\end{abstract}

\keywords{}


\maketitle

\section{Introduction and main result}

\subsection{Background}

In integrable quantum field theories in $(1+1)$-dimensional
space-time and in two-dimensional solvable lattice models of statistical
mechanics, a fundamental role is played by the Yang-Baxter equation
\cite{B82} in the bulk and by the reflection equation
\cite{Ch84,Sk88} at the boundary.  Their natural three-dimensional
generalizations are known as the tetrahedron equation (TE) \cite{Z80} and the
three-dimensional reflection equation (3DRE) \cite{IK97}.  In this
paper, we consider the following versions among their several
formulations:
\begin{align}
R_{456}\,R_{236}\,R_{135}\,R_{124}
&= R_{124}\,R_{135}\,R_{236}\,R_{456},
\label{te}
\\
R_{124}K_{1356}R_{178} R_{258}K_{2379} K_{4689}R_{457}
&=
R_{457}K_{4689} K_{2379} R_{258}R_{178}K_{1356}R_{124}.
\label{3dre}
\end{align}
Here $R_{ijk}$ and $K_{ijkl}$ denote operators $R$ and $K$ acting on
the three and four spaces specified by their indices,
respectively.\footnote{The subscripts in \eqref{te} and \eqref{3dre} are reset independently.
Thus, coincident indices across the two equations are not meant to
represent the same spaces.  See the explanation
following \eqref{prf} for a precise account.}
For geometric as well as algebraic interpretations, and applications of
the 3DRE, see \cite[Chaps.~4, 15, and~16]{K22}.\footnote{Equation~\eqref{3dre}
corresponds to \cite[eq.~(4.3)]{K22} with the indices
$1,2,3,4,5,6,7,8,9$ replaced by $9,8,6,7,5,4,3,2,1$, respectively.}

For the TE \eqref{te}, a number of remarkable solutions have been discovered; 
see, for example, 
\cite{BB92, KMS93, BS06, BMS10, KV94, SMS96, S99, BV15, KMY23, IKT1, IKSTY, PK24} 
and the references therein.
By contrast, for the 3DRE, only a few nontrivial solutions are known to date, 
namely those in \cite{KO12, KO13, Y21, IKT1}.

The objective of this paper is to construct a new $K$-operator that satisfies 
the 3DRE \eqref{3dre} together with the companion $R$-operator from \cite{IKSTY}, 
obtained by Sun, Terashima, Yagi, and the authors.
Our approach is based on quantum cluster algebras \cite{FG09}, 
initiated in \cite{SY22} and further developed in 
\cite{IKT1, IKT2, IKSTY}.

The quantum cluster algebra underlying the $R$-operator in \cite{IKSTY} 
is defined via the so-called \emph{symmetric butterfly} (SB) quiver, 
which is associated with the Weyl group of type~A.  
In this paper we extend it in a natural manner to type~C, 
and this enlarged structure turns out to accommodate the $K$-operator as well; 
see \eqref{SB-KC} and Figure~\ref{fig:RE}.  
For convenience, we also refer to these type~C variants as SB quivers.

\subsection{New solution to 3DRE}
Let us present our solution explicitly; its validity will be established in 
\S\ref{ss:mr} and \S\ref{ss:ef}.  
A guide to the derivation of the final forms given below is available in~\S\ref{ss:ef}.

For $i=1,2,\ldots,9$, let $(u_i,w_i)$ be canonical variables satisfying
\begin{align}\label{cv-C_3}
[u_i,w_j]=\hbar\,\gamma_i\,\delta_{ij},
\qquad
[u_i,u_j]=[w_i,w_j]=0,
\qquad
(\gamma_1,\ldots,\gamma_9)=(1,1,2,1,1,2,1,1,2),
\end{align}
where $\hbar$ is a parameter.  
We set $q=\e^\hbar$ and assume that $q$ is generic throughout.  
Let $(a_i,b_i,c_i,d_i,e_i)_{i=1,\ldots,9}$ be parameters subject to the following
constraints:
\begin{align}
&a_i +b_i+c_i+d_i+e_i=0 \quad (i=1,\ldots, 9),
\label{cd10}
\\
&a_i=c_i \quad (i=1,2,4,5,7,8),
\label{cd20}
\\
&\begin{pmatrix}
-a_3+c_3 \\
-a_6+c_6  \\
-a_9+c_9 
\end{pmatrix} = 
\begin{pmatrix}
 c_1+c_5  \\
 c_4+c_8  \\
 -c_2+c_7
 \end{pmatrix}
 =
 \begin{pmatrix} 
c_2+c_7\\
-c_1+c_5\\
-c_4+c_8
\end{pmatrix}.
\label{cd30}
\end{align}

The $R$-operator obtained in \cite{IKSTY} is given as follows (cf. \eqref{pfinal}, \eqref{Rf2}):
\footnote{The operator $R_{ijk}$ 
used here corresponds to $R_{kji}$ in \cite{IKSTY}.}
\begin{align}
R_{ijk}
&= \Psi_q(\e^{-d_k-c_j-b_i+u_i+u_k+w_i-w_j+w_k})^{-1}
\Psi_q(\e^{d_k+c_j+b_i+e_i-u_k+u_i-w_i+w_j-w_k})P_{ijk}
\nonumber \\
&\quad\cdot
\Psi_q(\e^{d_i+e_i+a_j+b_k+u_i-u_k-w_i+w_j-w_k})^{-1}
\Psi_q(\e^{-d_i-a_j-b_k+u_i+u_k-w_i+w_j-w_k}),
\label{rf}
\\
P_{ijk}
&=
\e^{\tfrac{1}{\hbar}(u_k-u_j)w_i}
\e^{\tfrac{e_j-e_k}{2\hbar}(w_k-w_j-w_i)}
\e^{\tfrac{-b_j+d_j+b_k-d_k}{2\hbar}(u_i-u_j)}\rho_{jk}.
\label{prf}
\end{align}
Here $\Psi_q$ denotes the quantum dilogarithm defined in \eqref{Psiq}.
The operator $\rho_{jk}$ exchanges the canonical variables according to
$\rho_{jk}(u_m,w_m)=(u_{m'},w_{m'})\,\rho_{jk}$, where $m=m'$ unless
$(m,m')=(j,k)$ or $(k,j)$, and leaves all parameters unchanged.  The
operator $\rho_{jl}$ appearing in \eqref{pkf} below is defined in the
same manner. 
The $R$-operator $R_{ijk}$ is known \cite{IKSTY} to satisfy the
tetrahedron equation 
$R_{lmn}\,R_{jkn}\,R_{ikm}\,R_{ijl}
 = R_{ijl}\,R_{ikm}\,R_{jkn}\,R_{lmn}$
 for any distinct indices $i,j,k,l,m,n$ from $\{1,2,4,5,7,8\}$. %
\footnote{This property already holds under the constraint \eqref{cd10} alone.
The additional conditions \eqref{cd20} and \eqref{cd30} are required only
for the 3DRE.}
The above $R_{ijk}$ corresponds to $R^{(+)}_{ijk}$ in
\eqref{R-+-+} and to
$\overline{R}^{(-)}_{ijk}$ in
\eqref{Rb-+-+}, which coincide under
the conditions \eqref{cd10}--\eqref{cd30}; see \eqref{srp2}.
A remarkable feature of $P_{ijk}$ is that it satisfies the tetrahedron equation by itself, 
$P_{lmn}\,P_{jkn}\,P_{ikm}\,P_{ijl}
 = P_{ijl}\,P_{ikm}\,P_{jkn}\,P_{lmn}$.

Let us now turn to the $K$-operator.  It involves ten quantum
dilogarithms, reflecting the ten mutations of the SB quiver in Figure \ref{K-SB-mutation}.
Explicitly, it is given as follows (cf.~\eqref{pf24}, \eqref{Kdf}):
\begin{align}
\begin{split}
K_{ijkl}&=~\Psi _q\left(\e^{-b_i-c_j-d_k+u_i+u_k+w_i-w_j+w_k}\right)^{-1}
\\ & \qquad \cdot 
\Psi_{q^2}\left(\e^{-a_k-b_j-d_l+u_j+u_l+w_j-2 w_k+w_l}\right)^{-1}
\\ & \qquad \cdot \Psi_q\left(\e^{2a_k-b_i+b_k-c_j+u_i-u_k+w_i-w_j+w_k}\right)^{-1}
\\ & \qquad \cdot \Psi_{q^2}\left(\e^{2a_k+a_i-2 c_j-d_j+d_l+u_j-u_l-w_j+2 w_k-w_l}\right)
\\ & \qquad \cdot \Psi_q\left(\e^{-2a_i+c_j-d_i+d_k+u_i-u_k-w_i+w_j-w_k}\right)
\\ & \qquad \cdot \Psi_{q^2}\left(
\e^{2a_k-2 b_i-b_j+2 b_k+b_l+a_i-2 c_j+2 c_l+2 u_i+u_j-2 u_k-u_l+2w_i-w_j+w_l}\right)^{-1}
\\ & \qquad \cdot \Psi_q\left(\e^{-b_i-b_j+b_k+b_l+a_i-c_j+2c_l+u_i+u_j-u_k-u_l+w_i-w_k+w_l}\right)^{-1}
\\ & \qquad \cdot \Psi _{q^2}\left(\e^{2a_k+a_i-2c_j-d_j+d_l+u_j-u_l-w_j+2 w_k-w_l}\right)^{-1}
\\ & \qquad \cdot \Psi_q\left(\e^{-a_i-b_k-c_j-d_i-d_j+d_l+u_i+u_j+u_k-u_l-w_i+w_k-w_l}\right)
\\ & \qquad \cdot \Psi_{q^2}\left(\e^{-a_k-b_j-d_l+u_j+u_l+w_j-2 w_k+w_l}\right)P^K_{ijkl},
\end{split}
\label{kf}\\
P^K_{ijkl} &= \e^{\frac{1}{2\hbar}\{(u_j-u_l)(a_j-a_l-2w_i)
-(b_j-d_j-b_l+d_l)(u_i-u_k)-\tfrac12(e_j-e_l)(w_j-w_l)\}}\rho_{jl}.
\label{pkf}
\end{align}
This corresponds to the specific expressions in \eqref{sol243} and
\eqref{K243} for the type $\rho_{24}$ solution in \S\ref{ad24}, with the
conditions \eqref{cd10}--\eqref{cd30} taken into account.

The  main result of this paper is the following (Theorem~\ref{th:main}):

\vspace{0.2cm}\noindent
\textbf{Theorem}.
{\em Under the conditions \eqref{cd10}--\eqref{cd30}, 
the $R$-operator \eqref{rf}--\eqref{prf} 
and the $K$-operator in \eqref{kf}--\eqref{pkf} satisfy the 
three-dimensional reflection equation\eqref{3dre}.}

\vspace{0.2cm}
Each operator $R_{ijk}$ and $K_{ijkl}$ decomposes into a dilogarithm
part and a monomial part, denoted by $P_{ijk}$ and $P^K_{ijkl}$,
respectively.  The monomial factors $P_{ijk}$ and $P^K_{ijkl}$ can be
moved to any position among the dilogarithm factors by transforming the
canonical variables according to $\eta^{(+)}_{ijk}$~\eqref{uw-2} and
$\eta^K_{ijkl}$~\eqref{tau24uw} under their adjoint actions.
As in the case of the TE, the 3DRE admits corresponding decomposition:
it separates into identities for the dilogarithm parts and for the
monomial parts.  These identities make sense both in the set of
formal Laurent series introduced in \S\ref{sec:qW} and in the
group $N(C_3)\rtimes S(C_3)$ described around
\eqref{NdC3}, respectively.
The monomial parts $P_{ijk}$ and $P^K_{ijkl}$ satisfy the 3DRE among
themselves (Proposition~\ref{prop:RE-P}).
The remaining dilogarithm identity involves
$3\times 10 + 4\times 4 = 46$ quantum dilogarithms on each side,
with $31$ given by $\Psi_q$ and the other $15$ by $\Psi_{q^2}$.
Among the $5 \times 9 = 45$ parameters $(a_i,b_i,c_i,d_i,e_i)$ there are $21$ relations in \eqref{cd10}--\eqref{cd30}.  
Hence our solution to the 3DRE contains 
$5\times 9 - 21 = 24$ free parameters, modulo constant shifts of the 
canonical variables $u_i$ and $w_i$.

Our solution can be evaluated in various representations of the canonical variables.
In fact, the corresponding images of the $R$-operator, including its quantum double
version, are known to cover the significant solutions to the TE obtained in 
\cite{KV94, BS06, BMS10, KMY23, IKT1} through suitable specializations of the parameters;  
see \cite[Table~1.1]{IKSTY}.

For the $K$-operator newly constructed in this paper, it contains the one in \cite{IKT1} 
as a special case.  
This will be demonstrated in \S\ref{sec:fg} as a consequence of the reduction from the 
SB quiver to the Fock-Goncharov (FG) quiver used in \cite{IKT1}.  
We further expect that, by an appropriate tuning of the parameters, our $K$-operator 
also reproduces the solution in \cite{KO12}, which was the first nontrivial solution 
to the 3DRE derived from the representation theory of the quantized coordinate ring 
$A_q(C_2)$.

\subsection{Method}
Our strategy for constructing the $K$-operator parallels the earlier
works \cite{IKT1,IKT2,IKSTY}, and may be sketched as follows.
We begin with the quantum cluster algebra associated with the SB quiver,
whose vertices lie on all of the crossings, the reflection points and the faces of 
the wiring diagram for the Weyl group $W(C_2)$ \eqref{SB-KC}.  It  is
naturally embedded into the one associated with $W(C_3)$ shown in
Figure \ref{fig:RE}.

Corresponding to the sequence of transformations that carries a reduced
expression of the longest element of $W(C_3)$ to the ``most distant"
reversed one, we obtain a nontrivial identity among the cluster
transformations of the quantum $Y$-variables.  It already takes the
form of the 3DRE \eqref{REiso}, whose building blocks
$\widehat{R}_{ijk}$ and $\widehat{K}_{ijkl}$ correspond to the cubic
and the quartic Coxeter relations, respectively.

Reflecting the decomposition of mutations (cf.~\eqref{mu-pm}), 
the cluster transformations are decomposed as
$\widehat{R}_{ijk}=\mathrm{Ad}(R^\Psi_{ijk})\circ\tau_{ijk}$ and
$\widehat{K}_{ijkl}=\mathrm{Ad}(K^\Psi_{ijkl})\circ\tau^K_{ijkl}$,
where $R^\Psi_{ijk}$ and $K^\Psi_{ijkl}$ are products of quantum
dilogarithms in quantum $Y$-variables, and $\tau_{ijk}$ and $\tau^K_{ijkl}$ are the monomial
parts (see \eqref{tau-+-+} and \eqref{tauK24}).

A key to the next step is to translate the quantum $Y$-variables into the
canonical variables \eqref{cv-C_3} via a ring homomorphism.  
We then seek a distinguished situation in which the monomial parts
$\tau_{ijk}$ and $\tau^K_{ijkl}$ are realized as adjoint actions
$\mathrm{Ad}(P_{ijk})$ and $\mathrm{Ad}(P^K_{ijkl})$ for suitable
$P_{ijk}$ and $P^K_{ijkl}$, respectively.
This effectively turns the cluster transformations
$\widehat{R}_{ijk}$ and $\widehat{K}_{ijkl}$
into totally adjoint operators 
$\mathrm{Ad}(R^\Psi_{ijk}  P_{ijk})$ and
$\mathrm{Ad}(K^\Psi_{ijkl} P^K_{ijkl})$,
where $R^\Psi_{ijk}$ and $K^\Psi_{ijkl}$ are now written in terms of the canonical variables.
We furthermore postulate that the combined operators 
$R_{ijk}:=R^\Psi_{ijk}P_{ijk}$ and $K_{ijkl}:=K^\Psi_{ijkl}P^K_{ijkl}$
{\em themselves} satisfy the 3DRE\ (!).
These requirements are highly nontrivial, and there is no general result
ensuring either the existence of such $P_{ijk}$ and $P^K_{ijkl}$, or the
well-definedness of the products $R^\Psi_{ijk}$ and $K^\Psi_{ijkl}$ in
the framework of quantum cluster algebras.
A central highlight of our analysis is to accomplish this by fully
exploiting the optional sign degrees of freedom in decomposing the
mutation in \eqref{mu-pm}.
The detailed procedure is summarized in the introduction of
\S\ref{sec:main}.

\subsection{Outlook}
There are a number of future problems stemming from the result of this
paper.

One natural direction is to compute matrix elements of the $K$-operator
either in the $u$-diagonal or $w$-diagonal representations, or in their
quantum-double versions, in parallel with the analysis carried out
for the $R$-operator in \cite{IKSTY}.  In such representations, the
equality of the two sides of the 3DRE is expected to admit an
interpretation as a duality involving $46$ $q$-factorials 
corresponding to \eqref{L46} = \eqref{R46}.  
See \cite[Th.~6.2]{IKT1} for a basic instance of this type of computation.

The $R$-operator is also characterized as the unique (up to normalization)
solution to the so-called $RLLL$ relation 
$R(L_{12}L_{13}L_{23})=(L_{23}L_{13}L_{12})R$ \cite[Th.~7.1]{IKSTY}.
Here $L$ denotes a three-dimensional $L$-operator \cite{BS06,BMS10},
which may be viewed as defining a quantized six-vertex model \cite{KMY23,IKTY}.
In a similar spirit, it is natural to expect that the $K$-operator 
constructed in this paper is characterized by a {\em quantized reflection equation}
(cf.~\cite{KP18}, \cite[Sec.~4.4]{K22}) of the form 
$(L_{12}G_2L_{21}G_1)\,K = K\,(G_1L_{12}G_2L_{21})$ for a suitable operator $G$.
Once such a quantized reflection equation is obtained,
it yields an infinite family of solutions to the {\em usual} reflection equation in two
dimensions, presented in a {\em matrix product form} \cite{KP18}.

In \S \ref{sec:k}, we have obtained the $K$-operators of types $\rho_{24}$ and
$\rho_{13}$.  The subsequent analysis focuses on the former case, whose
associated SB quiver \eqref{SB-KC} is of type~C and contains
weight-two nodes along the top boundary edges.
The $K$-operator of type
$\rho_{13}$ corresponds instead to type~B and warrants a parallel
investigation. See Remark \ref{re:b}. 
In particular, an appropriate specialization of this
operator is expected to reproduce the solution derived from the
quantized coordinate ring $A_q(B_2)$ in \cite{KO13}.

\subsection{Outline of the paper}

In \S \ref{sec:qca}, 
we summarize the basic facts on quantum cluster algebra.
A key role is played by Theorem \ref{thm:period} 
concerning the cluster transformations along the mutation sequences.

In \S \ref{sec:cm}, 
we recall the SB quiver used in \cite{IKSTY} for the
$R$-operator, and introduce its extension adapted to the $K$-operator.
The associated cluster transformations and their 3DRE are presented.

In \S \ref{sec:Rop}, 
we review the construction of the $R$-operator in
\cite{IKSTY}, where Proposition \ref{prop:ns} and Theorem \ref{thm:TE} 
provide a slight refinement of its description in terms of $N(A_3)\rtimes S(A_3)$, 
$N'(A_3)\rtimes S(A_3)$ and the set of Laurent series $\mathcal{L}_\gamma$ introduced in \S \ref{sec:qW}.

In \S \ref{sec:k}, 
we obtain the $K$-operators of types $\rho_{24}$ and
$\rho_{13}$ by analyzing the condition under which $\tau^K_{ijkl}$ is
realized as an adjoint operator. The rest of this paper focuses on the case
$\rho_{24}$; see Remark \ref{re:b}.

In \S \ref{sec:main}, 
building on the results in the preceding
sections, we establish the 3DRE under several additional constraints
required for the parameters.  We also give a detailed proof of the
well-definedness of the products of quantum dilogarithms appearing in
the 3DRE, both as Laurent series in quantum $Y$-variables and in terms
of $q$-Weyl algebra generators.  The derivation of the explicit formulas
\eqref{rf}--\eqref{pkf} is illustrated in \S \ref{ss:ef}.

In \S \ref{sec:fg}, we show how the $K$-operator in \cite{IKT1},
associated with the FG quiver, is recovered by an
appropriate specialization of the parameters.

Appendix \ref{app:pK} provides a proof of Proposition \ref{prop:K24}.
Appendix \ref{app2} presents explicit formulas of the $K$-operators 
for general allowed choices of sign sequence.
Due to Proposition \ref{prop:K24}, they are just different guises of the same operator.
Appendix~\ref{app:ff} collects the data used in the proof of the
well-definedness of the dilogarithm identities.
Appendix \ref{app:R}, parallel to \S \ref{sec:fg}, explains how the $R$-operator
for the FG quiver \cite{IKT1} is obtained as a specialization of the one in this paper.

\subsection*{Acknowledgement}
The authors thank Xiaoyue Sun, Yuji Terashima and Junya Yagi for collaborations in the previous works.
RI is supported by JSPS KAKENHI Grant Number 23K03048. 
AK is supported by JSPS KAKENHI Grant Number 24K06882.

\section{Basics of quantum cluster mutations and notations}\label{sec:qca}

\subsection{Quantum cluster mutation}

We recall the definition of quantum cluster mutation introduced by
Fock and Goncharov~\cite{FG06}, along with an important property
of periodic mutation sequences used in this paper.

Let $I$ be a finite index set, and let
$B=(b_{ij})_{i,j\in I}$ be a skew-symmetrizable exchange matrix with
entries in $\tfrac{1}{2}\mathbb{Z}$; that is, there exists a diagonal matrix $d=\mathrm{diag}(d_j)_{j\in I}$ of positive integers such that
$\widehat{B}=(\widehat{b}_{ij})_{i,j\in I}:=B\,d=(b_{ij}d_j)_{i,j\in I}$
is skew-symmetric. We assume $\gcd(d_j\mid j\in I)=1$, so that $d$ is
uniquely determined by~$B$.

Define the subset
$I_0 := \{\,i\in I \mid b_{ij}\notin\mathbb{Z}\text{ or }b_{ji}\notin\mathbb{Z}\,\}$.
Let $\mathcal{Y}(B)$ denote the skewfield generated by the quantum
$Y$-variables $Y=(Y_i)_{i\in I}$ subject to the $q$-commutation
relations
\begin{align}\label{q-Y}
  Y_i Y_j = q^{2\widehat{b}_{ij}}\,Y_j Y_i.  
\end{align}
We write $q_i:=q^{d_i}$.  
The data $(B,d,Y)$ is called a \emph{quantum $Y$-seed}, and each $Y_i$
is referred to as a \emph{quantum $y$-variable}.

For $k \in I \setminus I_0$, the quantum mutation $\mu_k$ transforms
$(B,d,Y)$ into $(B',d',Y') := \mu_k(B,d,Y)$ defined by
\begin{align}\label{q-mu}
  b_{ij}' &=
  \begin{cases}
    -b_{ij}, & i=k \text{ or } j=k,\\[2mm]
    \displaystyle b_{ij}
      + \frac{|b_{ik}|\,b_{kj} + b_{ik}\,|b_{kj}|}{2},
      & \text{otherwise},
  \end{cases}\\[2mm]
  \label{d-mu}
  d_i' &= d_i,\\[2mm]
  \label{Y-mu}
  Y_i' &=
  \begin{cases}
    Y_k^{-1}, & i=k,\\[2mm]
    \displaystyle
    Y_i \prod_{j=1}^{|b_{ik}|}
    \bigl(1 + q_k^{\,2j-1} Y_k^{-\mathrm{sgn}(b_{ik})}\bigr)^{-\mathrm{sgn}(b_{ik})},
      & i \neq k.
  \end{cases}
\end{align}
In what follows, we abbreviate $(B,d, Y)$ to $(B,Y)$, since $d$ is
invariant under mutations. The mutation $\mu_k$ induces an isomorphism of
skewfields $\mu_k^\ast:\mathcal{Y}(B') \to \mathcal{Y}(B)$, where
$\mathcal{Y}(B')$ is the skewfield generated by $Y'_i$ subject to the
relations $Y'_i Y'_j = q^{2\widehat{b}'_{ij}} Y'_j Y'_i$.

To visualize the exchange matrix $B$, we use weighted quivers.
For the pair $(B,d)$, define the skew-symmetric matrix
$\sigma = (\sigma_{ij})_{i,j \in I}$ by
$\sigma_{ij} = b_{ij}\,\gcd(d_i,d_j)/d_i$.
In this paper, we only encounter cases where $\sigma_{ij}$ is integral
or $\pm \tfrac{1}{2}$.
We determine the weighted quiver $Q=(\sigma,d)$, without one-loops or
two-cycles, as follows. The vertex set of $Q$ is $I$, and each vertex
$i \in I$ carries a weight~$d_i$.
A vertex of weight~$1$ is represented by a circle, while a vertex of
weight~$d_i>1$ is represented by a circle containing~$d_i$ inside.
When $\sigma_{ij}$ is integral, we draw ordinary arrows
$\longrightarrow$ so that
$\sigma_{ij} = \#\{\text{arrows from $i$ to $j$}\}
 - \#\{\text{arrows from $j$ to $i$}\}$.
When $\sigma_{ij}=1/2$ (resp.~$\sigma_{ij}=-1/2$),
we draw a dashed arrow $\dashrightarrow$ from $i$ to $j$
(resp.~from $j$ to $i$).
When $B$ is skew-symmetric, we have $d_i=1$ for all $i\in I$.

\subsubsection*{Decomposition of quantum mutation}

The quantum mutation can be decomposed into two parts --- a monomial part
and an automorphism part~\cite{FG09} --- in two equivalent ways
as described in~\cite{Ke11}.
For $\ve \in \{+,-\}$, define an isomorphism
$\tau_{k,\ve}$ of skewfields by
\begin{align}\label{mu-mono}
\tau_{k,\ve} :~ \mathcal{Y}(B') \to \mathcal{Y}(B);
\quad
Y'_i \mapsto
\begin{cases}
  Y_k^{-1}, & i = k,\\[2mm]
  q^{-\widehat{b}_{ik}[\ve b_{ik}]_+}\,
  Y_i\,Y_k^{[\ve b_{ik}]_+}, & i \neq k,
\end{cases}
\end{align}
where $[a]_+ := \max(0,a)$.
Then we have
\begin{align}\label{mu-pm}
\mu_k^\ast
 = \mathrm{Ad}\,\Psi_{q_i}(Y_k)\circ\tau_{k,+}(Y'_i)
 = \mathrm{Ad}\,\Psi_{q_i}(Y_k^{-1})^{-1}\circ\tau_{k,-}(Y'_i),
\end{align}
where $\mathrm{Ad}(X)(Y) = XYX^{-1}$,
and $\Psi_q(U)$ denotes the quantum dilogarithm, defined by
\begin{align}\label{Psiq}
\Psi_q(U) = \frac{1}{(-qU;q^2)_\infty},
\qquad
(z;q)_\infty = \prod_{k=0}^{\infty}(1-zq^k).
\end{align}
The function $\Psi_q(U)$ admits the series expansion
\begin{align}\label{dilog-sum}
\Psi_q(U)
 = \sum_{n=0}^\infty \frac{(-qU)^n}{(q^2;q^2)_n},
\qquad
\Psi_q(U)^{-1}
 = \sum_{n=0}^\infty \frac{q^{n^2}U^n}{(q^2;q^2)_n},
\qquad
(z;q)_n = \frac{(z;q)_\infty}{(zq^n;q)_\infty}\quad (n\in\mathbb{Z}),
\end{align}
and it is an element of the formal power series algebra
$\mathbb{Q}(q)[[U]]$.
Equation~\eqref{mu-pm} follows from one of the fundamental properties of
$\Psi_q(U)$,
\begin{align}\label{dilog-rec}
\Psi_q(q^2U)\,\Psi_q(U)^{-1} = 1 + qU.
\end{align}

\subsubsection*{Quantum torus algebra}

The quantum torus algebra $\mathcal{T}(B)$ associated with $B$ is the
$\mathbb{Q}(q)$-algebra generated by noncommutative variables
$\rY^\alpha~(\alpha \in \mathbb{Z}^I)$ satisfying the relations
\begin{align}
q^{\langle \alpha,\beta \rangle}\,\rY^\alpha \rY^\beta
 = \rY^{\alpha+\beta},
\end{align}
where $\langle~,~\rangle$ is the skew-symmetric bilinear form defined by
$\langle \alpha,\beta\rangle = -\langle \beta,\alpha\rangle
 = -\,\alpha \cdot \widehat{B}\beta$.
Let $e_i$ be the standard unit vector of $\mathbb{Z}^I$, and write
$\rY_i$ for $\rY^{e_i}$. Then
$\rY_i \rY_j = q^{2\widehat{b}_{ij}} \rY_j \rY_i$.
Identifying $\rY_i$ with $Y_i$, we recover~\eqref{q-Y}.
The monomial part of $\mu_k^\ast$, $\tau_{k,\ve}$ in~\eqref{mu-mono},
naturally induces an isomorphism of quantum torus algebras, written as
\begin{align}\label{torus-iso}
\tau_{k,\ve} : \mathcal{T}(B') \to \mathcal{T}(B);
\quad
\rY'_i \mapsto
\begin{cases}
\rY_k^{-1}, & i = k,\\[2mm]
\rY^{e_i + e_k [\ve b_{ik}]_+}, & i \neq k.
\end{cases}
\end{align}
 
\subsection{Periodicity and quantum dilogarithm identity}

Let $\mathbb{P}(u)=\mathbb{P}_\trop(u_1,u_2,\ldots,u_p)
 := \{\prod_{i=1}^{p} u_i^{a_i}\mid a_i \in \mathbb{Z}\}$
be the tropical semifield of rank~$p$, equipped with addition~$\oplus$
and multiplication~$\cdot$ defined by
$$
\prod_{i=1}^{p} u_i^{a_i} \oplus \prod_{i=1}^{p} u_i^{b_i}
 = \prod_{i=1}^{p} u_i^{\min(a_i,b_i)},
 \qquad
\prod_{i=1}^{p} u_i^{a_i} \cdot \prod_{i=1}^{p} u_i^{b_i}
 = \prod_{i=1}^{p} u_i^{a_i+b_i}.
$$
For $v = \prod_{i\in I} u_i^{a_i} \in \mathbb{P}(u)$, we write
$v = u^\alpha$ with $\alpha=(a_i)_{i\in I}\in\mathbb{Z}^{I}$.
If $\alpha \in \mathbb{Z}_{\ge0}^{I}$ (resp.~$\alpha \in \mathbb{Z}_{\le0}^{I}$),
we say that $v$ is \emph{positive} (resp.~\emph{negative}).

For a finite set $I$, let $\mathbb{P}(u)$ be the tropical semifield of
rank~$|I|$.
For a tropical $y$-seed $(B,y)$, where
$B=(b_{ij})_{i,j\in I}$ and $y=(y_i)_{i\in I}\in\mathbb{P}(u)^{I}$,
and for $k\in I\setminus I_0$, the mutation
$\mu_k(B,y) = (B',y')$ is defined by~\eqref{q-mu} together with
\begin{equation}\label{trop-mu}
y_i' =
  \begin{cases}
    y_k^{-1}, & i = k,\\[2mm]
    y_i \cdot (1 \oplus y_k^{-\mathrm{sgn}(b_{ik})})^{-b_{ik}}, & i \neq k.
  \end{cases}
\end{equation}
An important property of tropical $y$-variables is
\emph{sign coherence}:
for any tropical $y$-variable $y_i' = u^{\alpha'}$
obtained by mutating the initial seed $(B,y)$
with $y = (u_i)_{i\in I}$,
the exponent vector $\alpha'$ is either positive or negative.
The sign of $\alpha'$ is called the \emph{tropical sign} of~$y_i'$. 

The symmetric group $\mathfrak{S}_I$ naturally acts on tropical
$y$-seeds by
$$
  \sigma : (b_{ij},y_i) \mapsto
  (b_{\sigma^{-1}(i),\sigma^{-1}(j)},\,y_{\sigma^{-1}(i)}),
  \qquad \sigma \in \mathfrak{S}_I,
$$
and acts similarly on quantum $Y$-seeds.
For a sequence
$\mathbf{i} = (i_1,i_2,\ldots,i_L) \in I^L$, define the composition of
mutations
$\mu_{\mathbf{i}} := \mu_{i_L}\mu_{i_{L-1}}\cdots\mu_{i_2}\mu_{i_1}$,
and consider the resulting sequences of tropical $y$-seeds and quantum
$y$-seeds starting from $(B,u)$ and $(B,Y)$, respectively:
\begin{align}
\label{seq-ty}
&(B,u) = (B^{(1)},y^{(1)}) \xrightarrow{\mu_{i_1}}
(B^{(2)},y^{(2)}) \xrightarrow{\mu_{i_2}}
\cdots \xrightarrow{\mu_{i_L}} (B^{(L+1)},y^{(L+1)}),\\
\label{seq-Y}
&(B,Y) = (B^{(1)},Y^{(1)}) \xrightarrow{\mu_{i_1}}
(B^{(2)},Y^{(2)}) \xrightarrow{\mu_{i_2}}
\cdots \xrightarrow{\mu_{i_L}} (B^{(L+1)},Y^{(L+1)}).
\end{align}
For $\sigma \in \mathfrak{S}_I$, we say that the sequence
$\mathbf{i}$ is a \emph{$\sigma$-period} of $(B,u)$ if
$\sigma^{-1}(b^{(L+1)},y^{(L+1)}) = (b,y)$.
The $\sigma$-period of $(B,Y)$ is defined analogously.
For an exchange matrix $B$, we refer to a sequence of mutations for~$B$
together with a permutation of~$I$ as a \emph{mutation sequence}.
  
The following theorem is obtained by combining the synchronicity \cite{N21} among $x$-seeds, 
$y$-seeds and tropical $y$-seeds, together with the synchronicity 
between classical and quantum seeds \cite[Lemma 2.22]{FG09b}, \cite[Proposition 3.4]{KN11}.
 
\begin{theorem}\label{thm:period}
For an exchange matrix $B$ and a mutation sequence $\nu$ for $B$, the following two statements are equivalent:
\begin{itemize}

\item[(1)]
For a tropical $y$-seed $(B,y)$, it holds that $\nu (B,y) = (B,y)$. 

\item[(2)]
For a quantum $Y$-seed $(B,Y)$, it holds that $\nu (B,Y) = (B,Y)$. 

\end{itemize}
\end{theorem}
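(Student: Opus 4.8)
The plan is to deduce the equivalence by routing it through the intermediate classical (commutative) $y$-seed and stitching together the two \emph{synchronicity} results quoted just above, while making explicit the combinatorial data common to the tropical, classical, and quantum levels. The statement is an ``iff'' in both cited ingredients, so the two directions will come out symmetrically once the bridge is in place.

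First I would isolate that common data. Along the sequence \eqref{seq-ty}, the \emph{sign coherence} recalled after \eqref{trop-mu} assigns to each mutated $y$-variable a well-defined tropical sign, and the string of these signs together with the exchange-matrix mutations \eqref{q-mu} determines the monomial automorphisms $\tau_{k,\ve}$ of \eqref{mu-mono}: in the decomposition \eqref{mu-pm} one selects at each step the line whose sign $\ve$ matches the tropical sign, and $\tau_{k,\ve}$ then depends only on the entries $b_{ik}$ and on $\ve$. Thus the composite monomial part of $\nu$, at both the classical and the quantum level, is read off verbatim from the tropical data in \eqref{seq-ty}. This is the skeleton shared by all three theories, and it is the reason one can hope to transport periodicity across them.

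Next I would establish that (1) is equivalent to periodicity of the classical $y$-seed. This is precisely Nakanishi's synchronicity \cite{N21}: the separation formulas present the classical $y$-variables in terms of the exchange matrix and the $C$-matrix, whose columns are the exponent vectors of the tropical $y$-variables relative to the initial seed, so that $\nu$ fixes the tropical $y$-seed iff it fixes the classical one. In particular, condition (1) forces $\nu$ to return $B$ to itself and to fix every $c$-vector, which is exactly what classical periodicity requires.

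Finally I would upgrade classical periodicity to (2) via the classical-quantum synchronicity of \cite[Lemma 2.22]{FG09b} and \cite[Proposition 3.4]{KN11}: the quantum $Y$-variables evolve under the same exchange matrix and the same tropical-sign data as their commutative limits, with the $q$-commutation relations \eqref{q-Y} preserved throughout, so that $\nu$ is a quantum period iff it is a classical one. Reading the chain $(1)\Leftrightarrow\text{classical}\Leftrightarrow(2)$ in both directions then yields the theorem. The genuine difficulty sits in this last step, and is where I expect the main obstacle to lie: quantum periodicity is strictly stronger than returning each $Y_i$ to itself as a symbol, because the product of quantum dilogarithms contributed by the $\mathrm{Ad}$-factors of \eqref{mu-pm} along \eqref{seq-Y} must collapse to the identity automorphism. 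Sign coherence is exactly what makes this telescoping tractable, since it guarantees that the dilogarithm arguments met along the way are consistently the positive (or negative) quantum $y$-variables, allowing the underlying quantum dilogarithm identities of \cite{FG09b, KN11} to be applied one mutation at a time.
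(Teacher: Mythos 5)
Your proposal is correct and follows essentially the same route as the paper, which proves the theorem precisely by chaining the tropical--classical synchronicity of \cite{N21} with the classical--quantum synchronicity of \cite[Lemma 2.22]{FG09b} and \cite[Proposition 3.4]{KN11}. The additional remarks on sign coherence and the collapse of the dilogarithm factors are consistent with, and elaborate on, the same citation-based argument the paper gives.
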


Establishing the periodicity of a tropical $y$-seed is much
easier than that of its quantum counterpart. In this paper, we employ
the theorem so that statement~(2) follows from~(1).

For $t=1,\ldots,L+1$, let $\rY_i^{(t)}~(i\in I)$ denote the generators
of the quantum torus $\mathcal{T}(B^{(t)})$.
The quantum $y$-variables in~\eqref{seq-Y} can be expressed as
\begin{align}\label{ad-tau-decomp}
\begin{split}
Y_i^{(t+1)}
&= \Ad\!\bigl(\Psi_{q_{i_1}}(\rY_{i_1}^{(1)\delta_1})^{\delta_1}\bigr)
   \tau_{i_1,\delta_1}\cdots
   \Ad\!\bigl(\Psi_{q_{i_t}}(\rY_{i_t}^{(t)\delta_t})^{\delta_t}\bigr)
   \tau_{i_t,\delta_t}(\rY_i^{(t+1)})\\
&=\Ad\!\bigl(
   \Psi_{q_{i_1}}(\rY^{\delta_1\beta_1})^{\delta_1}
   \cdots
   \Psi_{q_{i_t}}(\rY^{\delta_t\beta_t})^{\delta_t}
   \bigr)
   \circ
   \tau_{i_1,\delta_1}\cdots\tau_{i_t,\delta_t}(\rY_i^{(t+1)}),
\end{split}
\end{align}
where $\beta_r \in \mathbb{Z}^I$ is determined by
$\rY^{\beta_r}
 = \tau_{i_1,\delta_1}\cdots\tau_{i_{r-1},\delta_{r-1}}
   (\rY_{i_r}^{(r)})$.
In particular, $\beta_1 = e_{i_1}$.
In this way, the isomorphism
$\mu_{\mathbf{i}}^\ast : \mathcal{Y}(B^{(L+1)}) \to \mathcal{Y}(B^{(1)})$
is decomposed into the monomial part
\begin{align}\label{mono-p}
\tau_{i_1,\delta_1}\tau_{i_2,\delta_2}\cdots\tau_{i_L,\delta_L}
 : \mathcal{Y}(B^{(L+1)}) \to \mathcal{Y}(B^{(1)}),
\end{align}
and the dilogarithm part
\begin{align}\label{dilog-p}
\Ad\!\bigl(
 \Psi_{q_{i_1}}(\rY^{\delta_1\beta_1})^{\delta_1}
 \Psi_{q_{i_2}}(\rY^{\delta_2\beta_2})^{\delta_2}
 \cdots
 \Psi_{q_{i_L}}(\rY^{\delta_L\beta_L})^{\delta_L}
 \bigr)
 : \mathcal{Y}(B^{(1)}) \to \mathcal{Y}(B^{(1)}).
\end{align}

Suppose that $\mathbf{i} = (i_1,i_2,\ldots,i_L)$ is a $\sigma$-period of $(B,Y)$.
For any sign sequence $(\delta_t)_{t=1,\ldots,L}$ with
$\delta_t \in \{+,-\}$, we have
\begin{align}\label{s-per}
\begin{split}
\Ad\!\bigl(
\Psi_{q_{i_1}}(\rY^{\delta_1\beta_1})^{\delta_1}
\Psi_{q_{i_2}}(\rY^{\delta_2\beta_2})^{\delta_2}
\cdots
\Psi_{q_{i_L}}(\rY^{\delta_L\beta_L})^{\delta_L}
\bigr)
\circ
\tau_{i_1,\delta_1}\tau_{i_2,\delta_2}\cdots\tau_{i_L,\delta_L}\sigma
= \mathrm{id}.
\end{split}
\end{align}
If each $\delta_t$ is chosen to be the tropical sign of
$y_{i_t}^{(t)}$, then by Theorem~\ref{thm:period} and the discussion in
\cite{Ke11,KN11}, equation~\eqref{s-per} decomposes into two identities:
\begin{align}
&\tau_{i_1,\delta_1}\tau_{i_2,\delta_2}\cdots\tau_{i_L,\delta_L}\sigma
 = \mathrm{id},\\[2mm]
&\Psi_{q_{i_1}}(\rY^{\delta_1\beta_1})^{\delta_1}
 \Psi_{q_{i_2}}(\rY^{\delta_2\beta_2})^{\delta_2}
 \cdots
 \Psi_{q_{i_L}}(\rY^{\delta_L\beta_L})^{\delta_L}
 = 1.
\end{align}
For a general sign sequence, there is no guarantee that
\eqref{s-per} can be separated into two such identities.

\subsection{$q$-Weyl algebras}\label{sec:qW}

Fix a positive integer $p$ and a nonzero complex number~$\hbar$.
For $\gamma = (\gamma_i)_{i=1,\ldots,p} \in \mathbb{Z}_{>0}^p$,
let $(u_i,w_i)~(i=1,2,\ldots,p)$ be canonical pairs satisfying
\begin{align}\label{uw-gamma}
[u_i,w_j] = \hbar\,\gamma_i\,\delta_{ij},
\qquad
[u_i,u_j] = [w_i,w_j] = 0.
\end{align}
We write
$\mathbf{u} = (u_1,u_2,\ldots,u_p,w_1,w_2,\ldots,w_p)$.
Set $q = \e^{\hbar}$, and let $\mathcal{W}_\gamma$ denote the algebra
over~$\mathbb{C}(q^{1/2})$ generated by the $q$-Weyl pairs
$\mathrm{e}^{\pm u_i}, \mathrm{e}^{\pm w_i}$ $(i=1,2,\ldots,p)$,
satisfying the $q$-commutation relations
\begin{align}\label{qW-gamma}
\begin{split}
&\mathrm{e}^{u_i}\mathrm{e}^{w_j}
 = q^{\gamma_i\,\delta_{ij}}\,
   \mathrm{e}^{w_i}\mathrm{e}^{u_i},\\
&\mathrm{e}^{u_i}\mathrm{e}^{u_j}
 = \mathrm{e}^{u_j}\mathrm{e}^{u_i},
\qquad
\mathrm{e}^{w_i}\mathrm{e}^{w_j}
 = \mathrm{e}^{w_j}\mathrm{e}^{w_i}.
\end{split}
\end{align}

We denote by $\mathcal{L}_\gamma$ the set of formal Laurent series in
$\mathrm{e}^{u_i}$ and $\mathrm{e}^{w_i}$ satisfying~\eqref{qW-gamma},
expressed as
\begin{align}\label{fLs}
\mathcal{L}_\gamma
 = \left\{
   \sum_{\mathbf{m}\in\mathbb{Z}^{2p}}
   f(\mathbf{m})\,\mathrm{e}^{\mathbf{m}\cdot\mathbf{u}}
   \;\middle|\;
   f(\mathbf{m})\in\mathbb{C}(q^{1/2})
   \right\},
\end{align}
with the relation
$\mathrm{e}^{u_i}\mathrm{e}^{w_j}
 = \mathrm{e}^{\frac{1}{2}[u_i,w_j]}\mathrm{e}^{u_i+w_j}$.
Note that $\mathcal{L}_\gamma$ is not closed under multiplication.

We identify $\gamma$ with the diagonal matrix
$\mathrm{diag}(\gamma_1,\gamma_2,\ldots,\gamma_p)\in GL_p(\mathbb{Z})$.
For $\alpha\in\mathbb{C}^{2p}$ and an integral matrix
$A\in GL_p(\mathbb{Z})$ such that
$\gamma^{-1}A\gamma\in GL_p(\mathbb{Z})$ and $\det A=\pm1$,
define an affine transformation $\eta_{A,\alpha}$ of $\mathbf{u}$ by
\begin{align}
\eta_{A,\alpha}:\;
\mathbf{u}\mapsto\tilde{A}\mathbf{u}+\alpha;
\qquad
\tilde{A}
 = \begin{pmatrix}
     A & O\\[1mm]
     O & \gamma(A^{-1})^{\!T}\gamma^{-1}
   \end{pmatrix}
   \in SL_{2p}(\mathbb{Z}).
\end{align}
It is easy to verify that $\eta_{A,\alpha}$ acts as a canonical
transformation of variables, preserving the commutation relations
\eqref{uw-gamma}.
This $\eta_{A,\alpha}$ defines an isomorphism of
$\mathcal{W}_\gamma$, and we denote this isomorphism by the same symbol.

\begin{lemma}\label{lem:L}
For
$F=\sum_{\mathbf{m}\in\mathbb{Z}^{2p}}
 f(\mathbf{m})\,\mathrm{e}^{\mathbf{m}\cdot\mathbf{u}}
 \in\mathcal{L}_\gamma$,
define the induced transformation $\eta_{A,\alpha}^\ast$ by
$$
  \eta_{A,\alpha}^\ast(F)
  = \sum_{\mathbf{m}\in\mathbb{Z}^{2p}}
     f(\mathbf{m})\,
     \mathrm{e}^{\mathbf{m}\cdot(\tilde{A}\mathbf{u}+\alpha)}.
$$
Then it holds that $\eta_{A,\alpha}^\ast(F)\in\mathcal{L}_\gamma$.
In particular, $\eta_{A,\alpha}^\ast$ acts on $\mathcal{L}_\gamma$.
\end{lemma}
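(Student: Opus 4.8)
The plan is to reduce everything to one structural fact: the lattice map induced by $\tilde A$ is a bijection of $\mathbb{Z}^{2p}$, so that $\eta_{A,\alpha}^\ast$ merely permutes the monomials of a formal Laurent series (up to central scalar factors coming from the shift $\alpha$), leaving the shape of the series intact.

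First I would compute the action on a single monomial. Writing
\[
\mathbf{m}\cdot(\tilde A\mathbf{u}+\alpha)
 = (\tilde A^{T}\mathbf{m})\cdot\mathbf{u} + \mathbf{m}\cdot\alpha,
\]
and observing that $\mathbf{m}\cdot\alpha\in\mathbb{C}$ is central, the exponential factorizes with no Baker--Campbell--Hausdorff correction:
\[
\mathrm{e}^{\mathbf{m}\cdot(\tilde A\mathbf{u}+\alpha)}
 = \mathrm{e}^{\mathbf{m}\cdot\alpha}\,\mathrm{e}^{(\tilde A^{T}\mathbf{m})\cdot\mathbf{u}}.
\]
Since $\tilde A\in SL_{2p}(\mathbb{Z})$ we have $\tilde A^{T}\mathbf{m}\in\mathbb{Z}^{2p}$, so the image of each admissible monomial is again a scalar multiple of an admissible monomial $\mathrm{e}^{\mathbf{n}\cdot\mathbf{u}}$ with $\mathbf{n}\in\mathbb{Z}^{2p}$, which automatically obeys \eqref{qW-gamma}.

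Next I would reindex the full series by $\mathbf{n}:=\tilde A^{T}\mathbf{m}$. Because $\tilde A$, and hence $\tilde A^{T}$, is unimodular, this assignment is a bijection of $\mathbb{Z}^{2p}$ with integral inverse $(\tilde A^{-1})^{T}$; in particular distinct $\mathbf{m}$ produce distinct $\mathbf{n}$, so no two terms collide and the resulting formal sum is unambiguous. Collecting terms gives
\[
\eta_{A,\alpha}^\ast(F)
 = \sum_{\mathbf{n}\in\mathbb{Z}^{2p}} g(\mathbf{n})\,\mathrm{e}^{\mathbf{n}\cdot\mathbf{u}},
\qquad
g(\mathbf{n})
 = f\bigl((\tilde A^{-1})^{T}\mathbf{n}\bigr)\,
   \mathrm{e}^{((\tilde A^{-1})^{T}\mathbf{n})\cdot\alpha}.
\]
It then suffices to note that each $g(\mathbf{n})$ is a coefficient of the required type: $f(\cdot)\in\mathbb{C}(q^{1/2})$ by hypothesis, and the prefactor $\mathrm{e}^{((\tilde A^{-1})^{T}\mathbf{n})\cdot\alpha}$ is a nonzero scalar, whence $g(\mathbf{n})\in\mathbb{C}(q^{1/2})$ and $\eta_{A,\alpha}^\ast(F)\in\mathcal{L}_\gamma$. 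Running the same reindexing through $(\tilde A^{-1})^{T}$ shows $\eta_{A,\alpha}^\ast$ maps $\mathcal{L}_\gamma$ into itself, as claimed.

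The conceptual crux is the unimodularity of $\tilde A$, that is, that both $\tilde A$ and $\tilde A^{-1}$ have integer entries; this is precisely what the hypotheses $A\in GL_p(\mathbb{Z})$, $\gamma^{-1}A\gamma\in GL_p(\mathbb{Z})$ and $\det A=\pm1$ secure, the lower-right block being $\gamma(A^{-1})^{T}\gamma^{-1}=\bigl((\gamma^{-1}A\gamma)^{-1}\bigr)^{T}$. The only other point needing a word is that the scalar prefactor $\mathrm{e}^{\mathbf{m}\cdot\alpha}$ stays in the coefficient ring, which is immediate for the affine shifts $\alpha$ arising in this paper. Everything else is the routine bookkeeping of a formal Laurent series, so I expect no genuine obstacle beyond correctly tracking the transpose and the direction of the index bijection.
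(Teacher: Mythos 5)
Your proof is correct and follows essentially the same route as the paper, whose entire argument is the observation that $\tilde{A}$ induces an injection of the lattice $\mathbb{Z}^{2p}$ into itself; your write-up simply makes explicit the monomial computation, the centrality of the shift $\mathbf{m}\cdot\alpha$, and the reindexing via unimodularity that the paper leaves implicit.
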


\begin{proof}
The claim follows from the fact that the matrix~$\tilde{A}$
defines an injection
$\tilde{A}:\mathbb{Z}^{2p}\to\mathbb{Z}^{2p}$ by
$\mathbf{m}\mapsto\mathbf{m}\tilde{A}$.
\end{proof}

\subsection{Other notations}

For a simple Lie algebra~$\mathfrak{g}$ of rank~$\ell$, let
$W(\mathfrak{g})$ denote the corresponding Weyl group generated by
simple reflections $s_1,\ldots,s_\ell$.
A reduced expression $s_{i_1}s_{i_2}\cdots s_{i_p}$ of an element of
$W(\mathfrak{g})$ will be abbreviated as $i_1 i_2 \cdots i_p$.

When $\mathfrak{g}=A_\ell$, we consider a wiring diagram with
$\ell+1$ wires, where $s_k$ ($k=1,\ldots,\ell$) interchanges the
$k$-th and $(k+1)$-th wires counted from the bottom.

When $\mathfrak{g}=C_\ell$ or $B_\ell$, 
we consider a wiring diagram
with $\ell$ wires and a wall which reflects the wires, where
$s_k$ ($k=1,\ldots,\ell-1$) interchanges the $k$-th and $(k+1)$-th
wires from the bottom, and $s_\ell$ reflects the $\ell$-th wire at
the wall.

\section{Tetrahedron and 3D reflection equations from cluster mutations}\label{sec:cm}

\subsection{$R$-operator}
Recall the $R$-operator for the symmetric butterfly (SB) quiver
introduced in~\cite{SY22,IKSTY}.
It is constructed from the following transformation of the wiring
diagrams (shown in red) and the corresponding quivers (shown in black),
which are associated with the two reduced expressions~$121$ and~$212$
of the longest element in the Weyl group~$W(A_2)$.
\begin{align}\label{SB-R}
\begin{tikzpicture}
\begin{scope}[>=latex,xshift=0pt]
{\color{red}
\fill (1,0.5) circle(2pt) coordinate(A) node[above right]{$1$};
\fill (2,1.5) circle(2pt) coordinate(B) node[below right]{$2$};
\fill (3,0.5) circle(2pt) coordinate(C) node[above right]{$3$};
\draw [-] (0,2) to [out = 0, in = 135] (B);
\draw [-] (B) -- (C); 
\draw [-] (C) to [out = -45, in = 180] (4,0);
\draw [-] (0,1) to [out = 0, in = 135] (A); 
\draw [-] (A) to [out = -45, in = -135] (C);
\draw [-] (C) to [out = 45, in = 180] (4,1);
\draw [-] (0,0) to [out = 0, in = -135] (A); 
\draw [-] (A) -- (B);
\draw [-] (B) to [out = 45, in = 180] (4,2);
}
%
\draw (2,-0.5) circle(2pt) coordinate(1) node[below]{\scriptsize$9$};
\draw (0,0.5) circle(2pt) coordinate(2) node[below left]{\scriptsize$4$};
\draw (1,0.5) circle(2pt) coordinate(3) node[below=1pt]{\scriptsize$5$};
\draw (2,0.5) circle(2pt) coordinate(4) node[above left]{\scriptsize$6$};
\draw (3,0.5) circle(2pt) coordinate(5) node[below=1pt]{\scriptsize$7$};
\draw (4,0.5) circle(2pt) coordinate(6) node[below right]{\scriptsize$8$};
\draw (1,1.5) circle(2pt) coordinate(7) node[above left]{\scriptsize$1$};
\draw (2,1.5) circle(2pt) coordinate(8); 
\draw (1.9,1.75) node {\scriptsize$2$};
\draw (3,1.5) circle(2pt) coordinate(9) node[above right]{\scriptsize$3$};
\draw (2,2.5) circle(2pt) coordinate(10) node[above]{\scriptsize$0$};
\draw[->,dashed, shorten >=2pt,shorten <=2pt] (1) to [out = 165, in = -60] (2) [thick];
\draw[->,shorten >=2pt,shorten <=2pt] (3) to [out = -60, in = 150] (1) [thick];
\qarrow{1}{4}
\draw[->,shorten >=2pt,shorten <=2pt] (5) to [out = -120, in = 30] (1) [thick];
\draw[->,dashed, shorten >=2pt,shorten <=2pt] (1) to [out = 15, in = -120] (6) [thick];
\qarrow{6}{5}
\qarrow{4}{5}
\qarrow{4}{3}
\qarrow{2}{3}
\qdarrow{7}{2}
\qarrow{3}{7}
\qarrow{8}{4}
\qarrow{5}{9}
\qdarrow{9}{6}
\qarrow{9}{8}
\qarrow{7}{8}
\qdarrow{10}{7}
\qarrow{8}{10}
\qdarrow{10}{9}
\draw [->] (5.3,1) to (6.3,1);
\draw (5.8,1) circle(0pt) node[above]{$\qR_{123}$};
\draw (2,-1.2) circle(0pt) node {$B(A_2)$};
\end{scope}
\begin{scope}[>=latex,xshift=215pt]
\coordinate (A) at (3,1.5);
\coordinate (B) at (2,0.5);
\coordinate (C) at (1,1.5);
{\color{red}
\fill (3,1.5) circle(2pt) coordinate(A) node[below right]{$1$};
\fill (2,0.5) circle(2pt) coordinate(B) node[above right]{$2$};
\fill (1,1.5) circle(2pt) coordinate(C) node[below right]{$3$};
\draw [-] (0,0) to [out = 0, in = -135] (B);
\draw [-] (B) -- (A); 
\draw [-] (A) to [out = 45, in = 180] (4,2);
\draw [-] (0,1) to [out = 0, in = -135] (C); 
\draw [-] (C) to [out = 45, in = 135] (A);
\draw [-] (A) to [out = -45, in = 180] (4,1);
\draw [-] (0,2) to [out = 0, in = 135] (C); 
\draw [-] (C) -- (B);
\draw [-] (B) to [out = -45, in = 180] (4,0);
}
%
%
\draw (2,-0.5) circle(2pt) coordinate(1) node[below]{\scriptsize$9$};
\draw (1,0.5) circle(2pt) coordinate(2) node[below left]{\scriptsize$4$};
\draw (3,1.5) circle(2pt) coordinate(3) node[above]{\scriptsize$5$};
\draw (2,1.5) circle(2pt) coordinate(4) node[below left]{\scriptsize$6$};
\draw (1,1.5) circle(2pt) coordinate(5) node[above]{\scriptsize$7$};
\draw (3,0.5) circle(2pt) coordinate(6) node[below right]{\scriptsize$8$};
\draw (0,1.5) circle(2pt) coordinate(7) node[above left]{\scriptsize$1$};
\draw (2,0.5) circle(2pt) coordinate(8);
\draw (1.9,0.2) node {\scriptsize$2$};
\draw (4,1.5) circle(2pt) coordinate(9) node[above right]{\scriptsize$3$};
\draw (2,2.5) circle(2pt) coordinate(10) node[above]{\scriptsize$0$};
\draw[->,dashed, shorten >=2pt,shorten <=2pt] (10) to [out = -165, in = 60] (7) [thick];
\draw[->,shorten >=2pt,shorten <=2pt] (5) to [out = 60, in = -150] (10) [thick];\qarrow{10}{4}
\draw[->,shorten >=2pt,shorten <=2pt] (3) to [out = 120, in = -30] (10) [thick];
\draw[->,dashed, shorten >=2pt,shorten <=2pt] (10) to [out = -15, in = 120] (9) [thick];
\qarrow{7}{5}
\qarrow{4}{5}
\qarrow{4}{3}
\qarrow{9}{3}
\qdarrow{2}{7}
\qarrow{8}{4}
\qarrow{5}{2}
\qarrow{3}{6}
\qdarrow{6}{9}
\qarrow{2}{8}
\qarrow{6}{8}
\qdarrow{1}{2}
\qarrow{8}{1}
\qdarrow{1}{6}
\draw (2,-1.2) circle(0pt) node {$B'(A_2)$};
\end{scope}
\end{tikzpicture}
\end{align}
For both transformations of wiring diagrams and quivers we use the same
notation~$\qR_{123}$,
where the quiver transformation is represented as the mutation sequence  
$\qR_{123} = \sigma_{5,7}\sigma_{2,6}\mu_2 \mu_7 \mu_5 \mu_6$.  

From the wiring diagrams and quivers shown in
Figure~\ref{figTE},
which correspond to the reduced expressions
$123121$ and $321323$ of the longest element in the Weyl group~$W(A_3)$,
we obtain the tetrahedron equation for the quiver transformation~$\qR_{ijk}$:
$$
  \sigma_{7,12}\qR_{456}\qR_{236}\qR_{135}\qR_{124}(B(A_3))
  = \sigma_{7,14}\qR_{124}\qR_{135}\qR_{236}\qR_{456}(B(A_3))
  = B'(A_3),
$$
and for the quantum $Y$-seed,
\begin{align}
\label{TE}
\sigma_{7,12}\qR_{456}\qR_{236}\qR_{135}\qR_{124}(B(A_3),Y)
 = \sigma_{7,14}\qR_{124}\qR_{135}\qR_{236}\qR_{456}(B(A_3),Y).
\end{align}

\begin{figure}[H]
\scalebox{0.9}{
\begin{tikzpicture}
\begin{scope}[>=latex,xshift=0pt]
{\color{red}
\fill (1,0.5) circle(2pt) coordinate(A) node[above right]{$1$};
\fill (2,1.5) circle(2pt) coordinate(B) node[above right]{$2$};
\fill (3,0.5) circle(2pt) coordinate(C) node[above right]{$4$};
\fill (3,2.5) circle(2pt) coordinate(D) node[above right]{$3$};
\fill (4,1.5) circle(2pt) coordinate(E) node[above right]{$5$};
\fill (5,0.5) circle(2pt) coordinate(F) node[above right]{$6$};
\draw [-] (0,3) to [out = 0, in = 135] (D);
\draw [-] (D) -- (F); 
\draw [-] (F) to [out = -45, in = 180] (6,0);
\draw [-] (0,2) to [out = 0, in = 135] (B);
\draw [-] (B) -- (C); 
\draw [-] (C) to [out = -45, in = -135] (F);
\draw [-] (F) to [out = 45, in = 180] (6,1);
\draw [-] (0,1) to [out = 0, in = 135] (A); 
\draw [-] (A) to [out = -45, in = -135] (C);
\draw [-] (C) -- (E);
\draw [-] (E) to [out = 45, in = 180] (6,2);
\draw [-] (0,0) to [out = 0, in = -135] (A); 
\draw [-] (A) -- (D);
\draw [-] (D) to [out = 45, in = 180] (6,3);
}
\draw (3,-0.5) circle(2pt) coordinate(Q17) node[below]{\scriptsize $17$};
\draw (0,0.5) circle(2pt) coordinate(Q10) node[left]{\scriptsize $10$};
\draw (1,0.5) circle(2pt) coordinate(Q11) node[below]{\scriptsize $11$};
\draw (2,0.5) circle(2pt) coordinate(Q12) node[below left]{\scriptsize $12$};
\draw (3,0.5) circle(2pt) coordinate(Q13) node[below left]{\scriptsize $13$};
\draw (4,0.5) circle(2pt) coordinate(Q14) node[below right]{\scriptsize $14$};
\draw (5,0.5) circle(2pt) coordinate(Q15) node[below]{\scriptsize $15$};
\draw (6,0.5) circle(2pt) coordinate(Q16) node[right]{\scriptsize $16$};
\draw (1,1.5) circle(2pt) coordinate(Q5) node[left]{\scriptsize $5$};
\draw (2,1.5) circle(2pt) coordinate(Q6) node[below left]{\scriptsize $6$};
\draw (3,1.5) circle(2pt) coordinate(Q7) node[below left ]{\scriptsize $7$};
\draw (4,1.5) circle(2pt) coordinate(Q8) node[below left]{\scriptsize $8$};
\draw (5,1.5) circle(2pt) coordinate(Q9) node[right]{\scriptsize $9$};
\draw (2,2.5) circle(2pt) coordinate(Q2) node[left]{\scriptsize $2$};
\draw (3,2.5) circle(2pt) coordinate(Q3) node[below left]{\scriptsize $3$};
\draw (4,2.5) circle(2pt) coordinate(Q4) node[right]{\scriptsize $4$};
\draw (3,3.5) circle(2pt) coordinate(Q1) node[above]{\scriptsize $1$};
\qarrow{Q3}{Q1}
\qarrow{Q2}{Q3} \qarrow{Q4}{Q3}
\qarrow{Q3}{Q7} \qarrow{Q6}{Q2} \qarrow{Q8}{Q4}
\qarrow{Q5}{Q6} \qarrow{Q7}{Q6} \qarrow{Q7}{Q8} \qarrow{Q9}{Q8}
\qarrow{Q11}{Q5} \qarrow{Q6}{Q12} \qarrow{Q13}{Q7} \qarrow{Q8}{Q14} \qarrow{Q15}{Q9}
\qarrow{Q10}{Q11} \qarrow{Q12}{Q11} \qarrow{Q12}{Q13} \qarrow{Q14}{Q13} \qarrow{Q14}{Q15} \qarrow{Q16}{Q15}
\qarrow{Q13}{Q17} \qarrow{Q17}{Q14} \qarrow{Q17}{Q12}
\draw [->, shorten >=2pt,shorten <=2pt] (Q15) to [out = -135, in = 15] (Q17) [thick];
\draw [<-, dashed, shorten >=2pt,shorten <=2pt] (Q16) to [out = -135, in = 0] (Q17) [thick];
\draw [->, shorten >=2pt,shorten <=2pt] (Q11) to [out = -45, in = 165] (Q17) [thick];
\draw [<-, dashed, shorten >=2pt,shorten <=2pt] (Q10) to [out = -45, in = 180] (Q17) [thick];
\qdarrow{Q1}{Q2}  \qdarrow{Q2}{Q5} \qdarrow{Q5}{Q10}  
\qdarrow{Q1}{Q4}  \qdarrow{Q4}{Q9} \qdarrow{Q9}{Q16}  
\draw (3,-1.5) node{$B(A_3)$};
\end{scope}
\begin{scope}[>=latex,xshift=230pt]
{\color{red}
\fill (5,2.5) circle(2pt) coordinate(A) node[above]{$1$};
\fill (4,1.5) circle(2pt) coordinate(B) node[above right]{$2$};
\fill (3,2.5) circle(2pt) coordinate(C) node[above right]{$4$};
\fill (3,0.5) circle(2pt) coordinate(D) node[above right]{$3$};
\fill (2,1.5) circle(2pt) coordinate(E) node[above right]{$5$};
\fill (1,2.5) circle(2pt) coordinate(F) node[above]{$6$};
\draw [-] (0,3) to [out = 0, in = 135] (F);
\draw [-] (F) -- (D); 
\draw [-] (D) to [out = -45, in = 180] (6,0);
\draw [-] (0,2) to [out = 0, in = -135] (F) to [out = 45, in = 135] (C) -- (B) to [out = -45, in = 180] (6,1);
\draw [-] (0,1) to [out = 0, in = -135] (E) -- (C) to [out = 45, in = 135] (A) to [out = -45, in = 180] (6,2);
\draw [-] (0,0) to [out = 0, in = -135] (D) -- (A) to [out = 45, in = 180] (6,3);
}
\draw (3,3.5) circle(2pt) coordinate(Q1) node[above]{\scriptsize $1$};
\draw (0,2.5) circle(2pt) coordinate(Q2) node[left]{\scriptsize $2$};
\draw (1,2.5) circle(2pt) coordinate(Q15) node[below left]{\scriptsize $15$};
\draw (2,2.5) circle(2pt) coordinate(Q12) node[below left]{\scriptsize $12$};
\draw (3,2.5) circle(2pt) coordinate(Q13) node[below left]{\scriptsize $13$};
\draw (4,2.5) circle(2pt) coordinate(Q14) node[below left]{\scriptsize $14$};
\draw (5,2.5) circle(2pt) coordinate(Q11) node[below left]{\scriptsize $11$};
\draw (6,2.5) circle(2pt) coordinate(Q4) node[right]{\scriptsize $4$};
\draw (1,1.5) circle(2pt) coordinate(Q5) node[left]{\scriptsize $5$};
\draw (2,1.5) circle(2pt) coordinate(Q8) node[below left]{\scriptsize $8$};
\draw (3,1.5) circle(2pt) coordinate(Q7) node[below left]{\scriptsize $7$};
\draw (4,1.5) circle(2pt) coordinate(Q6) node[below left]{\scriptsize $6$};
\draw (5,1.5) circle(2pt) coordinate(Q9) node[right]{\scriptsize $9$};
\draw (2,0.5) circle(2pt) coordinate(Q10) node[left]{\scriptsize $10$};
\draw (3,0.5) circle(2pt) coordinate(Q3) node[below left]{\scriptsize $3$};
\draw (4,0.5) circle(2pt) coordinate(Q16) node[right]{\scriptsize $16$};
\draw (3,-0.5) circle(2pt) coordinate(Q17) node[below]{\scriptsize $17$};
\qarrow{Q3}{Q17}
\qarrow{Q10}{Q3} \qarrow{Q16}{Q3}
\qarrow{Q3}{Q7} \qarrow{Q6}{Q16} \qarrow{Q8}{Q10}
\qarrow{Q5}{Q8} \qarrow{Q7}{Q8} \qarrow{Q7}{Q6} \qarrow{Q9}{Q6}
\qarrow{Q13}{Q7} \qarrow{Q6}{Q14} \qarrow{Q11}{Q9} \qarrow{Q8}{Q12} \qarrow{Q15}{Q5}
\qarrow{Q4}{Q11} \qarrow{Q12}{Q13} \qarrow{Q14}{Q11} \qarrow{Q14}{Q13} \qarrow{Q12}{Q15} \qarrow{Q2}{Q15}
\qarrow{Q13}{Q1} \qarrow{Q1}{Q14} \qarrow{Q1}{Q12}
\draw [->, shorten >=2pt,shorten <=2pt] (Q15) to [out = 45, in = 195] (Q1) [thick];
\draw [<-, dashed, shorten >=2pt,shorten <=2pt] (Q2) to [out = 45, in = 180] (Q1) [thick];
\draw [->, shorten >=2pt,shorten <=2pt] (Q11) to [out = 135, in = -15] (Q1) [thick];
\draw [<-, dashed, shorten >=2pt,shorten <=2pt] (Q4) to [out = 135, in = 0] (Q1) [thick];
\qdarrow{Q10}{Q17} \qdarrow{Q2}{Q5} \qdarrow{Q5}{Q10}  
\qdarrow{Q16}{Q17}  \qdarrow{Q4}{Q9} \qdarrow{Q9}{Q16}  
\draw (3,-1.5) node{$B'(A_3)$};
\end{scope}
\end{tikzpicture}
}
\caption{The tetrahedron relation for SB.}
\label{figTE}
\end{figure}
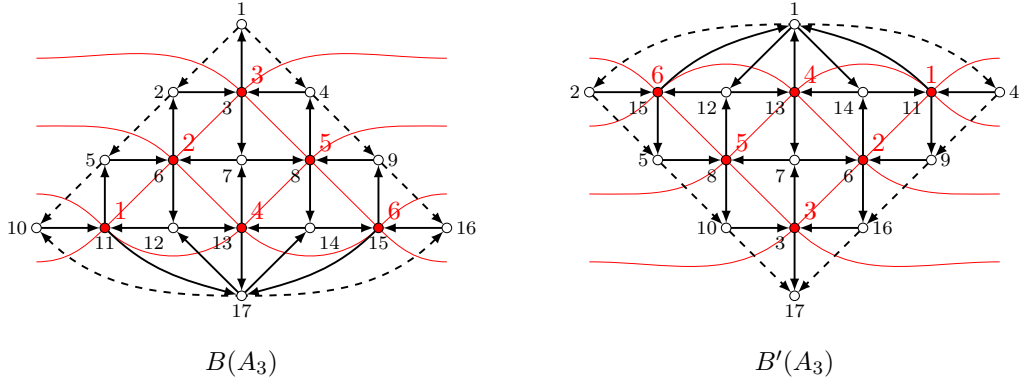

Corresponding to the mutation sequence 
$\qR_{123} = \sigma_{5,7}\sigma_{2,6}\mu_2 \mu_7 \mu_5 \mu_6$,
we define a sequence of quantum $Y$-seeds as
\begin{align}\label{R123-qY}
\begin{split}
(B(A_2),Y) &= (B^{(1)},Y^{(1)}) 
 \stackrel{\mu_6}{\longrightarrow} (B^{(2)},Y^{(2)}) 
 \stackrel{\mu_5}{\longrightarrow} (B^{(3)},Y^{(3)}) 
 \stackrel{\mu_7}{\longrightarrow} (B^{(4)},Y^{(4)}) 
\\
&\qquad\stackrel{\mu_2}{\longrightarrow} (B^{(5)},Y^{(5)}) 
 \stackrel{\sigma_{5,7}\sigma_{2,6}}{\longrightarrow} 
 (B^{(6)},Y^{(6)}) = (B'(A_2),Y').
\end{split}
\end{align}
This mutation sequence induces an isomorphism of skewfields
$\widehat{R}_{123} : \mathcal{Y}(B'(A_2)) \to \mathcal{Y}(B(A_2))$.
For a sign sequence $\ve = (\ve_1,\ve_2,\ve_3,\ve_4) \in \{-1,1\}^4$,
$\widehat{R}_{123}$ can be expressed as
\begin{equation}\label{Rhat}
\begin{split}
\widehat{R}_{123} 
&= 
\mathrm{Ad}\!\left(\Psi_q((Y^{(1)}_6)^{\varepsilon_1})^{\varepsilon_1}\right)\tau_{6,\varepsilon_1}
\mathrm{Ad}\!\left(\Psi_q((Y^{(2)}_5)^{\varepsilon_2})^{\varepsilon_2}\right)\tau_{5,\varepsilon_2}
\\
&\quad\cdot
\mathrm{Ad}\!\left(\Psi_q((Y^{(3)}_7)^{\varepsilon_3})^{\varepsilon_3}\right)\tau_{7,\varepsilon_3}
\mathrm{Ad}\!\left(\Psi_q((Y^{(4)}_2)^{\varepsilon_4})^{\varepsilon_4}\right)\tau_{2,\varepsilon_4}
\sigma_{5,7}\sigma_{2,6}.
\end{split}
\end{equation}
We define the monomial part of $\widehat{R}_{123}$ as
\begin{align}\label{Rmono}
\tau_{123|\ve} =
\tau_{6,\varepsilon_1}\,
\tau_{5,\varepsilon_2}\,
\tau_{7,\varepsilon_3}\,
\tau_{2,\varepsilon_4}\,
\sigma_{5,7}\sigma_{2,6}
:\;
\mathcal{Y}(B'(A_2)) \longrightarrow \mathcal{Y}(B(A_2)).
\end{align}

Both $\widehat{R}_{123}$ and $\tau_{123|\ve}$ can be extended to
$\widehat{R}_{ijk}$ and $\tau_{ijk|\ve}$ corresponding to the
transformations~$\qR_{ijk}$ appearing in the tetrahedron
equation~\eqref{TE}.  
In~\cite{IKSTY}, we studied a {\it homogeneous} tetrahedron
equation, in which all the operators~$\widehat{R}_{ijk}$ share a common
sign sequence~$\ve$, and the following
conditions are satisfied:
\begin{itemize}\label{ve-con}
\item[(i)] 
The tetrahedron equation for the operators~$\widehat{R}_{ijk}$ can be
{\it decomposed} into two parts: a tetrahedron equation for their
monomial parts~$\tau_{ijk|\ve}$ with the same sign sequence~$\ve$,
\begin{align*}
\tau_{124|\ve}\tau_{135|\ve}\tau_{236|\ve}\tau_{456|\ve}\sigma_{7,12}
=
\tau_{456|\ve}\tau_{236|\ve}\tau_{135|\ve}\tau_{124|\ve}\sigma_{7,14},
\end{align*}
and a corresponding dilogarithm identity.
\item[(ii)] 
Through a ring homomorphism from quantum $Y$ vartiables to $q$-Weyl algebras, 
each monomial part~$\tau_{ijk|\ve}$ of~$\widehat{R}_{ijk}$ associated
with the sign sequence~$\ve$ is realized as the adjoint action of a
suitable operator.
\end{itemize}

It turned out that there are two sign sequences which satisfy the condition (i):
$\ve = (-,-,+,+)$ and $(-,+,-,+)$ \cite[Proposition 3.8 and 3.9]{IKSTY}.
For these sign sequences, \eqref{Rhat} and \eqref{Rmono} take the forms 
\begin{align}
\label{hatR--++}
&\widehat{R}_{123} = 
\mathrm{Ad}\bigl(\Psi_q(Y_6^{-1})^{-1} \Psi_q(q Y_5^{-1} Y_6^{-1})^{-1} 
\Psi_q(q^{-1} Y_6 Y_7) \Psi_q(q^{-2}Y_2 Y_6 Y_7) \bigr) \tau_{123 | --++},
\\ \displaybreak[0]
\label{tau--++}
&\tau_{123 | --++}: 
\begin{cases}
Y_0' \mapsto Y_0, &Y_5' \mapsto Y_2,   
\\
Y_1' \mapsto q^2 Y_1 Y_5 Y_6, &Y_6' \mapsto q^{-2} Y_2^{-1} Y_6^{-1} Y_7^{-1}, 
\\
Y_2' \mapsto Y_5, &Y_7' \mapsto Y_2 Y_5^{-1} Y_7,  
\\
Y_3' \mapsto Y_3, &Y_8' \mapsto Y_6 Y_7 Y_8,
\\
Y_4' \mapsto Y_4, &Y_9' \mapsto Y_9,
\end{cases} 
\end{align}
for $\ve =(-,-,+,+)$, and 
\begin{align}
\label{hatR-+-+}
&\widehat{R}_{123} = 
\mathrm{Ad}\bigl(\Psi_q(Y_6^{-1})^{-1} \Psi_q(q Y_5 Y_6) 
\Psi_q(q^{-1} Y_6^{-1} Y_7^{-1})^{-1} \Psi_q(Y_2 Y_5 Y_6) \bigr) \tau_{123 | -+-+},
\\ \displaybreak[0]
\label{tau-+-+}
&\tau_{123 | -+-+}:
\begin{cases}
Y_0' \mapsto Y_0, &Y_5' \mapsto Y_2 Y_5 Y_7^{-1},   
\\
Y_1' \mapsto Y_1, &Y_6' \mapsto Y_2^{-1} Y_5^{-1} Y_6^{-1}, 
\\
Y_2' \mapsto Y_7, &Y_7' \mapsto Y_2,  
\\
Y_3' \mapsto Y_3 Y_6 Y_7, &Y_8' \mapsto Y_8,
\\
Y_4' \mapsto Y_4 Y_5 Y_6, &Y_9' \mapsto Y_9,
\end{cases} 
\end{align}
for $\ve =(-,+,-,+)$.
We will explain in \S \ref{sec:Rop} that these sign sequences satisfy the condition (ii).

For later use, we also introduce a transformation $\overline{\qR}_{123}$:
\begin{align}\label{SB-Rb}
\begin{tikzpicture}
\begin{scope}[>=latex,xshift=0pt]
{\color{red}
\fill (1,0.5) circle(2pt) coordinate(A) node[above right]{\scriptsize$3$};
\fill (2,1.5) circle(2pt) coordinate(B) node[below right]{\scriptsize$2$};
\fill (3,0.5) circle(2pt) coordinate(C) node[above left]{\scriptsize$1$};
\draw [-] (0,2) to [out = 0, in = 135] (B);
\draw [-] (B) -- (C); 
\draw [-] (C) to [out = -45, in = 180] (4,0);
\draw [-] (0,1) to [out = 0, in = 135] (A); 
\draw [-] (A) to [out = -45, in = -135] (C);
\draw [-] (C) to [out = 45, in = 180] (4,1);
\draw [-] (0,0) to [out = 0, in = -135] (A); 
\draw [-] (A) -- (B);
\draw [-] (B) to [out = 45, in = 180] (4,2);
}
%
\draw (2,-0.5) circle(2pt) coordinate(1) node[below]{\scriptsize$9$};
\draw (0,0.5) circle(2pt) coordinate(2) node[below left]{\scriptsize$4$};
\draw (1,0.5) circle(2pt) coordinate(3) node[below=1pt]{\scriptsize$5$};
\draw (2,0.5) circle(2pt) coordinate(4) node[above left]{\scriptsize$6$};
\draw (3,0.5) circle(2pt) coordinate(5) node[below=1pt]{\scriptsize$7$};
\draw (4,0.5) circle(2pt) coordinate(6) node[below right]{\scriptsize$8$};
\draw (1,1.5) circle(2pt) coordinate(7) node[above left]{\scriptsize$1$};
\draw (2,1.5) circle(2pt) coordinate(8); 
\draw (1.9,1.75) node {\scriptsize$2$};
\draw (3,1.5) circle(2pt) coordinate(9) node[above right]{\scriptsize$3$};
\draw (2,2.5) circle(2pt) coordinate(10) node[above]{\scriptsize$0$};
\draw[->,dashed, shorten >=2pt,shorten <=2pt] (1) to [out = 165, in = -60] (2) [thick];
\draw[->,shorten >=2pt,shorten <=2pt] (3) to [out = -60, in = 150] (1) [thick];
\qarrow{1}{4}
\draw[->,shorten >=2pt,shorten <=2pt] (5) to [out = -120, in = 30] (1) [thick];
\draw[->,dashed, shorten >=2pt,shorten <=2pt] (1) to [out = 15, in = -120] (6) [thick];
\qarrow{6}{5}
\qarrow{4}{5}
\qarrow{4}{3}
\qarrow{2}{3}
\qdarrow{7}{2}
\qarrow{3}{7}
\qarrow{8}{4}
\qarrow{5}{9}
\qdarrow{9}{6}
\qarrow{9}{8}
\qarrow{7}{8}
\qdarrow{10}{7}
\qarrow{8}{10}
\qdarrow{10}{9}
\draw [<-] (5.3,1) to (6.3,1);
\draw (5.8,1) node[above]{$\overline{\qR}_{123}$};
\draw (2,-1.2) circle(0pt) node {$B(A_2)$};
\end{scope}
\begin{scope}[>=latex,xshift=215pt]
\coordinate (A) at (3,1.5);
\coordinate (B) at (2,0.5);
\coordinate (C) at (1,1.5);
{\color{red}
\fill (3,1.5) circle(2pt) coordinate(A) node[below left]{\scriptsize$3$};
\fill (2,0.5) circle(2pt) coordinate(B) node[above right]{\scriptsize$2$};
\fill (1,1.5) circle(2pt) coordinate(C) node[below right]{\scriptsize$1$};
\draw [-] (0,0) to [out = 0, in = -135] (B);
\draw [-] (B) -- (A); 
\draw [-] (A) to [out = 45, in = 180] (4,2);
\draw [-] (0,1) to [out = 0, in = -135] (C); 
\draw [-] (C) to [out = 45, in = 135] (A);
\draw [-] (A) to [out = -45, in = 180] (4,1);
\draw [-] (0,2) to [out = 0, in = 135] (C); 
\draw [-] (C) -- (B);
\draw [-] (B) to [out = -45, in = 180] (4,0);
}
%
%
\draw (2,-0.5) circle(2pt) coordinate(1) node[below]{\scriptsize$9$};
\draw (1,0.5) circle(2pt) coordinate(2) node[below left]{\scriptsize$4$};
\draw (3,1.5) circle(2pt) coordinate(3) node[above]{\scriptsize$5$};
\draw (2,1.5) circle(2pt) coordinate(4) node[below left]{\scriptsize$6$};
\draw (1,1.5) circle(2pt) coordinate(5) node[above]{\scriptsize$7$};
\draw (3,0.5) circle(2pt) coordinate(6) node[below right]{\scriptsize$8$};
\draw (0,1.5) circle(2pt) coordinate(7) node[above left]{\scriptsize$1$};
\draw (2,0.5) circle(2pt) coordinate(8);
\draw (1.9,0.2) node {\scriptsize$2$};
\draw (4,1.5) circle(2pt) coordinate(9) node[above right]{\scriptsize$3$};
\draw (2,2.5) circle(2pt) coordinate(10) node[above]{\scriptsize$0$};
\draw[->,dashed, shorten >=2pt,shorten <=2pt] (10) to [out = -165, in = 60] (7) [thick];
\draw[->,shorten >=2pt,shorten <=2pt] (5) to [out = 60, in = -150] (10) [thick];\qarrow{10}{4}
\draw[->,shorten >=2pt,shorten <=2pt] (3) to [out = 120, in = -30] (10) [thick];
\draw[->,dashed, shorten >=2pt,shorten <=2pt] (10) to [out = -15, in = 120] (9) [thick];
\qarrow{7}{5}
\qarrow{4}{5}
\qarrow{4}{3}
\qarrow{9}{3}
\qdarrow{2}{7}
\qarrow{8}{4}
\qarrow{5}{2}
\qarrow{3}{6}
\qdarrow{6}{9}
\qarrow{2}{8}
\qarrow{6}{8}
\qdarrow{1}{2}
\qarrow{8}{1}
\qdarrow{1}{6}
\draw (2,-1.2) circle(0pt) node {$B'(A_2)$};
\end{scope}
\end{tikzpicture}
\end{align}
Note that the numberings of the crossings of the wiring diagrams
\eqref{SB-R} and~\eqref{SB-Rb} are different, whereas the numberings of the quiver vertices 
in \eqref{SB-R} and~\eqref{SB-Rb} are the same.
Hence as a transformation of wiring diagram, $\overline{\qR}_{123}$ is not the inverse of $\qR_{123}$.
On the other hand, as a mutation sequence, $\overline{\qR}_{123}$ coincides with~${\qR}_{123}$, 
and induces an isomorphism of skewfields $\mathcal{Y}(B(A_2)) \to \mathcal{Y}(B'(A_2))$, which is the inverse of $\qR_{123}$.

In analogy with~$\qR_{123}$, we define a sequence of quantum
$Y$-seeds by
\begin{align}
\begin{split}
({B}'(A_2),\overline{Y}) 
 &= (\overline{B}^{(1)},\overline{Y}^{(1)}) 
 \stackrel{\mu_6}{\longrightarrow} 
 (\overline{B}^{(2)},\overline{Y}^{(2)}) 
 \stackrel{\mu_5}{\longrightarrow}
 (\overline{B}^{(3)},\overline{Y}^{(3)}) 
 \stackrel{\mu_7}{\longrightarrow}
 (\overline{B}^{(4)},\overline{Y}^{(4)}) 
\\
&\qquad
\stackrel{\mu_2}{\longrightarrow}
 (\overline{B}^{(5)},\overline{Y}^{(5)}) 
 \stackrel{\sigma_{5,7}\sigma_{2,6}}{\longrightarrow}
 (\overline{B}^{(6)},\overline{Y}^{(6)}) 
 = ({B}(A_2),\overline{Y}'),
\end{split}
\end{align}
where we write $\overline{Y}=(\overline{Y}_i)$ for the quantum $Y$-variables to emphasize the difference from \eqref{R123-qY}.

We also define the monomial part~$\overline{\tau}_{123|\ve}$ of the induced
isomorphism of skewfields
$\widehat{\overline{R}}_{123} : \mathcal{Y}(B(A_2)) \to
\mathcal{Y}(B'(A_2))$
for a sign sequence $\ve = (\ve_1,\ve_2,\ve_3,\ve_4) \in \{-1,1\}^4$ by
\begin{align}\label{Rbmono}
\overline{\tau}_{123|\ve} =
\tau_{6,\varepsilon_1}\,
\tau_{5,\varepsilon_2}\,
\tau_{7,\varepsilon_3}\,
\tau_{2,\varepsilon_4}\,
\sigma_{5,7}\sigma_{2,6}
:\;
\mathcal{Y}(B(A_2)) \longrightarrow \mathcal{Y}(B'(A_2)).
\end{align}
For the sign sequences $\ve = (-,-,+,+)$ and $\ve = (-,+,-,+)$ introduced above,  
the isomorphism~$\widehat{\overline{R}}_{123}$ and its monomial
part~\eqref{Rbmono} are given by
\begin{align}
&\widehat{\overline{R}}_{123} = 
\mathrm{Ad}\bigl(\Psi_q(\overline{Y}_6^{-1})^{-1} \Psi_q(q \overline{Y}_5^{-1} 
\overline{Y}_6^{-1})^{-1} \Psi_q(q^{-1} \overline{Y}_6 \overline{Y}_7) 
\Psi_q(q^{-2}\overline{Y}_2 \overline{Y}_6 \overline{Y}_7) \bigr) \overline{\tau}_{123 | --++},
\label{hbr}\\ \displaybreak[0]
\label{taub--++}
&\overline{\tau}_{123 | --++}: 
\begin{cases}
\overline{Y}_0' \mapsto \overline{Y}_0, &\overline{Y}_5' \mapsto \overline{Y}_2,\\
\overline{Y}_1' \mapsto q^{-2} \overline{Y}_1 \overline{Y}_6 \overline{Y}_7,
 &\overline{Y}_6' \mapsto q^{-2} \overline{Y}_2^{-1} \overline{Y}_6^{-1} \overline{Y}_7^{-1}, 
\\
\overline{Y}_2' \mapsto \overline{Y}_5, &\overline{Y}_7' \mapsto \overline{Y}_2 \overline{Y}_5^{-1} \overline{Y}_7,  
\\
\overline{Y}_3' \mapsto \overline{Y}_3, &\overline{Y}_8' \mapsto  \overline{Y}_5 \overline{Y}_6 \overline{Y}_8,
\\
\overline{Y}_4' \mapsto \overline{Y}_4, &\overline{Y}_9' \mapsto \overline{Y}_9,
\end{cases} 
\end{align}
for $\ve =(-,-,+,+)$, and 
\begin{align}
&\widehat{\overline{R}}_{123} = 
\mathrm{Ad}\bigl(\Psi_q(\overline{Y}_6^{-1})^{-1} \Psi_q(q \overline{Y}_5 \overline{Y}_6) \Psi_q(q^{-1} \overline{Y}_6^{-1} \overline{Y}_7^{-1})^{-1} \Psi_q(\overline{Y}_2 \overline{Y}_5 \overline{Y}_6) \bigr) \overline{\tau}_{123 | -+-+},
\\ \displaybreak[0]
\label{taub-+-+}
&\overline{\tau}_{123 | -+-+}:
\begin{cases}
\overline{Y}_0' \mapsto \overline{Y}_0, &\overline{Y}_5' \mapsto \overline{Y}_2 \overline{Y}_5 \overline{Y}_7^{-1},   
\\
\overline{Y}_1' \mapsto \overline{Y}_1, &\overline{Y}_6' \mapsto \overline{Y}_2^{-1} \overline{Y}_5^{-1} \overline{Y}_6^{-1}, 
\\
\overline{Y}_2' \mapsto \overline{Y}_7, &\overline{Y}_7' \mapsto \overline{Y}_2,\\
\overline{Y}_3' \mapsto \overline{Y}_3 \overline{Y}_5 \overline{Y}_6,   
 &\overline{Y}_8' \mapsto \overline{Y}_8,
\\
\overline{Y}_4' \mapsto \overline{Y}_4 \overline{Y}_6 \overline{Y}_7,
&\overline{Y}_9' \mapsto \overline{Y}_9,
\end{cases} 
\end{align}
for $\ve =(-,+,-,+)$.
Note that for either sign sequences $\ve$, the $\overline{\tau}_{123|\ve}$ is the inverse map of $\tau_{123|\ve}$. 

\subsection{$K$-operator}

We now introduce $B(C_2)$ and $B'(C_2)$ as the SB
quivers corresponding to the wiring diagrams associated with the two
reduced expressions $1212$ and $2121$ of the longest element in the
Weyl group~$W(C_2)$.  
Let~$\qK_{1234}$ denote the transformation of the wiring diagrams (shown
in red) and the quivers (shown in black) defined as follows. 
\begin{align}
\label{SB-KC}
\begin{split}
\begin{tikzpicture}
\begin{scope}[>=latex,xshift=0pt]
{\color{red}
\fill (2,0) circle(2pt) coordinate(A) node[above right]{\scriptsize $1$};
\fill (4,0) circle(2pt) coordinate(C) node[above right]{\scriptsize $3$};
\fill (3,1) circle(2pt) coordinate(B) node[below right]{\scriptsize $2$};
\fill (5,1) circle(2pt) coordinate(D) node[below right]{\scriptsize $4$};
\draw [->] (1,0.5)  to [out = 0, in = 135] (A) to [out = -45, in = -135] (C) -- (D) to [out = -45, in = 180] (6,0.5);
\draw [->] (1,-0.5) to [out = 0, in = -135] (A) -- (B) -- (C) to [out = -45, in = 180] (6,-0.5);
}
%
\path (2,1) coordinate(A1) node[above=0.2em]{\scriptsize $1$};
\maruni{A1} 
\path (3,1) coordinate(A2) node[above=0.2em]{\scriptsize $2$};
\maruni{A2}
\path (4,1) coordinate(A3) node[above=0.2em]{\scriptsize $3$};
\maruni{A3}
\path (5,1) coordinate(A4) node[above=0.2em]{\scriptsize $4$};
\maruni{A4}
\path (6,1) coordinate(A5) node[above=0.2em]{\scriptsize $5$};
\maruni{A5}
\draw (1,0) circle(2pt) coordinate(A6) node[below left]{\scriptsize $6$};
\draw (2,0) circle(2pt) coordinate(A7) node[below]{\scriptsize $7$};
\draw (3,0) circle(2pt) coordinate(A8) node[below left]{\scriptsize $8$};
\draw (4,0) circle(2pt) coordinate(A9) node[below]{\scriptsize $9$};
\draw (5,0) circle(2pt) coordinate(A10) node[below right]{\scriptsize $10$};
\draw (3,-1) circle(2pt) coordinate(A11) node[below]{\scriptsize $11$};
\qsarrow{A1}{A2}
\qsarrow{A3}{A2}
\qsarrow{A3}{A4}
\qsarrow{A5}{A4}
\qarrow{A6}{A7}
\qarrow{A8}{A7}
\qarrow{A8}{A9}
\qarrow{A10}{A9}
\draw[->,shorten >=2pt,shorten <=4pt] (A4) -- (A10) [thick];
\draw[->,shorten >=4pt,shorten <=2pt] (A9) -- (A3) [thick];
\draw[->,shorten >=2pt,shorten <=4pt] (A2) -- (A8) [thick];
\draw[->,shorten >=4pt,shorten <=2pt] (A7) -- (A1) [thick];
\draw[->,dashed,shorten >=4pt,shorten <=2pt] (A10) -- (A5) [thick];
\draw[->,dashed,shorten >=2pt,shorten <=4pt] (A1) -- (A6) [thick];
\draw[->,dashed, shorten >=2pt,shorten <=2pt] (A11) to [out = 165, in = -60] (A6) [thick];
\draw[->,shorten >=2pt,shorten <=2pt] (A7) to [out = -60, in = 150] (A11) [thick];
\qarrow{A11}{A8}
\draw[->,shorten >=2pt,shorten <=2pt] (A9) to [out = -120, in = 30] (A11) [thick];
\draw[->,dashed, shorten >=2pt,shorten <=2pt] (A11) to [out = 15, in = -120] (A10) [thick];
\draw[->] (6.8,0)--(7.8,0); \draw (7.3,0) node[above] {$\qK_{1234}$};
\path (3,-1.9) node {$B(C_2)$};
\end{scope}
\begin{scope}[>=latex,xshift=225pt]
{\color{red}
\fill (5,0) circle(2pt) coordinate(A) node[above right]{\scriptsize $1$};
\fill (3,0) circle(2pt) coordinate(C) node[above right]{\scriptsize $3$};
\fill (4,1) circle(2pt) coordinate(B) node[below right]{\scriptsize $2$};
\fill (2,1) circle(2pt) coordinate(D) node[below right]{\scriptsize $4$};
\draw [->] (1,0.5) to [out = 0, in = -135] (D) -- (C) to [out = -45, in = -135] (A) to [out = 45, in = 180] (6,0.5);
\draw [->] (1,-0.5) to [out = 0, in = -135] (C) -- (B) -- (A) to [out = -45, in = 180] (6,-0.5);
}
%
%
\path (1,1) coordinate(A1) node[above=0.2em]{\scriptsize $1$};
\maruni{A1} 
\path (2,1) coordinate(A2) node[above=0.2em]{\scriptsize $2$};
\maruni{A2}
\path (3,1) coordinate(A3) node[above=0.2em]{\scriptsize $3$};
\maruni{A3}
\path (4,1) coordinate(A4) node[above=0.2em]{\scriptsize $4$};
\maruni{A4}
\path (5,1) coordinate(A5) node[above=0.2em]{\scriptsize $5$};
\maruni{A5}
\draw (2,0) circle(2pt) coordinate(A6) node[below left]{\scriptsize $6$};
\draw (3,0) circle(2pt) coordinate(A7) node[below]{\scriptsize $7$};
\draw (4,0) circle(2pt) coordinate(A8) node[below left]{\scriptsize $8$};
\draw (5,0) circle(2pt) coordinate(A9) node[below]{\scriptsize $9$};
\draw (6,0) circle(2pt) coordinate(A10) node[below right]{\scriptsize $10$};
\draw (4,-1) circle(2pt) coordinate(A11) node[below]{\scriptsize $11$};
\qsarrow{A1}{A2}
\qsarrow{A3}{A2}
\qsarrow{A3}{A4}
\qsarrow{A5}{A4}
\qarrow{A6}{A7}
\qarrow{A8}{A7}
\qarrow{A8}{A9}
\qarrow{A10}{A9}
\draw[->,shorten >=4pt,shorten <=2pt] (A9) -- (A5) [thick];
\draw[->,shorten >=2pt,shorten <=4pt] (A4) -- (A8) [thick];
\draw[->,shorten >=4pt,shorten <=2pt] (A7) -- (A3) [thick];
\draw[->,shorten >=2pt,shorten <=4pt] (A2) -- (A6) [thick];
\draw[->,dashed,shorten >=2pt,shorten <=4pt] (A5) -- (A10) [thick];
\draw[->,dashed,shorten >=4pt,shorten <=2pt] (A6) -- (A1) [thick];
\draw[->,dashed, shorten >=2pt,shorten <=2pt] (A11) to [out = 165, in = -60] (A6) [thick];
\draw[->,shorten >=2pt,shorten <=2pt] (A7) to [out = -60, in = 150] (A11) [thick];
\qarrow{A11}{A8}
\draw[->,shorten >=2pt,shorten <=2pt] (A9) to [out = -120, in = 30] (A11) [thick];
\draw[->,dashed, shorten >=2pt,shorten <=2pt] (A11) to [out = 15, in = -120] (A10) [thick];
\path (4,-1.9) node {$B'(C_2)$};
\end{scope}
\end{tikzpicture}
\end{split}
\end{align}
The exchange matrices for these quivers are not skew symmetric. 
Using the diagonal matrix
$d = (2,2,2,2,2,1,1,1,1,1,1)$,
we symmetrize the exchange matrix
$B = ({b}_{ij})$ for $B(C_2)$ to obtain
\begin{align}\label{Bhat-K}
\widehat{B}:= B\, d = 
\begin{bmatrix}
0 & 2 & 0 & 0 & 0 & 1 & -2 & 0 & 0 & 0 & 0\\
-2 & 0 & -2 & 0 & 0 & 0 & 0 & 2 & 0 & 0 & 0\\
0 & 2 & 0 & 2 & 0 & 0 & 0 & 0 & -2 & 0 & 0\\
0 & 0 & -2 & 0 & -2 & 0 & 0 & 0 & 0 & 2 & 0\\
0 & 0 & 0 & 2 & 0 & 0 & 0 & 0 & 0 & -1 & 0\\
-1 & 0 & 0 & 0 & 0 & 0 & 1 & 0 & 0 & 0 & -\frac{1}{2}\\
2 & 0 & 0 & 0 & 0 & -1 & 0 & -1 & 0 & 0 & 1\\
0 & -2 & 0 & 2 & 0 & 0 & 1 & 0 & 1 & 0 & -1\\
0 & 0 & 2 & 0 & 0 & 0 & 0 & -1 & 0 & -1 & 1\\
0 & 0 & 0 & -2 & 1 & 0 & 0 & 0 & 1 & 0 & -\frac{1}{2}\\
0 & 0 & 0 & 0 & 0 & \frac{1}{2} & -1 & 1 & -1 & \frac{1}{2} & 0\\
\end{bmatrix}.
\end{align} 
This determines the $q$-commutation relations among the quantum
$Y$-variables in the $Y$-seed $(B(C_2), Y)$ as
$Y_i Y_j = q^{2 \widehat{b}_{ij}} Y_j Y_i$.
See \eqref{q-Y}.
If there are no arrows between vertices~$i$ and~$j$ of a quiver
(i.e.,~$b_{ij}=0$), we write $\mu_{i,j}$ for the corresponding pair of
commuting mutations $\mu_i \mu_j \,(= \mu_j \mu_i)$.

\begin{remark}
The rank of the matrix $\widehat{B}$ \eqref{Bhat-K} is $8$.
As a result, the skewfield $\mathcal{Y}(B(C_2))$ generated by
$Y_1,\ldots,Y_{11}$ has a center generated by the following three elements:
\begin{align}
&Y_1 Y_3^{-1} Y_5 Y_6^4 Y_7^2 Y_{11}^2,
\quad
Y_1^2 Y_2 Y_4^{-1} Y_5^{-2} Y_6^2 Y_7^2 Y_8^2 Y_{10}^{-2},
\quad
Y_1 Y_2 Y_3 Y_5^{-1} Y_7 Y_8^2 Y_9.
\end{align}
We note that this rank coincides with the number of
canonical variables $u_i, w_i$ $(i=1,\ldots,4)$, which will be introduced in
\S\ref{ss:Kop}.
A similar feature also holds for the quiver $B(A_2)$ in \eqref{SB-Rb},
as well as for the square quiver studied in \cite{IKT2}.
\end{remark}

\begin{lemma}
The relation 
$\mu_{3,8}\mu_{2,9}\mu_{4,7}\mu_{2,9}\mu_{3,8}(B(C_2)) = B'(C_2)$ 
is satisfied.
Equivalently, as a quiver transformation,
$\qK_{1234} = \mu_{3,8}\mu_{2,9}\mu_{4,7}\mu_{2,9}\mu_{3,8}$.
\end{lemma}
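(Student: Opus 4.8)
The statement is a finite identity of exchange matrices, so my plan is to verify it directly from the mutation rule \eqref{q-mu}, using the quiver picture in \eqref{SB-KC} as a guide and as an independent check. Two preliminary points fix the meaning of the sequence. First, the notation $\mu_{i,j}$ requires $b_{ij}=0$ in the seed to which it is applied; for the first three pairs this is read off from \eqref{Bhat-K}, where $\widehat b_{3,8}=\widehat b_{2,9}=\widehat b_{4,7}=0$, and for the two repeated pairs $\mu_{2,9}$ and $\mu_{3,8}$ one must re-verify this vanishing at the intermediate seeds where they occur. Second, each individual $\mu_k$ is defined only for $k\in I\setminus I_0$. In the initial seed the non-integral entries of $B=\widehat B\,d^{-1}$ all involve the weight-two corner vertices $1,5$ and the central face vertex $11$, giving $I_0=\{1,5,6,10,11\}$, so that $I\setminus I_0=\{2,3,4,7,8,9\}$ is exactly the set of mutated vertices; at every later step one must confirm that the vertex about to be mutated still lies outside the current $I_0$.

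Granting admissibility, the proof reduces to applying the ten individual mutations to $\widehat B$ one at a time via \eqref{q-mu} (equivalently, redrawing the quiver arrow by arrow) and comparing the endpoint with the symmetrized exchange matrix $\widehat B'$ of $B'(C_2)$ read from the right-hand quiver in \eqref{SB-KC}. The palindromic shape $\mu_{3,8}\,\mu_{2,9}\,\mu_{4,7}\,\mu_{2,9}\,\mu_{3,8}$ means the whole sequence is assembled from only three distinct mutation symbols, so the local effect of $\mu_{3,8}$, $\mu_{2,9}$ and $\mu_{4,7}$ on the relevant arrow configurations needs to be worked out only once each and then composed around the central step.

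The main obstacle is not conceptual but one of controlled bookkeeping of the fractional arrows. The half-integral $b$-entries attach to the weight-two vertices $1,5$ and to the face vertex $11$, and the quadratic term $\tfrac12\bigl(|b_{ik}|\,b_{kj}+b_{ik}\,|b_{kj}|\bigr)$ in \eqref{q-mu} can in principle produce new fractional entries whenever a half-integral arrow runs through a mutated vertex. The crux of the verification is therefore to show that these fractional arrows stay confined to the subquiver on $\{1,5,6,10,11\}$ throughout, so that the mutated vertices never enter $I_0$ and every step of the sequence remains legal.

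As an independent confirmation I would invoke the wiring-diagram interpretation. The words $1212$ and $2121$ represent the longest element of $W(C_2)$ and are related by its quartic Coxeter relation; the transformation $\qK_{1234}$ in \eqref{SB-KC} is precisely the associated move on the type-C wiring diagram, and $B'(C_2)$ is the SB quiver it produces. Checking that the matrix obtained from the mutation computation coincides with the SB quiver predicted by the word $2121$ both establishes the identity and pins down that no spurious arrows have been introduced along the way.
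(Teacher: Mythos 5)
Your proposal is correct and takes essentially the same route as the paper: the lemma is established there by direct computation of the mutation sequence, with the intermediate quivers displayed step by step in Figure~\ref{K-SB-mutation}, which is exactly the arrow-by-arrow bookkeeping you describe (and your identification of $I_0=\{1,5,6,10,11\}$ and of the vanishing entries $\widehat b_{3,8}=\widehat b_{2,9}=\widehat b_{4,7}=0$ checks out against \eqref{Bhat-K}). The wiring-diagram cross-check via the Coxeter relation $1212=2121$ is a sensible sanity check but not a separate argument.
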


\begin{figure}[H]
\begin{tikzpicture}
\begin{scope}[>=latex,xshift=0pt]
\draw[->] (7,0.3) -- (8,0.3);
\draw (7.5,0.3) circle(0pt) node[below] {$\mu_{3,8} \mu_{2,9} \mu_{4,7} \mu_{2,9} \mu_{3,8}$};
\path (2,1) coordinate(A1) node[above=0.2em]{\scriptsize $1$};
\maruni{A1} 
\path (3,1) coordinate(A2) node[above=0.2em]{\scriptsize $2$};
\maruni{A2}
\path (4,1) coordinate(A3) node[above=0.2em]{\scriptsize $3$};
\maruni{A3}
\path (5,1) coordinate(A4) node[above=0.2em]{\scriptsize $4$};
\maruni{A4}
\path (6,1) coordinate(A5) node[above=0.2em]{\scriptsize $5$};
\maruni{A5}
\draw (1,0) circle(2pt) coordinate(A6) node[below left]{\scriptsize $6$};
\draw (2,0) circle(2pt) coordinate(A7) node[below]{\scriptsize $7$};
\draw (3,0) circle(2pt) coordinate(A8) node[below left]{\scriptsize $8$};
\draw (4,0) circle(2pt) coordinate(A9) node[below]{\scriptsize $9$};
\draw (5,0) circle(2pt) coordinate(A10) node[below right]{\scriptsize $10$};
\draw (3,-1) circle(2pt) coordinate(A11) node[below]{\scriptsize $11$};
\qsarrow{A1}{A2}
\qsarrow{A3}{A2}
\qsarrow{A3}{A4}
\qsarrow{A5}{A4}
\qarrow{A6}{A7}
\qarrow{A8}{A7}
\qarrow{A8}{A9}
\qarrow{A10}{A9}
\draw[->,shorten >=2pt,shorten <=4pt] (A4) -- (A10) [thick];
\draw[->,shorten >=4pt,shorten <=2pt] (A9) -- (A3) [thick];
\draw[->,shorten >=2pt,shorten <=4pt] (A2) -- (A8) [thick];
\draw[->,shorten >=4pt,shorten <=2pt] (A7) -- (A1) [thick];
\draw[->,dashed,shorten >=4pt,shorten <=2pt] (A10) -- (A5) [thick];
\draw[->,dashed,shorten >=2pt,shorten <=4pt] (A1) -- (A6) [thick];
\draw[->,dashed, shorten >=2pt,shorten <=2pt] (A11) to [out = 165, in = -60] (A6) [thick];
\draw[->,shorten >=2pt,shorten <=2pt] (A7) to [out = -60, in = 150] (A11) [thick];
\qarrow{A11}{A8}
\draw[->,shorten >=2pt,shorten <=2pt] (A9) to [out = -120, in = 30] (A11) [thick];
\draw[->,dashed, shorten >=2pt,shorten <=2pt] (A11) to [out = 15, in = -120] (A10) [thick];
\draw[->] (3.5,-1.5) -- (3.5,-2.2);
\path (3.5,-1.9) node[right]{$\mu_{3,8}$};
\end{scope}
\begin{scope}[>=latex,xshift=0pt,yshift=-110pt]
\path (2,1) coordinate(A1) node[above=0.2em]{\scriptsize $1$};
\maruni{A1} 
\path (3,1) coordinate(A2) node[above=0.2em]{\scriptsize $2$};
\maruni{A2}
\path (4,1) coordinate(A3) node[above=0.2em]{\scriptsize $3$};
\maruni{A3}
\path (5,1) coordinate(A4) node[above=0.2em]{\scriptsize $4$};
\maruni{A4}
\path (6,1) coordinate(A5) node[above=0.2em]{\scriptsize $5$};
\maruni{A5}
\draw (1,0) circle(2pt) coordinate(A6) node[below left]{\scriptsize $6$};
\draw (2,0) circle(2pt) coordinate(A7) node[below]{\scriptsize $7$};
\draw (3,0) circle(2pt) coordinate(A8) node[below left]{\scriptsize $8$};
\draw (4,0) circle(2pt) coordinate(A9) node[below]{\scriptsize $9$};
\draw (5,0) circle(2pt) coordinate(A10) node[below right]{\scriptsize $10$};
\draw (3,-1) circle(2pt) coordinate(A11) node[below]{\scriptsize $11$};
\qsarrow{A1}{A2}
\qsarrow{A2}{A3}
\qsarrow{A4}{A3}
\qsarrow{A5}{A4}
\qarrow{A6}{A7}
\qarrow{A7}{A8}
\qarrow{A9}{A8}
\qarrow{A10}{A9}
\draw[->,shorten >=2pt,shorten <=4pt] (A4) -- (A10) [thick];
\draw[->,shorten >=2pt,shorten <=4pt] (A3) -- (A9) [thick];
\draw[->,shorten >=4pt,shorten <=2pt] (A8) -- (A2) [thick];
\draw[->,shorten >=4pt,shorten <=2pt] (A7) -- (A1) [thick];
\draw[->,dashed,shorten >=4pt,shorten <=2pt] (A10) -- (A5) [thick];
\draw[->,dashed,shorten >=2pt,shorten <=4pt] (A1) -- (A6) [thick];
\draw[->,dashed, shorten >=2pt,shorten <=2pt] (A11) to [out = 165, in = -60] (A6) [thick];
\qarrow{A8}{A11}
\draw[->,dashed, shorten >=2pt,shorten <=2pt] (A11) to [out = 15, in = -120] (A10) [thick];
\draw[->] (3.5,-1.5) -- (3.5,-2.2);
\path (3.5,-1.9) node[right]{$\mu_{2,9}$};
\end{scope}
\begin{scope}[>=latex,xshift=0pt,yshift=-220pt]
\path (2,1) coordinate(A1) node[above=0.2em]{\scriptsize $1$};
\maruni{A1} 
\path (3,1) coordinate(A2) node[above=0.2em]{\scriptsize $2$};
\maruni{A2}
\path (4,1) coordinate(A3) node[above=0.2em]{\scriptsize $3$};
\maruni{A3}
\path (5,1) coordinate(A4) node[above=0.2em]{\scriptsize $4$};
\maruni{A4}
\path (6,1) coordinate(A5) node[above=0.2em]{\scriptsize $5$};
\maruni{A5}
\draw (1,0) circle(2pt) coordinate(A6) node[below left]{\scriptsize $6$};
\draw (2,0) circle(2pt) coordinate(A7) node[below]{\scriptsize $7$};
\draw (3,0) circle(2pt) coordinate(A8) node[below left]{\scriptsize $8$};
\draw (4,0) circle(2pt) coordinate(A9) node[below]{\scriptsize $9$};
\draw (5,0) circle(2pt) coordinate(A10) node[below right]{\scriptsize $10$};
\draw (3,-1) circle(2pt) coordinate(A11) node[below]{\scriptsize $11$};
\qsarrow{A2}{A1}
\qsarrow{A3}{A2}
\qsarrow{A3}{A4}
\qsarrow{A5}{A4}
\qarrow{A6}{A7}
\qarrow{A8}{A7}
\qarrow{A8}{A9}
\qarrow{A9}{A10}
\draw[->,shorten >=2pt,shorten <=4pt] (A4) -- (A9) [thick];
\draw[->,shorten >=4pt,shorten <=2pt] (A9) -- (A3) [thick];
\draw[->,shorten >=2pt,shorten <=4pt] (A2) -- (A8) [thick];
\draw[->,shorten >=4pt,shorten <=2pt] (A7) -- (A2) [thick];
\draw[->,dashed,shorten >=4pt,shorten <=2pt] (A10) -- (A5) [thick];
\draw[->,dashed,shorten >=2pt,shorten <=4pt] (A1) -- (A6) [thick];
\draw[->, shorten >=4pt,shorten <=4pt] (A1) to [out = 45, in = 135] (A3) [thick];
\draw[->,dashed, shorten >=2pt,shorten <=2pt] (A11) to [out = 165, in = -60] (A6) [thick];
\qarrow{A8}{A11}
\draw[->,dashed, shorten >=2pt,shorten <=2pt] (A11) to [out = 15, in = -120] (A10) [thick];
\draw[->, shorten >=2pt,shorten <=2pt] (A10) to [out = -135, in = -45] (A8) [thick];
\draw[->] (7,0.5) -- (8,0.5);
\path (7.5,0.5) node[below]{$\mu_{4,7}$};
\end{scope}
\begin{scope}[>=latex,xshift=225pt,yshift=-220pt]
\path (2,1) coordinate(A1) node[above=0.2em]{\scriptsize $1$};
\maruni{A1} 
\path (3,1) coordinate(A2) node[above=0.2em]{\scriptsize $2$};
\maruni{A2}
\path (4,1) coordinate(A3) node[above=0.2em]{\scriptsize $3$};
\maruni{A3}
\path (5,1) coordinate(A4) node[above=0.2em]{\scriptsize $4$};
\maruni{A4}
\path (6,1) coordinate(A5) node[above=0.2em]{\scriptsize $5$};
\maruni{A5}
\draw (1,0) circle(2pt) coordinate(A6) node[below left]{\scriptsize $6$};
\draw (2,0) circle(2pt) coordinate(A7) node[below]{\scriptsize $7$};
\draw (3,0) circle(2pt) coordinate(A8) node[below left]{\scriptsize $8$};
\draw (4,0) circle(2pt) coordinate(A9) node[below]{\scriptsize $9$};
\draw (5,0) circle(2pt) coordinate(A10) node[below right]{\scriptsize $10$};
\draw (3,-1) circle(2pt) coordinate(A11) node[below]{\scriptsize $11$};
\qsarrow{A2}{A1}
\qsarrow{A3}{A2}
\qsarrow{A4}{A3}
\qsarrow{A4}{A5}
\qarrow{A7}{A6}
\qarrow{A7}{A8}
\qarrow{A8}{A9}
\qarrow{A9}{A10}
\draw[->,shorten >=4pt,shorten <=2pt] (A9) -- (A4) [thick];
\draw[->,shorten >=2pt,shorten <=4pt] (A2) -- (A7) [thick];
\draw[->,shorten >=2pt,shorten <=4pt] (A5) -- (A9) [thick];
\draw[->,shorten >=4pt,shorten <=2pt] (A6) -- (A2) [thick];
\draw[->,dashed,shorten >=4pt,shorten <=2pt] (A10) -- (A5) [thick];
\draw[->,dashed,shorten >=2pt,shorten <=4pt] (A1) -- (A6) [thick];
\draw[->, shorten >=4pt,shorten <=4pt] (A1) to [out = 45, in = 135] (A3) [thick];
\draw[->,dashed, shorten >=2pt,shorten <=2pt] (A11) to [out = 165, in = -60] (A6) [thick];
\qarrow{A8}{A11}
\draw[->,dashed, shorten >=2pt,shorten <=2pt] (A11) to [out = 15, in = -120] (A10) [thick];
\draw[->, shorten >=2pt,shorten <=2pt] (A10) to [out = -135, in = -45] (A8) [thick];
\end{scope}
\begin{scope}[>=latex,xshift=225pt,yshift=-110pt]
\path (2,1) coordinate(A1) node[above=0.2em]{\scriptsize $1$};
\maruni{A1} 
\path (3,1) coordinate(A2) node[above=0.2em]{\scriptsize $2$};
\maruni{A2}
\path (4,1) coordinate(A3) node[above=0.2em]{\scriptsize $3$};
\maruni{A3}
\path (5,1) coordinate(A4) node[above=0.2em]{\scriptsize $4$};
\maruni{A4}
\path (6,1) coordinate(A5) node[above=0.2em]{\scriptsize $5$};
\maruni{A5}
\draw (1,0) circle(2pt) coordinate(A6) node[below left]{\scriptsize $6$};
\draw (2,0) circle(2pt) coordinate(A7) node[below]{\scriptsize $7$};
\draw (3,0) circle(2pt) coordinate(A8) node[below left]{\scriptsize $8$};
\draw (4,0) circle(2pt) coordinate(A9) node[below]{\scriptsize $9$};
\draw (5,0) circle(2pt) coordinate(A10) node[below right]{\scriptsize $10$};
\draw (3,-1) circle(2pt) coordinate(A11) node[below]{\scriptsize $11$};
\qsarrow{A1}{A2}
\qsarrow{A2}{A3}
\qsarrow{A4}{A3}
\qsarrow{A5}{A4}
\qarrow{A6}{A7}
\qarrow{A7}{A8}
\qarrow{A9}{A8}
\qarrow{A10}{A9}
\draw[->,shorten >=2pt,shorten <=4pt] (A4) -- (A9) [thick];
\draw[->,shorten >=4pt,shorten <=2pt] (A7) -- (A2) [thick];
\draw[->,shorten >=2pt,shorten <=4pt] (A3) -- (A7) [thick];
\draw[->,shorten >=4pt,shorten <=2pt] (A9) -- (A5) [thick];
\draw[->,shorten >=4pt,shorten <=2pt] (A8) -- (A4) [thick];
\draw[->,shorten >=2pt,shorten <=4pt] (A2) -- (A6) [thick];
\draw[->,dashed,shorten >=2pt,shorten <=4pt] (A5) -- (A10) [thick];
\draw[->,dashed,shorten >=4pt,shorten <=2pt] (A6) -- (A1) [thick];
\draw[->,dashed, shorten >=2pt,shorten <=2pt] (A11) to [out = 165, in = -60] (A6) [thick];
\qarrow{A8}{A11}
\draw[->,dashed, shorten >=2pt,shorten <=2pt] (A11) to [out = 15, in = -120] (A10) [thick];
\draw[<-] (3.5,-1.5) -- (3.5,-2.2);
\path (3.5,-1.9) node[right]{$\mu_{2,9}$};
\end{scope}
\begin{scope}[>=latex,xshift=225pt]
\path (1,1) coordinate(A1) node[above=0.2em]{\scriptsize $1$};
\maruni{A1} 
\path (2,1) coordinate(A2) node[above=0.2em]{\scriptsize $2$};
\maruni{A2}
\path (3,1) coordinate(A3) node[above=0.2em]{\scriptsize $3$};
\maruni{A3}
\path (4,1) coordinate(A4) node[above=0.2em]{\scriptsize $4$};
\maruni{A4}
\path (5,1) coordinate(A5) node[above=0.2em]{\scriptsize $5$};
\maruni{A5}
\draw (2,0) circle(2pt) coordinate(A6) node[below left]{\scriptsize $6$};
\draw (3,0) circle(2pt) coordinate(A7) node[below]{\scriptsize $7$};
\draw (4,0) circle(2pt) coordinate(A8) node[below left]{\scriptsize $8$};
\draw (5,0) circle(2pt) coordinate(A9) node[below]{\scriptsize $9$};
\draw (6,0) circle(2pt) coordinate(A10) node[below right]{\scriptsize $10$};
\draw (4,-1) circle(2pt) coordinate(A11) node[below]{\scriptsize $11$};
\qsarrow{A1}{A2}
\qsarrow{A3}{A2}
\qsarrow{A3}{A4}
\qsarrow{A5}{A4}
\qarrow{A6}{A7}
\qarrow{A8}{A7}
\qarrow{A8}{A9}
\qarrow{A10}{A9}
\draw[->,shorten >=4pt,shorten <=2pt] (A9) -- (A5) [thick];
\draw[->,shorten >=2pt,shorten <=4pt] (A4) -- (A8) [thick];
\draw[->,shorten >=4pt,shorten <=2pt] (A7) -- (A3) [thick];
\draw[->,shorten >=2pt,shorten <=4pt] (A2) -- (A6) [thick];
\draw[->,dashed,shorten >=2pt,shorten <=4pt] (A5) -- (A10) [thick];
\draw[->,dashed,shorten >=4pt,shorten <=2pt] (A6) -- (A1) [thick];
\draw[->,dashed, shorten >=2pt,shorten <=2pt] (A11) to [out = 165, in = -60] (A6) [thick];
\draw[->,shorten >=2pt,shorten <=2pt] (A7) to [out = -60, in = 150] (A11) [thick];
\qarrow{A11}{A8}
\draw[->,shorten >=2pt,shorten <=2pt] (A9) to [out = -120, in = 30] (A11) [thick];
\draw[->,dashed, shorten >=2pt,shorten <=2pt] (A11) to [out = 15, in = -120] (A10) [thick];
\draw[<-] (3.5,-1.5) -- (3.5,-2.2);
\path (3.5,-1.9) node[right]{$\mu_{3,8}$};
\end{scope}
\end{tikzpicture}
\caption{The mutation sequence for $\qK_{1234}$.}
\label{K-SB-mutation}
\end{figure}
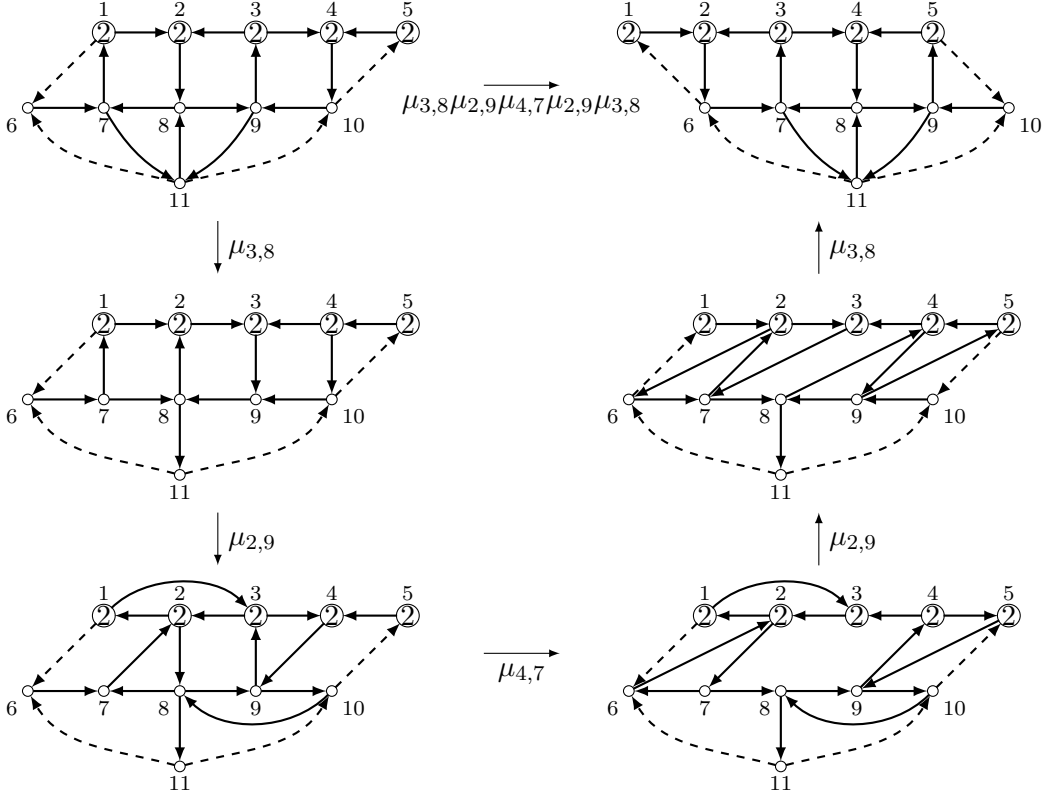

Correspondingly, we define the following sequence of quantum $Y$-seeds:
\begin{align}\label{K-Y-seq}
\begin{split}
(B(C_2),Y) &= (B^{(1)},Y^{(1)}) \stackrel{\mu_8}{\longrightarrow} 
(B^{(2)},Y^{(2)}) \stackrel{\mu_3}{\longrightarrow}
(B^{(3)},Y^{(3)}) \stackrel{\mu_9}{\longrightarrow}
(B^{(4)},Y^{(4)}) \stackrel{\mu_2}{\longrightarrow}
(B^{(5)},Y^{(5)}) 
\\
& \qquad 
\stackrel{\mu_7}{\longrightarrow}
(B^{(6)},Y^{(6)}) \stackrel{\mu_4}{\longrightarrow}
(B^{(7)},Y^{(7)}) \stackrel{\mu_9}{\longrightarrow}
(B^{(8)},Y^{(8)}) \stackrel{\mu_2}{\longrightarrow}
(B^{(9)},Y^{(9)}) 
\\
& \qquad 
\stackrel{\mu_8}{\longrightarrow} 
(B^{(10)},Y^{(10)}) \stackrel{\mu_3}{\longrightarrow}
(B^{(11)},Y^{(11)})= (B'(C_2),Y').
\end{split}
\end{align}
The mutation sequence $\qK_{1234}$ induces the isomorphism of skewfields 
$\widehat{K}_{1234} : \mathcal{Y}(B'(C_2),Y') \to \mathcal{Y}(B(C_2),Y)$.
For a sign sequence $\ve=(\ve_k)_{k=1,\ldots,10} \in \{1,-1\}^{10}$ we set
\begin{align}\label{Khat}
\begin{split}
\widehat{K}_{1234} = & \;
\mathrm{Ad}\bigl(\Psi_q(Y^{(1)\varepsilon_1}_8)^{\varepsilon_1}\bigr) \tau_{8,\varepsilon_1}
\mathrm{Ad}\bigl(\Psi_{q^2}(Y^{(2)\varepsilon_2}_3)^{\varepsilon_2}\bigr) \tau_{3,\varepsilon_2}
\\
& \quad \cdot \mathrm{Ad}\bigl(\Psi_{q}(Y^{(3)\varepsilon_3}_9)^{\varepsilon_3}\bigr) \tau_{9,\varepsilon_3}
\mathrm{Ad}\bigl(\Psi_{q^2}(Y^{(4)\varepsilon_4}_2)^{\varepsilon_4}\bigr) \tau_{2,\varepsilon_4}
\\ 
& \quad \cdot \mathrm{Ad}\bigl(\Psi_q(Y^{(5)\varepsilon_5}_7)^{\varepsilon_5}\bigr) \tau_{7,\varepsilon_5}
\mathrm{Ad}\bigl(\Psi_{q^2}(Y^{(6)\varepsilon_6}_4)^{\varepsilon_6}\bigr) \tau_{4,\varepsilon_6}
\\
& \quad \cdot \mathrm{Ad}\bigl(\Psi_{q}(Y^{(7)\varepsilon_7}_9)^{\varepsilon_7}\bigr) \tau_{9,\varepsilon_7}
\mathrm{Ad}\bigl(\Psi_{q^2}(Y^{(8)\varepsilon_8}_2)^{\varepsilon_8}\bigr) \tau_{2,\varepsilon_8}
\\
& \quad \cdot \mathrm{Ad}\bigl(\Psi_q(Y^{(9)\varepsilon_9}_8)^{\varepsilon_9}\bigr) \tau_{8,\varepsilon_9}
\mathrm{Ad}\bigl(\Psi_{q^2}(Y^{(10)\varepsilon_{10}}_3)^{\varepsilon_{10}}\bigr)\tau_{3,\varepsilon_{10}}.
\end{split}
\end{align}

We define the monomial part $\tau_{1234|\ve}^K$ of $\widehat{K}_{1234}$ by 
\begin{align}\label{Kmono}
\tau_{1234|\ve}^K= \;
\tau_{8, \varepsilon_1}\tau_{3, \varepsilon_2} \tau_{9, \varepsilon_3}\tau_{2, \varepsilon_4}
\tau_{7, \varepsilon_5}\tau_{4, \varepsilon_6}
\tau_{9, \varepsilon_7 }\tau_{2, \varepsilon_8}\tau_{8, \varepsilon_9}
\tau_{3, \varepsilon_{10}}: \mathcal{Y}(B'(C_2),Y') \to \mathcal{Y}(B(C_2),Y).
\end{align}

\subsection{3D reflection equation}

The transformations~$\qR_{ijk}, \overline{\qR}_{ijk}$ and~$\qK_{ijkl}$
satisfy the 3D reflection equation,  
which is realized as an equality between two transformations of the
SB quiver~$B(C_3)$ associated with the longest element
$123123123$ in the Weyl group~$W(C_3)$ into the quiver~$B'(C_3)$
corresponding to $321321321$.
See Figure~\ref{fig:RE}.

\begin{figure}[H]
\scalebox{0.8}{
\begin{tikzpicture}
\begin{scope}[>=latex]
{\color{red}
\fill (1,0) circle(2pt) coordinate(A) node[above right]{$1$};
\fill (2,1) circle(2pt) coordinate(B) node[above right]{$2$};
\fill (3,2) circle(2pt) coordinate(C) node[above=0.2em]{$3$};
\fill (3,0) circle(2pt) coordinate(D) node[above right]{$4$};
\fill (4,1) circle(2pt) coordinate(E) node[above right]{$5$};
\fill (5,2) circle(2pt) coordinate(F) node[above=0.2em]{$6$};
\fill (5,0) circle(2pt) coordinate(G) node[above right]{$7$};
\fill (6,1) circle(2pt) coordinate(H) node[above right]{$8$};
\fill (7,2) circle(2pt) coordinate(I) node[above=0.2em]{$9$};
\draw [-] (0,1.5) to [out = 0, in = 135] (B);
\draw [-] (B) -- (D);
\draw [-] (D) to [out = -45, in = -135] (G); 
\draw [-] (G) -- (I);
\draw [->] (I) to [out = -45, in = 180] (8,1.5);
\draw [-] (0,0.5) to [out = 0, in = 135] (A); 
\draw [-] (A) to [out = -45, in = -135] (D);
\draw [-] (D) -- (F);
\draw [-] (F) -- (H);
\draw [->] (H) to [out = -45, in = 180] (8,0.5);
\draw [-] (0,-0.5) to [out = 0, in = -135] (A); 
\draw [-] (A) -- (C);
\draw [-] (C) -- (G);
\draw [->] (G) to [out = -45, in = 180] (8,-0.5);
}
\path (2,2) node[circle]{2} coordinate(J1) node[above=0.2em]{\scriptsize$1$};
\draw (2,2) circle[radius=0.15];
\path (3,2) node[circle]{2} coordinate(J2) node[above right]{\scriptsize$2$};
\draw (3,2) circle[radius=0.15];
\path (4,2) node[circle]{2} coordinate(J3) node[above=0.2em]{\scriptsize$3$};
\draw (4,2) circle[radius=0.15];
\path (5,2) node[circle]{2} coordinate(J4) node[above right]{\scriptsize$4$};
\draw (5,2) circle[radius=0.15];
\path (6,2) node[circle]{2} coordinate(J5) node[above=0.2em]{\scriptsize$5$};
\draw (6,2) circle[radius=0.15];
\path (7,2) node[circle]{2} coordinate(J6) node[above right]{\scriptsize$6$};
\draw (7,2) circle[radius=0.15];
\path (8,2) node[circle]{2} coordinate(J7) node[above=0.2em]{\scriptsize$7$};
\draw (8,2) circle[radius=0.15];
\qsarrow{J1}{J2}
\qsarrow{J3}{J2}
\qsarrow{J3}{J4}
\qsarrow{J5}{J4}
\qsarrow{J5}{J6}
\qsarrow{J7}{J6}
\draw (1,1) circle(2pt) coordinate(J8) node[left]{\scriptsize$8$};
\draw (2,1) circle(2pt) coordinate(J9) node[below left]{\scriptsize$9$};
\draw (3,1) circle(2pt) coordinate(J10) node[below left]{\scriptsize$10$};
\draw (4,1) circle(2pt) coordinate(J11) node[below left]{\scriptsize$11$};
\draw (5,1) circle(2pt) coordinate(J12) node[below left]{\scriptsize$12$};
\draw (6,1) circle(2pt) coordinate(J13) node[below left]{\scriptsize$13$};
\draw (7,1) circle(2pt) coordinate(J14) node[right]{\scriptsize$14$};
\qarrow{J8}{J9}
\qarrow{J10}{J9}
\qarrow{J10}{J11}
\qarrow{J12}{J11}
\qarrow{J12}{J13}
\qarrow{J14}{J13}
\qarrowsb{J9}{J1}
\qarrowsa{J2}{J10}
\qarrowsb{J11}{J3}
\qarrowsa{J4}{J12}
\qarrowsb{J13}{J5}
\qarrowsa{J6}{J14}
\draw (0,0) circle(2pt) coordinate(J15) node[left]{\scriptsize$15$};
\draw (1,0) circle(2pt) coordinate(J16) node[below]{\scriptsize$16$};
\draw (2,0) circle(2pt) coordinate(J17) node[below left]{\scriptsize$17$};
\draw (3,0) circle(2pt) coordinate(J18) node[below left]{\scriptsize$18$};
\draw (4,0) circle(2pt) coordinate(J19) node[below right]{\scriptsize$19$};
\draw (5,0) circle(2pt) coordinate(J20) node[below]{\scriptsize$20$};
\draw (6,0) circle(2pt) coordinate(J21) node[right]{\scriptsize$21$};
\qarrow{J15}{J16}
\qarrow{J17}{J16}
\qarrow{J17}{J18}
\qarrow{J19}{J18}
\qarrow{J19}{J20}
\qarrow{J21}{J20}
\qarrow{J16}{J8}
\qarrow{J9}{J17}
\qarrow{J18}{J10}
\qarrow{J11}{J19}
\qarrow{J20}{J12}
\qarrow{J13}{J21}
\draw (3,-1) circle(2pt) coordinate(J22) node[below]{\scriptsize$22$};
\qarrow{J18}{J22}
\qarrow{J22}{J19}
\qarrow{J22}{J17}
\draw [->, shorten >=2pt,shorten <=2pt] (J20) to [out = -135, in = 15] (J22) [thick];
\draw [<-, dashed, shorten >=2pt,shorten <=2pt] (J21) to [out = -135, in = 0] (J22) [thick];
\draw [->, shorten >=2pt,shorten <=2pt] (J16) to [out = -45, in = 165] (J22) [thick];
\draw [<-, dashed, shorten >=2pt,shorten <=2pt] (J15) to [out = -45, in = 180] (J22) [thick];
\qdarrow{J21}{J14}
\qdarrow{J8}{J15}
\draw[->,dashed,shorten >=4pt,shorten <=2pt] (J14) -- (J7) [thick];
\draw[->,dashed,shorten >=2pt,shorten <=4pt] (J1) -- (J8) [thick];
\draw (4,-2) node {$B(C_3)$};
\end{scope}
\begin{scope}[>=latex,xshift=270pt]
{\color{red}
\fill (7,0) circle(2pt) coordinate(A) node[above right]{$1$};
\fill (6,1) circle(2pt) coordinate(B) node[above right]{$2$};
\fill (5,2) circle(2pt) coordinate(C) node[above=0.2em]{$3$};
\fill (5,0) circle(2pt) coordinate(D) node[above right]{$4$};
\fill (4,1) circle(2pt) coordinate(E) node[above right]{$5$};
\fill (3,2) circle(2pt) coordinate(F) node[above=0.2em]{$6$};
\fill (3,0) circle(2pt) coordinate(G) node[above right]{$7$};
\fill (2,1) circle(2pt) coordinate(H) node[above right]{$8$};
\fill (1,2) circle(2pt) coordinate(I) node[above=0.2em]{$9$};
\draw [-] (0,1.5) to [out = 0, in = -135] (I);
\draw [-] (I) -- (G);
\draw [-] (G) to [out = -45, in = -135] (D);
\draw [-] (D) -- (B);
\draw [->] (B) to [out = 45, in = 180] (8,1.5);
\draw [-] (0,0.5) to [out = 0, in = -135] (H);
\draw [-] (H) -- (F); 
\draw [-] (F) to [out = 45, in = -135] (F);
\draw [-] (F) -- (D);
\draw [-] (D) to [out = -45, in = -135] (A);
\draw [->] (A) to [out = 45, in = 180] (8,0.5);
\draw [-] (0,-0.5) to [out = 0, in = -135] (G); 
\draw [-] (G) -- (C);
\draw [-] (C) -- (A);
\draw [->] (A) to [out = -45, in = 180] (8,-0.5);
}
\path (0,2) node[circle]{2} coordinate(J1) node[above=0.2em]{\scriptsize$1$};
\draw (0,2) circle[radius=0.15];
\path (1,2) node[circle]{2} coordinate(J2) node[above right]{\scriptsize$2$};
\draw (1,2) circle[radius=0.15];
\path (2,2) node[circle]{2} coordinate(J3) node[above=0.2em]{\scriptsize$3$};
\draw (2,2) circle[radius=0.15];
\path (3,2) node[circle]{2} coordinate(J4) node[above right]{\scriptsize$4$};
\draw (3,2) circle[radius=0.15];
\path (4,2) node[circle]{2} coordinate(J5) node[above=0.2em]{\scriptsize$5$};
\draw (4,2) circle[radius=0.15];
\path (5,2) node[circle]{2} coordinate(J6) node[above right]{\scriptsize$6$};
\draw (5,2) circle[radius=0.15];
\path (6,2) node[circle]{2} coordinate(J7) node[above=0.2em]{\scriptsize$7$};
\draw (6,2) circle[radius=0.15];
\qsarrow{J1}{J2}
\qsarrow{J3}{J2}
\qsarrow{J3}{J4}
\qsarrow{J5}{J4}
\qsarrow{J5}{J6}
\qsarrow{J7}{J6}
\draw (1,1) circle(2pt) coordinate(J8) node[left]{\scriptsize$8$};
\draw (2,1) circle(2pt) coordinate(J9) node[below left]{\scriptsize$18$};
\draw (3,1) circle(2pt) coordinate(J10) node[below left]{\scriptsize$17$};
\draw (4,1) circle(2pt) coordinate(J11) node[below left]{\scriptsize$16$};
\draw (5,1) circle(2pt) coordinate(J12) node[below left]{\scriptsize$10$};
\draw (6,1) circle(2pt) coordinate(J13) node[below left]{\scriptsize$20$};
\draw (7,1) circle(2pt) coordinate(J14) node[right]{\scriptsize$14$};
\qarrow{J8}{J9}
\qarrow{J10}{J9}
\qarrow{J10}{J11}
\qarrow{J12}{J11}
\qarrow{J12}{J13}
\qarrow{J14}{J13}
\qarrowsa{J2}{J8}
\qarrowsb{J9}{J3}
\qarrowsa{J4}{J10}
\qarrowsb{J11}{J5}
\qarrowsa{J6}{J12}
\qarrowsb{J13}{J7}
\draw (2,0) circle(2pt) coordinate(J15) node[left]{\scriptsize$15$};
\draw (3,0) circle(2pt) coordinate(J16) node[below]{\scriptsize$9$};
\draw (4,0) circle(2pt) coordinate(J17) node[below left]{\scriptsize$19$};
\draw (5,0) circle(2pt) coordinate(J18) node[below left]{\scriptsize$13$};
\draw (6,0) circle(2pt) coordinate(J19) node[below right]{\scriptsize$12$};
\draw (7,0) circle(2pt) coordinate(J20) node[below]{\scriptsize$11$};
\draw (8,0) circle(2pt) coordinate(J21) node[right]{\scriptsize$21$};
\qarrow{J15}{J16}
\qarrow{J17}{J16}
\qarrow{J17}{J18}
\qarrow{J19}{J18}
\qarrow{J19}{J20}
\qarrow{J21}{J20}
\qarrow{J9}{J15}
\qarrow{J16}{J10}
\qarrow{J11}{J17}
\qarrow{J18}{J12}
\qarrow{J13}{J19}
\qarrow{J20}{J14}
\draw (5,-1) circle(2pt) coordinate(J22) node[below]{\scriptsize$22$};
\qarrow{J18}{J22}
\qarrow{J22}{J19}
\qarrow{J22}{J17}
\draw [->, shorten >=2pt,shorten <=2pt] (J20) to [out = -135, in = 15] (J22) [thick];
\draw [<-, dashed, shorten >=2pt,shorten <=2pt] (J21) to [out = -135, in = 0] (J22) [thick];
\draw [->, shorten >=2pt,shorten <=2pt] (J16) to [out = -45, in = 165] (J22) [thick];
\draw [<-, dashed, shorten >=2pt,shorten <=2pt] (J15) to [out = -45, in = 180] (J22) [thick];
\qdarrow{J14}{J21}
\qdarrow{J15}{J8}
\draw[<-,dashed,shorten >=4pt,shorten <=2pt] (J14) -- (J7) [thick];
\draw[<-,dashed,shorten >=2pt,shorten <=4pt] (J1) -- (J8) [thick];
\draw (4,-2) node {$B'(C_3)$};
\end{scope}
\end{tikzpicture}
}
\caption{SB quivers connected by the two sides of the 3D reflection equation.}
\label{fig:RE}
\end{figure}
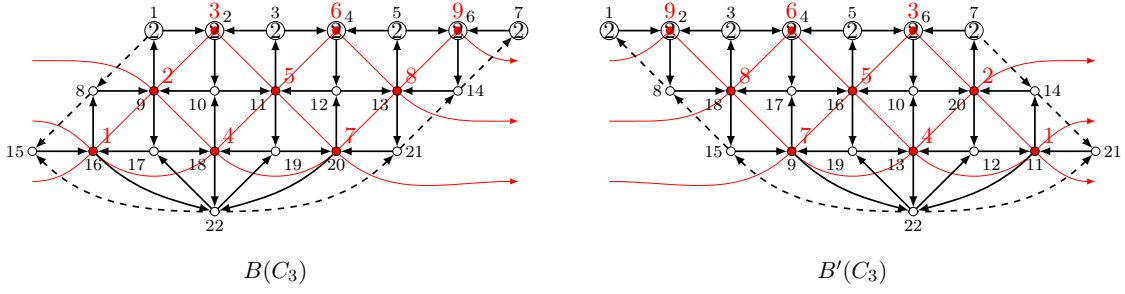

\begin{proposition}\label{pr:resb}
The transformations $\qR_{ijk}$, $\overline{\qR}_{ijk}$ and $\qK_{ijkl}$ satisfy
the 3D reflection equation:
\begin{align}\label{RE-SB}
\begin{split}
&\overline{\qR}_{457} \qK_{4689} \qK_{2379} \qR_{258} \overline{\qR}_{178} \qK_{1356}\qR_{124}(B(C_3),Y)
\\
& \quad = \overline{\qR}_{124} \qK_{1356} \qR_{178} \overline{\qR}_{258} \qK_{2379} \qK_{4689} \qR_{457} (B(C_3),Y).
\end{split}
\end{align}
Accordingly, we have the 3DRE as the isomorphisms of the skewfields from $\mathcal{Y}(B'(C_3))$ to $\mathcal{Y}(B(C_3))$, 
consisiting of $\widehat{R}_{ijk}$, $\widehat{\overline{R}}_{ijk}$ and $\widehat{K}_{ijkl}$ as 
\begin{align}\label{REiso}
\begin{split}
\widehat{R}_{124} \widehat{K}_{1356} \widehat{\overline{R}}_{178}  \widehat{R}_{258} \widehat{K}_{2379} \widehat{K}_{4689}  \widehat{\overline{R}}_{457} 
= \widehat{R}_{457} \widehat{K}_{4689} \widehat{K}_{2379} \widehat{\overline{R}}_{258} \widehat{R}_{178} \widehat{K}_{1356} \widehat{\overline{R}}_{124}.
\end{split}
\end{align}
\end{proposition}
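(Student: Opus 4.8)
The plan is to recognize both sides of \eqref{RE-SB} as long but completely concrete mutation sequences for the quiver $B(C_3)$, and to establish their equality through the periodicity criterion of Theorem~\ref{thm:period}. Each factor $\qR_{ijk}$, $\overline{\qR}_{ijk}$, $\qK_{ijkl}$ is, by definition, a composition of quantum cluster mutations followed by a relabeling permutation, so each side of \eqref{RE-SB} is itself a single mutation sequence carrying $(B(C_3),Y)$ to $(B'(C_3),Y')$. Writing $\nu$ for the composite mutation sequence that first runs the left-hand side forward and then the right-hand side backward (each elementary factor being invertible), $\nu$ is a closed loop based at $B(C_3)$. Proving that $\nu$ returns $(B(C_3),Y)$ to itself, i.e.\ that $\nu$ is a $\sigma$-period of the quantum $Y$-seed for the appropriate permutation $\sigma$, is equivalent to the asserted equality \eqref{RE-SB}, and hence to \eqref{REiso}.

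By Theorem~\ref{thm:period}, periodicity of the quantum $Y$-seed under $\nu$ is equivalent to periodicity of the \emph{tropical} $y$-seed under the same sequence. The decisive simplification is therefore to check $\nu(B(C_3),u) = \sigma(B(C_3),u)$ inside the tropical semifield $\mathbb{P}(u)$. There each tropical $y$-variable is a single Laurent monomial $u^\alpha$, and the mutation rule \eqref{trop-mu} involves only the tropical sum $\oplus$ (componentwise minimum of exponent vectors) together with monomial multiplication. Thus the verification reduces to propagating the exponent vectors $\alpha$ through the entire sequence on both sides and confirming that the two resulting tropical seeds coincide after the relabeling $\sigma$. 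Sign coherence guarantees that every intermediate tropical $y$-variable has a definite tropical sign, which is exactly the data needed later to split each cluster transformation into its dilogarithm and monomial parts via \eqref{mu-pm}.

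The combinatorial backbone that organizes this computation is the wiring-diagram picture of Figure~\ref{fig:RE}: the two sides correspond to the two ways of transforming the reduced word $123123123$ of the longest element $w_0\in W(C_3)$ into the reversed word $321321321$, where the cubic Coxeter relations at wire crossings are realized by $\qR_{ijk}$ and $\overline{\qR}_{ijk}$, and the quartic $C_2$ Coxeter relation at the reflection points is realized by $\qK_{ijkl}$. That both move-sequences are legitimate, and that each elementary move acts on $B(C_3)$ exactly as the corresponding mutation sequence prescribed in \S\ref{sec:cm} (so that the SB-quiver assignment is functorial with respect to these Coxeter moves), is precisely what makes the exchange-matrix part of \eqref{RE-SB}, namely the common endpoint $B'(C_3)$, a finite check. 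One may reuse the already-established tetrahedron relation \eqref{TE} for the $\qR$-blocks to shorten the bookkeeping on the overlapping $A_3$-subdiagrams.

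I expect the main obstacle to be the sheer size and the weighted structure of the computation rather than any conceptual difficulty. The sequence $\nu$ comprises of order one hundred mutations acting on the $22$-vertex quiver $B(C_3)$, several of whose nodes carry weight two; the exchange matrix is therefore only skew-symmetrizable, with half-integer entries and the attendant $q$ versus $q^2$ distinction that must be tracked consistently. Keeping the tropical exponent-vector computation tractable will require exploiting locality, since each $\qR$ or $\qK$ only disturbs a bounded neighborhood of the quiver, so that the two long compositions can be compared block by block rather than all at once. Once the tropical periodicity is confirmed, Theorem~\ref{thm:period} upgrades it to the quantum statement \eqref{REiso} with no further analytic input, and no direct manipulation of the quantum dilogarithms is needed at this stage.
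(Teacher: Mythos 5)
Your proposal is correct and follows essentially the same route as the paper: the paper verifies that the quiver mutations on both sides coincide (via the explicit diagrams of Figures~\ref{fig:RE1} and~\ref{fig:RE2}), checks the equality at the level of tropical $y$-seeds, and then invokes Theorem~\ref{thm:period} to lift the identity to the quantum $Y$-seeds, exactly as you describe. Your reformulation via the closed composite loop $\nu$ and the remark about reusing the tetrahedron relation \eqref{TE} are only cosmetic variations on the same argument.
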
  

\begin{proof}
From Figures~\ref{fig:RE1} and~\ref{fig:RE2}, one observes that the
quiver mutations on both sides of~\eqref{RE-SB} coincide.  
For the quantum $Y$-seeds, the equality can be verified at the level of
tropical $y$-variables, and the claim then follows from
Theorem~\ref{thm:period}.
Correspondingly, \eqref{REiso} is obtained as an identity of isomorphisms of skewfields from 
$\mathcal{Y}(B'(C_3))$  to $\mathcal{Y}(B(C_3))$. 
\end{proof}

\begin{figure}[H]
\hspace*{-1.5cm}
\includegraphics[clip,scale=0.9]{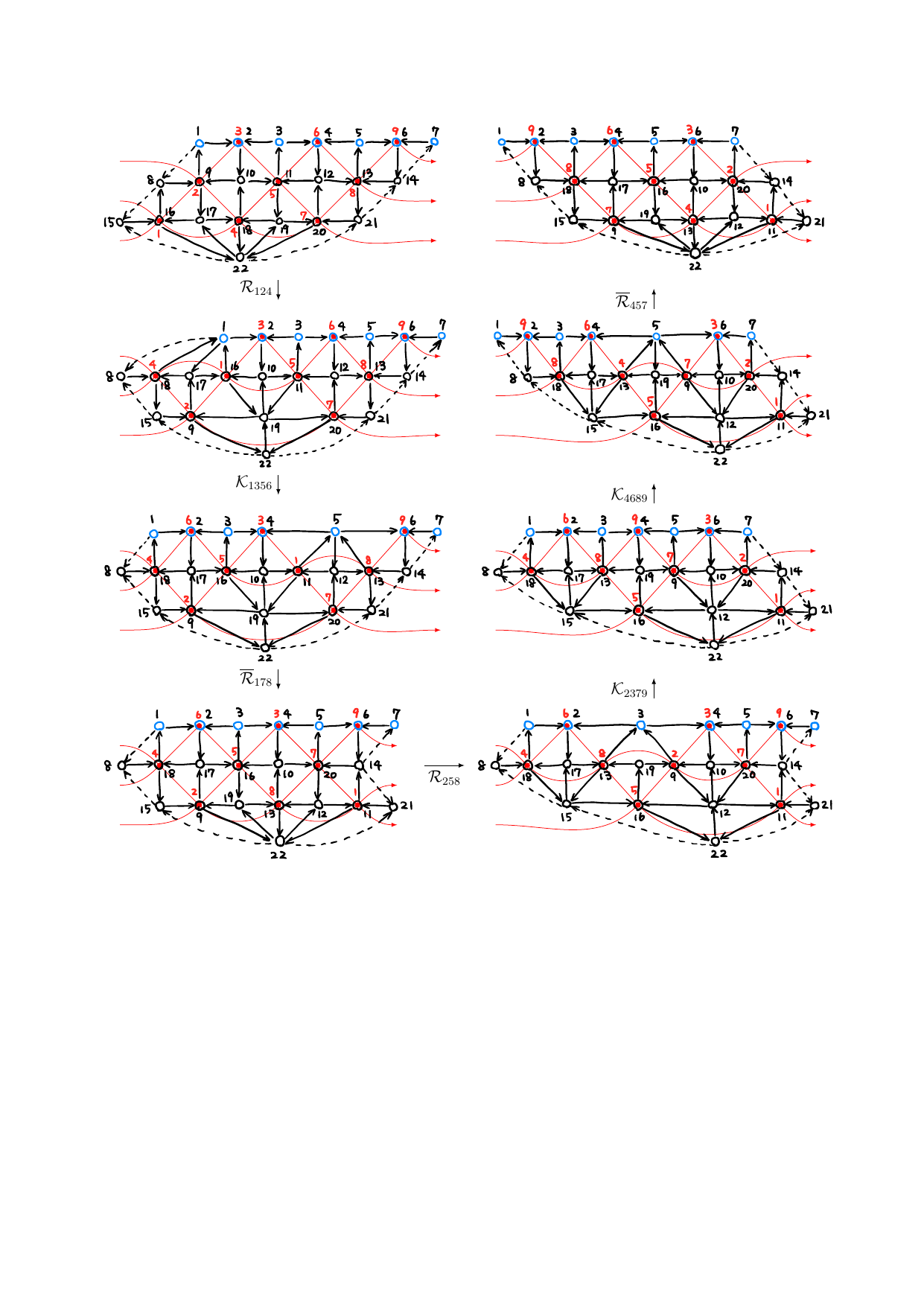}
\vspace*{-9cm}
\caption{The LHS of the 3D reflection equation \eqref{RE-SB}. Blue vertices have weight two.}
\label{fig:RE1}
\end{figure}

\begin{figure}[H]
\hspace*{-1.5cm}
\includegraphics[clip,scale=0.90]{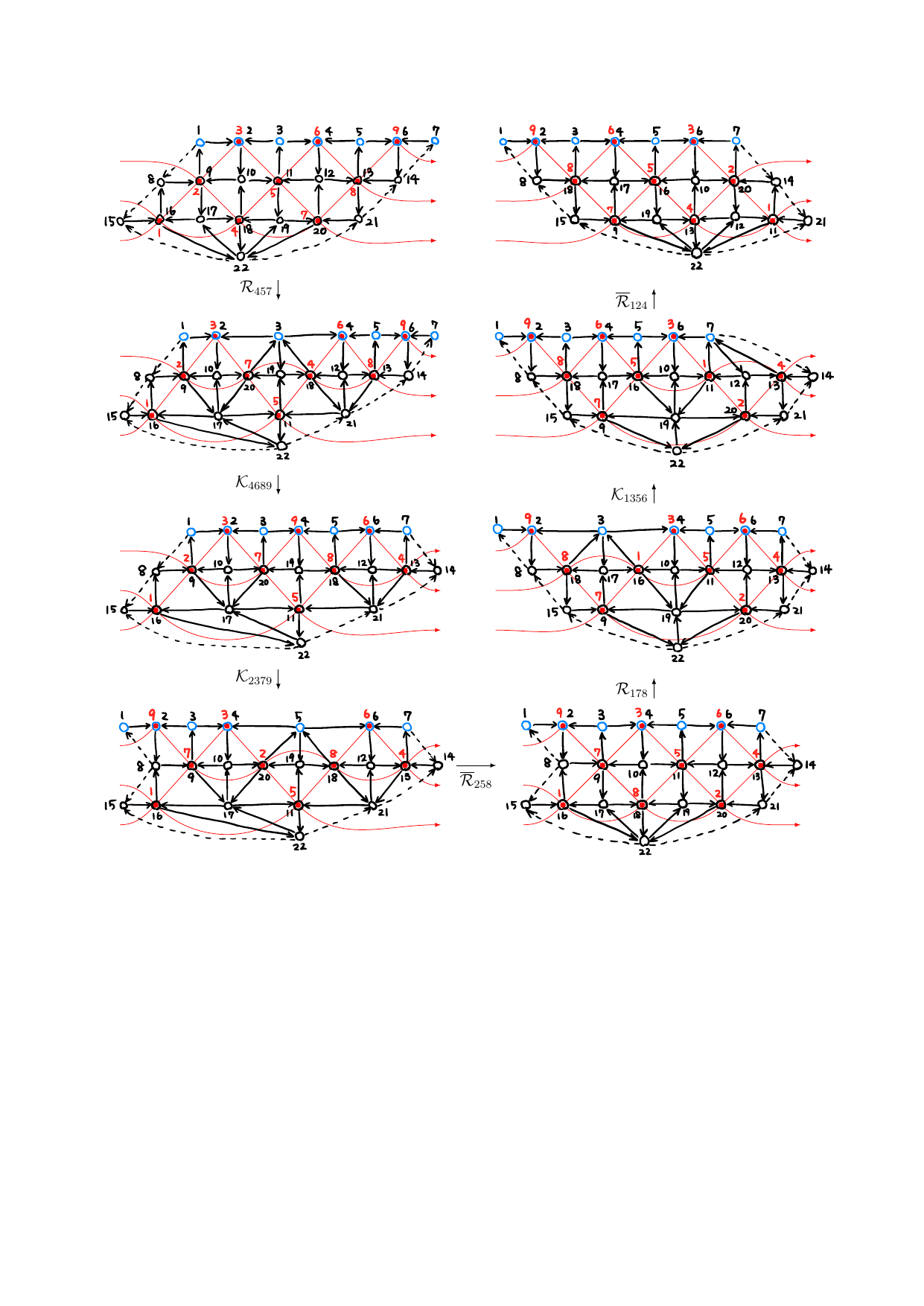}
\vspace*{-9cm}
\caption{The RHS of the 3D reflection equation \eqref{RE-SB}. Blue vertices have weight two.}
\label{fig:RE2}
\end{figure}

\section{$R$-operator from $q$-Weyl algebra}
\label{sec:Rop}

We set $\gamma = (1,1,1)$ and consider canonical variables
$(u_i, w_i)$ satisfying
\begin{align}
\label{uwA}
[u_i, w_j] = \hbar\,\delta_{ij}, 
\qquad 
[u_i, u_j] = [w_i, w_j] = 0,
\end{align}
for $i,j = 1,2,3$.
Let $\mathcal{W}(A_3) := \mathcal{W}_\gamma$ denote the $q$-Weyl algebra
defined in~\S\ref{sec:qW}, and let
$\mathrm{Frac}\,\mathcal{W}(A_2)$ be its noncommutative field of fractions.
Let $\mathscr{P}_i := (a_i,b_i,c_i,d_i,e_i) \in \C^5$ be a tuple of parameters
satisfying
\begin{align}\label{econ-a}
a_i + b_i + c_i + d_i + e_i = 0 \qquad (i=1,2,3).
\end{align}

For the left quiver in~\eqref{SB-R}, 
we define a ring homomorphism of skewfields
$\phi \colon \mathcal{Y}(B(A_2)) \to \mathrm{Frac}\,\mathcal{W}(A_2)$
according to the graphical rule shown in Figure~\ref{fig:para}.
\begin{align}
\phi:
\begin{cases}
Y_0 \mapsto \e^{a_2+w_2},&Y_5 \mapsto \e^{e_1+2u_1}, 
\\
Y_1 \mapsto \e^{a_1+d_2+w_1-u_2-w_2},&Y_6  \mapsto \e^{b_1+c_2+d_3-u_1-w_1+w_2-u_3-w_3},
\\
Y_2 \mapsto \e^{e_2+2u_2},&Y_7 \mapsto \e^{e_3+2u_3},
\\
Y_3 \mapsto \e^{b_2+a_3-u_2-w_2+w_3}, &Y_8 \mapsto \e^{b_3-u_3-w_3},
\\
Y_4 \mapsto \e^{d_1-u_1-w_1}, &Y_9 \mapsto \e^{c_1+c_3+w_1+w_3}. 
\end{cases}
\label{Yw}
\end{align}
Similarly, for the right quiver in \eqref{SB-R}, a ring homomorphism 
$\phi' \colon \mathcal{Y}(B'(A_3)) \rightarrow \mathrm{Frac}\,\mathcal{W}(A_3)$ is defined by 
\begin{align}
\phi':
\begin{cases}
Y'_{0}  \mapsto \e^{a_1+a_3+w_1+w_3},
&Y'_5  \mapsto \e^{e_1+2u_1}, 
\\
Y'_{1}  \mapsto \e^{d_3-u_3-w_3},
&Y'_{6} \mapsto \e^{d_1+a_2+b_3-u_1-w_1+w_2-u_3-w_3},
\\
Y'_{2}  \mapsto \e^{e_2+2u_2},
&Y'_{7} \mapsto \e^{e_3+2u_3},
\\
Y'_{3}  \mapsto \e^{b_1-u_1-w_1},
&Y'_{8}  \mapsto \e^{c_1+b_2+w_1-u_2-w_2},
\\
Y'_{4}  \mapsto \e^{d_2+c_3-u_2-w_2+w_3},
&Y'_{9}  \mapsto \e^{c_2+w_2}.
\end{cases}
\label{Ypw}
\end{align}

\begin{figure}[H]
\begin{tikzpicture}
\begin{scope}[>=latex,xshift=0pt]
{\color{red}
\draw [-] (0,2) to [out = 0, in = 135] (1.8,1.2);
\draw [-] (0,0) to [out = 0, in = -135] (1.8,0.8);
\draw [-] (2.2,0.8) to [out = -45, in = 180] (4,0);
\draw [-] (2.2,1.2) to [out = 45, in = 180] (4,2);
}
%
\draw (0.7,1) circle(2pt) coordinate(A) node[left]{\color{red} \scriptsize{$-u_i-w_i+d_i$}};
\draw (3.3,1) circle(2pt) coordinate(B) node[right]{\color{red} \scriptsize{$-u_i-w_i+b_i$}};
\draw (2,2.3) circle(2pt) coordinate(C) node[above]{\color{red} \scriptsize{$w_i+a_i$}};
\draw (2,-0.3) circle(2pt) coordinate(D) node[below]{\color{red} \scriptsize{$w_i+c_i$}};
\draw (2,1) node {\color{red} \scriptsize{$2u_i+e_i$}};
\draw[->,shorten >=2pt] (2,1.3) -- (C) [thick];
\draw[->,shorten >=2pt] (2,0.7) -- (D) [thick];
\draw[->,shorten <=2pt ] (A) -- (1.4,1) [thick];
\draw[->,shorten <=2pt] (B) -- (2.6,1) [thick];
\qdarrow{C}{A}
\qdarrow{C}{B}
\qdarrow{D}{A}
\qdarrow{D}{B}
\end{scope}
\end{tikzpicture}
\caption{Graphical rule for parametrizing the $Y$-variables 
in terms of the $q$-Weyl algebra generators 
near the crossing $i$ (center) of the wiring diagram (in red).}
\label{fig:para}
\end{figure}
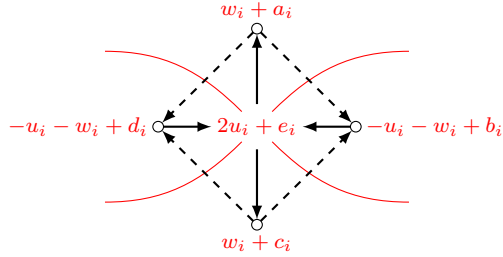

Define the transformations $\eta_{123}^{(-)}$ and $\eta_{123}^{(+)}$ of canonical variables by 
\begin{align}
&\eta_{123}^{(-)}:\; 
\begin{cases}
u_1 \mapsto u_2 +\lambda_0, 
\qquad \qquad \qquad \;\;
w_1  \mapsto w_2-w_3+\lambda_2,
\\
u_2  \mapsto u_1-\lambda_0, 
\qquad \qquad \qquad \;\;
w_2  \mapsto w_1+w_3+\lambda_1,
\\
u_3  \mapsto -u_1+u_2+u_3+\lambda_0, 
\quad \;
w_3  \mapsto w_3+\lambda_3,
\end{cases}
\label{uw-1}
\\[1mm]
&\eta_{123}^{(+)}:\; 
\begin{cases}
u_1 \mapsto u_1+u_2-u_3+\kappa_0, 
\quad \;\;
w_1  \mapsto w_1+ \kappa_1,
\\
u_2  \mapsto u_3-\kappa_0, 
\qquad \qquad \qquad 
w_2  \mapsto w_1+w_3+\kappa_3,
\\
u_3  \mapsto u_2+\kappa_0, 
\qquad \qquad \qquad 
w_3  \mapsto -w_1+w_2+\kappa_2,
\end{cases}
\label{uw-2}
\end{align}
where $\lambda_r=\lambda_r(\mathscr{P}_1,\mathscr{P}_2,\mathscr{P}_3)$
and $\kappa_r=\kappa_r(\mathscr{P}_1,\mathscr{P}_2,\mathscr{P}_3)$
for $r=0$, $1$, $2$, $3$ are defined, under the condition \eqref{econ-a},
by
\begin{align}\label{lad}
\lambda_0 = \frac{e_2-e_1}{2},\quad 
\lambda_1 = c_1-c_2+c_3, \quad
\lambda_2 = -a_3-b_2+b_1-\lambda_0,\quad
\lambda_3 = a_2-a_1+b_2-b_1 + \lambda_0.
\end{align}
and 
\begin{align}\label{kad}
\kappa_0 = \frac{e_2-e_3}{2},\quad 
\kappa_1 =  b_3+c_3-b_2-c_2-\kappa_0, \quad
\kappa_2 =d_3-d_2-a_1 - \kappa_0,\quad
\kappa_3 = c_1-c_2+c_3.
\end{align}
These transformations induce isomorphisms of the algebra~$\mathcal{W}(A_2)$.
They act naturally on $\mathrm{Frac}\,\mathcal{W}(A_2)$,
and we denote this induced action by~$\eta_{123}^{(\pm)}$ as well.

\begin{proposition}(Cf. \cite[\S 4.2]{IKSTY})
\label{prop-Rcom}
(i) The following diagrams are commutative.
\begin{align}\label{Rcom1}
\xymatrix{
\mathcal{Y}(B'(A_2)) \ar[r]^{\phi'} \ar[d]_{\tau_{123| --++}} & \mathrm{Frac}\,\mathcal{W}(A_2) \ar[d]_{\eta_{123}^{(-)}}
\\
\mathcal{Y}(B(A_2)) \ar[r]^{\phi}& \mathrm{Frac}\,\mathcal{W}(A_2)
}
\qquad 
\xymatrix{
\mathcal{Y}(B'(A_2)) \ar[r]^{\phi'} \ar[d]_{\tau_{123| -+-+}} & \mathrm{Frac}\,\mathcal{W}(A_2) \ar[d]_{\eta_{123}^{(+)}}
\\
\mathcal{Y}(B(A_2)) \ar[r]^{\phi}& \mathrm{Frac}\,\mathcal{W}(A_2)
}
\end{align}
(ii) For $\delta = +$ and $-$, the isomorphism $\eta^{(\delta)}_{123}$ 
is realized as $\eta^{(\delta)}_{123} = \mathrm{Ad} P_{123}^{(\delta)}$ by
\begin{align}
\label{P--++}
P_{123}^{(-)}
&=
\e^{\tfrac{1}{\hbar}(u_1-u_2)w_3}
\e^{\tfrac{\lambda_0}{\hbar}(-w_3-w_2+w_1)}
\e^{\tfrac{1}{\hbar}(\lambda_1u_1+\lambda_2u_2+\lambda_3u_3)}\rho_{12},
\\
\label{P-+-+}
P_{123}^{(+)}&= 
\e^{\tfrac{1}{\hbar}(u_3-u_2)w_1}
\e^{\tfrac{\kappa_0}{\hbar}(w_3-w_2-w_1)}
\e^{\tfrac{1}{\hbar}(\kappa_1u_1+\kappa_2u_2+\kappa_3u_3)}\rho_{23}.
\end{align}
Here, $\rho_{ij} \in \mathfrak{S}_3$ acts on~$\mathcal{W}(A_2)$ by adjoint
action, permuting the indices of the canonical variables; for example,
$\rho_{ij} u_i = u_j \rho_{ij}$, and similarly for the other variables.
\end{proposition}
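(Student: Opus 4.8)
The plan is to prove both parts by a direct computation on generators, since every map involved is given explicitly: the monomial automorphisms $\tau_{123|--++}$ and $\tau_{123|-+-+}$ by \eqref{tau--++} and \eqref{tau-+-+}, the homomorphisms $\phi,\phi'$ by \eqref{Yw} and \eqref{Ypw}, the canonical transformations $\eta^{(\pm)}_{123}$ by \eqref{uw-1} and \eqref{uw-2}, and the operators $P^{(\pm)}_{123}$ by \eqref{P--++} and \eqref{P-+-+}. For part (i) it suffices to verify, for each of the ten generators $Y'_0,\dots,Y'_9$ of $\mathcal{Y}(B'(A_2))$, the identity $\eta^{(\delta)}_{123}\bigl(\phi'(Y'_i)\bigr)=\phi\bigl(\tau_{123|\ve}(Y'_i)\bigr)$, where $(\ve,\delta)$ is $(--++,-)$ for the left square and $(-+-+,+)$ for the right. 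Both $\phi$ and $\phi'$ send each $Y$-variable to a single $q$-Weyl monomial $\e^{(\text{linear in }u,w)}$, so each side reduces to a monomial in the $\e^{u_j},\e^{w_j}$ once the products produced by $\tau$ are reordered by the Weyl relations \eqref{uwA}.

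Concretely, the comparison for each $i$ splits into two independent matchings. First, the exponent vector in $(u_1,u_2,u_3,w_1,w_2,w_3)$ must coincide on the two sides; this is the purely combinatorial content dictated by the quiver mutation and is insensitive to the parameters. Second, the parameter-dependent constant term and the central $q$-power prefactor must coincide; the constants reduce to the linear relations defining $\lambda_0,\dots,\lambda_3$ in \eqref{lad} and $\kappa_0,\dots,\kappa_3$ in \eqref{kad}, which are precisely what makes these ten identities hold under the constraint \eqref{econ-a}. As an illustration, for $Y'_2$ one has $\tau_{123|--++}(Y'_2)=Y_5$, hence $\phi(Y_5)=\e^{e_1+2u_1}$, whereas $\phi'(Y'_2)=\e^{e_2+2u_2}$ and $\eta^{(-)}_{123}$ sends $u_2\mapsto u_1-\lambda_0$, so $\eta^{(-)}_{123}(\phi'(Y'_2))=\e^{e_2-2\lambda_0+2u_1}=\e^{e_1+2u_1}$ because $\lambda_0=(e_2-e_1)/2$. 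The remaining nine generators are handled in the same way, the $q$-powers generated by reordering being tracked together with the explicit $q$-factors already displayed in \eqref{tau--++} and \eqref{tau-+-+}.

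For part (ii) I would first record the adjoint-action rules that follow from \eqref{uwA}. Since every exponent appearing in $P^{(\delta)}_{123}$ is at most quadratic with central commutators, each factor acts as an affine canonical transformation: $\mathrm{Ad}(\e^{(c/\hbar)u_i})$ shifts $w_i\mapsto w_i+c$, $\mathrm{Ad}(\e^{(c/\hbar)w_i})$ shifts $u_i\mapsto u_i-c$, the quadratic factor $\e^{(1/\hbar)(u_1-u_2)w_3}$ of \eqref{P--++} induces the shear $u_3\mapsto u_3-u_1+u_2$, $w_1\mapsto w_1+w_3$, $w_2\mapsto w_2-w_3$ (all other generators fixed), and $\rho_{12}$ exchanges the indices $1,2$; the analogous rules hold for \eqref{P-+-+} with $\rho_{23}$. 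Composing the corresponding adjoint actions in the order in which the factors occur in $P^{(\delta)}_{123}$, with the rightmost factor (the permutation) applied first, and reading off the resulting linear map and translation vector, one matches the outcome against \eqref{uw-1}, respectively \eqref{uw-2}, term by term; the translations reproduce exactly the $\lambda_r$ and $\kappa_r$.

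The computations are routine but bookkeeping-heavy, and the hard part will be the consistent management of the central scalars rather than any conceptual difficulty. In part (i) these are the $q$-powers: since $\phi,\phi'$ are ring homomorphisms their images automatically satisfy the $q$-commutation relations, but fusing several monomials into one via $\e^A\e^B=\e^{A+B+\frac12[A,B]}$ produces half-integer powers of $q=\e^{\hbar}$ that must cancel against the prefactors in \eqref{tau--++} and \eqref{tau-+-+}; an error here would be invisible at the level of exponent vectors. In part (ii) the corresponding care lies in fixing the signs and the order of composition of the affine pieces so that each shift lands on the correct variable with the correct sign, which is exactly where the precise forms of $\lambda_2,\lambda_3$ (resp.\ $\kappa_1,\kappa_2$) and the choice of $\rho_{12}$ versus $\rho_{23}$ are used.
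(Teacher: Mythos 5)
Your proposal is correct and follows essentially the same route as the paper: part (i) is verified generator by generator via $\eta^{(\delta)}_{123}\circ\phi'(Y_i')=\phi\circ\tau_{123|\ve}(Y_i')$ (the paper works out the case $\delta=+$, $i=4$ as an illustration, matching your $Y_2'$ example), and part (ii) is a direct computation of $\mathrm{Ad}\,P^{(\delta)}_{123}$ by the BCH formula, which is exactly your decomposition into affine canonical transformations.
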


\begin{proof}
(i) By direct computation we check 
$\eta^{(\delta)}_{123} \circ \phi' (Y'_i) = \phi \circ \tau_{123|\ve} (Y'_i)$ for 
$\ve=(-,\delta,-\delta,+)$ with $\delta= \pm$ and $i=0,1,\ldots,9$.
We demonstrate the case of $\delta=+$ and $i=4$, using 
notations $Y_i = \e^{y_i}$ and $Y_i' = \e^{y_i'}$. Using \eqref{tau-+-+}, \eqref{Yw}, \eqref{Ypw} and \eqref{uw-2} we have 
\begin{align*}
y_4' &\stackrel{\phi'}{\longmapsto} d_2+c_3-u_2-w_2+w_3 
\stackrel{\eta^{(+)}_{123}}{\longmapsto} d_2+c_3 -(u_3-\kappa_0) - (w_1+w_3+\kappa_3) +(-w_1+w_2+\kappa_2)
\\
& \qquad = \underline{d_2+c_3+\kappa_0+\kappa_2-\kappa_3} -2w_1+w_2 -u_3-w_3
\\
y_4' &\stackrel{\tau_{123|\ve}}{\longmapsto} y_4+y_5+y_6 
\stackrel{\phi}{\mapsto}(d_1-u_1-w_1) + (e_1+2u_1) + (b_1+c_2+d_3-u_1-w_1+w_2-u_3-w_3)
\\ & \qquad = \underline{d_1+b_1+e_1+c_2+d_3} -2w_1+w_2 -u_3-w_3,  
\end{align*}
where the underlined parts are the same due to \eqref{econ-a} and \eqref{kad}.
\\
(ii) It is proved by computing $\mathrm{Ad} P^{(\delta)}_{123} (Y_i)$ applying the BCH formula.
\end{proof}

Consequently,
for the two sign sequences $\ve = (-, \delta, -\delta, +)$ with $\delta \in \{+,-\}$, the operators $R_{123}^{(\delta)}$ realizing 
$\widehat{R}_{123}$ through 
$\phi \circ \widehat{R}_{123} = \mathrm{Ad}\,R_{123}^{(\delta)} \circ \phi'$ 
are obtained as follows.
\begin{align}
\begin{split}
\label{R--++}
R_{123}^{(-)}
&= 
\Psi_q(\e^{-d_3-c_2-b_1+u_1+u_3+w_1-w_2+w_3})^{-1}
\Psi_q(\e^{-d_3-c_2-b_1-e_1+u_3-u_1+w_1-w_2+w_3})^{-1}
\\
&\quad \cdot
\Psi_q(\e^{d_3+e_3+c_2+b_1+u_3-u_1-w_1+w_2-w_3})
\Psi_q(\e^{d_3+e_3+c_2+e_2+b_1+u_3+2u_2-u_1-w_1+w_2-w_3})
\\
&\quad \cdot P_{123}^{(-)},
\end{split}
\\
\begin{split}
\label{R-+-+}
R_{123}^{(+)}
&= \Psi_q(\e^{-d_3-c_2-b_1+u_1+u_3+w_1-w_2+w_3})^{-1}
\Psi_q(\e^{d_3+c_2+b_1+e_1-u_3+u_1-w_1+w_2-w_3})
\\
& \quad \cdot 
\Psi_q(\e^{-d_3-e_3-c_2-b_1-u_3+u_1+w_1-w_2+w_3})^{-1}
\Psi_q(\e^{d_3+c_2+e_2+b_1+e_1-u_3+2u_2+u_1-w_1+w_2-w_3})
\\
& \quad \cdot P_{123}^{(+)}.
\end{split}
\end{align}

\begin{remark}
In \eqref{R-+-+}, only the last dilogarithm contains $\e^{2u_2}$.
This asymmetry can be remedied by placing $P^{(+)}_{123}$ in the middle,
using $\mathrm{Ad}(P^{(+)}_{123})=\eta^{(+)}_{123}$ together with
\eqref{Rcom1}.  This leads to the expression
\begin{equation}\label{rsigma}
\begin{split}
R^{(+)}_{123}
&=
\Psi_q(\e^{-d_3-c_2-b_1+u_1+u_3+w_1-w_2+w_3})^{-1}
\Psi_q(\e^{d_3+c_2+b_1+e_1-u_3+u_1-w_1+w_2-w_3})P^{(+)}_{123}
\\
&\quad \cdot
\Psi_q(\e^{d_1+e_1+a_2+b_3+u_1-u_3-w_1+w_2-w_3})^{-1}
\Psi_q(\e^{-d_1-a_2-b_3+u_1+u_3-w_1+w_2-w_3}).
\end{split}
\end{equation}
This agrees with \cite[eq.\ (B.2)]{IKSTY} after interchanging the indices
$1$ and $3$.  An analogous rewriting applies to the other $R$-operators
presented in this paper.
\end{remark}

In a similar manner, associated with the transformation~$\overline{R}_{123}$ in~\eqref{SB-Rb}, 
we define ring homomorphisms 
$\overline{\phi}: \mathcal{Y}(B'(A_2)) \to \mathrm{Frac}\,\mathcal{W}(A_2)$ 
and 
$\overline{\phi}': \mathcal{Y}(B(A_2)) \to \mathrm{Frac}\,\mathcal{W}(A_2)$ 
by interchanging the subscripts~$1$ and~$3$ of all canonical variables~$u_i, w_i$ 
and parameters~$\mathscr{P}_i$ in~$\phi'$ and~$\phi$, respectively.
Namely, $\overline{\phi}$ is given by
\begin{align}
\overline{\phi}:
\begin{cases}
\overline{Y}_{0}  \mapsto \e^{a_1+a_3+w_1+w_3},
&\overline{Y}_5  \mapsto \e^{e_3+2u_3}, 
\\
\overline{Y}_{1}  \mapsto \e^{d_1-u_1-w_1},
&\overline{Y}_{6} \mapsto \e^{d_3+a_2+b_1-u_1-w_1+w_2-u_3-w_3},
\\
\overline{Y}_{2}  \mapsto \e^{e_2+2u_2},
&\overline{Y}_{7} \mapsto \e^{e_1+2u_1},
\\
\overline{Y}_{3}  \mapsto \e^{b_3-u_3-w_3},
&\overline{Y}_{8}  \mapsto \e^{c_3+b_2+w_3-u_2-w_2},
\\
\overline{Y}_{4}  \mapsto \e^{d_2+c_1-u_2-w_2+w_1},
&\overline{Y}_{9}  \mapsto \e^{c_2+w_2},
\end{cases}
\label{Ypwb}
\end{align}
and $\overline{\phi}'$ is given by
\begin{align}
\overline{\phi}':
\begin{cases}
\overline{Y}'_0 \mapsto \e^{a_2+w_2},&\overline{Y}'_5 \mapsto \e^{e_3+2u_3}, 
\\
\overline{Y}'_1 \mapsto \e^{a_3+d_2+w_3-u_2-w_2},&\overline{Y}'_6  \mapsto \e^{b_3+c_2+d_1-u_1-w_1+w_2-u_3-w_3},
\\
\overline{Y}'_2 \mapsto \e^{e_2+2u_2},&\overline{Y}'_7 \mapsto \e^{e_1+2u_1},
\\
\overline{Y}'_3 \mapsto \e^{b_2+a_1-u_2-w_2+w_1}, &\overline{Y}'_8 \mapsto \e^{b_1-u_1-w_1},
\\
\overline{Y}'_4 \mapsto \e^{d_3-u_3-w_3}, &\overline{Y}'_9 \mapsto \e^{c_1+c_3+w_1+w_3}. 
\end{cases}
\label{Ywb}
\end{align}
Define the transformations $\overline{\eta}_{123}^{(-)}$ and $\overline{\eta}_{123}^{(+)}$ of canonical variables by
\begin{align}
\label{uwb-1}
&\overline{\eta}_{123}^{(-)}:\;
\begin{cases}
u_1 \mapsto u_1+u_2-u_3+\overline{\lambda}_0, 
\quad \;\;
w_1  \mapsto w_1+ \overline{\lambda}_1,
\\
u_2  \mapsto u_3-\overline{\lambda}_0, 
\qquad \qquad \qquad 
w_2  \mapsto w_1+w_3+\overline{\lambda}_3,
\\
u_3  \mapsto u_2+\overline{\lambda}_0, 
\qquad \qquad \qquad 
w_3  \mapsto -w_1+w_2+\overline{\lambda}_2,
\end{cases}
\\[1mm]
\label{uwb-2}
&\overline{\eta}_{123}^{(+)}:\; 
\begin{cases}
u_1 \mapsto u_2 +\overline{\kappa}_0, 
\qquad \qquad \qquad \;\;
w_1  \mapsto w_2-w_3+\overline{\kappa}_2,
\\
u_2  \mapsto u_1-\overline{\kappa}_0, 
\qquad \qquad \qquad \;\;
w_2  \mapsto w_1+w_3+\overline{\kappa}_1,
\\
u_3  \mapsto -u_1+u_2+u_3+\overline{\kappa}_0, 
\quad \;
w_3  \mapsto w_3+\overline{\kappa}_3,
\end{cases}
\end{align} 
where $\overline{\lambda}_r$ and $\overline{\kappa}_r$ are given by 
\begin{align}\label{lbad}
\overline{\lambda}_0 = \frac{e_2-e_3}{2},\quad 
\overline{\lambda}_1 = c_2-c_3+d_2-d_3 + \overline{\lambda}_0,\quad
\overline{\lambda}_2 = -c_1-d_2+d_3-\overline{\lambda}_0, \quad
\overline{\lambda}_3 = a_1-a_2+a_3.
\end{align}
and
\begin{align}\label{kbad}
\overline{\kappa}_0 = \frac{e_2-e_1}{2},\quad 
\overline{\kappa}_1 = a_1-a_2+a_3,\quad
\overline{\kappa}_2 = -c_3+b_1-b_2 - \overline{\kappa}_0,\quad
\overline{\kappa}_3 =  d_1+a_1-d_2-a_2-\overline{\kappa}_0.
\end{align}

Similarly to~$\eta_{123}^{(\pm)}$, these transformations induce
isomorphisms of~$\mathcal{W}(A_2)$.  
We denote by~$\overline{\eta}_{123}^{(\pm)}$ their corresponding natural
actions on~$\mathrm{Frac}\,\mathcal{W}(A_2)$ as well.
The following proposition is proved in the same manner as
Proposition~\ref{prop-Rcom}.

\begin{proposition}
(i) The following diagrams are commutative.
\begin{align}\label{Rcom2}
\xymatrix{
\mathcal{Y}(B(A_2)) \ar[r]^{\overline{\phi}'} \ar[d]_{\overline{\tau}_{123|--++}} & 
\mathrm{Frac}\,\mathcal{W}(A_2) \ar[d]_{\overline{\eta}_{123}^{(-)}}
\\
\mathcal{Y}(B'(A_2)) \ar[r]^{\overline{\phi}}& \mathrm{Frac}\,\mathcal{W}(A_2)
}
\qquad 
\xymatrix{
\mathcal{Y}(B(A_2)) \ar[r]^{\overline{\phi}'} \ar[d]_{\overline{\tau}_{123|-+-+}} & 
\mathrm{Frac}\,\mathcal{W}(A_2) \ar[d]_{\overline{\eta}_{123}^{(+)}}
\\
\mathcal{Y}(B'(A_2)) \ar[r]^{\overline{\phi}}& \mathrm{Frac}\,\mathcal{W}(A_2)
}
\end{align}
(ii) For $\delta = +$ and $-$, the isomorphism $\overline{\eta}^{(\delta)}_{123}$ 
is realized as $\overline{\eta}^{(\delta)}_{123} = \mathrm{Ad} \overline{P}_{123}^{(\delta)}$ by
\begin{align}
\label{Pb--++}
\overline{P}_{123}^{(-)}&= 
\e^{\tfrac{1}{\hbar}(u_3-u_2)w_1}
\e^{\tfrac{\overline{\lambda}_0}{\hbar}(w_3-w_2-w_1)}
\e^{\tfrac{1}{\hbar}(\overline{\lambda}_1u_1+\overline{\lambda}_2u_2+\overline{\lambda}_3u_3)}\rho_{23},
\\
\label{Pb-+-+}
\overline{P}_{123}^{(+)} 
&=
\e^{\tfrac{1}{\hbar}(u_1-u_2)w_3}
\e^{\tfrac{\overline{\kappa}_0}{\hbar}(-w_3-w_2+w_1)}
\e^{\tfrac{1}{\hbar}(\overline{\kappa}_1u_1+\overline{\kappa}_2u_2+\overline{\kappa}_3u_3)}\rho_{12}.
\end{align}
\end{proposition}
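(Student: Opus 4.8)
The plan is to follow the proof of Proposition~\ref{prop-Rcom} line by line, since every barred object is obtained from its unbarred counterpart by the transposition $s$ interchanging the indices $1$ and $3$ of all canonical variables $(u_i,w_i)$ and parameters $\mathscr{P}_i$. Comparing \eqref{Ypwb} with \eqref{Ypw} and \eqref{Ywb} with \eqref{Yw} shows $\overline{\phi}=s\circ\phi'$ and $\overline{\phi}'=s\circ\phi$ under the identifications $\overline{Y}_i\equiv Y_i'$ and $\overline{Y}_i'\equiv Y_i$; moreover $\overline{\tau}_{123|\ve}=\tau_{123|\ve}^{-1}$, and at the level of affine transformations one has $\overline{\eta}^{(\delta)}_{123}=s\circ(\eta^{(\delta)}_{123})^{-1}\circ s$. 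Because $s$ is an automorphism of $\mathrm{Frac}\,\mathcal{W}(A_2)$ preserving \eqref{uwA} (as $\gamma=(1,1,1)$ is $s$-invariant), both assertions of the proposition are formally equivalent to those of Proposition~\ref{prop-Rcom}. I will nevertheless present the direct verification, which is the cleanest self-contained route.

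For part~(i), it suffices to verify, for each generator $\overline{Y}_i'$ with $i=0,1,\dots,9$, the identity $\overline{\eta}^{(\delta)}_{123}\bigl(\overline{\phi}'(\overline{Y}_i')\bigr)=\overline{\phi}\bigl(\overline{\tau}_{123|\ve}(\overline{Y}_i')\bigr)$ for $\ve=(-,\delta,-\delta,+)$ and $\delta=\pm$. Both sides are monomials of the form $\e^{(\text{linear in }u,w)+(\text{const})}$, so the check reduces to an equality of exponents. The linear parts in $u,w$ agree by the construction of $\overline{\eta}^{(\delta)}_{123}$ in \eqref{uwb-1}--\eqref{uwb-2}, whose shears and permutations are tailored to the monomial maps \eqref{taub--++} and \eqref{taub-+-+}. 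The only substantive point is then the matching of the constant parts built from $(a_i,b_i,c_i,d_i,e_i)$; these coincide after inserting the definitions \eqref{lbad}, \eqref{kbad} of $\overline{\lambda}_r,\overline{\kappa}_r$ and using \eqref{econ-a}, exactly as in the sample computation for $\delta=+$, $i=4$ displayed in the proof of Proposition~\ref{prop-Rcom}, now with $1$ and $3$ exchanged.

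For part~(ii), I will compute $\mathrm{Ad}\,\overline{P}^{(\delta)}_{123}$ on the generators $\e^{u_i},\e^{w_i}$ via the Baker--Campbell--Hausdorff formula. Each factor of \eqref{Pb--++} and \eqref{Pb-+-+} acts in an elementary way: the bilinear exponential $\e^{\frac{1}{\hbar}(u_3-u_2)w_1}$ (resp.\ $\e^{\frac{1}{\hbar}(u_1-u_2)w_3}$) implements the shear between the $u$- and $w$-variables, the factor $\e^{\frac{\overline{\lambda}_0}{\hbar}(\cdots)}$ (resp.\ $\e^{\frac{\overline{\kappa}_0}{\hbar}(\cdots)}$) shifts the $u_i$ by constants, the factor $\e^{\frac{1}{\hbar}(\overline{\lambda}_1u_1+\overline{\lambda}_2u_2+\overline{\lambda}_3u_3)}$ shifts the $w_i$, and $\rho_{23}$ (resp.\ $\rho_{12}$) permutes the indices. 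Composing these and reading off the resulting linear map together with the constant vector reproduces $\overline{\eta}^{(\delta)}_{123}$; the only care required is the order in which the constants accumulate, since $\rho_{ij}$ does not commute with the subsequent shifts.

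The main obstacle in both parts is bookkeeping rather than anything conceptual: one must confirm that the parameter-dependent constant shifts close up, which repeatedly uses \eqref{econ-a} together with the precise definitions \eqref{lbad}, \eqref{kbad}. The structural identity $\overline{\eta}^{(\delta)}_{123}=s\circ(\eta^{(\delta)}_{123})^{-1}\circ s$ provides a stringent consistency check: it forces the linear parts recorded in \eqref{uwb-1}, \eqref{uwb-2} and predicts $\mathrm{Ad}\,\overline{P}^{(\delta)}_{123}=s\circ\mathrm{Ad}\bigl((P^{(\delta)}_{123})^{-1}\bigr)\circ s$, so that $\overline{P}^{(\delta)}_{123}$ may be taken as $s\bigl((P^{(\delta)}_{123})^{-1}\bigr)$ up to a central factor; any sign error in the constants would violate this relation.
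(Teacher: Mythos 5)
Your proof is correct and takes essentially the same route as the paper, which disposes of this statement by remarking that it ``is proved in the same manner as Proposition~\ref{prop-Rcom}'', i.e., a generator-by-generator matching of exponents for part~(i) and a BCH computation of $\mathrm{Ad}\,\overline{P}^{(\delta)}_{123}$ for part~(ii). Your supplementary observation that $\overline{\phi}=s\circ\phi'$, $\overline{\phi}'=s\circ\phi$ and $\overline{\tau}_{123|\ve}=\tau_{123|\ve}^{-1}$ (so the statement is the $s$-conjugated inverse of Proposition~\ref{prop-Rcom}) is consistent with how the paper defines the barred homomorphisms and serves as a legitimate cross-check, but it is not needed for the argument you actually carry out.
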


Consequently, for the two sign sequences $\ve = (-,\delta,-\delta,+)$ with $\delta = \mp$,  
the operators $\overline{R}_{123}^{(\mp)}$ and $\overline{P}_{123}^{(\mp)}$,  
which realize $\widehat{\overline{R}}_{123}$ through  
$\overline{\phi} \circ \widehat{\overline{R}}_{123} = \mathrm{Ad}\,\overline{R}_{123}^{(\mp)} \circ \overline{\phi}'$ and  
$\overline{\eta}_{123}^{(\mp)} = \mathrm{Ad}\,\overline{P}_{123}^{(\mp)}$,  
are obtained as follows.
\begin{align}
\begin{split}
\label{Rb--++}
\overline{R}_{123}^{(-)}
&= \Psi_q(\e^{-d_3-a_2-b_1+u_1+u_3+w_1-w_2+w_3})^{-1}
\Psi_q(\e^{-d_3-e_3-a_2-b_1-u_3+u_1+w_1-w_2+w_3})^{-1}
\\
& \quad \cdot
\Psi_q(\e^{d_3+a_2+b_1+e_1-u_3+u_1-w_1+w_2-w_3})
\Psi_q(\e^{d_3+a_2+e_2+b_1+e_1-u_3+2u_2+u_1-w_1+w_2-w_3})
\\
& \quad \cdot \overline{P}_{123}^{(-)},
\end{split}
\\
\begin{split}
\label{Rb-+-+}
\overline{R}_{123}^{(+)} 
&= 
\Psi_q(\e^{-d_3-a_2-b_1+u_1+u_3+w_1-w_2+w_3})^{-1}
\Psi_q(\e^{d_3+e_3+a_2+b_1+u_3-u_1-w_1+w_2-w_3})
\\
&\quad \cdot
\Psi_q(\e^{-d_3-a_2-b_1-e_1+u_3-u_1+w_1-w_2+w_3})^{-1}
\Psi_q(\e^{d_3+e_3+a_2+e_2+b_1+u_3+2u_2-u_1-w_1+w_2-w_3})
\\
&\quad \cdot \overline{P}_{123}^{(+)},
\end{split}
\end{align}

\begin{remark}\label{re:ac}
Under the  interchange of the parameters $a_i$ and $c_i$,  
the operators $R_{123}^{(-)}$~\eqref{R--++}, 
$P_{123}^{(-)}$~\eqref{P--++}
 and $\overline{R}_{123}^{(+)}$~\eqref{Rb-+-+},
 $\overline{P}_{123}^{(+)}$~\eqref{Pb-+-+}  
are exchanged, and likewise 
$R_{123}^{(+)}$~\eqref{R-+-+},
$P_{123}^{(+)}$~\eqref{P-+-+} and  
$\overline{R}_{123}^{(-)}$~\eqref{Rb--++},
$\overline{P}_{123}^{(-)}$~\eqref{Pb--++} are exchanged.
Namely, the following relations hold:
\begin{align*}
&\overline{R}_{123}^{(+)}  = \left. R_{123}^{(-)}\right |_{(a_1,a_2,a_3) \leftrightarrow (c_1,c_2,c_3)} ,
\qquad
\overline{R}_{123}^{(-)} = 
\left. R_{123}^{(+)}\right |_{(a_1,a_2,a_3) \leftrightarrow (c_1,c_2,c_3)},
\\
&\overline{P}_{123}^{(+)}  = \left. P_{123}^{(-)}\right |_{(a_1,a_2,a_3) \leftrightarrow (c_1,c_2,c_3)} ,
\qquad
\overline{P}_{123}^{(-)} = 
\left. P_{123}^{(+)}\right |_{(a_1,a_2,a_3) \leftrightarrow (c_1,c_2,c_3)}.
\end{align*}
\end{remark}

The well-definedness of the dilogarithm parts of the
$R$-operators~\eqref{R--++}, \eqref{R-+-+}, \eqref{Rb--++}, and~\eqref{Rb-+-+}
is ensured by the following proposition.  
Since an analogous argument will be presented later for the
$K$-operator in~\S\ref{sec:wdK}, we omit the proof here.

\begin{proposition}\label{wdR}
The dilogarithm parts of the $R$-operators ${R}_{123}^{(\pm)}$ 
and $\overline{R}_{123}^{(\pm)}$ belong to the set $\mathcal{L}(A_2):= \mathcal{L}_\gamma$.  
\end{proposition}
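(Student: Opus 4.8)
\emph{Sketch of the intended argument.} With $\gamma=(1,1,1)$, the set $\mathcal{L}(A_2)$ consists of formal Laurent series in $\e^{u_i},\e^{w_i}$ ($i=1,2,3$), and by the \emph{dilogarithm part} of an $R$-operator I mean the ordered product of its four $\Psi_q^{\pm1}$ factors, dropping the monomial $P^{(\pm)}_{123}$. Each such factor has the form $\Psi_q(\e^{\alpha\cdot\mathbf{u}+c})^{\pm1}$ with $\alpha\in\mathbb{Z}^{6}$ in the coordinates $\mathbf{u}=(u_1,u_2,u_3,w_1,w_2,w_3)$; by the expansions \eqref{dilog-sum} both $\Psi_q(U)$ and $\Psi_q(U)^{-1}$ are power series in $U$ with coefficients in $\mathbb{Q}(q)$ involving only non-negative powers, so each factor individually lies in $\mathcal{L}(A_2)$. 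Because $\mathcal{L}(A_2)$ is not closed under multiplication, the real content is that the four-fold product is again a well-defined element of $\mathcal{L}(A_2)$, i.e.\ that each monomial receives only finitely many contributions.

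First I would set up the finiteness criterion. Writing the four arguments as $\e^{\alpha_r\cdot\mathbf{u}+c_r}$ ($r=1,\dots,4$) and expanding by \eqref{dilog-sum}, the product becomes a sum over $(n_1,\dots,n_4)\in\mathbb{Z}_{\ge0}^{4}$ in which the $(n_r)$-term, after normal-ordering via \eqref{qW-gamma}, is a scalar in $\mathbb{C}(q^{1/2})$ times the monomial $\e^{(\sum_r n_r\alpha_r)\cdot\mathbf{u}}$. Hence the coefficient of a fixed $\e^{\mathbf{m}\cdot\mathbf{u}}$ is a sum over the fiber $\{(n_r)\in\mathbb{Z}_{\ge0}^{4}:\sum_r n_r\alpha_r=\mathbf{m}\}$, and the product lies in $\mathcal{L}(A_2)$ provided every such fiber is finite. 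For this it suffices to exhibit a linear functional $\ell$ on $\mathbb{R}^{6}$ with $\ell(\alpha_r)>0$ for all $r$: then $\bigl(\min_r\ell(\alpha_r)\bigr)\sum_r n_r\le\ell(\mathbf{m})$ bounds $\sum_r n_r$, forcing the fiber to be finite.

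The plan is then to read off the exponent vectors and exhibit $\ell$ case by case. From \eqref{R--++} I expect every argument of $R_{123}^{(-)}$ to carry $u_3$ with coefficient $+1$, and from \eqref{R-+-+} every argument of $R_{123}^{(+)}$ to carry $u_1$ with coefficient $+1$; taking $\ell$ to be the projection onto $u_3$ (resp.\ $u_1$) then yields $\ell(\alpha_r)=1$ for all $r$. For the barred cases, Remark~\ref{re:ac} identifies $\overline{R}_{123}^{(+)}$ and $\overline{R}_{123}^{(-)}$ with $R_{123}^{(-)}$ and $R_{123}^{(+)}$ under $a_i\leftrightarrow c_i$; since that interchange touches only the constants $c_r$ and leaves each $\alpha_r$ unchanged, the same projections work, which can also be confirmed directly from \eqref{Rb--++} and \eqref{Rb-+-+}.

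The only step that is not purely formal is the finiteness criterion, whose substance is the existence of a common positive functional on the set of exponent vectors---equivalently, that they lie in an open half-space through the origin. For these four-factor $R$-operators this is witnessed cheaply by a single coordinate, so I expect the verification to be routine here; the same mechanism, applied to the ten exponent vectors of the $K$-operator, is what will require genuine care in \S\ref{sec:wdK}, and that is where I would expect the real obstacle to lie.
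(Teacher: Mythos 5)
Your argument is correct and is essentially the proof the paper has in mind: the paper omits the proof of Proposition~\ref{wdR} and refers to the analogous argument in \S\ref{sec:wdK}, which likewise expands each $\Psi_q^{\pm1}$ via \eqref{dilog-sum} and shows that the linear equations for the exponents confine the summation indices to finitely many values. Your half-space criterion (a single positive functional, here the projection onto $u_3$ for $R^{(-)}_{123},\overline{R}^{(+)}_{123}$ and onto $u_1$ for $R^{(+)}_{123},\overline{R}^{(-)}_{123}$) is a clean one-step special case of the paper's staged elimination, and your reading of the exponent vectors from \eqref{R--++}--\eqref{Rb-+-+} checks out.
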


Set $\gamma = (1,1,1,1,1,1)$, and let $\mathcal{W}(A_3) := \mathcal{W}_\gamma$
be the corresponding $q$-Weyl algebra defined in~\S\ref{sec:qW}.
The homomorphisms $\eta_{123|\ve}$ and $\overline{\eta}_{123|\ve}$ extend
naturally to $\eta_{ijk|\ve}$ and $\overline{\eta}_{ijk|\ve}$ acting on
$\mathcal{W}(A_3)$ and on its noncommutative field of fractions
$\mathrm{Frac}\,\mathcal{W}(A_3)$.

Let $\prec$ and $\prec'$ be two partial orders on the set
$J := \{1,2,\ldots,6\}$ defined by
\[
  1 \prec 2,4 \prec 3,5,6,  
  \qquad 
  1,2,3 \prec' 4,5 \prec' 6.
\]
In the first case (resp.\ the second case), the symmetric subgroup
$S(A_3) := \mathfrak{S}_2 \times \mathfrak{S}_3 \subset \mathfrak{S}_6$
acts on $J$ so as to preserve the order $\prec$ (resp.\ $\prec'$); namely,
$\mathfrak{S}_2$ acts on $\{2,4\}$ and $\mathfrak{S}_3$ on $\{3,5,6\}$
(resp.\ $\mathfrak{S}_2$ on $\{4,5\}$ and $\mathfrak{S}_3$ on $\{1,2,3\}$).
Define the group $N(A_3)$ to be that generated by
\begin{align}
  \e^{\pm \tfrac{1}{\hbar}u_i w_j}\ (i \succ j),\qquad
  \e^{\tfrac{a}{\hbar}u_i},\ \e^{\tfrac{a}{\hbar}w_i}\ (a \in \C),
  \qquad b \in \C^\times,\ i,j \in J,
  \label{Nd}
\end{align}
and the group $N'(A_3)$ to be that generated by
\begin{align}
  \e^{\pm \tfrac{1}{\hbar}u_i w_j}\ (i \prec' j),\qquad
  \e^{\tfrac{a}{\hbar}u_i},\ \e^{\tfrac{a}{\hbar}w_i}\ (a \in \C),
  \qquad b \in \C^\times,\ i,j \in J.
\end{align}
Multiplication in these groups is defined using the (generalized)
Baker--Campbell--Hausdorff formula together with~\eqref{uwA}, which is
well defined due to the grading by~$\hbar^{-1}$.

The group $S(A_3)$ acts on $N(A_3)$ and $N'(A_3)$ by adjoint action,
permuting the indices of the canonical variables.
Finally, let $\mathcal{L}(A_3) := \mathcal{L}_\gamma$ denote the set of
formal Laurent series~\eqref{fLs}.

The operators $P_{123}^{(\mp)}$, $\overline{P}_{123}^{(\mp)}$, 
$R_{123}^{(\mp)}$, and $\overline{R}_{123}^{(\mp)}$ extend to
$P_{ijk}^{(\mp)}$, $\overline{P}_{ijk}^{(\mp)}$,
$R_{ijk}^{(\mp)}$, and $\overline{R}_{ijk}^{(\mp)}$ acting on
$\mathrm{Frac}\,\mathcal{W}(A_3)$.
The tetrahedron relations for these $R$-operators are summarized as follows.

\begin{proposition}(Cf. \cite[\S 3 and Lemma 4.2]{IKSTY})\label{prop:ns}
Corresponding to the two sign sequences
$\ve = (-,-,+,+)$ and $\ve = (-,+,-,+)$, the followings hold.
\begin{itemize}
\item[(i)]
For $\delta = -, +$, the transformations of canonical variables
$\eta_{ijk}^{(\delta)}$ and $\overline{\eta}_{ijk}^{(\delta)}$ each satisfy the
homogeneous tetrahedron equation
\begin{align}
\label{TE-eta}
&\eta_{456}^{(\delta)}\,\eta_{236}^{(\delta)}\,\eta_{135}^{(\delta)}\,\eta_{124}^{(\delta)}
 = \eta_{124}^{(\delta)}\,\eta_{135}^{(\delta)}\,\eta_{236}^{(\delta)}\,\eta_{456}^{(\delta)},\\
\label{TE-etab}
&\overline{\eta}_{456}^{(\delta)}\,\overline{\eta}_{236}^{(\delta)}\,\overline{\eta}_{135}^{(\delta)}\,\overline{\eta}_{124}^{(\delta)}
 = \overline{\eta}_{124}^{(\delta)}\,\overline{\eta}_{135}^{(\delta)}\,\overline{\eta}_{236}^{(\delta)}\,\overline{\eta}_{456}^{(\delta)}.
\end{align}

\item[(ii)]
The operators $P_{ijk}^{(\delta)}$ and $\overline{P}_{ijk}^{(\delta)}$ satisfy the
tetrahedron equation
\begin{align}
\label{TE-P}
&P_{456}^{(+)}\,P_{236}^{(+)}\,P_{135}^{(+)}\,P_{124}^{(+)}
 = P_{124}^{(+)}\,P_{135}^{(+)}\,P_{236}^{(+)}\,P_{456}^{(+)},\\
\label{TE-Pb}
&\overline{P}_{456}^{(-)}\,\overline{P}_{236}^{(-)}\,\overline{P}_{135}^{(-)}\,\overline{P}_{124}^{(-)}
 = \overline{P}_{124}^{(-)}\,\overline{P}_{135}^{(-)}\,\overline{P}_{236}^{(-)}\,\overline{P}_{456}^{(-)}
\end{align}
in the semidirect product $N(A_3)\rtimes S(A_3)$, and
\begin{align}
\label{TE-P2}
&P_{456}^{(-)}\,P_{236}^{(-)}\,P_{135}^{(-)}\,P_{124}^{(-)}
 = P_{124}^{(-)}\,P_{135}^{(-)}\,P_{236}^{(-)}\,P_{456}^{(-)},\\
\label{TE-Pb2}
&\overline{P}_{456}^{(+)}\,\overline{P}_{236}^{(+)}\,\overline{P}_{135}^{(+)}\,\overline{P}_{124}^{(+)}
 = \overline{P}_{124}^{(+)}\,\overline{P}_{135}^{(+)}\,\overline{P}_{236}^{(+)}\,\overline{P}_{456}^{(+)}
\end{align}
in the semidirect product $N'(A_3)\rtimes S(A_3)$.
\end{itemize}
\end{proposition}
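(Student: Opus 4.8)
The plan is to reduce both parts to the monomial tetrahedron equation satisfied by the cluster transformations $\tau_{ijk|\ve}$, and then to transport it first to the affine maps $\eta^{(\delta)}_{ijk}$ and finally to the group elements $P^{(\delta)}_{ijk}$. For the two admissible sign sequences $\ve=(-,-,+,+)$ and $\ve=(-,+,-,+)$ singled out by condition~(i), the monomial parts obey
\[
\tau_{124|\ve}\tau_{135|\ve}\tau_{236|\ve}\tau_{456|\ve}\sigma_{7,12}
=\tau_{456|\ve}\tau_{236|\ve}\tau_{135|\ve}\tau_{124|\ve}\sigma_{7,14}
\]
as isomorphisms $\mathcal{Y}(B'(A_3))\to\mathcal{Y}(B(A_3))$, by \cite[Prop.~3.8, 3.9]{IKSTY}; I take this as the input.

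For part~(i) I paste the commutative square \eqref{Rcom1}, in its $A_3$ extension, along each of the two mutation routes realizing the tetrahedron move. Each elementary square intertwines the relevant $\tau_{ijk|\ve}$ with $\eta^{(\delta)}_{ijk}$, where $\ve=(-,\delta,-\delta,+)$, so the pasted squares give $\phi\circ\tau_{\mathrm L}=\eta^{(\delta)}_{\mathrm L}\circ\phi'$ and $\phi\circ\tau_{\mathrm R}=\eta^{(\delta)}_{\mathrm R}\circ\phi'$, where $\tau_{\mathrm L},\tau_{\mathrm R}$ and $\eta^{(\delta)}_{\mathrm L},\eta^{(\delta)}_{\mathrm R}$ denote the two ordered products in \eqref{TE-eta}. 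Since $\tau_{\mathrm L}=\tau_{\mathrm R}$ by the displayed identity and $\phi'$ is injective (being a ring homomorphism out of a skewfield), I cancel $\phi'$ to obtain $\eta^{(\delta)}_{\mathrm L}=\eta^{(\delta)}_{\mathrm R}$, which is \eqref{TE-eta}. The barred identity \eqref{TE-etab} follows verbatim with $\overline{\phi},\overline{\phi}',\overline{\tau}_{ijk|\ve}$ and $\overline{\eta}^{(\delta)}_{ijk}$ in place of the unbarred objects.

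For part~(ii) I use $\eta^{(\delta)}_{ijk}=\mathrm{Ad}\,P^{(\delta)}_{ijk}$ from Proposition~\ref{prop-Rcom}(ii). Applying $\mathrm{Ad}$ to either asserted $P$-identity and invoking part~(i) shows that its two sides have the same adjoint action on $\mathrm{Frac}\,\mathcal{W}(A_3)$. An element of $N(A_3)\rtimes S(A_3)$ or $N'(A_3)\rtimes S(A_3)$ acting trivially by $\mathrm{Ad}$ must have trivial permutation part and trivial $u,w$-exponents, so the kernel of $\mathrm{Ad}$ on either group is the central scalars $\C^\times$. Hence each $P$-identity holds up to a scalar, and two points remain: that the two products genuinely lie in the asserted semidirect product, and that the residual scalar equals $1$. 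Both are settled by normal-ordering each side through the Baker--Campbell--Hausdorff formula and the relations \eqref{uwA}, reading off the $u_iw_j$ cross-terms and the accumulated constant.

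The main obstacle is exactly this normal-ordering. One must verify that after the four-fold BCH composition the cross-terms $u_iw_j$ of each product respect the order $\prec$ for $P^{(+)}$ and $\overline{P}^{(-)}$, and the order $\prec'$ for $P^{(-)}$ and $\overline{P}^{(+)}$, so that the products land in $N(A_3)\rtimes S(A_3)$ and $N'(A_3)\rtimes S(A_3)$ respectively; this is precisely where the two admissible sign sequences force two different ambient groups, in accordance with Remark~\ref{re:ac}. Confirming that the $\hbar$-graded constants produced by the commutators cancel is the genuinely delicate bookkeeping, whereas part~(i) is essentially formal once the $A_3$ commutative squares are assembled.
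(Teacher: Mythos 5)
Your strategy coincides with what the paper intends: the paper gives no self-contained argument here, deferring \eqref{TE-eta}, \eqref{TE-P}, \eqref{TE-P2} to \cite{IKSTY} and noting that the barred identities are "established in an analogous manner," and the route indicated by the citation of \cite[\S 3 and Lemma 4.2]{IKSTY} is exactly yours — transport the monomial tetrahedron equation for $\tau_{ijk|\ve}$ through the commutative squares \eqref{Rcom1}, \eqref{Rcom2} to obtain the $\eta$-identities, then realize these as adjoint actions and pin down the $P$-identities by BCH normal ordering. Two of your justifications need tightening. First, injectivity of $\phi'$ only permits cancellation on the \emph{left}; from $\eta^{(\delta)}_{\mathrm L}\circ\phi'=\eta^{(\delta)}_{\mathrm R}\circ\phi'$ you get agreement of the two maps only on the image of $\phi'$, and to conclude $\eta^{(\delta)}_{\mathrm L}=\eta^{(\delta)}_{\mathrm R}$ on all of $\mathrm{Frac}\,\mathcal{W}(A_3)$ you must add that both are induced by affine transformations $\mathbf{u}\mapsto\tilde{A}\mathbf{u}+\alpha$ and that the exponent vectors of the monomials $\phi'(Y'_i)$ span a full-rank sublattice, which determines $\tilde{A}$ and $\alpha$ uniquely. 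Second, the kernel of $\mathrm{Ad}$ on $N(A_3)\rtimes S(A_3)$ is not literally $\C^\times$: since $\mathrm{Ad}(\e^{a u_i/\hbar})(\e^{w_i})=\e^{a}\e^{w_i}$, the adjoint action determines the linear exponents only modulo $2\pi i$, so the identity is a priori established only up to such a discrete ambiguity; this, together with the residual scalar and the membership of the normal-ordered products in $N(A_3)$ versus $N'(A_3)$, is resolved only by the BCH computation you defer — which is where the actual content of part (ii) lies, exactly as in the paper's reference.
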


\begin{theorem}\label{thm:TE}
(Cf. \cite[Theorem 4.3]{IKSTY})
The operators $R_{ijk}^{(\delta)}$ and $\overline{R}_{ijk}^{(\delta)}$ satisfy the
tetrahedron equation
\begin{align}
\label{TE-R}
&R_{456}^{(+)}\,R_{236}^{(+)}\,R_{135}^{(+)}\,R_{124}^{(+)}
 = R_{124}^{(+)}\,R_{135}^{(+)}\,R_{236}^{(+)}\,R_{456}^{(+)},\\
\label{TE-Rb}
&\overline{R}_{456}^{(-)}\,\overline{R}_{236}^{(-)}\,\overline{R}_{135}^{(-)}\,\overline{R}_{124}^{(-)}
 = \overline{R}_{124}^{(-)}\,\overline{R}_{135}^{(-)}\,\overline{R}_{236}^{(-)}\,\overline{R}_{456}^{(-)},
\end{align}
in the following sense:  
each side decomposes into a dilogarithmi part in $\mathcal{L}(A_3)$ and a
monomial transformation part in $N(A_3)\rtimes S(A_3)$, and both
components agree.

They also satisfy the tetrahedron equation
\begin{align}
\label{TE-R2}
&R_{456}^{(-)}\,R_{236}^{(-)}\,R_{135}^{(-)}\,R_{124}^{(-)}
 = R_{124}^{(-)}\,R_{135}^{(-)}\,R_{236}^{(-)}\,R_{456}^{(-)},\\
\label{TE-Rb2}
&\overline{R}_{456}^{(+)}\,\overline{R}_{236}^{(+)}\,\overline{R}_{135}^{(+)}\,\overline{R}_{124}^{(+)}
 = \overline{R}_{124}^{(+)}\,\overline{R}_{135}^{(+)}\,\overline{R}_{236}^{(+)}\,\overline{R}_{456}^{(+)},
\end{align}
again in the sense that each side admits the same decomposition into a
dilogarithmi part in $\mathcal{L}(A_3)$ and a monomial part in
$N'(A_3)\rtimes S(A_3)$.
\end{theorem}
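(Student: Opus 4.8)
The plan is to reduce the statement to the two identities already supplied by Proposition~\ref{prop:ns} --- the monomial tetrahedron equation and its companion dilogarithm identity --- and then transport the latter through the ring homomorphism $\phi$ (resp.\ $\overline{\phi}$). I treat the case $\delta=+$, that is~\eqref{TE-R}; the equations \eqref{TE-Rb}, \eqref{TE-R2}, \eqref{TE-Rb2} follow by the same reasoning after replacing $(\eta^{(+)},P^{(+)},N(A_3))$ with the barred, opposite-sign, or primed analogues, or alternatively from the $a\leftrightarrow c$ symmetry of Remark~\ref{re:ac}.

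First I would write $R^{(+)}_{ijk}=R^{\Psi}_{ijk}\,P^{(+)}_{ijk}$, where $R^{\Psi}_{ijk}$ is the product of quantum dilogarithms (the dilogarithm part), lying in $\mathcal{L}(A_3)$ by the $\mathcal{W}(A_3)$-analogue of Proposition~\ref{wdR}, and $P^{(+)}_{ijk}\in N(A_3)$ is the monomial part with $\mathrm{Ad}\,P^{(+)}_{ijk}=\eta^{(+)}_{ijk}$. Expanding both sides of~\eqref{TE-R} and sliding every monomial factor to the right --- using $\mathrm{Ad}\,P^{(+)}_{ijk}=\eta^{(+)}_{ijk}$ together with Lemma~\ref{lem:L}, so that each conjugated dilogarithm factor stays in $\mathcal{L}(A_3)$ --- each side assumes the form (product of $\eta^{(+)}$-conjugated dilogarithm parts)$\,\cdot\,$(product of $P^{(+)}$'s). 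The two monomial products coincide by the tetrahedron equation~\eqref{TE-P} in $N(A_3)\rtimes S(A_3)$, which disposes of the monomial component.

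It then remains to match the dilogarithm components, i.e.\ to prove
\begin{align*}
&R^{\Psi}_{456}\,\eta^{(+)}_{456}(R^{\Psi}_{236})\,\eta^{(+)}_{456}\eta^{(+)}_{236}(R^{\Psi}_{135})\,\eta^{(+)}_{456}\eta^{(+)}_{236}\eta^{(+)}_{135}(R^{\Psi}_{124})
\\
&\qquad = R^{\Psi}_{124}\,\eta^{(+)}_{124}(R^{\Psi}_{135})\,\eta^{(+)}_{124}\eta^{(+)}_{135}(R^{\Psi}_{236})\,\eta^{(+)}_{124}\eta^{(+)}_{135}\eta^{(+)}_{236}(R^{\Psi}_{456})
\end{align*}
in $\mathcal{L}(A_3)$. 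For this I would lift back to the quantum cluster algebra. By the computation behind Proposition~\ref{prop-Rcom}, $R^{\Psi}_{ijk}$ is the $\phi$-image of the product of quantum dilogarithms in the $Y$-variables occurring in $\widehat{R}_{ijk}$ for the sign sequence $\ve=(-,+,-,+)$, while the conjugating operator $\eta^{(+)}_{ijk}$ matches the cluster monomial part $\tau_{ijk|\ve}$ through the commutative diagram~\eqref{Rcom1}. Hence the displayed equality is exactly the $\phi$-image of the dilogarithm identity extracted from the quantum-$Y$ tetrahedron equation~\eqref{TE}: for the distinguished sign sequence $\ve=(-,+,-,+)$, equation~\eqref{TE} separates, by condition~(i) established in \cite[Prop.~3.8, 3.9]{IKSTY}, into the monomial tetrahedron equation for the $\tau_{ijk|\ve}$ and a dilogarithm identity among the factors $\Psi_q(\rY^{\bullet})$. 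Applying the ring homomorphism $\phi$ to this latter identity gives the required equality in $\mathcal{L}(A_3)$.

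I expect the dilogarithm component to be the main obstacle, for two reasons. First, well-definedness: since $\mathcal{L}_\gamma$ is not closed under multiplication, one must verify that the conjugated products actually define elements of $\mathcal{L}(A_3)$ and that no ill-defined infinite resummation is hidden in sliding the monomial parts through --- an argument I would run in parallel with the one given for the $K$-operator in \S\ref{sec:wdK}. Second, one must check that the $\eta^{(+)}$-conjugated arguments of the quantum dilogarithms agree term by term with the $\phi$-images of the conjugated $Y$-variable arguments; this is precisely what condition~(i) guarantees for $\ve=(-,+,-,+)$, and it is exactly this requirement that singles out this sign sequence (together with $(-,-,+,+)$), since for a generic sign sequence the clean separation into two identities fails, as noted after~\eqref{s-per}.
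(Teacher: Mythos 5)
Your proposal is correct and follows essentially the same route as the paper, which proves \eqref{TE-R}--\eqref{TE-Rb2} by combining the monomial tetrahedron equations \eqref{TE-P}--\eqref{TE-Pb2} of Proposition~\ref{prop:ns} with the well-definedness of the dilogarithm part and the separation of the quantum-$Y$ identity \eqref{TE} for the sign sequences $(-,-,+,+)$ and $(-,+,-,+)$ (deferring the details to \cite{IKSTY}). Your two flagged obstacles --- well-definedness in $\mathcal{L}(A_3)$ and the matching of conjugated dilogarithm arguments via \eqref{Rcom1} --- are exactly the points the paper attributes to \cite[Prop.~3.8, 3.9]{IKSTY}, so no gap remains.
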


See \cite{IKSTY} for the proofs of
\eqref{TE-eta}, \eqref{TE-P}, \eqref{TE-P2}, \eqref{TE-R}, and
\eqref{TE-R2}.  
We remark that \eqref{TE-R} and \eqref{TE-R2} are consequences of
\eqref{TE-P} and \eqref{TE-P2}, together with the well-definedness of the
dilogarithmi part of the tetrahedron relation
\cite[Proposition\,3.9]{IKSTY}.  
The remaining identities are established in an analogous manner.

\section{$K$-operator from $q$-Weyl algebra} \label{sec:k}

\subsection{$K$-operator}\label{ss:Kop} 

For $i=1,2,3,4$, let $(u_i,w_i)$ be canonical variables satisfying
(cf.\,\cite{IKT1})
\begin{align}
\label{uw-def}
[u_i, w_j] &= 
\begin{cases}
\hbar\, \delta_{ij}, & i,j = 1,3,\\[2mm]
2\hbar\, \delta_{ij}, & i,j = 2,4,
\end{cases}
\end{align}
and all other commutators vanish.
Accordingly, set $\gamma = (1,2,1,2)$ and let
$\mathcal{W}(C_2) := \mathcal{W}_\gamma$ be the $q$-Weyl algebra
generated by $\e^{\pm u_i}$ and $\e^{\pm w_i}$ with relations
\eqref{qW-gamma}.
For $i=1,2,3,4$, let  
$\mathscr{P}_i = (a_i,b_i,c_i,d_i,e_i) \in \C^5$ be a tuple of parameters.  
For the sake of uniform notation we keep five symbols for every
$\mathscr{P}_i$, although, in the parametrizations given below,
the entries $a_2$ and $a_4$ actually do not appear.
For the bulk part of the quiver, we assume 
\begin{align}\label{econ}
a_i+b_i+c_i+d_i+e_i=0\;\; (i=1,3).
\end{align}

Following the graphical rule in Figure \ref{fig:para} and \ref{fig:paraC}, 
we define a map $\phi: \mathcal{Y}(B(C_2)) \to$ Frac$\mathcal{W}(C_2)$ by 
\begin{align}
\label{YwC2}
\begin{cases}
 Y_1 \mapsto \exp(-u_2+2w_1-w_2+a_1+d_2),
\\
Y_2 \mapsto \exp(2u_2+e_2),
\\
Y_3 \mapsto \exp(-u_2-u_4-w_2+2w_3-w_4+a_3+b_2+d_4),
\\
Y_4 \mapsto \exp(2u_4+e_4),
\\
Y_5 \mapsto \exp(-u_4-w_4+b_4),
\\
Y_6 \mapsto \exp(-u_1-w_1+d_1),
\\
Y_7 \mapsto \exp(2u_1+e_1)
\\
Y_8\mapsto \exp(-u_1-u_3-w_1+w_2-w_3+b_1+c_2+d_3),
\\
Y_9 \mapsto \exp(2u_3+e_3),
\\
Y_{10}\mapsto \exp(-u_3-w_3+w_4+b_3+c_4),
\\
Y_{11} \mapsto\exp(w_1+w_3+c_1+c_3).
\end{cases}
\end{align}
We also define a map $\phi': \mathcal{Y}(B'(C_2)) \to$ Frac$W(C_2)$ by
\begin{align}
\label{YpwC2}
\begin{cases}
Y'_1 \mapsto \exp(-u_4-w_4+d_4),
\\
Y'_2 \mapsto \exp(2u_4+e_4),
\\
Y'_3 \mapsto \exp(-u_2-u_4-w_2+2w_3-w_4+a_3+d_2+b_4),
\\
Y'_4 \mapsto \exp(2u_2+e_2),
\\
Y'_5 \mapsto \exp(-u_2-w_2+2w_1+a_1+b_2),
\\
Y'_6 \mapsto \exp(-u_3-w_3+w_4+d_3+c_4),
\\
Y'_7 \mapsto \exp(2u_3+e_3)
\\
Y'_8 \mapsto \exp(-u_1-u_3-w_1+w_2-w_3+d_1+c_2+b_3),
\\
Y'_9 \mapsto \exp(2u_1+e_1),
\\
Y'_{10}\mapsto \exp(-u_1-w_1+b_1),
\\
Y'_{11} \mapsto\exp(w_1+w_3+c_1+c_3).
\end{cases}
\end{align}

\begin{figure}[H]
\begin{tikzpicture}
\begin{scope}[>=latex,xshift=0pt]
{\color{red}
\draw [-] (0,2) to [out = 0, in = 135] (1.8,1.2);
\draw [-] (0,0) to [out = 0, in = -135] (1.8,0.8);
\draw [-] (2.2,0.8) to [out = -45, in = 180] (4,0);
\draw [-] (2.2,1.2) to [out = 45, in = 180] (4,2);
}
%
\draw (0.7,1) circle(2pt) coordinate(A) node[left]{\color{red} \scriptsize{$-u_i-w_i+d_i$}};
\draw (3.3,1) circle(2pt) coordinate(B) node[right]{\color{red} \scriptsize{$-u_i-w_i+b_i$}};
\draw (2,2.3) coordinate(C) node[above=1pt]{\color{red} \scriptsize{$2 w_i+a_i$}}; 
\maruni{C};
\draw (2,-0.3) circle(2pt) coordinate(D) node[below]{\color{red} \scriptsize{$w_i+c_i$}};
\draw (2,1) node {\color{red} \scriptsize{$2u_i+e_i$}};
\draw[->,shorten >=4pt] (2,1.3) -- (C) [thick];
\draw[->,shorten >=2pt] (2,0.7) -- (D) [thick];
\draw[->,shorten <=2pt ] (A) -- (1.4,1) [thick];
\draw[->,shorten <=2pt] (B) -- (2.6,1) [thick];
\draw[->,dashed,shorten >=2pt,shorten <=4pt] (C) -- (A) [thick];
\draw[->,dashed,shorten >=2pt,shorten <=4pt] (C) -- (B) [thick];
\qdarrow{D}{A}
\qdarrow{D}{B}
\end{scope}
\begin{scope}[>=latex,xshift=200pt, yshift=30pt]
{\color{red}
\draw [-] (0,0) to [out = 0, in = -135] (1.8,0.8);
\draw [-] (2.2,0.8) to [out = -45, in = 180] (4,0);
}
%
\draw (0.7,1) coordinate(A) node[left=1pt]{\color{red} \scriptsize{$-u_i-w_i+d_i$}};
\maruni{A};
\draw (3.3,1) coordinate(B) node[right=1pt]{\color{red} \scriptsize{$-u_i-w_i+b_i$}};
\maruni{B};
\draw (2,-0.3) circle(2pt) coordinate(D) node[below]{\color{red} \scriptsize{$w_i+c_i$}};
\draw (2,1) node {\color{red} \scriptsize{$2u_i+e_i$}};
\draw[->,shorten >=2pt] (2,0.7) -- (D) [thick];
\draw[->,shorten <=4pt ] (A) -- (1.4,1) [thick];
\draw[->,shorten <=4pt] (B) -- (2.6,1) [thick];
\draw[->,dashed,shorten >=4pt,shorten <=2pt] (D) -- (A) [thick];
\draw[->,dashed,shorten >=4pt,shorten <=2pt] (D) -- (B) [thick];
\end{scope}
\end{tikzpicture}
\caption{Graphical rule for parametrizing the $Y$-variables in terms of
$q$-Weyl algebra generators.  
The left diagram shows the neighborhood of a crossing~$i$ (at the center)
in the wiring diagram (in red), while the right diagram shows the
neighborhood of a reflection point~$i$ (at the top center) on the wall,
together with the adjacent vertices of weight~$2$.}
\label{fig:paraC}
\end{figure}
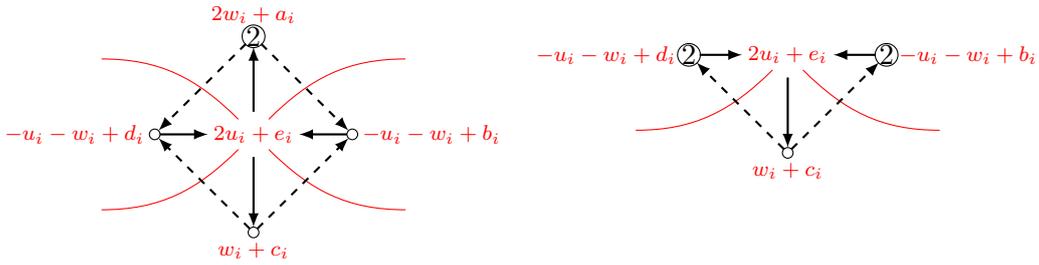

The following lemma is easily checked.

\begin{lemma}
The maps $\phi$ and $\phi'$ are ring homomorphisms of skewfields.
\end{lemma}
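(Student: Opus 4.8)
The plan is to reduce the statement to a finite check of $q$-commutation relations. Recall that $\mathcal{Y}(B(C_2))$ is generated by the invertible quantum $y$-variables $Y_1,\dots,Y_{11}$ subject only to the relations $Y_iY_j=q^{2\widehat b_{ij}}Y_jY_i$, where $\widehat b_{ij}$ are the entries of the symmetrized matrix $\widehat B$ in \eqref{Bhat-K}. Consequently, to produce a ring homomorphism into $\mathrm{Frac}\,\mathcal{W}(C_2)$ it is enough to send each $Y_i$ to an invertible element of the target and to verify that these images obey the very same commutation relations; the map then extends from the quantum torus to its skewfield of fractions by the standard Ore localization. The images prescribed in \eqref{YwC2} are exponentials $\phi(Y_i)=\e^{\ell_i}$, where $\ell_i$ is an affine-linear form in $u_1,\dots,u_4,w_1,\dots,w_4$, hence manifestly invertible, so only the relations remain to be checked.

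First I would record the elementary commutation rule in $\mathcal{W}(C_2)$: for two affine-linear forms $\ell,\ell'$ in the canonical variables one has $\e^{\ell}\e^{\ell'}=\e^{[\ell,\ell']}\e^{\ell'}\e^{\ell}$, where the commutator $[\ell,\ell']$ is a central scalar computed from \eqref{uw-def}, namely $[u_i,w_j]=\hbar\,\gamma_i\delta_{ij}$ with $\gamma=(1,2,1,2)$. Since the parameters $a_i,b_i,c_i,d_i,e_i\in\C$ are constants, they commute with everything and drop out of $[\ell_i,\ell_j]$ entirely; thus the homomorphism property is independent of the parameters and depends only on the linear (operator) parts of the exponents. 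Because $q=\e^{\hbar}$, the desired relation $\phi(Y_i)\phi(Y_j)=q^{2\widehat b_{ij}}\phi(Y_j)\phi(Y_i)$ is therefore equivalent to the scalar identity
\[
  [\ell_i,\ell_j]=2\hbar\,\widehat b_{ij}\qquad(1\le i<j\le 11).
\]

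The core of the proof is then to verify this identity for every pair, reading $\widehat b_{ij}$ off \eqref{Bhat-K}. Most pairs are immediate, since the corresponding exponents share no conjugate canonical variable and both sides vanish. For the nonzero entries the computation is a one-line symplectic pairing; for instance $\ell_1=-u_2+2w_1-w_2$ and $\ell_2=2u_2$ give $[\ell_1,\ell_2]=-2[w_2,u_2]=2\hbar\gamma_2=4\hbar=2\hbar\,\widehat b_{12}$, matching $\widehat b_{12}=2$. This is exactly what the graphical rule of Figures~\ref{fig:para} and~\ref{fig:paraC} encodes: it was designed so that the symplectic form of the exponent vectors reproduces $2\hbar\widehat B$. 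The map $\phi'$ is treated identically, using the exchange matrix $\widehat B'=B'(C_2)\,d$ of the mutated quiver (equivalently, the image of $\widehat B$ under $\mu_{3,8}\mu_{2,9}\mu_{4,7}\mu_{2,9}\mu_{3,8}$, cf.\ the preceding lemma).

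I expect the only real obstacle to be bookkeeping rather than anything conceptual: one must keep consistent track of the weights $\gamma_i$ and, in particular, reproduce the half-integer entries $\pm\tfrac12$ and those involving the weight-two reflection vertices. These arise precisely from the pairing of a weight-one variable ($\gamma=1$, giving $[\,\cdot\,,\cdot\,]=\pm\hbar=2\hbar\cdot(\pm\tfrac12)$) with the boundary and reflection data at vertices $6,10,11$; e.g.\ $[\ell_6,\ell_{11}]=[-u_1-w_1,\,w_1+w_3]=-\hbar=2\hbar\,\widehat b_{6,11}$. Such cases demand care but no new idea. Once all $\binom{11}{2}=55$ pairs are checked against \eqref{Bhat-K}, and the analogous pairs are checked for $\phi'$, both maps are ring homomorphisms, which is why the lemma is asserted to be easily verified.
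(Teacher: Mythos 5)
Your proposal is correct and is precisely the verification the paper intends: the paper offers no argument beyond ``easily checked,'' and the content of that check is exactly your reduction to the scalar identities $[\ell_i,\ell_j]=2\hbar\,\widehat b_{ij}$ read off against \eqref{Bhat-K} (your sample pairings, e.g.\ $[\ell_1,\ell_2]=4\hbar=2\hbar\,\widehat b_{12}$ and $[\ell_6,\ell_{11}]=-\hbar=2\hbar\,\widehat b_{6,11}$, are computed correctly, including the weight factors $\gamma=(1,2,1,2)$). The observation that the parameters drop out of the commutators and that the graphical rule of Figures~\ref{fig:para} and~\ref{fig:paraC} is designed to reproduce $2\hbar\widehat B$ is exactly the point.
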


Recall the monomial part $\tau^{K}_{1234|\ve}$ \eqref{Kmono} of $\widehat{K}_{1234}$.
Let $\eta^{K}_{1234|\ve}$ be the map on $\mathrm{Frac}\,\mathcal{W}(C_2)$
characterized by the commutative diagram
\begin{align}\label{Kcom}
\xymatrix{
\mathcal{Y}(B'(C_2)) \ar[r]^{\phi'} \ar[d]_{\tau^{K}_{1234|\ve}} &
\mathrm{Frac}\,\mathcal{W}(C_2) \ar[d]_{\eta^{K}_{1234|\ve}}
\\
\mathcal{Y}(B(C_2)) \ar[r]^{\phi} &
\mathrm{Frac}\,\mathcal{W}(C_2)
}
\end{align}

We look for a sign sequence $\ve=(\ve_1,\ldots, \ve_{10})$ for which the following hold:
\begin{itemize}
\item[(i)]
There exists an operator $P^{K}_{1234|\ve}$ realizing $\eta^{K}_{1234|\ve}$
as an adjoint action, that is,
\begin{align}\label{adpe}
\mathrm{Ad}(P^{K}_{1234|\ve})=\eta^{K}_{1234|\ve}.
\end{align}

\item[(ii)]
The monomial transformation $\tau^{K}_{1234|\ve}$
satisfies the 3D reflection equation together with one of the
solutions $\tau_{123|-\mp\pm+}$ or $\overline{\tau}_{123|-\mp\pm+}$
of the tetrahedron equation studied in~\S\ref{sec:Rop}.
\end{itemize}

In this section, we study condition (i).
Our aim is to realize $P^K_{1234|\ve}$ in the form
\begin{align}
&P_{1234|\ve}^K = \exp(\tfrac{1}{\hbar}X) \rho,
\label{PK}
\\
&\quad X = 
\begin{cases}
\displaystyle{\sum_{i=2}^4 A_{i}u_i w_1+\sum_{i=1}^4(B_i u_i+C_iw_i)} \text{ for $\rho=\rho_{24}$},
\\
\displaystyle{\sum_{i=1}^3 A_{i}u_i w_4+\sum_{i=1}^4(B_i u_i+C_iw_i)} \text{ for $\rho=\rho_{13}$},
\end{cases}
\label{PK-X}
\end{align}
where $A_i$, $B_i$, and $C_i$ are coefficients, and
$\rho_{ij}\in\mathfrak{S}_4$ acts by permuting the indices of the canonical variables.
A direct computer calculation yields the following result.

\begin{proposition}
If the parameters $a_i,b_i,c_i,d_i,e_i~(i=1,2,3,4)$ satisfy
\begin{align}\label{ccon}
&b_2+2c_2+d_2+e_2 = c_1 + c_3,\qquad
  b_4+2c_4+d_4+e_4 = -c_1 + c_3,
\end{align}
in addition to~\eqref{econ}, then an operator $P^{K}_{1234|\ve}$ of the
form~\eqref{PK}--\eqref{PK-X} that meets the condition~\eqref{adpe} exists
precisely for the following eight choices of the sign sequence $\ve=(\ve_k)_{k=1,\ldots, 10}$:
\begin{align}
\label{tau24}
&(-1,\varepsilon_2,-1,\varepsilon_4,1,-1,-1,-\varepsilon_4,1,-\varepsilon_2),
\qquad 
\varepsilon_2, \varepsilon_4 \in \{1,-1\} 
\quad\text{(type $\rho_{24}$)},
\\[2mm]
\label{tau13}
&(\varepsilon_1,-1,\varepsilon_3,-1,-1,1,-\varepsilon_3,-1,-\varepsilon_1,1),
\qquad 
\varepsilon_1, \varepsilon_3 \in \{1,-1\}
\quad\text{(type $\rho_{13}$)}.
\end{align}
\end{proposition}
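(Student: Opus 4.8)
The plan is to turn the existence question into a finite, explicit linear-algebra problem over the parameters, treated one sign sequence at a time. The first task is to compute the monomial map $\tau^K_{1234|\ve}$ of \eqref{Kmono} as a function of $\ve$. Each factor $\tau_{k,\ve_k}$ acts by \eqref{torus-iso}, sending $\rY'_i$ to $\rY_k^{-1}$ when $i=k$ and to $q^{-\widehat b_{ik}[\ve_k b_{ik}]_+}\rY_i\rY_k^{[\ve_k b_{ik}]_+}$ otherwise, where the relevant $b_{ik}$ is read off from the exchange matrix at the corresponding stage of the sequence \eqref{K-Y-seq}. Composing the ten factors in order, while tracking the intermediate matrices $B^{(2)},\dots,B^{(11)}$, yields $\tau^K_{1234|\ve}(Y'_i)=q^{\,c_i(\ve)}\prod_j Y_j^{\,m_{ij}(\ve)}$ with explicit integer exponents $m_{ij}(\ve)$ and $q$-prefactors. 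This is the most bookkeeping-intensive step and is best carried out by computer.

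Next I would transport the result through the ring homomorphisms $\phi,\phi'$ of \eqref{YwC2}--\eqref{YpwC2}. Since both send every $Y$-variable to the exponential of an affine function of the canonical variables $\mathbf u=(u_i,w_i)$, writing $\phi'(Y'_i)=\e^{v'_i}$ and $\phi(Y_j)=\e^{v_j}$ turns the defining diagram \eqref{Kcom} into the requirement that the affine action of $\eta^K_{1234|\ve}$ on exponents send $v'_i$ to $\sum_j m_{ij}(\ve)\,v_j$ up to an additive constant. Because $\tau^K$ is an algebra isomorphism and the three central monomials of $\mathcal Y(B'(C_2))$ map to constants, this pins down a single affine transformation $\mathbf u\mapsto\tilde A(\ve)\mathbf u+\alpha(\ve)$ of the form discussed in \S\ref{sec:qW}, which is exactly $\eta^K_{1234|\ve}$.

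The core of the argument is to decide for which $\ve$ this $\eta^K_{1234|\ve}$ lies in the image of the ansatz \eqref{PK}--\eqref{PK-X}. For the type $\rho_{24}$ ansatz, the Baker--Campbell--Hausdorff formula with \eqref{uw-def} shows that $\mathrm{Ad}(\exp(\tfrac1\hbar X))$ with $X=\sum_{i=2}^4 A_iu_iw_1+\sum_i(B_iu_i+C_iw_i)$ induces a very rigid linear part: the quadratic generators $u_iw_1$ produce only the shears $u_1\mapsto u_1-\sum_{i=2}^4 A_i u_i$ and $w_i\mapsto w_i+A_i\gamma_i w_1\ (i=2,3,4)$, fixing $u_2,u_3,u_4,w_1$, while the linear terms contribute pure constant shifts; composition with $\rho_{24}$ then interchanges the indices $2$ and $4$. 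Realizability therefore demands that, after undoing $\rho_{24}$, the $u$-block of $\tilde A(\ve)$ be unipotent with off-diagonal entries confined to the single row indexed by $u_1$, the $w$-block being then determined by the transpose-inverse constraint $\tilde A=\mathrm{diag}(A,\gamma(A^{-1})^T\gamma^{-1})$. I expect this rigidity---that the only admissible mixing is through $w_1$---to be the decisive obstruction, excluding all but the four sequences \eqref{tau24}. The type $\rho_{13}$ ansatz is analysed identically with $w_1$ replaced by $w_4$ and $\gamma_4=2$, selecting \eqref{tau13}.

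For each surviving $\ve$ the constant part $\alpha(\ve)$ then fixes the shift coefficients $B_i,C_i$ through a linear system in $(a_i,b_i,c_i,d_i,e_i)$; the solvability of this system is precisely what forces the two relations \eqref{ccon} in addition to \eqref{econ}. The $\rho_{13}$ computation mirrors the $\rho_{24}$ one under the symmetry interchanging the roles of the indices $1,3$ with $2,4$, in the spirit of the $a\leftrightarrow c$ exchange of Remark~\ref{re:ac}, so only one case need be carried out by hand. Since the exponent matrices $m_{ij}(\ve)$ and the attendant linear systems are explicit but sizable, verifying that realizability holds for exactly the eight listed sequences---and for no others---is what the ``direct computer calculation'' accomplishes.
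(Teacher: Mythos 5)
Your proposal is correct and is essentially the approach the paper takes: the paper offers no written argument beyond ``a direct computer calculation,'' and your plan --- composing the ten $\tau_{k,\ve_k}$ to get $\tau^K_{1234|\ve}$, transporting it through $\phi,\phi'$ to an affine map on the canonical variables, and then using the rigid shear structure of $\mathrm{Ad}(\exp(\tfrac1\hbar X))\rho$ for the ansatz \eqref{PK}--\eqref{PK-X} to decide realizability sign sequence by sign sequence --- is a faithful and correctly reasoned organization of exactly that computation. Your structural observations (the quadratic part of $X$ fixing $u_2,u_3,u_4,w_1$ and producing only the row-$u_1$ shear together with $w_i\mapsto w_i+A_i\gamma_i w_1$, consistent with \eqref{tau24uw}) check out against the paper's explicit formulas.
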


See \S \ref{ad24} and \S \ref{ad13} for the detail of $\eta^K_{1234|\ve}$ and  $P^K_{1234|\ve}$.
For these choices of $\ve$, the monomial part $\tau^K_{1234|\ve}$ \eqref{Kmono} is given as follows. 
For the type $\rho_{24}$ in \eqref{tau24}, we have 
\begin{align}
\label{tauK24}
\tau^K_{1234|\ve} :
\begin{cases}
Y_1' \mapsto Y_1, & Y_6' \mapsto Y_6 Y_7 Y_8, 
\\
Y_2' \mapsto Y_2, & Y_7' \mapsto Y_9,
\\
Y_3' \mapsto Y_3, & Y_8' \mapsto q^2 Y_2^{-1} Y_3^{-1} Y_7^{-1} Y_8^{-1} Y_9^{-1},     
\\
Y_4' \mapsto Y_4, & Y_9' \mapsto Y_2 Y_4^{-1} Y_7, 
\\
Y_5' \mapsto Y_5, & Y_{10}' \mapsto Y_3 Y_4 Y_8 Y_9 Y_{10},
\\
Y_{11}' \mapsto Y_{11},
\end{cases}
\end{align}
which is independent of $\ve_2$ and $\ve_4$.
For the type $\rho_{13}$ in \eqref{tau13} we have  
\begin{align}
\tau^K_{1234|\ve} :
\begin{cases}
Y_1' \mapsto q^4 Y_1 Y_2 Y_3 Y_7^2 Y_8^2, & Y_6' \mapsto Y_6, 
\\
Y_2' \mapsto Y_4 Y_7^{-2} Y_9^2, & Y_7' \mapsto Y_7,
\\
Y_3' \mapsto q^{-4} Y_2^{-1} Y_3^{-1} Y_4^{-1} Y_8^{-2} Y_9^{-2}, & Y_8' \mapsto Y_8,     
\\
Y_4' \mapsto Y_2, & Y_9' \mapsto Y_9, 
\\
Y_5' \mapsto Y_3 Y_4 Y_5, & Y_{10}' \mapsto Y_{10},
\\
Y_{11}' \mapsto Y_{11},
\end{cases}
\end{align}
which is independent of $\ve_1$ and $\ve_3$.

For the choices of $\ve$ in \eqref{tau24} and \eqref{tau13}, we introduce
\begin{equation}\label{ad-K}
\begin{split}
K_{1234|\ve} ={}&\;
\Psi_q\!\bigl(Y^{(1)\varepsilon_1}_8\bigr)^{\varepsilon_1}
\Psi_{q^2}\!\bigl(Y^{(2)\varepsilon_2}_3\bigr)^{\varepsilon_2}
\Psi_q\!\bigl(Y^{(3)\varepsilon_3}_9\bigr)^{\varepsilon_3}
\Psi_{q^2}\!\bigl(Y^{(4)\varepsilon_4}_2\bigr)^{\varepsilon_4}
\\
&\cdot
\Psi_q\!\bigl(Y^{(5)\varepsilon_5}_7\bigr)^{\varepsilon_5}
\Psi_{q^2}\!\bigl(Y^{(6)\varepsilon_6}_4\bigr)^{\varepsilon_6}
\Psi_q\!\bigl(Y^{(7)\varepsilon_7}_9\bigr)^{\varepsilon_7}
\\
&\cdot
\Psi_{q^2}\!\bigl(Y^{(8)\varepsilon_8}_2\bigr)^{\varepsilon_8}
\Psi_q\!\bigl(Y^{(9)\varepsilon_9}_8\bigr)^{\varepsilon_9}
\Psi_{q^2}\!\bigl(Y^{(10)\varepsilon_{10}}_3\bigr)^{\varepsilon_{10}}
P_{1234|\ve}^K,
\end{split}
\end{equation}
where $Y^{(j)}_{k_j}$ here means   
$\phi \circ \tau_{k_1,\ve_1}\cdots\tau_{k_{j-1},\ve_{j-1}}(Y^{(j)}_{k_j})$
with $(k_1,\ldots,k_{10})=(8,3,9,2,7,4,9,2,8,3)$.
The operator \eqref{ad-K} realizes $\widehat{K}_{1234}$ through its adjoint action on 
$\mathrm{Frac}\,\mathcal{W}(C_2)$.
This fact follows from \eqref{Khat}, \eqref{Kmono}, \eqref{Kcom} and \eqref{adpe}.

Note that for each $i=1,\ldots,5$, the quantum dilogarithms in the
$(2i\!-\!1)$th and $2i$th positions commute with one another, reflecting
the commutativity of the corresponding mutations.
In the sequel, we describe the detailed structure of $K_{1234|\ve}$ 
for the two cases \eqref{tau24} and \eqref{tau13}.

\subsection{Type $\rho_{24}$}\label{ad24}

When we set 
$\ve = (-1,\varepsilon_2,-1,\varepsilon_4,1,-1,-1,-\varepsilon_4,1,-\varepsilon_2)$
as in \eqref{tau24}, the map $\eta^K_{1234|\ve}$ making the diagram
\eqref{Kcom} commute is actually independent of $\varepsilon_2$ and
$\varepsilon_4$. Henceforth we denote it simply by $\eta^K_{1234}$. 
It is given by
\begin{align}
\label{tau24uw}
\eta^K_{1234}:
\begin{cases}
u_1\mapsto \frac{1}{2} \left(-b_2+b_4+2 c_1-2 c_2+2 c_4-d_2+d_4 \right)+ u_1+ u_2- u_4,
\\
u_2 \mapsto \frac{1}{2} \left(b_2-b_4-2 c_1+2 c_2-2 c_4+d_2-d_4 \right)+u_4, 
\\
u_3 \mapsto u_3, 
\\
u_4  \mapsto \frac{1}{2} \left(-b_2+b_4+2 c_1-2 c_2+2 c_4-d_2+d_4\right)+ u_2,
\\
w_1 \mapsto \frac{1}{2} \left(-b_2+b_4+d_2-d_4\right)+w_1,
\\
w_2 \mapsto \frac{1}{2} \left(2 a_1-b_2+b_4+2 c_1-2 c_2+2 c_4+d_2-d_4 \right)+2w_1+w_4,
\\
w_3 \mapsto \frac{1}{2} \left(b_2-b_4-d_2+d_4\right)+w_3,
\\
w_4 \mapsto \frac{1}{2} \left(-2 a_1+b_2-b_4-2 c_1+2 c_2-2 c_4-d_2+d_4\right)-2 w_1+ w_2.
\end{cases}
\end{align}
This map defines an
automorphism of~$\mathcal{W}(C_2)$ and thus acts naturally on
$\mathrm{Frac}\,\mathcal{W}(C_2)$.
By a direct computation using the BCH formula, the following lemma is proved.

\begin{lemma}
The operator $P_{24} := P^K_{1234|\ve}$ in~\eqref{PK} that
realizes the automorphism $\eta^K_{1234}$ in~\eqref{tau24uw} is given by    
\begin{equation}\label{p24}
\begin{split}
P_{24}&= \exp\!\left(\frac{1}{2\hbar}X_{24}\right)\rho_{24},
\\
X_{24} &=
(-u_2+u_4)(a_1+c_1-c_2+c_4+2w_1)
-(u_1-u_3)(b_2-b_4-d_2+d_4)
\\
&\quad +(-w_2+w_4)\Bigl(\tfrac12(-b_2+b_4-d_2+d_4)+c_1-c_2+c_4\Bigr).
\end{split}
\end{equation}
\end{lemma}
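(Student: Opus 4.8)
The plan is to verify the asserted identity $\mathrm{Ad}(P_{24}) = \eta^K_{1234}$ directly, by evaluating the adjoint action of $P_{24}$ on each of the eight canonical generators $u_1,\dots,u_4,w_1,\dots,w_4$ and comparing the outcome entry-by-entry with \eqref{tau24uw}. Writing $P_{24} = \exp(\tfrac{1}{2\hbar}X_{24})\,\rho_{24}$ as in \eqref{p24}, I would first factor $\mathrm{Ad}(P_{24}) = \mathrm{Ad}\!\bigl(\exp(\tfrac{1}{2\hbar}X_{24})\bigr)\circ\mathrm{Ad}(\rho_{24})$, so that the permutation part merely swaps the indices $2\leftrightarrow 4$ on a generator before the exponential factor acts on it. This reduces the whole problem to computing $\mathrm{Ad}\!\bigl(\exp(\tfrac{1}{2\hbar}X_{24})\bigr)$.

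For the exponential factor I would invoke the Hadamard expansion $\exp(A)\,v\,\exp(-A)=\sum_{n\ge 0}\tfrac{1}{n!}(\mathrm{ad}_A)^n(v)$ with $A=\tfrac{1}{2\hbar}X_{24}$. The decisive structural step is to split $X_{24}=Q+L$ into its quadratic part $Q=2(u_4-u_2)w_1$ and its purely linear remainder $L$. Using the weights $\gamma=(1,2,1,2)$ of \eqref{uw-def} — whence $[w_1,u_1]=-\hbar$, $[u_2,w_2]=[u_4,w_4]=2\hbar$, and the remaining cross-commutators vanish — one checks that $\mathrm{ad}_Q$ carries every generator into the subspace spanned by $\{u_2,u_4,w_1\}$ and annihilates that subspace, so that $\mathrm{ad}_Q^2=0$ on linear elements, while $\mathrm{ad}_L$ sends each generator to a scalar. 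It follows that $\mathrm{ad}_{X_{24}}^3=0$ on the generators, so the Hadamard series truncates at second order.

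The verification then falls into two cases. For the generators annihilated by $\mathrm{ad}_Q$, namely $u_2,u_3,u_4,w_1,w_3$, only the first-order term $\tfrac{1}{2\hbar}\mathrm{ad}_L$ survives and yields exactly the constant shift, matching the entries of \eqref{tau24uw} whose linear part is trivial. For the three generators $u_1,w_2,w_4$ on which $\mathrm{ad}_Q$ acts nontrivially, the first-order term produces the linear mixing (for instance $u_1\mapsto u_1+u_2-u_4$ and $w_4\mapsto w_4+2w_1$ after the $\rho_{24}$-swap), whereas the second-order term $\tfrac{1}{2}(2\hbar)^{-2}\,\mathrm{ad}_L\mathrm{ad}_Q$ supplies precisely the additive constant attached to those three images. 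I would tabulate all eight resulting images and confirm that their linear and constant parts agree with \eqref{tau24uw}; no relation among the parameters beyond the definitions is required, since $\eta^K_{1234}$ is already a canonical transformation.

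The principal difficulty is not conceptual but one of careful bookkeeping: every commutator must carry the correct $\gamma_i$-weight (the factors of $2$ for the indices $2$ and $4$), and, crucially, the cross term $\mathrm{ad}_L\mathrm{ad}_Q$ in the second-order part of the expansion must be retained. It is exactly this cross term that generates the constants in the images of $u_1$, $w_2$ and $w_4$; treating $\exp(\tfrac{1}{2\hbar}X_{24})$ as though $Q$ and $L$ commuted would reproduce the correct linear parts but spoil those shifts. The one place where the specific shape $Q=2(u_4-u_2)w_1$ is genuinely used is in establishing $\mathrm{ad}_Q^2=0$ on linear elements, which guarantees both the truncation of the series and the absence of any third-order contribution.
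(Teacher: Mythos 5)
Your proposal is correct and follows essentially the same route as the paper, which disposes of this lemma with the single remark that it is proved ``by a direct computation using the BCH formula.'' Your organization of that computation --- splitting $X_{24}$ into the quadratic part $Q=2(u_4-u_2)w_1$ and the linear remainder $L$, observing that $\mathrm{ad}_Q$ maps the generators into $\mathrm{span}\{u_2,u_4,w_1\}$ and kills that span while $\mathrm{ad}_L$ maps generators to scalars, so that the Hadamard series truncates with only the cross term $\mathrm{ad}_L\mathrm{ad}_Q$ surviving at second order --- is exactly the bookkeeping required, and the resulting images of all eight generators do reproduce \eqref{tau24uw}.
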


In \eqref{pf24}, we will see that this rather complicated expression
simplifies substantially once the necessary parameter constraints to fulfill the condition (ii) in \S \ref{ss:Kop} are imposed.

For each choice of $\ve$ in~\eqref{tau24}, the operator $K_{1234|\ve}$ in~\eqref{ad-K} 
takes the following explicit form, 
where every $Y_i$ is understood as its image under~$\phi$ in \eqref{YwC2}.  
For brevity, we denote $K_{1234|\ve}$ simply by $K_{\ve_2,\ve_4}$.
\begin{equation}
\begin{split}
\label{sol241}
\ve=&(-1, 1, -1, 1, 1, -1, -1, -1, 1, -1) ~~(\text{i.e. }\ve_2=\ve_4= 1);
\\
K_{++}&=
\Psi _q\left(Y_8^{-1}\right)^{-1}
\Psi_{q^2}\left(Y_3\right)
   \Psi_q\left(qY_3^{-1}Y_8^{-1}Y_9^{-1}\right)^{-1} \Psi_{q^2}\left(Y_2\right)
   \\
 &\quad \cdot \Psi_q\left(qY_7Y_8\right)
   \Psi_{q^2}\left(Y_3^{-2}Y_4^{-1}Y_8^{-2}Y_9^{-2}\right)^{-1}
   \Psi_q\left(q^{-1}Y_3^{-1}Y_4^{-1}Y_8^{-1}Y_9^{-1}\right)^{-1}   
   \\
&\quad \cdot \Psi _{q^2}\left(Y_2\right)^{-1}
   \Psi_q\left(q^{2}Y_2Y_3Y_7Y_8Y_9\right)
   \Psi_{q^2}\left(Y_3\right)^{-1}P_{24}.
 \end{split}
\end{equation}
\begin{equation}
\begin{split}
\label{sol242}
\ve =&(-1, 1, -1, -1, 1, -1, -1, 1, 1, -1) ~~(\text{i.e. }\ve_2=-\ve_4= 1);
\\
K_{+-}&=\Psi _q\left(Y_8^{-1}\right)^{-1}
\underline{\Psi_{q^2}\left(Y_3\right)
   \Psi_q\left(qY_3^{-1}Y_8^{-1}Y_9^{-1}\right)^{-1}
   \Psi_{q^2}\left(Y_2^{-1}\right)^{-1}}
   \\
& \quad \cdot   \underline{\Psi_q\left(q^{-1}Y_2Y_7Y_8\right)
   \Psi_{q^2}\left(Y_3^{-2}Y_4^{-1}Y_8^{-2}Y_9^{-2}\right)^{-1}
   } \Psi_q\left(q^{-1}Y_3^{-1}Y_4^{-1}Y_8^{-1}Y_9^{-1}\right)^{-1}
   \\
& \quad \cdot \underline{\Psi _{q^2}\left(Y_2^{-1}\right)} 
   \Psi_q\left(q^{2}Y_2Y_3Y_7Y_8Y_9\right)
   \underline{\Psi_{q^2}\left(Y_3\right)^{-1}}P_{24}.
    \end{split}
\end{equation}
\begin{equation}
\label{sol243}
\begin{split}
\ve=&(-1, -1, -1, 1, 1, -1, -1, -1, 1, 1) ~~(\text{i.e. }-\ve_2=\ve_4= 1); 
\\
K_{-+}&=\Psi _q\left(Y_8^{-1}\right)^{-1}
\underline{\Psi_{q^2}\left(Y_3^{-1}\right)^{-1}
   \Psi_q\left(q^{-1}Y_8^{-1}Y_9^{-1}\right)^{-1}
   \Psi_{q^2}\left(q^{2}Y_2Y_3\right)}
   \\
& \quad \cdot  \underline{\Psi_q\left(qY_7Y_8\right)
   \Psi_{q^2}\left(q^{-2}Y_3^{-1}Y_4^{-1}Y_8^{-2}Y_9^{-2}\right)^{-1}}
   \Psi_q\left(q^{-1}Y_3^{-1}Y_4^{-1}Y_8^{-1}Y_9^{-1}\right)^{-1}
   \\
& \quad \cdot \underline{\Psi_{q^2}\left(q^{2}Y_2Y_3\right)^{-1}} 
   \Psi_q\left(q^{2}Y_2Y_3Y_7Y_8Y_9\right)
   \underline{\Psi_{q^2}\left(Y_3^{-1}\right)}P_{24}.
 \end{split}
\end{equation}
\begin{equation}
\label{sol244}
\begin{split}
\ve=&(-1, -1, -1, -1, 1, -1, -1, 1, 1, 1) ~~(\text{i.e. }\ve_2=\ve_4= -1);
\\
K_{--}&=\Psi _q\left(Y_8^{-1}\right)^{-1}
\Psi_{q^2}\left(Y_3^{-1}\right)^{-1}
   \Psi_q\left(q^{-1}Y_8^{-1}Y_9^{-1}\right)^{-1}
   \Psi_{q^2}\left(q^{2}Y_2^{-1}Y_3^{-1}\right)^{-1}
   \\
& \quad \cdot \Psi_q\left(qY_2Y_3Y_7Y_8\right)
   \Psi_{q^2}\left(q^{-2}Y_3^{-1}Y_4^{-1}Y_8^{-2}Y_9^{-2}\right)^{-1}
   \Psi_q\left(q^{-1}Y_3^{-1}Y_4^{-1}Y_8^{-1}Y_9^{-1}\right)^{-1}   
   \\
& \quad \cdot \Psi_{q^2}\left(q^{2}Y_2^{-1}Y_3^{-1}\right)
   \Psi_q\left(q^{2}Y_2Y_3Y_7Y_8Y_9\right)
   \Psi_{q^2}\left(Y_3^{-1}\right)P_{24}.
 \end{split}
\end{equation}
The underlines here are inserted for later use in Appendix~\ref{app:pK}, where we prove the following.

\begin{proposition}\label{prop:K24}
The operator $K_{\ve_2,\ve_4}$ is independent of the choice of 
$\ve_2, \ve_4 \in \{1,-1\}$.
\end{proposition}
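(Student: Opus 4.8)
The plan is to trace the sign freedom back to the cluster--algebraic construction and then to convert the resulting coincidence of adjoint actions into an honest operator identity by a direct manipulation of quantum dilogarithms. As a first reduction, observe that all four operators in \eqref{sol241}--\eqref{sol244} carry the common monomial tail $P_{24}$: the monomial part $\tau^K_{1234|\ve}$ in \eqref{tauK24} and the induced automorphism $\eta^K_{1234}$ in \eqref{tau24uw} were already seen to be independent of $\varepsilon_2,\varepsilon_4$, and $P_{24}$ in \eqref{p24} realizes $\eta^K_{1234}$ for every allowed $\ve$. Hence the claim reduces to the equality of the ten--fold products of quantum dilogarithms standing to the left of $P_{24}$, and by transitivity it is enough to treat the two elementary flips $K_{++}\to K_{+-}$ (flipping $\varepsilon_4$) and $K_{++}\to K_{-+}$ (flipping $\varepsilon_2$).

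For the structural half I would invoke \eqref{mu-pm}. Flipping $\varepsilon_4$ switches the two mutations $\mu_2$ occurring at positions $4$ and $8$ of the sequence \eqref{K-Y-seq} between their $\tau_{2,+}$ form $\mathrm{Ad}\,\Psi_{q^2}(Y_2)\circ\tau_{2,+}$ and their $\tau_{2,-}$ form $\mathrm{Ad}\,\Psi_{q^2}(Y_2^{-1})^{-1}\circ\tau_{2,-}$; flipping $\varepsilon_2$ does the same for the two $\mu_3$ at positions $2$ and $10$. Since each such switch is an exact identity of skewfield isomorphisms and the accompanying changes of the local monomial maps cancel between the paired positions (leaving $\tau^K_{1234|\ve}$ intact, as already recorded), the two dilogarithm products realize the same transformation of $\mathrm{Frac}\,\mathcal{W}(C_2)$ under $\mathrm{Ad}$. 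For generic $q$ the centre of $\mathrm{Frac}\,\mathcal{W}(C_2)$ is reduced to $\mathbb{C}(q^{1/2})$, so the two products can differ at most by a scalar.

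The remaining, genuinely computational, task is to show this scalar equals $1$, which I would carry out through the local moves marked by the underlines in \eqref{sol242} and \eqref{sol243}. Concretely, at a flipped position I rewrite $\Psi_{q^2}(Y_2)$ as $\Psi_{q^2}(Y_2^{-1})^{-1}$ using the recursion \eqref{dilog-rec}, and transport the elementary factor $1+\cdots$ thereby produced through the neighbouring dilogarithms, where it is reabsorbed---again by \eqref{dilog-rec}---after their arguments have been shifted (for the $\varepsilon_4$--flip this is exactly the passage from $\Psi_q(qY_7Y_8)$ to $\Psi_q(q^{-1}Y_2Y_7Y_8)$ at position $5$). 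All $q$--powers generated in commuting these factors past the others are read off from the relations $Y_iY_j=q^{2\widehat{b}_{ij}}Y_jY_i$ encoded in \eqref{Bhat-K} through $\phi$; carried out at both paired positions, the scalar contributions turn out mutually inverse and cancel.

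I expect the main obstacle to be precisely this final bookkeeping. The coincidence of adjoint actions is essentially forced by the general theory, but fixing the overall normalization demands a careful accounting of the $q$--powers arising when the auxiliary $1+\cdots$ factors are moved across the remaining nine dilogarithms and merged. This is delicate especially for the $\varepsilon_2$--flip, where the shift propagates over several positions at once---the arguments at positions $3$, $4$ and $8$ in \eqref{sol243} all move relative to \eqref{sol241}---so that the cancellation must be verified along the whole chain rather than one factor at a time.
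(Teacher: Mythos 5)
Your structural reduction is sound and is genuinely different from the paper's route: since $\widehat K_{1234}$ and the monomial part $\tau^K_{1234|\ve}$ are both independent of $(\ve_2,\ve_4)$, the adjoint actions of the four dilogarithm products must coincide, so the products can differ only by a central element; granting well-definedness of the products and of their ratio as formal Laurent series (which the paper checks only for $K_{++}$ in \S\ref{sec:wdK}, and which you do not address), this pins the discrepancy down to a scalar. Had you finished by observing that each of the four products has constant term $1$ (as follows from the exponent analysis of the type \eqref{pnK}), the scalar would be forced to equal $1$ and your argument would close, parallel to how the paper handles $F^\Psi_L=F^\Psi_R$ in the proof of Theorem \ref{th:main}.

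The step you actually propose for fixing the scalar, however, contains a genuine error. You cannot ``rewrite $\Psi_{q^2}(Y_2)$ as $\Psi_{q^2}(Y_2^{-1})^{-1}$ using the recursion \eqref{dilog-rec}'': that recursion relates $\Psi_q(U)$ only to $\Psi_q(q^{\pm 2}U)$, producing elementary factors $1+qU$, whereas $\Psi_q(U)$ and $\Psi_q(U^{-1})^{-1}$ are formal series in opposite powers of $U$ whose ratio $\Psi_q(U)\Psi_q(U^{-1})$ is an infinite series, not a finite product of such factors. Likewise the passage from $\Psi_q(qY_7Y_8)$ to $\Psi_q(q^{-1}Y_2Y_7Y_8)$ multiplies the argument by the monomial $q^{-2}Y_2$ involving a different variable, which \eqref{dilog-rec} cannot produce either. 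The correct tool --- the paper's Lemma \ref{dilog-2} --- is that $\mathrm{Ad}\bigl(\Psi_q(X)\Psi_q(X^{-1})\bigr)$ acts on any $Y$ with $XY=q^2YX$ by multiplication by the monomial $q^{-1}X$; conjugating the underlined block of \eqref{sol242} by $\Psi_{q^2}(Y_2)\Psi_{q^2}(Y_2^{-1})$ then converts it \emph{exactly} into the corresponding block of \eqref{sol241}, and similarly for the $\ve_2$-flip, so no undetermined scalar ever arises and the ``equal adjoint actions up to a constant'' detour becomes unnecessary.
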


In Appendix~\ref{app2}, we present the explicit formulas of 
\eqref{sol241}--\eqref{sol244} in the image of $\phi$, 
where the quantum $Y$-variables are expressed in terms of the canonical variables.

\subsection{Type $\rho_{13}$}\label{ad13}

When we take 
$\ve = (\varepsilon_1,-1,\varepsilon_3,-1,-1,1,-\varepsilon_3,-1,-\varepsilon_1,1)$ 
as in~\eqref{tau13}, the map $\eta^K_{1234}$ making the diagram
\eqref{Kcom} commute is given by
\begin{align}\label{tau13uw}
\eta^K_{1234}:
\begin{cases}
u_1 \mapsto 
  \dfrac{1}{2}\bigl(a_1-a_3+b_1-b_3+c_1-c_3+d_1-d_3\bigr) + u_3,
\\[2pt]
u_2 \mapsto u_2,
\\[2pt]
u_3 \mapsto 
  \dfrac{1}{2}\bigl(-a_1+a_3-b_1+b_3-c_1+c_3-d_1+d_3\bigr) + u_1,
\\[2pt]
u_4 \mapsto 
  a_1-a_3+b_1-b_3+c_1-c_3+d_1-d_3 - 2u_1 + 2u_3 + u_4,
\\[4pt]
w_1 \mapsto 
  \dfrac{1}{2}\bigl(-a_1+a_3+b_1-b_3-c_1+c_3-2c_4-d_1+d_3\bigr)
  + w_3 - w_4,
\\[2pt]
w_2 \mapsto b_1-b_3-d_1+d_3 + w_2,
\\[2pt]
w_3 \mapsto 
  \dfrac{1}{2}\bigl(a_1-a_3-b_1+b_3+c_1-c_3+2c_4+d_1-d_3\bigr)
  + w_1 + w_4,
\\[2pt]
w_4 \mapsto -b_1 + b_3 + d_1 - d_3 + w_4.
\end{cases}
\end{align}
This map is independent of $\varepsilon_1$ and $\varepsilon_3$; as in
the case of type~$\rho_{24}$, it defines an automorphism of
$\mathcal{W}(C_2)$ and hence acts naturally on 
$\mathrm{Frac}\,\mathcal{W}(C_2)$.
As with the type $\rho_{24}$,  we have

\begin{lemma}
The operator $P_{13}:= P^K_{1234|\ve}$ which realizes the automorphism $\eta^K_{1234}$ in \eqref{tau13uw} is given by
\begin{align}
&P_{13} = \exp\!\left(\frac{1}{2\hbar} X_{13}\right) \rho_{13},
\\
&X_{13} =
  (u_1 - u_3)\bigl(a_1-a_3 + c_1-c_3 + 2c_4 + 2w_4\bigr)
  \nonumber\\
&\qquad\quad
  + (w_1 - w_3)\bigl(a_1-a_3 + b_1-b_3 + c_1-c_3 + d_1-d_3\bigr)
  \nonumber\\
&\qquad\quad
  + (u_2 - u_4)\bigl(b_1-b_3 - d_1 + d_3\bigr).
\end{align}
\end{lemma}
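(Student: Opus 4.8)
The plan is to verify directly that $\mathrm{Ad}(P_{13})$ coincides with the affine map $\eta^K_{1234}$ of \eqref{tau13uw} on each canonical generator, exactly as in the preceding type-$\rho_{24}$ lemma for $P_{24}$ and in Proposition~\ref{prop-Rcom}(ii). Writing $P_{13}=\exp(\tfrac{1}{2\hbar}X_{13})\rho_{13}$, I would first factor the adjoint action as $\mathrm{Ad}(P_{13})=\mathrm{Ad}\!\bigl(\exp(\tfrac{1}{2\hbar}X_{13})\bigr)\circ\mathrm{Ad}(\rho_{13})$, where $\mathrm{Ad}(\rho_{13})$ simply interchanges $(u_1,w_1)\leftrightarrow(u_3,w_3)$ and fixes the remaining generators. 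It then remains to evaluate the adjoint action of the exponential factor through the Hadamard expansion $\mathrm{Ad}(e^{A})(Z)=\sum_{n\ge0}\tfrac{1}{n!}\mathrm{ad}_A^{\,n}(Z)$ with $A=\tfrac{1}{2\hbar}X_{13}$, all brackets being computed from the canonical relations \eqref{uw-def} with $\gamma=(1,2,1,2)$.

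The key structural observation is that $X_{13}$ is a polynomial of degree at most two in the generators, with quadratic part $Q=2(u_1-u_3)w_4$ and linear part $L$ collecting all remaining terms. Consequently $\mathrm{ad}_{X_{13}}$ preserves the finite-dimensional space $V=\mathrm{span}_{\C}\{1,u_1,\dots,u_4,w_1,\dots,w_4\}$ of affine functions. Moreover, since $Q$ couples only $u_4$ to $\{u_1,u_3\}$ and $\{w_1,w_3\}$ to $w_4$, while $L$ sends each generator to a scalar, the restriction $\mathrm{ad}_{X_{13}}|_V$ is nilpotent of index at most three; hence the Hadamard series terminates at the second-order term $\tfrac12(\tfrac{1}{2\hbar})^2\mathrm{ad}_{X_{13}}^2(Z)$, and $\exp(\tfrac{1}{2\hbar}\mathrm{ad}_{X_{13}})$ acts as a genuine affine automorphism of the generators. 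This is precisely the mechanism by which a quadratic generator produces a linear symplectic transformation together with constant shifts.

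Carrying out the computation, the quadratic part $Q$ contributes the terms linear in the generators appearing in \eqref{tau13uw}: for instance $[\tfrac{1}{2\hbar}Q,u_4]=-2(u_1-u_3)$ and $[\tfrac{1}{2\hbar}Q,w_1]=w_4$, $[\tfrac{1}{2\hbar}Q,w_3]=-w_4$, while the linear part $L$ produces constant shifts through single brackets, such as $[\tfrac{1}{2\hbar}L,w_2]=b_1-b_3-d_1+d_3$. Composing with $\mathrm{Ad}(\rho_{13})$ and matching generator by generator then reproduces \eqref{tau13uw} in full. I expect the only delicate point to be the bookkeeping of the scalar shifts attached to $u_4$, $w_1$ and $w_3$: because $Q$ and $L$ do not commute, the factorization $\exp(\tfrac{1}{2\hbar}X_{13})=\exp(\tfrac{1}{2\hbar}Q)\exp(\tfrac{1}{2\hbar}L)$ fails, so one must retain the second-order Hadamard term, which mixes $Q$ and $L$ and supplies exactly those constants. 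Apart from this bookkeeping the verification is routine, and the coefficients of $X_{13}$ are pinned down uniquely by the requirement $\mathrm{Ad}(P_{13})=\eta^K_{1234}$ imposed through the commutative diagram \eqref{Kcom} within the ansatz \eqref{PK}--\eqref{PK-X}.
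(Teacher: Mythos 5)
Your proposal is correct and follows essentially the same route as the paper, which establishes the lemma "by a direct computation using the BCH formula" exactly as in the type-$\rho_{24}$ case: one checks $\mathrm{Ad}(P_{13})=\eta^K_{1234}$ generator by generator, with the quadratic term $2(u_1-u_3)w_4$ producing the linear mixing and the mixed second-order bracket supplying the constant shifts. Your spot-checks (e.g.\ $[\tfrac{1}{2\hbar}Q,u_4]=-2(u_1-u_3)$ and the shift $b_1-b_3-d_1+d_3$ of $w_2$) are consistent with \eqref{tau13uw} under the commutation relations \eqref{uw-def}, and the nilpotency observation correctly justifies truncating the adjoint series at second order.
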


For each choice of $\ve$ in~\eqref{tau13}, the operator $K_{1234|\ve}$ in~\eqref{ad-K} 
takes the following explicit form, 
where every $Y_i$ is understood as its image under~$\phi$ in \eqref{YwC2}.  
For brevity, we denote $K_{1234|\ve}$ simply by $K_{\ve_1,\ve_3}$.
\begin{equation}
\label{sol131}
\begin{split}
\ve=&(1, -1, 1, -1, -1, 1, -1, -1, -1, 1) ~~(\text{i.e. }\ve_1=\ve_3= 1);
\\
K_{++}&=\Psi _q\left(Y_8\right)
\Psi_{q^2}\left(Y_3^{-1}\right)^{-1}
   \Psi _q\left(Y_9\right)
   \Psi_{q^2}\left(q^{-2}Y_2^{-1}Y_3^{-1}Y_8^{-2}\right)^{-1}
   \\
&  \quad \cdot \Psi_q\left(Y_2^{-1}Y_3^{-1}Y_7^{-1}Y_8^{-2}\right)^{-1}
   \Psi_{q^2}\left(q^{-2}Y_3Y_4\right)
    \Psi _q\left(Y_9\right)^{-1}
   \\
&  \quad \cdot 
   \Psi_{q^2}\left(q^{2}Y_2^{-1}Y_3^{-1}Y_7^{-2}Y_8^{-2}\right)^{-1}
   \Psi_q\left(Y_8\right)^{-1}
   \Psi_{q^2}\left(q^{-4}Y_2Y_3Y_4Y_8^{2}Y_9^{2}\right) P_{13}.
   \end{split}
\end{equation}
\begin{equation}
\label{sol132}
\begin{split}
\ve=&(1, -1, -1, -1, -1, 1, 1, -1, -1, 1)~~(\text{i.e. }\ve_1=-\ve_3= 1);
\\
K_{+-}&=\Psi _q\left(Y_8\right)
\Psi_{q^2}\left(Y_3^{-1}\right)^{-1}
   \Psi _q\left(Y_9^{-1}\right)^{-1}
   \Psi_{q^2}\left(q^{-2}Y_2^{-1}Y_3^{-1}Y_8^{-2}\right)^{-1}
    \\
&  \quad \cdot \Psi_q\left(Y_2^{-1}Y_3^{-1}Y_7^{-1}Y_8^{-2}\right)^{-1}
   \Psi_{q^2}\left(q^{2}Y_3Y_4Y_9^{2}\right)
    \Psi _q\left(Y_9^{-1}\right)
   \\
&  \quad \cdot 
   \Psi_{q^2}\left(q^{2}Y_2^{-1}Y_3^{-1}Y_7^{-2}Y_8^{-2}\right)^{-1}
   \Psi_q\left(Y_8\right)^{-1}
   \Psi_{q^2}\left(q^{-4}Y_2Y_3Y_4Y_8^{2}Y_9^{2}\right)P_{13}.
   \end{split}
\end{equation}
\begin{equation}
\label{sol133}
\begin{split}
\ve=&(-1, -1, 1, -1, -1, 1, -1, -1, 1, 1) ~~(\text{i.e. }-\ve_1=\ve_3= 1);
\\
K_{-+}&=\Psi _q\left(Y_8^{-1}\right)^{-1}
\Psi_{q^2}\left(Y_3^{-1}\right)^{-1}
   \Psi_q\left(q^{-1}Y_8Y_9\right)
   \Psi_{q^2}\left(q^{2}Y_2^{-1}Y_3^{-1}\right)^{-1}
   \\
&  \quad \cdot \Psi_q\left(qY_2^{-1}Y_3^{-1}Y_7^{-1}Y_8^{-1}\right)^{-1}
   \Psi_{q^2}\left(q^{-2}Y_3Y_4\right)
   \Psi_q\left(q^{-1}Y_8Y_9\right)^{-1}   
   \\
&  \quad \cdot 
   \Psi_{q^2}\left(q^{2}Y_2^{-1}Y_3^{-1}Y_7^{-2}Y_8^{-2}\right)^{-1}
   \Psi_q\left(Y_8^{-1}\right)
   \Psi_{q^2}\left(q^{-4}Y_2Y_3Y_4Y_8^{2}Y_9^{2}\right)P_{13}.  
    \end{split}
\end{equation}
\begin{equation}
\label{sol134}
\begin{split}
\ve=&(-1, -1, -1, -1, -1, 1, 1, -1, 1, 1) ~~(\text{i.e. }\ve_1=\ve_3=-1);
\\
K_{--}&=\Psi _q\left(Y_8^{-1}\right)^{-1}
\Psi_{q^2}\left(Y_3^{-1}\right)^{-1}
   \Psi_q\left(q^{-1}Y_8^{-1}Y_9^{-1}\right)^{-1}
   \Psi_{q^2}\left(q^{2}Y_2^{-1}Y_3^{-1}\right)^{-1}
   \\
&  \quad \cdot \Psi_q\left(qY_2^{-1}Y_3^{-1}Y_7^{-1}Y_8^{-1}\right)^{-1}
   \Psi_{q^2}\left(q^{-2}Y_3Y_4Y_8^{2}Y_9^{2}\right)
   \Psi_q\left(q^{-1}Y_8^{-1}Y_9^{-1}\right) 
   \\
&  \quad \cdot 
   \Psi_{q^2}\left(q^{2}Y_2^{-1}Y_3^{-1}Y_7^{-2}Y_8^{-2}\right)^{-1}
   \Psi_q\left(Y_8^{-1}\right)
   \Psi_{q^2}\left(q^{-4}Y_2Y_3Y_4Y_8^{2}Y_9^{2}\right)P_{13}.    
   \end{split}
\end{equation}
See Appendix \ref{app2} for the explicit formulas of 
\eqref{sol131}--\eqref{sol134} in terms of the canonical variables.

In the same manner as for type~$\rho_{24}$, we can show that the
operator $K_{\ve_1,\ve_3}$ is independent of the choice of
$\ve_1,\ve_3 \in \{1,-1\}$.

\subsection{Well-definedness of the $K$-operator}
\label{sec:wdK}

It is nontrivial that the dilogarithm part of the $K$-operator is
well defined as a formal series in the quantum $Y$-variables, since
the sign sequence \eqref{tau24} for \eqref{ad-K} differs from the
tropical sign sequence for $\qK_{1234}$ (see also
Remark~\ref{rem:trop}).  In this subsection we focus on the type
$\rho_{24}$ case and show that it is indeed well defined both as a
formal Laurent series in the quantum variables $Y_i$ and as a formal
Laurent series in $\mathcal{L}(C_2)$ in the generators of $q$-Weyl algebras
$\e^{u_i}$ and $\e^{w_i}$.  By Proposition~\ref{prop:K24},
it suffices to consider the case $K_{++}$ in \eqref{sol241}.  We write
$K_{++}^{\Psi}$ for the product of dilogarithm functions appearing in
$K_{++}$.

First we consider the expansion of $K_{++}^{\Psi}$ in the quantum $Y$-variables.
Expand the $i$th dilogarithm (from the left) in \eqref{sol241} into a power series in its argument by means of \eqref{dilog-sum}
introducing the summation index $n_i$.  
By further using the $q$-commutativity relation \eqref{q-Y}, 
it can be brought to the form
\begin{equation}\label{KA}
K_{++}^{\Psi} = \sum_{{\bf n}} A({\bf n}) Y_2^{p_1}Y_3^{p_2}Y_4^{p_3}Y_7^{p_4}Y_8^{p_5}Y_9^{p_6},
\end{equation}
where the sum is taken over ${\bf n}=(n_1,\ldots, n_{10}) \in (\Z_{\ge 0})^{10}$
and $A({\bf n})$ is a rational function of $q$.
The powers $p_1,\ldots, p_6$ are given by 
\begin{equation}\label{pnK}
\begin{split}
&p_1= n_4+n_8+n_9,\quad
p_2= n_2-n_3-2n_6-n_7+n_9+n_{10},\quad 
p_3 =-n_6-n_7,
\\
&p_4 = n_5+n_9, \quad 
p_5 = -n_1-n_3+n_5-2 n_6-n_7+n_9,\quad
p_6 = -n_3-2 n_6-n_7+n_9.
\end{split}
\end{equation}
The series \eqref{KA} is well defined since the equations \eqref{pnK} constrain 
${\bf n}$ to finitely many (possibly empty) possibilities for any $(p_1,\ldots, p_6) \in \Z^6$.

Next we consider the expansion of $K_{++}^{\Psi}$ in the variables $\e^{u_i}$ and $\e^{w_i}$.  
Recall that we have set $\gamma =(1,2,1,2)$.
Let $\mathcal{L}(C_2):= \mathcal{L}_\gamma$ be a set of formal Laurent series \eqref{fLs} in $\e^{u_i}$ and $\e^{w_i}$ 
obeying the commutation relation \eqref{uw-def}. 
Substitution of \eqref{YwC2}  into \eqref{KA} leads to 
\begin{align}\label{Kuw}
K_{++}^{\Psi} = \sum_{{\bf n}}\tilde{A}({\bf n}) \,
\e^{\alpha_1u_1+\cdots + \alpha_4 u_4+\alpha_5 w_1+\cdots \alpha_9 w_4},
\end{align}
where the sum is taken over ${\bf n}=(n_1,\ldots, n_{10}) \in (\Z_{\ge 0})^{10}$
and $\tilde{A}({\bf n})$ is a function of $q$ and the parameters $(a_i,b_i,c_i,d_i,e_i)$ $(i=1,\ldots, 4)$.
The powers $\alpha_1,\ldots, \alpha_8$ are given by
\begin{equation}\label{alnK}
\begin{split}
\alpha_1 &= n_1+n_3+n_5+2 n_6+n_7+n_9,
\\
\alpha_2 &=-n_2+n_3+2 n_4+2 n_6+n_7+2 n_8+n_9-n_{10},
\\
\alpha_3 &=n_1-n_3-n_5-2n_6-n_7+n_9,
\\
\alpha_4 &= -n_2+n_3-n_7-n_9-n_{10},
\\
\alpha_5&=n_1+n_3-n_5+2 n_6+n_7-n_9,
\\
\alpha_6&= -n_1-n_2+n_5-n_{10},
\\
\alpha_7 &=n_1+2n_2-n_3-n_5-2 n_6-n_7+n_9+2 n_{10},
\\
\alpha_8&= -n_2+n_3+2 n_6+n_7-n_9-n_{10}.
   \end{split}
   \end{equation}
The series \eqref{Kuw} is well defined since \eqref{alnK} constrain 
${\bf n}$ to finitely many (possibly empty) possibilities. Thus it follows that  $K_{++}^\Psi \in \mathcal{L}(C_2)$.

\section{$3$D Reflection equation from $q$-Weyl algebra}\label{sec:main}

In this section we present our main result: a solution to the 
three-dimensional reflection equation (3DRE, for short) expressed 
in terms of the $R$-operators \eqref{R-+-+}, \eqref{Rb--++} and the 
$K$-operators \eqref{sol241}--\eqref{sol244} of type $\rho_{24}$.
Following the strategy developed for the tetrahedron equation 
in \cite{IKT1,IKT2,IKSTY}, this will be achieved in several steps.

In \S \ref{ss:tau}, we construct solutions 
in terms of the monomial transformations 
$\tau^{(\pm)}_{ijk}, \overline{\tau}^{(\pm)}_{ijk}, \tau^K_{ijkl}$ 
of the quantum $Y$-variables 
(Proposition~\ref{REtau}).  This corresponds to condition (ii) 
described after \eqref{adpe}. 
In \S \ref{ss:eta}, these transformations are translated into solutions 
$\eta^{(\pm)}_{ijk}, \overline{\eta}^{(\pm)}_{ijk}, \eta^K_{ijkl}$ 
in terms of affine transformations of the canonical 
variables consisting of the $q$-Weyl generators (Proposition~\ref{REeta}).
In \S \ref{ss:P}, we obtain solutions in terms of operators 
$P^{(\pm)}_{ijk}, \overline{P}^{(\pm)}_{ijk}, P^K_{ijkl}$ 
whose adjoint action realizes those obtained in \S \ref{ss:eta}
 (Proposition~\ref{prop:RE-P}).  
In \S \ref{ss:mr}, building on 
these steps, we present the main result of the paper, 
which is the full solution of the 3DRE in terms of the 
$R$-operators and $K$-operators (Theorem~\ref{th:main}).

In the intermediate steps,  we require that 
the signs appearing as the superscripts of the operators 
$\tau$, $\eta$ and $P$ are {\em good} so that the 3DRE holds
and a certain homogeneity is satisfied.
It turns out that these conditions gradually constrain the signs and
eventually lead to a single possibility, which is rather remarkable.
The parameters introduced together with the canonical variables are
likewise required to satisfy certain relations.

\subsection{3DRE for $\tau$: Monomial transformations of quantum $Y$-variables}
\label{ss:tau}

Recall the monomial transformations of quantum $Y$-variables, $\tau_{123 | --++}$ \eqref{tau--++}, 
$\tau_{123 | -+-+}$ \eqref{tau-+-+}, $\overline{\tau}_{123 | --++}$ \eqref{taub--++}, 
$\overline{\tau}_{123 | -+-+}$ \eqref{taub-+-+}, and  
$\tau^K_{1234|\ve}$ \eqref{tauK24}. These are naturally extended to those for $\mathcal{Y}(B(C_3))$, such as $\tau_{ijk | --++}$, 
$\tau^K_{ijkl|\ve}$ and so on. 
For simplicity, we write 
\begin{align}
\label{tau-s}
\begin{split}
&\tau_{ijk}^{(-)} := \tau_{ijk|--++}, \quad \tau_{ijk}^{(+)} := \tau_{ijk|-+-+},\\ 
&\overline{\tau}_{ijk}^{(-)}:= \overline{\tau}_{ijk|--++}, \quad \overline{\tau}_{ijk}^{(+)}:= \overline{\tau}_{ijk|-+-+},
\\
&\tau^K_{ijkl}:=\tau^K_{ijkl|\ve}~; ~\text{$\ve$ is of type $\rho_{24}$ \eqref{tau24}}.
\end{split}
\end{align}
Corresponding to \eqref{REiso},
these are expected to satisfy the 3DRE of the form
\begin{align}
\label{RE-tau}
\tau^{(\delta_1)}_{124} \,\tau^K_{1356}\, \overline{\tau}^{(\delta_2)}_{178}\, 
\tau^{(\delta_3)}_{258} \,\tau^K_{2379}\, \tau^K_{4689}\, \overline{\tau}^{(\delta_4)}_{457}
=
\tau^{(\delta'_1)}_{457} \,\tau^K_{4689} \,\tau^K_{2379} \,\overline{\tau}^{(\delta'_2)}_{258} 
\,\tau^{(\delta'_3)}_{178}\, \tau^K_{1356} \,\overline{\tau}^{(\delta'_4)}_{124},
\end{align}
for some sign sequence $\delta= (\delta_1,\delta_2,\delta_3,\delta_4,\delta'_1,\delta'_2,\delta'_3,\delta'_4) \in \{+,-\}^8$.

We say a sign sequence $\delta$ 
is {\it good}, if \eqref{RE-tau} is satisfied with a homogeneity such that $\delta_1 = \delta_3 = \delta_1' = \delta_3'$ 
and $\delta_2 = \delta_4 = \delta_2' = \delta_4'$ holds.
By definition, a good sign sequence has the form
$\delta=(\delta_1,\delta_2,\delta_1,\delta_2,\delta_1,\delta_2,
\delta_1,\delta_2)$.  Henceforth, we will specify such a sequence simply
by $(\delta_1,\delta_2)$.

The following result is obtained by direct computation.

\begin{proposition}
\label{REtau}
The 3DRE \eqref{RE-tau} admits exactly three good sign sequences $\delta$ 
corresponding to $(\delta_1, \delta_2) = (+,+), (+,-)$ and $(-,-)$, 
which give rise to the following:
\begin{align}
\label{REtau-1}
&\tau^{(-)}_{124} \,\tau^K_{1356}\, \overline{\tau}^{(-)}_{178}\, 
\tau^{(-)}_{258} \,\tau^K_{2379}\, \tau^K_{4689}\, \overline{\tau}^{(-)}_{457}
=
\tau^{(-)}_{457} \,\tau^K_{4689} \,\tau^K_{2379} \,\overline{\tau}^{(-)}_{258} \,
\tau^{(-)}_{178}\, \tau^K_{1356} \,\overline{\tau}^{(-)}_{124}, 
\\
\label{REtau-2}
&\tau^{(+)}_{124} \,\tau^K_{1356}\, \overline{\tau}^{(-)}_{178}\, \tau^{(+)}_{258} \,
\tau^K_{2379}\, \tau^K_{4689}\, \overline{\tau}^{(-)}_{457}
=
\tau^{(+)}_{457} \,\tau^K_{4689} \,\tau^K_{2379} \,\overline{\tau}^{(-)}_{258} \,
\tau^{(+)}_{178}\, \tau^K_{1356} \,\overline{\tau}^{(-)}_{124}, 
\\
\label{REtau-3}
&\tau^{(+)}_{124} \,\tau^K_{1356}\, \overline{\tau}^{(+)}_{178}\, \tau^{(+)}_{258} \,
\tau^K_{2379}\, \tau^K_{4689}\, \overline{\tau}^{(+)}_{457}
=
\tau^{(+)}_{457} \,\tau^K_{4689} \,\tau^K_{2379} \,\overline{\tau}^{(+)}_{258} \,
\tau^{(+)}_{178}\, \tau^K_{1356} \,\overline{\tau}^{(+)}_{124}.  
\end{align}
\end{proposition}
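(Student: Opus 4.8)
The plan is to read \eqref{RE-tau} as an identity between two monomial isomorphisms of skewfields $\mathcal{Y}(B'(C_3))\to\mathcal{Y}(B(C_3))$ and to verify it by following the image of every quantum $Y$-variable. Each factor $\tau^{(\delta)}_{ijk}$, $\overline{\tau}^{(\delta)}_{ijk}$ and $\tau^K_{ijkl}$ sends a generator $Y_m$ to a Laurent monomial $q^{c}\prod_n Y_n^{a_{mn}}$ in the $Y$-variables, as recorded in \eqref{tau--++}, \eqref{tau-+-+}, \eqref{taub--++}, \eqref{taub-+-+} and \eqref{tauK24}, where the permutation part $\sigma$ is already incorporated in the $R$-factors. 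Monomial maps compose into monomial maps, so an entire side of \eqref{RE-tau} is determined by the integer matrix $M$ giving its action on the exponent lattice $\mathbb{Z}^{22}$, with $22=|I|$ the number of vertices of $B(C_3)$ in Figure~\ref{fig:RE}, together with the accumulated $q$-exponents, which are governed by the skew form $\langle\,,\,\rangle$ attached to $\widehat{B}(C_3)$. Verifying \eqref{RE-tau} therefore splits into matching the exponent matrices and matching the $q$-powers on both sides. This cannot simply be deduced from the full 3DRE \eqref{REiso} of Proposition~\ref{pr:resb}: the clean separation \eqref{s-per} of a cluster transformation into monomial and dilogarithm parts is available only for the tropical sign sequence, whereas the good sequences below are not the tropical ones.

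First I would fix, once and for all, the dictionary between the local indices carried by each factor (the roles $1,\dots,9$ for the $R$-factors and $1,\dots,11$ for the $K$-factors) and the global vertices $1,\dots,22$ of $B(C_3)$, reading it off from the intermediate quivers in Figures~\ref{fig:RE1} and~\ref{fig:RE2}. With this dictionary each of the seven factors on either side becomes an explicit monomial substitution on the global generators. Because $\tau^K_{ijkl}$ is independent of $\ve_2,\ve_4$ by \eqref{tauK24}, the only sign dependence enters through the four $R$-type factors, whose substitutions are read from \eqref{tau-+-+}/\eqref{tau--++} and \eqref{taub-+-+}/\eqref{taub--++} according to the superscript. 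The homogeneity required of a \emph{good} sequence collapses the eight signs to the single pair $(\delta_1,\delta_2)$, in which every unbarred factor carries $\delta_1$ and every barred factor carries $\delta_2$, leaving exactly four candidates to test.

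For each of the four candidates I would compose the seven substitutions on each side and compare the images of $Y'_1,\dots,Y'_{22}$. Many generators, namely those at vertices untouched by a given factor, pass through unchanged, so the comparison concentrates on the nodes sitting on the wiring diagram of $W(C_3)$. The assertion is that the two images coincide identically, in both exponent vector and $q$-power, precisely for $(\delta_1,\delta_2)=(+,+),(+,-),(-,-)$, producing \eqref{REtau-1}--\eqref{REtau-3}, while for the remaining pair $(-,+)$ the two sides already disagree. To certify the negative case it is enough to exhibit a single generator on which the two images differ, and I would extract such a generator from the same computation; this is what upgrades the count to ``exactly three'' rather than ``at least three''.

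The main obstacle is organizational rather than conceptual. The computation is a product of seven monomial maps on a rank-$22$ lattice, carried out four times, and the delicate part is transporting the $q$-power cocycle through every step without error, especially given the half-integer and integer entries of $\widehat{B}(C_3)$ inherited from \eqref{Bhat-K}. The two error-prone points are the local-to-global index dictionary, where a single mismatch contaminates the whole composite, and the bookkeeping of the signs $[\ve b_{ik}]_+$ entering each $\tau_{k,\ve}$ through \eqref{mu-mono} and \eqref{torus-iso}. Both are naturally handled by a symbolic computation that stores each factor as a matrix-plus-cocycle pair and multiplies them, which is exactly the ``direct computation'' referred to in the statement; the conceptual work then reduces to organizing the dictionary and pinpointing the one discriminating generator that eliminates $(-,+)$.
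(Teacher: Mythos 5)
Your proposal is correct and matches the paper's approach: the paper establishes Proposition~\ref{REtau} by exactly the direct computation you describe, namely composing the seven monomial substitutions on the exponent lattice of $B(C_3)$ (with the $q$-power cocycle) for each homogeneous sign pair and comparing the two sides generator by generator. Your additional remarks — that the identity cannot be inherited from \eqref{REiso} because the good sequences are not tropical (cf.\ Remark~\ref{rem:trop}), and that a single discriminating generator suffices to exclude $(\delta_1,\delta_2)=(-,+)$ — are consistent with what the paper leaves implicit in its one-line ``direct computation'' justification.
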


\begin{remark}\label{rem:trop}
The tropical sign sequence for the 3DRE leads to the 3DRE for monomial transformations as follows.
\begin{align*}
&\tau_{124|++++} \,\tau^K_{1356|++++++--++} \, \overline{\tau}_{178|++++} \,\tau_{258|+-++} 
\\
& \qquad \circ \tau^K_{2379|++++++--++} \,\tau^K_{4689|--++++--++}\,\overline{\tau}_{457|--++}
\\
&=
\tau_{457|++++} \,\tau^K_{4689|++++++--++} \,\tau^K_{2379|++++++--++} \,\overline{\tau}_{258|--++} 
\\
& \qquad \circ \tau_{178|++-+}\, \tau^K_{1356|+++++---++} \,\overline{\tau}_{124|-+++}.
\end{align*}
One sees that none of $\tau^K_{ijkl|\ve}$, $\tau_{ijk|\ve}$ 
and $\overline{\tau}_{ijk|\ve}$ has homogeneity in sign sequences.  
\end{remark}

\subsection{3DRE for $\eta$: Affine transformations of canonical variables}
\label{ss:eta}

Let $\gamma=(\gamma_i)_{i=1,\ldots,9}$ be given by 
$\gamma=(1,1,2,1,1,2,1,1,2)$.
Let $\mathcal{W}(C_3):=\mathcal{W}_\gamma$ \eqref{qW-gamma} be the $q$-Weyl algebra 
generated by $\e^{\pm u_i}$ and $\e^{\pm w_i}$, with the canonical variables 
$(u_i,w_i)$ $(i=1,\ldots,9)$ satisfying
\begin{align}
[u_i,w_j]=\hbar\,\gamma_i\,\delta_{ij},
\qquad
[u_i,u_j]=[w_i,w_j]=0.
\label{uw-defc3}
\end{align}
Write $\mathrm{Frac}\,\mathcal{W}(C_3)$ for the skewfield of $\mathcal{W}(C_3)$.

Recall the monomial transformations of $q$-Weyl variables
$\eta_{123}^{(\mp)}$ \eqref{uw-1}--\eqref{uw-2},
$\overline{\eta}_{123}^{(\mp)}$ \eqref{uwb-1}--\eqref{uwb-2} on $\mathcal{W}(A_2)$,
and $\eta^K_{1234}$ \eqref{tau24uw} on $\mathcal{W}(C_2)$.
They extend to transformations $\eta_{ijk}^{(\mp)}$, 
$\overline{\eta}_{ijk}^{(\mp)}$ and $\eta^K_{ijkl}$ acting on 
$\mathrm{Frac}\,\mathcal{W}(C_3)$.

For these operators, a good sign sequence 
$\delta=(\delta_1,\delta_2,\delta_3,\delta_4,\delta'_1,\delta'_2,
\delta'_3,\delta'_4)\in\{+,-\}^8$ is
defined in the same way as for the 3DRE \eqref{RE-tau}.  Namely, 
$\delta$ is said to be good if it makes 
\begin{align}
&\eta^{(\delta_1)}_{124} \,\eta^K_{1356}\, \overline{\eta}^{(\delta_2)}_{178}\, 
\eta^{(\delta_3)}_{258} \,\eta^K_{2379}\, \eta^K_{4689}\, \overline{\eta}^{(\delta_4)}_{457}
=
\eta^{(\delta'_1)}_{457} \,\eta^K_{4689} \,\eta^K_{2379} \,
\overline{\eta}^{(\delta'_2)}_{258} \,\eta^{(\delta'_3)}_{178}\, \eta^K_{1356} \,
\overline{\eta}^{(\delta'_4)}_{124}, 
\label{REetag}
 \end{align}
hold, and satisfies the conditions
$\delta_1=\delta_3=\delta'_1=\delta'_3$ and
$\delta_2=\delta_4=\delta'_2=\delta'_4$.
A good sign sequence has the form
$\delta=(\delta_1,\delta_2,\delta_1,\delta_2,\delta_1,\delta_2, \delta_1,\delta_2)$, 
and is specified simply
by $(\delta_1,\delta_2)$.

Using \eqref{econ-a}, \eqref{econ} and \eqref{ccon}, we obtain the conditions
for the parameters $\mathscr{P}_i = (a_i,b_i,c_i,d_i,e_i)$:
\begin{align}
\label{recon1}
&a_i + b_i + c_i + d_i + e_i = 0 
\quad \text{for } i = 1,2,4,5,7,8,
\\[1mm]
\label{recon2}
&\begin{cases}
b_j + 2c_j + d_j + e_j = c_i + c_k,
\\
b_l + 2c_l + d_l + e_l = -c_i + c_k
\end{cases}
\;\text{for } (i,j,k,l) = (4,6,8,9), (2,3,7,9), (1,3,5,6).
\end{align}
We note that \eqref{recon2} implies the necessary conditions
\begin{align}\label{Ksub}
c_1 - c_2 + c_4 = 0, 
\quad
c_2 - c_5 + c_8 = 0, 
\quad
c_1 - c_2 + c_5 - c_7 = 0.
\end{align}

By a direct calculation using \eqref{uw-1}, \eqref{uw-2}, \eqref{uwb-1}, 
\eqref{uwb-2} and \eqref{tau24uw}, we obtain the following.

\begin{proposition}
\label{REeta}
Assume \eqref{recon1}, \eqref{Ksub}  and
\begin{align}
a_1 - a_2 + a_4 = 0, 
\quad 
a_2 - a_5 + a_8 = 0, 
\quad 
a_1 - a_2 + a_5 - a_7 = 0.
\end{align}
Then the 3DRE \eqref{REetag} admits exactly three good sign sequences $\delta$ 
corresponding to $(\delta_1, \delta_2) = (+,+), (+,-)$ and $(-,-)$, 
which give rise to the following:
\begin{align}
\label{REeta-1}
&\eta^{(-)}_{124} \,\eta^K_{1356}\, \overline{\eta}^{(-)}_{178}\, \eta^{(-)}_{258} \,
\eta^K_{2379}\, \eta^K_{4689}\, \overline{\eta}^{(-)}_{457}
=
\eta^{(-)}_{457} \,\eta^K_{4689} \,\eta^K_{2379} \,\overline{\eta}^{(-)}_{258} \,
\eta^{(-)}_{178}\, \eta^K_{1356} \,\overline{\eta}^{(-)}_{124},  
\\
\label{REeta-2}
&\eta^{(+)}_{124} \,\eta^K_{1356}\, \overline{\eta}^{(-)}_{178}\, \eta^{(+)}_{258} \,
\eta^K_{2379}\, \eta^K_{4689}\, \overline{\eta}^{(-)}_{457}
=
\eta^{(+)}_{457} \,\eta^K_{4689} \,\eta^K_{2379} \,\overline{\eta}^{(-)}_{258} \,
\eta^{(+)}_{178}\, \eta^K_{1356} \,\overline{\eta}^{(-)}_{124},  
\\
\label{REeta-3}
&\eta^{(+)}_{124} \,\eta^K_{1356}\, \overline{\eta}^{(+)}_{178}\, \eta^{(+)}_{258} \,
\eta^K_{2379}\, \eta^K_{4689}\, \overline{\eta}^{(+)}_{457}
=
\eta^{(+)}_{457} \,\eta^K_{4689} \,\eta^K_{2379} \,\overline{\eta}^{(+)}_{258} \,
\eta^{(+)}_{178}\, \eta^K_{1356} \,\overline{\eta}^{(+)}_{124}.  
\end{align}
In particular, each of these is compatible with the 3DRE for the monomial
transformations of the $Y$-variables \eqref{REtau-1}--\eqref{REtau-3}, thanks
to the commutative diagrams \eqref{Rcom1}, \eqref{Rcom2} and \eqref{Kcom}. 
\end{proposition}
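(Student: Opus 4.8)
The plan is to derive the three identities \eqref{REeta-1}--\eqref{REeta-3} from the already-established monomial identities \eqref{REtau-1}--\eqref{REtau-3} by pushing the latter through the parametrizing homomorphisms, so that Proposition~\ref{REtau} does most of the work. Each factor $\eta^{(\delta)}_{ijk}$, $\overline{\eta}^{(\delta)}_{ijk}$ and $\eta^K_{ijkl}$ is an affine automorphism of $\mathrm{Frac}\,\mathcal{W}(C_3)$ extending the block formulas \eqref{uw-1}, \eqref{uw-2}, \eqref{uwb-1}, \eqref{uwb-2}, \eqref{tau24uw} and fixing the canonical variables whose indices lie outside its label. Hence \eqref{REetag} is an equality of two seven-fold affine compositions, which I would split into an equality of linear parts (parameter-free integer maps of the $18$ variables $u_1,\dots,u_9,w_1,\dots,w_9$) and an equality of constant shift vectors (affine in the tuples $\mathscr{P}_i$).

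First I would dispatch the linear parts. By each commutative square \eqref{Rcom1}, \eqref{Rcom2}, \eqref{Kcom}, the linear part of a block $\eta$ is the transformation induced on the exponent lattice of $\mathrm{Frac}\,\mathcal{W}(C_3)$ by the monomial action of the corresponding $\tau$ on the $Y$-variables; consequently the linear-part form of \eqref{REetag} is equivalent, block for block, to the exponent form of \eqref{RE-tau}. Proposition~\ref{REtau} then settles the sign analysis verbatim: the linear parts agree for exactly the three good sequences $(\delta_1,\delta_2)=(+,+),(+,-),(-,-)$ and disagree for the remaining candidate $(-,+)$. This yields both the count ``exactly three'' and the specific variants $\eta^{(\pm)}$, $\overline{\eta}^{(\pm)}$ entering \eqref{REeta-1}--\eqref{REeta-3}.

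The new content sits in the constant parts, which is where the hypotheses are consumed. The graphical rule of Figures~\ref{fig:para} and~\ref{fig:paraC} attaches one tuple $\mathscr{P}_i$ to each crossing and reflection point of the $W(C_3)$ diagram, and the intermediate homomorphisms composing along one side of \eqref{REetag} chain into a commutative ladder exactly when the parameters on coincident wires match across adjacent blocks. I would check that \eqref{recon1}, \eqref{Ksub} together with $a_1-a_2+a_4=0$, $a_2-a_5+a_8=0$, $a_1-a_2+a_5-a_7=0$ enforce every such matching. Granting this, the ladders give $\eta_{\mathrm{LHS}}\circ\phi_{\mathrm{end}}=\phi_{\mathrm{start}}\circ\tau_{\mathrm{LHS}}$ and the analogue for the right-hand side, with the same end-homomorphisms $\phi_{\mathrm{start}},\phi_{\mathrm{end}}$ at $B(C_3)$ and $B'(C_3)$; since $\tau_{\mathrm{LHS}}=\tau_{\mathrm{RHS}}$ by Proposition~\ref{REtau} and the images of $\phi$ generate $\mathrm{Frac}\,\mathcal{W}(C_3)$, the two affine maps coincide. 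This is exactly the asserted compatibility with \eqref{REtau-1}--\eqref{REtau-3}, and a self-contained alternative is simply to compose the seven affine maps on each side and verify that the shift vectors agree modulo the stated relations.

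The main obstacle is the constant-part bookkeeping rather than any structural difficulty: one has to track how the parameter-dependent shift of a variable such as $u_2$ or $w_3$ accumulates through the several blocks acting on it and confirm that the two sides telescope to the same vector. The genuinely delicate feature is the weight-two structure, since among the indices touched by the $K$-blocks the values $3,6,9$ carry $\gamma_i=2$: in the transformation $\tilde{A}=\mathrm{diag}\bigl(A,\gamma(A^{-1})^{T}\gamma^{-1}\bigr)$ the $w$-shifts are rescaled relative to the $u$-shifts precisely on those indices, and reconciling this type-$C$ rescaling in $\eta^K$ with the plain type-$A$ action of $\eta^{(\pm)}$ and $\overline{\eta}^{(\pm)}$ is where the matching is easiest to botch and must be verified with care.
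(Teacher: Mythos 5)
Your strategy---deducing the $\eta$-identities from Proposition~\ref{REtau} by gluing the local commutative squares \eqref{Rcom1}, \eqref{Rcom2}, \eqref{Kcom} into a ladder over the whole mutation chain---is genuinely different from the paper's proof, which simply composes the seven explicit affine maps \eqref{uw-1}, \eqref{uw-2}, \eqref{uwb-1}, \eqref{uwb-2}, \eqref{tau24uw} on each side and compares them directly. Your route is structurally appealing, but it has a central gap: the commutative ladder is asserted, not constructed. The squares \eqref{Rcom1}, \eqref{Rcom2}, \eqref{Kcom} are local statements involving three or four canonical pairs and a fixed assignment of the tuples $\mathscr{P}_i$ to the crossings of a small wiring diagram. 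To chain seven of them inside $\mathrm{Frac}\,\mathcal{W}(C_3)$ you need consistent parametrizations of \emph{all} intermediate quivers along both sides of \eqref{RE-SB} (the two sides pass through different intermediate quivers; only the endpoints are shared), such that the target parametrization of each block equals the source parametrization of the next and both chains terminate in the same $\phi_{\mathrm{start}}$, $\phi_{\mathrm{end}}$. Verifying that \eqref{recon1}, \eqref{Ksub} and the three $a$-relations are precisely what makes this gluing possible is the entire content of the proposition; deferring it to ``I would check that the parameters on coincident wires match'' leaves the argument no further along than the direct computation you mention as a fallback, which is in fact what the paper does.

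There is a second, smaller gap: the ``exactly three'' claim does not follow from Proposition~\ref{REtau}. The parametrization \eqref{yeuw} sends the $22$ quantum $Y$-variables of $B(C_3)$ into an algebra with only $18$ canonical generators, so the induced map on exponent lattices has a rank-$4$ kernel (correspondingly, the central elements of $\mathcal{Y}(B(C_3))$ are sent to scalars). Two distinct monomial compositions $\tau_L\neq\tau_R$ can therefore induce the same affine map after composition with $\phi$, and the failure of the $\tau$-version of the 3DRE for $(\delta_1,\delta_2)=(-,+)$ does not by itself imply the failure of the $\eta$-version for that sequence. Excluding the fourth candidate requires a separate direct check of the affine maps, independent of Proposition~\ref{REtau}.
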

Note that the claim holds under the condition \eqref{Ksub},  which is weaker than \eqref{recon2}.

\begin{remark}
In addition to the three good sign sequences $\delta$ appearing in
Propositions~\ref{REtau} and \ref{REeta}, there exist ten further sign
sequences that do not satisfy the homogeneity condition but still make
the 3DRE hold.
Among these, two are `symmetric' in the sense that 
$(\delta_1,\delta_2,\delta_3,\delta_4) =(\delta'_1,\delta'_2,\delta'_3,\delta'_4)$:
\[
(-,+,-,-,-,+,-,-), 
\qquad 
(+,+,-,+,+,+,-,+).
\]
The remaining eight are not symmetric:
\begin{align*}
&(-, -, -, -, +, -, -, +), ~(-, -, +, -, +, -, +, +),
~(-, +, -, -, +, +, -, +),
\\ &(-, +, +, -, +, +, +, +), ~(+, -, -, -, -, -, +, -), 
~(+, -, -, +, -, +, +, -),
\\ &(+, -, +, +, +, +, +, -), ~(+, +, -, +, -, +, -, -).
\end{align*}
\end{remark}

\subsection{3DRE for operators $P$}\label{ss:P}

We extend the operators $P_{123}^{(\pm)}$ in \eqref{P--++}, \eqref{P-+-+},
$\overline{P}_{123}^{(\pm)}$ in \eqref{Pb--++}, \eqref{Pb-+-+}
and $P^K_{1234|\ve}$ in \eqref{p24}
to $P_{ijk}^{(\pm)}$,  $\overline{P}_{ijk}^{(\pm)}$ and $P^K_{1234|\ve}$ 
acting on $\mathrm{Frac}\,\mathcal{W}(C_3)$ naturally.
For simplicity we write $P^K_{ijkl}$ for $P^K_{ijkl|\ve}$.

Let $\prec$ be a partial order on a set $J := \{1,2,\ldots,9\}$ given by
$$
  1 \prec 2,4 \prec 5,7,8,3,6,9.
$$
The symmetric subgroup
$S(C_3) = \mathfrak{S}_2 \times \mathfrak{S}_3 \times \mathfrak{S}_3$ of
$\mathfrak{S}_9$ acts on $J$ in such a way that $\mathfrak{S}_2$ acts on
$\{2,4\}$ and $\mathfrak{S}_3 \times \mathfrak{S}_3$ acts on
$\{5,7,8\} \times \{3,6,9\}$.  Note that $S(C_3)$ preserves the partial
order.   
For the canonical variables \eqref{uw-defc3}, we define the group
$N(C_3)$ generated by
\begin{align}\label{NdC3}
\e^{\pm \tfrac{1}{\hbar}u_iw_j} \ (i \succ j),\quad 
\e^{ \tfrac{a}{\hbar}u_i}, \ \e^{ \tfrac{a}{\hbar}w_i} \ (a \in \C),\quad
b \in \C^\times; ~~i, j\in \{1,\ldots, 9\},
\end{align}
in the same way as $N(A_3)$ in \S \ref{sec:Rop}. 
The group $S(C_3)$ acts on $N(C_3)$ via the adjoint action, permuting
the indices of the canonical variables.  Further, let
$\mathcal{L}(C_3):=\mathcal{L}_\gamma$ denote the set of formal Laurent
series \eqref{fLs}.  It is straightforward to see that
$P_{ijk}^{(\pm)}$, $\overline{P}_{ijk}^{(\pm)}$, and $P^K_{ijkl}$ all
belong to $N(C_3)\rtimes S(C_3)$ by inspecting their explicit formulas.

Given a sign sequence $\delta=(\delta_1,\delta_2,\delta_3,\delta_4,\delta'_1,\delta'_2,
\delta'_3,\delta'_4)\in\{+,-\}^8$, we consider the 3DRE:
\begin{align}
&{P}_{124}^{(\delta_1)} P^K_{1356} \overline{P}_{178}^{(\delta_2)} {P}_{258}^{(\delta_3)} 
P^K_{2379} P^K_{4689} \overline{P}_{457}^{(\delta_4)}=
{P}_{457}^{(\delta'_1)} P^K_{4689} P^K_{2379} \overline{P}_{258}^{(\delta'_2)}
 {P}_{178}^{(\delta'_3)} P^K_{1356} \overline{P}_{124}^{(\delta'_4)}.
 \label{REPg}
 \end{align}
 Again, $\delta$ is defined to be good if it has the form
 $\delta=(\delta_1,\delta_2,\delta_1,\delta_2,\delta_1,\delta_2, \delta_1,\delta_2)$,
 and makes \eqref{REPg} hold.

The following proposition is proved by a direct computation using the BCH
formula.  It shows that, among the three good sign sequences
appearing in Propositions~\ref{REtau} and \ref{REeta}, only the case corresponding to 
$(\delta_1,\delta_2)=(+,-)$ survives.  Moreover, it becomes necessary to
impose the condition on the parameters that 
\begin{align}\label{conac}
a_i=c_i \quad \text{for } i=1,2,4,5,7,8.
\end{align}
In this situation, we may use the simplifying feature described in
Remark~\ref{re:ac} and set
\begin{align}\label{srp1}
P_{ijk}=P^{(+)}_{ijk}=\overline{P}^{(-)}_{ijk}.
\end{align}

\begin{proposition}\label{prop:RE-P}
Assume \eqref{recon1}, \eqref{recon2} and \eqref{conac}.
Then the 3DRE \eqref{REPg} admits a unique good sign sequence corresponding to 
$(\delta_1,\delta_2)=(+,-)$.
In terms of the operator $P_{ijk}$ in \eqref{srp1} and $P^K_{ijkl}$, it is expressed as the 
following equality in $N(C_3)\rtimes S(C_3)$:
\begin{align}\label{RE-P}
P_{124}\,P^K_{1356}\,P_{178}\,P_{258}\,P^K_{2379}\,P^K_{4689}\,P_{457}
=
P_{457}\,P^K_{4689}\,P^K_{2379}\,P_{258}\,P_{178}\,P^K_{1356}\,P_{124}.
\end{align}
\end{proposition}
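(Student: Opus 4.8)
The plan is to prove \eqref{RE-P} in two stages: first deduce, from the $\eta$-level identities already established in Proposition~\ref{REeta}, that the two sides of \eqref{REPg} induce the same automorphism of $\mathrm{Frac}\,\mathcal{W}(C_3)$, and then pin down the residual scalar ambiguity inherent in passing from adjoint actions back to group elements of $N(C_3)\rtimes S(C_3)$. The point is that equality of the operators is strictly stronger than equality of their adjoint actions, and it is exactly this gap that singles out $(\delta_1,\delta_2)=(+,-)$.

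First I would check that the present hypotheses \eqref{recon1}, \eqref{recon2}, \eqref{conac} imply those of Proposition~\ref{REeta}: indeed \eqref{recon2} yields \eqref{Ksub}, and under \eqref{conac} the required relations $a_1-a_2+a_4=0$, $a_2-a_5+a_8=0$, $a_1-a_2+a_5-a_7=0$ reduce to the $c$-relations in \eqref{Ksub}. Hence each of \eqref{REeta-1}--\eqref{REeta-3} holds. Since $\eta^{(\delta)}_{ijk}=\mathrm{Ad}\,P^{(\delta)}_{ijk}$ and $\overline{\eta}^{(\delta)}_{ijk}=\mathrm{Ad}\,\overline{P}^{(\delta)}_{ijk}$ by Proposition~\ref{prop-Rcom}, and $\eta^K_{ijkl}=\mathrm{Ad}\,P^K_{ijkl}$ by the lemma of \S\ref{ad24}, applying $\mathrm{Ad}$ to both sides of \eqref{REPg} reproduces the $\eta$-identity \eqref{REetag} for the same sign sequence. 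Therefore, for each of the three good sign sequences $(\delta_1,\delta_2)=(+,+),(+,-),(-,-)$, the two sides of \eqref{REPg} have equal adjoint action on $\mathrm{Frac}\,\mathcal{W}(C_3)$.

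Next I would exploit the structure of $N(C_3)\rtimes S(C_3)$. The adjoint action determines the $S(C_3)$-component (a permutation of the indices of the canonical variables) together with all shift data in the $N(C_3)$-component generated by \eqref{NdC3}; its kernel inside $N(C_3)\rtimes S(C_3)$ consists exactly of the scalars $b\in\C^\times$. Consequently, for each good sign sequence the two sides of \eqref{REPg} coincide up to a single scalar factor $c\in\C^\times$, and the whole problem collapses to computing $c$ and showing that $c=1$ holds for precisely one sequence.

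Finally I would carry out the explicit multiplication to evaluate $c$. Invoking \eqref{conac} and Remark~\ref{re:ac} gives $P^{(+)}_{ijk}=\overline{P}^{(-)}_{ijk}=:P_{ijk}$ as in \eqref{srp1}, so that for $(\delta_1,\delta_2)=(+,-)$ both sides of \eqref{REPg} are built solely from the operators $P_{ijk}$ and $P^K_{ijkl}$, producing the homogeneous form \eqref{RE-P}; for $(+,+)$ and $(-,-)$ the equation instead mixes the two inequivalent families $P^{(+)}_{ijk}$ and $\overline{P}^{(+)}_{ijk}=P^{(-)}_{ijk}$. Using the explicit formulas \eqref{P-+-+}, \eqref{Pb--++}, \eqref{p24} and the Baker--Campbell--Hausdorff formula, which is well defined on $N(C_3)\rtimes S(C_3)$ thanks to the grading by $\hbar^{-1}$, I would multiply out the seven factors on each side, transporting each quadratic exponential such as $\e^{\frac1\hbar(u_3-u_2)w_1}$ past the permutations $\rho$ that relabel the canonical variables and collecting the central ($\hbar^0$) contributions. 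The main obstacle is exactly this bookkeeping of the scalar terms through the chain of seven permuted Gaussian factors: the $\hbar^{-1}$-graded pieces merely re-encode the $\mathrm{Ad}$-data guaranteed above, whereas the $\hbar^0$-graded constants separate the three good $\eta$-sequences, matching to give $c=1$ only for $(+,-)$ while leaving a nontrivial scalar in the mixed cases, which therefore violate \eqref{REPg} as an identity of group elements.
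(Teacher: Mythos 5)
Your reduction of \eqref{RE-P} to a single central scalar is sound and is a useful way to organize the ``direct computation using the BCH formula'' that the paper invokes without detail: for $(\delta_1,\delta_2)=(+,-)$ every factor $P_{ijk}=P^{(+)}_{ijk}=\overline{P}^{(-)}_{ijk}$ and $P^K_{ijkl}$ genuinely lies in $N(C_3)\rtimes S(C_3)$ (all Gaussian exponents are of the form $u_aw_b$ with $a\succ b$, and the permutations $\rho_{24},\rho_{78},\rho_{58},\rho_{57},\rho_{36},\rho_{39},\rho_{69}$ preserve the blocks of $S(C_3)$), the hypotheses of Proposition~\ref{REeta} do follow from \eqref{recon1}, \eqref{recon2}, \eqref{conac} as you check, and in this group an element with trivial adjoint action is forced to have trivial permutation, unipotent and shift parts, hence is a central constant. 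Up to actually carrying out the bookkeeping of the central terms --- which is where all the computational content of the proposition sits, and which the paper likewise does not display --- this is essentially the paper's argument with the non-central part of the verification outsourced to Proposition~\ref{REeta}. One small correction: the central constants of $N(C_3)$ are of the form $b\,\e^{s/\hbar}$ with $s$ arising from commutators of the linear terms, so they sit in degree one in $\hbar^{-1}$ rather than in degree $\hbar^{0}$; what you are really separating is the central from the non-central part of the exponent.

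The genuine gap is in the uniqueness claim. For $(\delta_1,\delta_2)=(+,+)$ the equation \eqref{REPg} contains $\overline{P}^{(+)}_{124},\overline{P}^{(+)}_{178},\overline{P}^{(+)}_{258},\overline{P}^{(+)}_{457}$, and for $(-,-)$ it contains $P^{(-)}_{124},P^{(-)}_{178},P^{(-)}_{258},P^{(-)}_{457}$; none of these lies in $N(C_3)\rtimes S(C_3)$. For instance $\overline{P}^{(+)}_{178}$ carries the factor $\e^{\frac{1}{\hbar}(u_1-u_7)w_8}$ with $1\prec 8$ (and $7,8$ incomparable), and its permutation part $\rho_{17}$ does not preserve the blocks $\{2,4\},\{5,7,8\},\{3,6,9\}$ --- this is the same $N$ versus $N'$ dichotomy recorded for type $A_3$ in Proposition~\ref{prop:ns}. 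Once both families of Gaussians occur, the strict partial order that makes the relevant Lie algebra nilpotent, and hence the BCH products terminating and the normal form unique, is lost; so neither your ``kernel of $\mathrm{Ad}$ equals the scalars'' step nor the advertised multiplication of the seven factors applies to the mixed cases. The assertion that those cases ``leave a nontrivial scalar'' is therefore unsupported: to exclude $(+,+)$ and $(-,-)$ you must either make sense of the mixed products in a larger (non-nilpotent) setting and exhibit an actual discrepancy, or argue their failure directly, which is presumably what the paper's unreported direct computation does.
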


\subsection{Main result: Full solution to 3DRE by $R$ and $K$}\label{ss:mr}

We extend the operators $R_{123}^{(\pm)}$ in \eqref{R--++},
\eqref{R-+-+} to $R_{ijk}^{(\pm)}$, the operators
$\overline{R}_{123}^{(\pm)}$ in \eqref{Rb--++}, \eqref{Rb-+-+} to
$\overline{R}_{ijk}^{(\pm)}$, and the operators $K_{1234|\ve}$ in
\eqref{p24}--\eqref{sol244} to $K_{ijkl|\ve}$, acting naturally on
$\mathrm{Frac}\,\mathcal{W}(C_3)$.  For simplicity, we write $K_{ijkl}$
for $K_{ijkl|\ve}$ (see Proposition~\ref{prop:K24}).

We focus on the unique good sign sequence
$\delta=(+,-,+,-,+,-,+,-)$ in Proposition~\ref{prop:RE-P}, and
examine the validity of the corresponding 3DRE for the $R$- and
$K$-operators:
\begin{align}\label{RErk}
{R}_{124}^{(+)} K_{1356} \overline{R}_{178}^{(-)} {R}_{258}^{(+)}
K_{2379} K_{4689} \overline{R}_{457}^{(-)}
=
{R}_{457}^{(+)} K_{4689} K_{2379} \overline{R}_{258}^{(-)}
R_{178}^{(+)} K_{1356} \overline{R}_{124}^{(-)}.
\end{align}

We continue to assume the condition $a_i=c_i$ for
$i=1,2,4,5,7,8$, as required in Proposition~\ref{prop:RE-P}.
Under this assumption, all $R$-operators in \eqref{RErk} satisfy the
simplifying relation in Remark~\ref{re:ac}. Accordingly, we set
\begin{align}\label{srp2}
R_{ijk}=R^{(+)}_{ijk}=\overline{R}^{(-)}_{ijk}.
\end{align}

From Proposition~\ref{wdR} and \S\ref{sec:wdK}, each of the operators
$R_{ijk}$ and $K_{ijkl}$ decomposes into a dilogarithm part in
$\mathcal{L}(C_3)$ and a monomial part in $N(C_3)\rtimes S(C_3)$.
We write these decompositions as
\begin{align}\label{RK-decomp}
R_{ijk}=R_{ijk}^{\Psi}  P_{ijk},
\qquad
K_{ijkl}=K_{ijkl}^{\Psi}  P_{ijkl}^K.
\end{align}

The main result of this paper is the following theorem, which answers
the quest for \eqref{RErk} affirmatively.

\begin{theorem}\label{th:main}
Assume \eqref{recon1}, \eqref{recon2} and \eqref{conac}.
Then the 3DRE \eqref{RErk} is valid.  In the
notation of \eqref{srp2}, it takes the form
\begin{align}\label{RE-RK}
R_{124} K_{1356} R_{178} R_{258} K_{2379} K_{4689} R_{457}
=
R_{457} K_{4689} K_{2379} R_{258} R_{178} K_{1356} R_{124},
\end{align}
in the sense that each side decomposes into a dilogarithm part in
$\mathcal{L}(C_3)$ and a monomial part in $N(C_3)\rtimes S(C_3)$, and
both components coincide.
\end{theorem}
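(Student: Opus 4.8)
The plan is to establish \eqref{RE-RK} by the same mechanism used for the tetrahedron equation in Theorem~\ref{thm:TE}: split each side into a dilogarithm part lying in $\mathcal{L}(C_3)$ and a monomial part lying in $N(C_3)\rtimes S(C_3)$, and verify the two parts separately. Using the decomposition \eqref{RK-decomp}, I would first move every monomial factor $P_{ijk}$ and $P^K_{ijkl}$ to the right end, commuting it past the dilogarithm factors lying to its right. Since $\mathrm{Ad}(P_{ijk})$ and $\mathrm{Ad}(P^K_{ijkl})$ act on $\mathrm{Frac}\,\mathcal{W}(C_3)$ as the affine transformations $\eta_{ijk}$ and $\eta^K_{ijkl}$ by the commutative diagrams \eqref{Rcom1}, \eqref{Rcom2} and \eqref{Kcom}, commuting a monomial factor past a dilogarithm simply replaces the argument of that dilogarithm by its image under the relevant $\eta$, and by Lemma~\ref{lem:L} the result again lies in $\mathcal{L}(C_3)$. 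After this rearrangement each side equals $(\text{collected dilogarithm product})\cdot(\text{ordered product of monomial factors})$, and the two ordered monomial products agree by Proposition~\ref{prop:RE-P}, equation \eqref{RE-P} (with $P_{ijk}$ as in \eqref{srp2} and the type-$\rho_{24}$ operator $P^K_{ijkl}$). Cancelling this common monomial part reduces the theorem to a single dilogarithm identity $\Phi_L=\Phi_R$ in $\mathcal{L}(C_3)$, where $\Phi_L,\Phi_R$ are the collected products of the $46$ quantum dilogarithms coming from the two sides.

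Two tasks then remain: that $\Phi_L$ and $\Phi_R$ are genuinely well-defined elements of $\mathcal{L}(C_3)$, and that they are equal. For well-definedness I would extend verbatim the argument of \S\ref{sec:wdK} (and Proposition~\ref{wdR}): expand each dilogarithm as a power series in its monomial argument with its own summation index, normal-order the resulting monomials using the $q$-commutation relations, and check that for every fixed total exponent vector the exponent-matching equations admit only finitely many nonnegative solutions in the summation indices. The combinatorial data needed for this check is exactly what Appendix~\ref{app:ff} collects. This step is indispensable precisely because the good sign sequence $\delta=(+,-,+,-,+,-,+,-)$ is \emph{not} the tropical one (Remark~\ref{rem:trop}), so the clean separation into a dilogarithm identity and a monomial identity is not furnished automatically by Theorem~\ref{thm:period} and must be secured by hand.

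For the equality $\Phi_L=\Phi_R$ I would invoke the cluster-algebraic 3DRE: Proposition~\ref{pr:resb} provides the exact identity \eqref{REiso} of skewfield isomorphisms, and transporting it through the ring homomorphisms $\phi$ — using $\phi\circ\widehat{R}_{ijk}=\mathrm{Ad}(R_{ijk})\circ\phi'$ together with the analogous relations for $\widehat{\overline{R}}_{ijk}$ and $\widehat{K}_{ijkl}$, all consistent under the constraint $a_i=c_i$ by \eqref{srp2} — yields $\mathrm{Ad}(\mathrm{LHS})=\mathrm{Ad}(\mathrm{RHS})$ as automorphisms of $\mathrm{Frac}\,\mathcal{W}(C_3)$. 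Since $q$ is generic, the centre of $\mathrm{Frac}\,\mathcal{W}(C_3)$ reduces to scalars, so the two sides agree up to a factor $c\in\C(q^{1/2})$; combined with the already-matched monomial parts this forces $\Phi_L=c\,\Phi_R$. I would then pin down $c=1$ by comparing a single Laurent coefficient, most cheaply the constant term, where all summation indices vanish and each dilogarithm contributes its unit leading term, so both products give coefficient $1$. The main obstacle is the interaction between the last two paragraphs: the non-tropical sign choice means that neither the well-definedness of the $46$-fold products nor their decoupling into separate dilogarithm and monomial identities is guaranteed by the general theory, so the technical heart of the proof is the explicit normal-ordering and finiteness analysis underlying well-definedness, after which the cluster identity supplies equality up to the scalar that the constant-term comparison removes.
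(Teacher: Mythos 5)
Your proposal follows the same architecture as the paper's proof: decompose both sides via \eqref{RK-decomp}, push the monomial factors to the right (with Lemma~\ref{lem:L} keeping the conjugated dilogarithm factors inside $\mathcal{L}(C_3)$), dispose of the monomial identity by Proposition~\ref{prop:RE-P}, verify well-definedness of the $46$-fold dilogarithm products by the exponent-counting of \S\ref{dilog-p1}--\S\ref{dilog-p2}, and deduce equality of the dilogarithm parts from the cluster-algebraic 3DRE together with a centrality argument and a constant-term normalization. The one genuine divergence is the centrality step. The paper first establishes $\mathrm{Ad}\bigl((F^\Psi_R)^{-1}F^\Psi_L\bigr)=\mathrm{id}$ on $\mathcal{Y}(B(C_3))$, whose quantum torus is degenerate (the rank of the symmetrized exchange matrix is $18$, while there are $22$ vertices), so a central element need not be a scalar; it therefore invokes the Extension Theorem of \cite{N11}, \cite{KN11} to pass to a nondegenerate $\tilde{B}$. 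You instead work directly in $\mathrm{Frac}\,\mathcal{W}(C_3)$, where for generic $q$ the only monomials $\e^{\mathbf{m}\cdot\mathbf{u}}$ commuting with all generators have $\mathbf{m}=0$, so a Laurent series whose adjoint action is the identity must be scalar. This is a legitimate shortcut (the exponent vectors of the images \eqref{yeuw} span a finite-index sublattice, and genericity of $q$ still forces $\mathbf{m}=0$); it buys a more elementary argument, at the price of not obtaining the identity \eqref{L46} $=$ \eqref{R46} as a statement in the quantum torus algebra itself.

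One point you gloss over: since $\mathcal{L}_\gamma$ is not closed under multiplication, the well-definedness of $\Phi_L$ and $\Phi_R$ separately does not by itself give meaning to $(\Phi_R)^{-1}\Phi_L$, and your ``agree up to a scalar $c$'' step requires precisely this product to be a well-defined Laurent series before centrality can be converted into $\Phi_L=c\,\Phi_R$. The paper devotes Appendix~\ref{app:ff} to the finiteness analysis for this $92$-fold product; you cite that appendix but describe only the check for the two $46$-fold products. This is an omission of a necessary verification rather than a wrong idea, and it is repaired by exactly the same exponent-matching analysis you already propose.
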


\begin{proof}
With the decompositions \eqref{RK-decomp} the LHS of the 3DRE has the form
\begin{align*}
&{R}_{124}^{\Psi} P_{124} \cdot K_{1356}^\Psi P^K_{1356} \cdot R_{178}^{\Psi} 
P_{178} \cdot {R}_{258}^{\Psi} {P}_{258}\cdot
K_{2379}^\Psi P_{2379}^K \cdot K_{4689}^\Psi P^K_{4689} \cdot R_{457}^{\Psi} P_{457}.
\end{align*}
Moving the monomial parts to the right, we obtain 
\begin{align}\label{RE-decompR}
{R}_{124}^{\Psi} {K}_{1356}^{\Psi'} R_{178}^{\Psi'}
 {R}_{258}^{\Psi'} {K}_{2379}^{\Psi '} K_{4689}^{\Psi '} R_{457}^{\Psi'} 
\cdot 
{P}_{124} P^K_{1356} P_{178}{P}_{258}P^K_{2379} P^K_{4689} P_{457},
\end{align}
where 
\begin{align*}
&{K}_{1356}^{\Psi'} = \text{Ad}({P}_{124})({K}_{1356}^{\Psi}),
\\
&R_{178}^{\Psi'} = \text{Ad}({P}_{124}P^K_{1356}) (R_{178}^{\Psi}),
\\
&{R}_{258}^{\Psi'} = \text{Ad}({P}_{124} P^K_{1356} P_{178}) ({R}_{258}^{\Psi}),
\\ 
&{K}_{2379}^{\Psi '} = \text{Ad}({P}_{124} P^K_{1356} P_{178} {P}_{258})({K}_{2379}^{\Psi}),
\\
&K_{4689}^{\Psi '} = \text{Ad}({P}_{124} P^K_{1356} P_{178}{P}_{258} P^K_{2379} )(K_{4689}^{\Psi}),
\\
&R_{457}^{\Psi'}= \text{Ad}({P}_{124}P^K_{1356}P_{178}{P}_{258}P^K_{2379} P^K_{4689} )(R_{457}^{\Psi}).  
\end{align*}
Note that all of these terms belong to $\mathcal{L}(C_3)$ by
Lemma~\ref{lem:L}.  We denote by $F^{\Psi}_L$ and $P^{\Psi}_L$ the
dilogarithm and monomial parts of \eqref{RE-decompR}, respectively.
Likewise, we obtain the dilogarithm part $F^{\Psi}_R$ and the
monomial part $P^{\Psi}_R$ of the RHS of \eqref{RE-RK}.  Our goal is
to show that $F^{\Psi}_L=F^{\Psi}_R$ in $\mathcal{L}(C_3)$ and
$P^{\Psi}_L=P^{\Psi}_R$ in $N(C_3)\rtimes S(C_3)$.  The latter already
follows from Proposition~\ref{prop:RE-P}.  In what follows, we prove
the equality $F^{\Psi}_L=F^{\Psi}_R$.

First we consider $F^{\Psi}_L$ and $F^{\Psi}_R$ in terms of quantum
$Y$-variables rather than $q$-Weyl generators. 
Let $\tau_L$ and $\tau_R$ be the LHS and RHS of
\eqref{REtau-2}, respectively.  
The identity \eqref{s-per}  from the general theory, applied to 
the mutation sequence \eqref{RE-SB} for the 3DRE, yields 
\begin{align*}
\mathrm{Ad}(F^\Psi_L) \circ \tau_L
   = \mathrm{Ad}(F^\Psi_R) \circ \tau_R,
\end{align*}
as homomorphisms on $\mathcal{Y}(B(C_3))$.  
From \eqref{REtau-2}, we know $\tau_L = \tau_R$, hence obtain
$\mathrm{Ad}((F^\Psi_R)^{-1}\! \cdot F^\Psi_L)=\mathrm{id}$.

In \S \ref{dilog-p1} and \S \ref{dilog-p3} below we show that all of 
$F^{\Psi}_L$, $F^{\Psi}_R$ and $(F^\Psi_R)^{-1}\! \cdot F^\Psi_L$ are well defined as formal Laurent series in the quantum
$Y$-variables, and that the constant terms of $F^{\Psi}_L$ and 
$F^{\Psi}_R$ are $1$.  This leads to
$F^\Psi_L = F^\Psi_R$ by the same reasoning as in the proof of
\cite[Th.\ 3.5]{KN11} as follows.  We extend the degenerate exchange
matrix $B(C_3)$ (whose rank is $18$) to a nondegenerate one
$\tilde{B}$ of twice the size of $B(C_3)$ (see \cite[Example 2.5]{KN11}).
Then, by the Extension Theorem \cite[Th.\ 4.3]{N11}, the periodicity of
the seed $(B(C_3),Y^{(1)})$ is inherited by the extended seed
$(\tilde{B},\tilde{Y})$.  Thus $(F^\Psi_R)^{-1}\!\cdot F^\Psi_L$
commutes with any element of the quantum torus algebra
$\mathcal{T}(\tilde{B})$.  Since $\tilde{B}$ is nondegenerate, this
implies that $(F^\Psi_R)^{-1}\!\cdot F^\Psi_L$ is a constant depending
only on $q$.  This constant must be $1$, because the constant terms of
$F^\Psi_L$ and $F^\Psi_R$ are both $1$.

Finally, by the argument in \S \ref{dilog-p2} and \S \ref{dilog-p4}  below, we see that 
$F^\Psi_L$, $F^\Psi_R$ and $(F^\Psi_R)^{-1}\! \cdot F^\Psi_L$ are elements of $\mathcal{L}(C_3)$, and hence
we obtain the identity $F^\Psi_L = F^\Psi_R$ in $\mathcal{L}(C_3)$.
\end{proof}

\begin{remark}\label{re:b}
The $K$-operators \eqref{sol131}--\eqref{sol134} of type $\rho_{13}$ are
expected to satisfy the 3DRE associated with the Lie algebra $B_3$, in
conjunction with the $R$-operators for the SB-quiver whose vertices all
have weight two.  In the present paper we do not pursue this direction.
Instead, we restrict ourselves to describing the relation between the
$K$-operators of type $\rho_{13}$ and those for the FG-quiver of
$B_2$-type in \S \ref{sec:lim13}.
\end{remark}

\subsection{Explicit formulas}\label{ss:ef}

The conditions on the parameters \eqref{recon1}, \eqref{recon2} and \eqref{conac}, imposed in Theorem~\ref{th:main},
slightly simplify the $R$- and $K$-operators into more symmetric
forms.  We record here such final expressions together with the
practical computations leading to them.

Consider $R_{124}$, for instance, appearing in the 3DRE
\eqref{RE-RK}.  By \eqref{srp2}, it is obtained from
\eqref{R-+-+}--\eqref{P-+-+} by replacing the indices $1,2,3$ with
$1,2,4$.  Thus we have
\begin{align}
&P_{124} \overset{\eqref{srp1}}{=} P_{124}^{(+)}=
\e^{\tfrac{1}{\hbar}(u_4-u_2)w_1}
\e^{\tfrac{\kappa'_0}{\hbar}(w_4-w_2-w_1)}
\e^{\tfrac{1}{\hbar}(\kappa'_1u_1+\kappa'_2u_2+\kappa'_4u_4)}\rho_{24},
\\
&\kappa'_0=\frac{e_2-e_4}{2},\quad
\kappa'_1=b_4+c_4-b_2-c_2-\kappa'_0,\quad
\kappa'_2=d_4-d_2-a_1-\kappa'_0,\quad
\kappa'_4=c_1-c_2+c_4.
\end{align}
Using the conditions in Theorem~\ref{th:main}, it follows easily that
$\kappa'_1=-\kappa'_2=\frac{-b_2+d_2 +b_4-d_4}{2}$ and $\kappa'_4=0$.  
The same reduction applies to all $P_{ijk}$, and we obtain
\begin{align}\label{pfinal}
P_{ijk}
=
\e^{\tfrac{1}{\hbar}(u_k-u_j)w_i}
\e^{\tfrac{e_j-e_k}{2\hbar}(w_k-w_j-w_i)}
\e^{\tfrac{-b_j+d_j+b_k-d_k}{2\hbar}(u_i-u_j)}\rho_{jk}
\end{align}
for $i,j,k\in\{1,2,4,5,7,8\}$.

Combining this expression with \eqref{R-+-+} or \eqref{rsigma}, the $R$-operator takes the form
\begin{align}
R_{ijk}
&= \Psi_q(\e^{-d_k-c_j-b_i+u_i+u_k+w_i-w_j+w_k})^{-1}
\Psi_q(\e^{d_k+c_j+b_i+e_i-u_k+u_i-w_i+w_j-w_k})
\nonumber \\
& \quad \cdot 
\Psi_q(\e^{-d_k-e_k-c_j-b_i-u_k+u_i+w_i-w_j+w_k})^{-1}
\Psi_q(\e^{d_k+c_j+e_j+b_i+e_i-u_k+2u_j+u_i-w_i+w_j-w_k})P_{ijk}
\label{Rf1}
\\
&= \Psi_q(\e^{-d_k-c_j-b_i+u_i+u_k+w_i-w_j+w_k})^{-1}
\Psi_q(\e^{d_k+c_j+b_i+e_i-u_k+u_i-w_i+w_j-w_k})P_{ijk}
\nonumber \\
&\quad \cdot
\Psi_q(\e^{d_i+e_i+a_j+b_k+u_i-u_k-w_i+w_j-w_k})^{-1}
\Psi_q(\e^{-d_i-a_j-b_k+u_i+u_k-w_i+w_j-w_k}),
\label{Rf2}
\end{align}
where $a_l+b_l+c_l+d_l+e_l=0$ and $a_l=c_l$ for $l=i,j,k$.

Next we consider $K_{1356}$, for instance, appearing in the 3DRE \eqref{RE-RK}.
We have the decomposition into the dilogarithm and monomial parts 
as $K_{1356} = K^\Psi_{1356} P^K_{1356}$, and the latter is given by \eqref{p24} by replacing the 
indices $1,2,3,4$ with $1,3,5,6$.
Thus we have $P^{K}_{1356} = \exp(\frac{X}{2\hbar})\rho_{36}$ with 
\begin{equation}\label{pp24}
\begin{split}
X &=
(-u_3+u_6)(a_1+c_1-c_3+c_6+2w_1)
-(u_1-u_5)(b_3-b_6-d_3+d_6)
\\
&+(-w_3+w_6)\Bigl(\tfrac12(-b_3+b_6-d_3+d_6)+c_1-c_3+c_6\Bigr).
\end{split}
\end{equation}
Using the conditions in Theorem~\ref{th:main},  the coefficient appearing here are 
rewritten as
\begin{align}
a_1+c_1-c_3+c_6 = a_6-a_3, 
\quad
\tfrac12(-b_3+b_6-d_3+d_6)+c_1-c_3+c_6=\tfrac12(e_3-e_6)
\end{align}
by introducing $a_3$ and $a_6$ extending the condition \eqref{recon1} to $i=3,6,9$ as well.
The same rewriting holds for $P^K_{ijkl}$ in general, and we obtain
\begin{equation}\label{pf24}
\begin{split}
P^K_{ijkl} &= \e^{\frac{1}{2\hbar}\{(u_j-u_l)(a_j-a_l-2w_i)
-(b_j-d_j-b_l+d_l)(u_i-u_k)-\tfrac12(e_j-e_l)(w_j-w_l)\}}\rho_{jl}
\end{split}
\end{equation}
for $(i,j,k,l)=(1,3,5,6), (2,3,7,9)$ and $(4,6,8,9)$.

For the dilogarithm part, any of the expressions
\eqref{K241}--\eqref{K244} may be used, since they are all equal by
Proposition~\ref{prop:K24}.  Here we choose \eqref{K243}.  Upon
replacing the indices $1,2,3,4$ with $i,j,k,l$, we obtain
\begin{equation}\label{Kdf}
\begin{split}
K_{ijkl}&=~\Psi _q\left(\e^{-b_i-c_j-d_k+u_i+u_k+w_i-w_j+w_k}\right)^{-1}
\\ & \qquad \cdot 
\Psi_{q^2}\left(\e^{-a_k-b_j-d_l+u_j+u_l+w_j-2 w_k+w_l}\right)^{-1}
\\ & \qquad \cdot \Psi_q\left(\e^{a_k-b_i+b_k-c_j+c_k+u_i-u_k+w_i-w_j+w_k}\right)^{-1}
\\ & \qquad \cdot \Psi_{q^2}\left(\e^{a_k+c_i-2 c_j+c_k-d_j+d_l+u_j-u_l-w_j+2 w_k-w_l}\right)
\\ & \qquad \cdot \Psi_q\left(\e^{-a_i-c_i+c_j-d_i+d_k+u_i-u_k-w_i+w_j-w_k}\right)
\\ & \qquad \cdot \Psi_{q^2}\left(\e^{a_k-2 b_i-b_j+2 b_k+b_l+c_i-2 c_j+c_k+2 c_l+2 u_i+u_j-2 u_k-u_l+2w_i-w_j+w_l}\right)^{-1}
\\ & \qquad \cdot \Psi_q\left(\e^{-b_i-b_j+b_k+b_l+c_i-c_j+2c_l+u_i+u_j-u_k-u_l+w_i-w_k+w_l}\right)^{-1}
\\ & \qquad \cdot \Psi _{q^2}\left(\e^{a_k+c_i-2c_j+c_k-d_j+d_l+u_j-u_l-w_j+2 w_k-w_l}\right)^{-1}
\\ & \qquad \cdot \Psi_q\left(\e^{-a_i-b_k-c_j-d_i-d_j+d_l+u_i+u_j+u_k-u_l-w_i+w_k-w_l}\right)
\\ & \qquad \cdot \Psi_{q^2}\left(\e^{-a_k-b_j-d_l+u_j+u_l+w_j-2 w_k+w_l}\right)P^K_{ijkl}.
    \end{split}
\end{equation}
Further setting $c_i=a_i$ and $c_k=a_k$ leads to the formula \eqref{kf}.

\subsection{Well-definedness of the dilogarithm part of the 3DRE: quantum $Y$-variables}
\label{dilog-p1}

In the remainder of this section, all the formulas are based on the 
expression 
\eqref{sol243} of the $K$-operator, which corresponds to the choice $(\ve_2, \ve_4)=(-,+)$. 

First we prove the well-definedness of the dilogarithm part of the 3DRE
as formal Laurent series in quantum $Y$-variables. 
The dilogarithm part of the LHS reads 
\begin{equation}\label{L46}
\begin{split}
&\Psi_q\!\left(Y_{17}^{-1}\right)^{-1}
 \Psi_q\!\left(q Y_{16} Y_{17}\right)
 \Psi_q\!\left(q^{-1} Y_{17}^{-1} Y_{18}^{-1}\right)^{-1}
 \Psi_q\!\left(Y_{9} Y_{16} Y_{17}\right)
\\
& \cdot
 \Psi_q\!\left(Y_{10}^{-1} Y_{17}^{-1} Y_{18}^{-1}\right)^{-1}
 \Psi_{q^2}\!\left(Y_{3}^{-1}\right)^{-1}
 \Psi_q\!\left(q^{-1} Y_{10}^{-1} Y_{11}^{-1} Y_{17}^{-1} Y_{18}^{-1}\right)^{-1}
 \Psi_{q^2}\!\left(q^{2} Y_{2} Y_{3}\right)
\\
&\cdot 
 \Psi_q\!\left(q Y_{9} Y_{10} Y_{16} Y_{17}\right)
 \Psi_{q^2}\!\left(q^{-2} Y_{3}^{-1} Y_{4}^{-1} Y_{10}^{-2} Y_{11}^{-2} Y_{17}^{-2} Y_{18}^{-2}\right)^{-1}
\\
&\cdot
 \Psi_q\!\left(q^{-1} Y_{3}^{-1} Y_{4}^{-1} Y_{10}^{-1} Y_{11}^{-1} Y_{17}^{-1} Y_{18}^{-1}\right)^{-1}
 \Psi_{q^2}\!\left(q^{2} Y_{2} Y_{3}\right)^{-1}
 \Psi_q\!\left(q^{2} Y_{2} Y_{3} Y_{9} Y_{10} Y_{11} Y_{16} Y_{17}\right)
\\
&\cdot
 \Psi_{q^2}\!\left(Y_{3}^{-1}\right)
 \Psi_q\!\left(q^{-2} Y_{3}^{-1} Y_{4}^{-1} Y_{10}^{-1} Y_{11}^{-1} Y_{12}^{-1} Y_{17}^{-1} Y_{18}^{-1}\right)^{-1}
\\
&\cdot
 \Psi_q\!\left(q^{-3} Y_{3}^{-1} Y_{4}^{-1} Y_{10}^{-1} Y_{11}^{-1} Y_{12}^{-1} Y_{13}^{-1} Y_{17}^{-1} Y_{18}^{-1}\right)^{-1}
 \Psi_q\!\left(q^{3} Y_{2} Y_{3} Y_{9} Y_{10} Y_{11} Y_{12} Y_{16} Y_{17}\right)
\\
&\cdot
 \Psi_q\!\left(q^{4} Y_{2} Y_{3} Y_{9} Y_{10} Y_{11} Y_{12} Y_{16} Y_{17} Y_{20}\right)
 \Psi_q\!\left(Y_{19}^{-1}\right)^{-1}
\\
&\cdot
 \Psi_q\!\left(q Y_{18} Y_{19}\right)
 \Psi_q\!\left(q^{-1} Y_{19}^{-1} Y_{20}^{-1}\right)^{-1}
 \Psi_q\!\left(Y_{11} Y_{18} Y_{19}\right)
 \Psi_q\!\left(Y_{12}^{-1} Y_{19}^{-1} Y_{20}^{-1}\right)^{-1}
\\
&\cdot
 \Psi_{q^2}\!\left(Y_{5}^{-1}\right)^{-1}
 \Psi_q\!\left(q^{-1} Y_{12}^{-1} Y_{13}^{-1} Y_{19}^{-1} Y_{20}^{-1}\right)^{-1}
 \Psi_{q^2}\!\left(q^{2} Y_{4} Y_{5}\right)
\\
&\cdot
 \Psi_q\!\left(q Y_{11} Y_{12} Y_{18} Y_{19}\right)
 \Psi_{q^2}\!\left(q^{-2} Y_{5}^{-1} Y_{6}^{-1} Y_{12}^{-2} Y_{13}^{-2} Y_{19}^{-2} Y_{20}^{-2}\right)^{-1}
\\
&\cdot
 \Psi_q\!\left(q^{-1} Y_{5}^{-1} Y_{6}^{-1} Y_{12}^{-1} Y_{13}^{-1} Y_{19}^{-1} Y_{20}^{-1}\right)^{-1}
 \Psi_{q^2}\!\left(q^{2} Y_{4} Y_{5}\right)^{-1}
\\
&\cdot
 \Psi_q\!\left(q^{2} Y_{4} Y_{5} Y_{11} Y_{12} Y_{13} Y_{18} Y_{19}\right)
 \Psi_{q^2}\!\left(Y_{5}^{-1}\right)
 \Psi_q\!\left(Y_{10}^{-1}\right)^{-1}
 \Psi_{q^2}\!\left(Y_{3}^{-1}\right)^{-1}
\\
&\cdot
 \Psi_q\!\left(q^{-1} Y_{10}^{-1} Y_{11}^{-1}\right)^{-1}
 \Psi_{q^2}\!\left(q^{2} Y_{2} Y_{3}\right)
 \Psi_q\!\left(q Y_{9} Y_{10}\right)
 \Psi_{q^2}\!\left(q^{-2} Y_{3}^{-1} Y_{4}^{-1} Y_{10}^{-2} Y_{11}^{-2}\right)^{-1}
\\
&\cdot
 \Psi_q\!\left(q^{-1} Y_{3}^{-1} Y_{4}^{-1} Y_{10}^{-1} Y_{11}^{-1}\right)^{-1}
 \Psi_{q^2}\!\left(q^{2} Y_{2} Y_{3}\right)^{-1}
 \Psi_q\!\left(q^{2} Y_{2} Y_{3} Y_{9} Y_{10} Y_{11}\right)
 \Psi_{q^2}\!\left(Y_{3}^{-1}\right)
\\
&\cdot
 \Psi_q\!\left(q^{-2} Y_{3}^{-1} Y_{4}^{-1} Y_{10}^{-1} Y_{11}^{-1} Y_{12}^{-1}\right)^{-1}
 \Psi_q\!\left(q^{-3} Y_{3}^{-1} Y_{4}^{-1} Y_{10}^{-1} Y_{11}^{-1} Y_{12}^{-1} Y_{13}^{-1}\right)^{-1}
\\
&\cdot
 \Psi_q\!\left(q^{3} Y_{2} Y_{3} Y_{9} Y_{10} Y_{11} Y_{12}\right)
 \Psi_q\!\left(q^{4} Y_{2} Y_{3} Y_{9} Y_{10} Y_{11} Y_{12} Y_{20}\right).
\end{split}
\end{equation}
It consists of $46 = 3 \times 10 + 4 \times 4$ dilogarithms.  
Expand the $i$th one (from the left) into a power series in its argument by means of \eqref{dilog-sum}, 
introducing the summation index $n_i$.  
By further using the $q$-commutativity relation \eqref{q-Y}, the result can be brought to the form
\begin{align}\label{CYp}
\sum_{\mathbf{n}} C(\mathbf{n}) \,
Y_2^{p_1} Y_3^{p_2} Y_4^{p_3} Y_5^{p_4} Y_6^{p_5} Y_9^{p_6}
Y_{10}^{p_7} Y_{11}^{p_8} Y_{12}^{p_9} Y_{13}^{p_{10}}
Y_{16}^{p_{11}} Y_{17}^{p_{12}} Y_{18}^{p_{13}} Y_{19}^{p_{14}} Y_{20}^{p_{15}},
\end{align}
where the sum runs over $\mathbf{n} = (n_1,\ldots,n_{46}) \in (\mathbb{Z}_{\ge0})^{46}$, 
and $C(\mathbf{n})$ is a rational function of $q$.  
The series \eqref{CYp} involves 15 $Y$-variables attached to the unfrozen vertices of the quiver $B(C_3)$ in Figure \ref{fig:RE}.  
Their powers $p_1,\ldots,p_{15}$ are linear forms in $\mathbf{n}$ given as follows:  
\begin{align}\label{pnL}
p_{1} &= n_2+n_4+n_9+n_{13}+n_{17}+n_{18},\nonumber \displaybreak[0] \\ 
p_{2} &= -n_1+n_2-n_3+n_4-n_5-n_7+n_9-2
   n_{10}-n_{11}+n_{13}-n_{15}-n_{16}+n_{17}+n_{18},\nonumber \displaybreak[0] \\ 
p_{3} &= -n_{28}-n_{29},\nonumber \displaybreak[0] \\ 
p_{4} &= -n_3-n_5-n_7-2
   n_{10}-n_{11}-n_{15}-n_{16}+n_{20}+n_{22}+n_{27}+n_{31},\nonumber \displaybreak[0] \\ 
p_{5} &= -n_{19}+n_{20}-n_{21}+n_{22}-n_{23}-n_{25}+n_{27}-2
   n_{28}-n_{29}+n_{31},\nonumber \displaybreak[0]
   \\ p_{6} &= -n_{24}+n_{26}-n_{28}-n_{29}+n_{30}+n_{31}-n_{32},\nonumber \displaybreak[0] \\ 
p_{7} &= -n_{16}-n_{25}-2n_{28}-n_{29}+n_{31}-n_{44},\nonumber \displaybreak[0] \\ 
p_{8} &= -n_{10}-n_{11}-n_{15}-n_{16}+n_{26}+n_{30}+n_{31}-n_{38}-n_{39}-n_{43}-n_{44},\nonumber \displaybreak[0] \\ 
p_{9} &= 
   n_{18}-n_{21}-n_{23}-n_{25}-2
   n_{28}-n_{29}+n_{46},\nonumber \displaybreak[0] \\ 
p_{10} &= n_4+n_9+n_{13}+n_{17}+n_{18}+n_{37}+n_{41}+n_{45}+n_{46},\nonumber \displaybreak[0] \\ 
p_{11} &= n_8+n_{12}+n_{13}+n_{17}+n_{18}+n_{36}+n_{40}+n_{41}+n_{45}+n_{46},\nonumber \displaybreak[0] \\ 
p_{12} &= -n_{15}-n_{16}+n_{17}+n_{18}-n_{23}-n_{25}+n_{27}-2n_{28}
-n_{29}+n_{31}-n_{43}-n_{44}+n_{45}+n_{46},\nonumber \displaybreak[0] \\ 
p_{13} &= -n_7-2n_{10}-n_{11}+n_{13}-n_{15}-n_{16}+n_{17}+n_{18}+n_{22}+n_{27}+n_{31}-n_{35}-2n_{38}
   \nonumber \displaybreak[0] \\
  & \quad -n_{39}+n_{41}-n_{43}-n_{44}+n_{45}+n_{46},\nonumber \displaybreak[0] \\ 
p_{14} &= -n_5-n_7+n_9-2
   n_{10}-n_{11}+n_{13}-n_{15}-n_{16}+n_{17}+n_{18}-n_{33}-n_{35}+n_{37}-2
   n_{38}
   \nonumber \\
   & \quad -n_{39}+n_{41}-n_{43}-n_{44}+n_{45}+n_{46},\nonumber \displaybreak[0] 
   \\ p_{15} &= -n_6+n_8-n_{10}-n_{11}+n_{12}+n_{13}-n_{14}-n_{15}-n_{16}+n_{17}+n_{18}-n_{34}+n_{36}-n_{38}
   \nonumber \\
   & \quad -n_{39}+n_{40}+n_{41}-n_{42}-n_{43}-n_{44}+n_{45}+n_{46}
   \end{align}
To prove that the product \eqref{L46} is well defined, 
it suffices to show that the system \eqref{pnL} admits only finitely many (possibly empty) 
solutions $\mathbf{n}$ for each fixed $(p_1,\ldots,p_{15}) \in \mathbb{Z}^{15}$.  
This is readily verified.  
From the equations involving $p_1,p_3,p_{10}, p_{11}$ and the condition $\forall n_i \in \mathbb{Z}_{\ge0}$, 
there are finitely many possibilities for
$n_2,n_4,n_8,n_9,n_{12},n_{13},n_{17},n_{18},n_{28}$, $n_{29},n_{36},n_{37},n_{40},n_{41},n_{45},n_{46}$.  
For each such choice, the equations involving $p_2,p_9, p_{14}, p_{15}$ take the form
\begin{equation*}
\begin{split}
p'_{2} &= -n_1-n_3-n_5-n_7-2 n_{10}-n_{11}-n_{15}-n_{16},\nonumber \\ 
p'_9 &= -n_{21}-n_{23}-n_{25},\nonumber \\ 
p'_{14} &= -n_5-n_7-2 n_{10}-n_{11}-n_{15}-n_{16}-n_{33}-n_{35}-2
   n_{38}-n_{39}-n_{43}-n_{44},\nonumber \\ 
p'_{15} &= -n_6-n_{10}-n_{11}-n_{14}-n_{15}-n_{16}-n_{34}-n_{38}-n_{39}-n_{42}-n_{43}-n_{44}
\end{split}
\end{equation*}
for certain $p'_2,p'_9,p'_{14}, p'_{15}$.  
These constrain 
$n_1,n_3,n_5,n_6,n_7,n_{10},n_{11},n_{14},n_{15},n_{16},n_{21},n_{23},n_{25}$, 
$n_{33},n_{34},n_{35},n_{38},n_{39},
n_{42},n_{43},n_{44}$ 
to finitely many values.  
Assuming such values fixed, the remaining equations become
\begin{equation*}
\begin{split}
p''_{4} &= n_{20}+n_{22}+n_{27}+n_{31},\quad
p''_5 = -n_{19}+n_{20}+n_{22}+n_{27}+n_{31}, 
\nonumber \\
p''_6 &= -n_{24}+n_{26}+n_{30}+n_{31}-n_{32},\quad 
p''_{7} = n_{31},\quad 
p''_{8} = n_{26}+n_{30}+n_{31},\nonumber \\ 
p''_{12} &= n_{27}+n_{31},\quad 
p''_{13} = n_{22}+n_{27} +n_{31}.
\end{split}
\end{equation*}
The equations for $p''_4,p''_7, p''_8, p''_{12}, p''_{13}$ constrain
$n_{20},n_{22},n_{26},n_{27},n_{30},n_{31}$
to finitely many possibilities. 
Once these are fixed, the remaining two equations reduce to 
$p''_{5} = -n_{19}$ and $p''_{6} = -n_{24}-n_{32}$ for some $p''_{5}$ and $p''_{6}$, 
thereby forcing $n_{19}, n_{24}$ and $n_{32}$ also to take only finitely many values.
Hence all variables $n_1,\ldots,n_{46}$ are confined to finitely many choices, 
which proves the well-definedness of \eqref{L46}.  
It also follows that the constant term of the series \eqref{L46}, 
corresponding to $\mathbf{n} = \mathbf{0}$, equals $1$.
  
For simplicity, we write the above procedure as
\begin{equation}\label{ppnL}
\begin{split}
p&: 1,3,10,11,\\
n&: 2,4,8,9,12,13,17,18,28,29,36,37,40,41,45,46,
\\
p&: 2,9,14,15,\\
n&: 1,3,5,6,7,10,11,14,15,16,21,23,25,33,34,35,38,39,42,43,44,
\\
p&: 4,7,8,12,13,\\
n&: {20},{22},{26},{27},{30},{31},
\\
p&: 5,6,\\
n&: 19,24,32.
\end{split}
\end{equation}

The same argument applies to the right-hand side of the 3DRE, 
showing its well-definedness and the unity of its constant term as well.  
We therefore omit the details here and simply present formulas 
analogous to \eqref{L46} and \eqref{pnL}.
The dilogarithm part of the RHS reads
\begin{equation}\label{R46}
\begin{split}
&\Psi_q\!\left(Y_{19}^{-1}\right)^{-1}
   \Psi_q\!\left(q Y_{18} Y_{19}\right)
   \Psi_q\!\left(q^{-1} Y_{19}^{-1} Y_{20}^{-1}\right)^{-1}
   \Psi_q\!\left(Y_{11} Y_{18} Y_{19}\right)                                   \\
&\cdot\Psi_q\!\left(Y_{12}^{-1} Y_{19}^{-1} Y_{20}^{-1}\right)^{-1}
   \Psi_{q^2}\!\left(Y_{5}^{-1}\right)^{-1}
   \Psi_q\!\left(q^{-1} Y_{12}^{-1} Y_{13}^{-1} Y_{19}^{-1} Y_{20}^{-1}\right)^{-1}
   \Psi_{q^2}\!\left(q^{2} Y_{4} Y_{5}\right)                                    \\
&\cdot\Psi_q\!\left(q Y_{11} Y_{12} Y_{18} Y_{19}\right)
   \Psi_{q^2}\!\left(q^{-2} Y_{5}^{-1} Y_{6}^{-1} Y_{12}^{-2} Y_{13}^{-2}
                     Y_{19}^{-2} Y_{20}^{-2}\right)^{-1}                         \\
&\cdot\Psi_q\!\left(q^{-1} Y_{5}^{-1} Y_{6}^{-1} Y_{12}^{-1} Y_{13}^{-1}
                     Y_{19}^{-1} Y_{20}^{-1}\right)^{-1}
   \Psi_{q^2}\!\left(q^{2} Y_{4} Y_{5}\right)^{-1}                               \\
&\cdot\Psi_q\!\left(q^{2} Y_{4} Y_{5} Y_{11} Y_{12} Y_{13} Y_{18} Y_{19}\right)
   \Psi_{q^2}\!\left(Y_{5}^{-1}\right)
   \Psi_q\!\left(Y_{10}^{-1}\right)^{-1}
   \Psi_{q^2}\!\left(Y_{3}^{-1}\right)^{-1}                                      \\
&\cdot\Psi_q\!\left(q^{-1} Y_{10}^{-1} Y_{11}^{-1}\right)^{-1}
   \Psi_{q^2}\!\left(q^{2} Y_{2} Y_{3}\right)
   \Psi_q\!\left(q Y_{9} Y_{10}\right)
   \Psi_{q^2}\!\left(q^{-2} Y_{3}^{-1} Y_{4}^{-1} Y_{10}^{-2} Y_{11}^{-2}\right)^{-1} \\
&\cdot\Psi_q\!\left(q^{-1} Y_{3}^{-1} Y_{4}^{-1} Y_{10}^{-1} Y_{11}^{-1}\right)^{-1}
   \Psi_{q^2}\!\left(q^{2} Y_{2} Y_{3}\right)^{-1}
   \Psi_q\!\left(q^{2} Y_{2} Y_{3} Y_{9} Y_{10} Y_{11}\right)
   \Psi_{q^2}\!\left(Y_{3}^{-1}\right)                                           \\
&\cdot\Psi_q\!\left(q^{-2} Y_{3}^{-1} Y_{4}^{-1} Y_{10}^{-1} Y_{11}^{-1}
                     Y_{12}^{-1}\right)^{-1}
   \Psi_q\!\left(q^{-3} Y_{3}^{-1} Y_{4}^{-1} Y_{10}^{-1} Y_{11}^{-1}
                 Y_{12}^{-1} Y_{13}^{-1}\right)^{-1}                             \\
&\cdot\Psi_q\!\left(q^{3} Y_{2} Y_{3} Y_{9} Y_{10} Y_{11} Y_{12}\right)
   \Psi_q\!\left(q^{4} Y_{2} Y_{3} Y_{9} Y_{10} Y_{11} Y_{12} Y_{20}\right)       \\
&\cdot\Psi_q\!\left(Y_{17}^{-1} Y_{18}^{-1} Y_{19}^{-1}\right)^{-1}
   \Psi_q\!\left(q Y_{16} Y_{17} Y_{18} Y_{19}\right)
   \Psi_q\!\left(q^{-1} Y_{17}^{-1} Y_{18}^{-1} Y_{19}^{-1} Y_{20}^{-1}\right)^{-1} \\
&\cdot\Psi_q\!\left(Y_{11} Y_{16} Y_{17} Y_{18} Y_{19}\right)
   \Psi_q\!\left(Y_{12}^{-1} Y_{17}^{-1} Y_{18}^{-1} Y_{19}^{-1} Y_{20}^{-1}\right)^{-1}
   \Psi_{q^2}\!\left(Y_{5}^{-1}\right)^{-1}                                      \\
&\cdot\Psi_q\!\left(q^{-1} Y_{12}^{-1} Y_{13}^{-1} Y_{17}^{-1} Y_{18}^{-1}
                     Y_{19}^{-1} Y_{20}^{-1}\right)^{-1}
   \Psi_{q^2}\!\left(q^{2} Y_{4} Y_{5}\right)
   \Psi_q\!\left(q Y_{11} Y_{12} Y_{16} Y_{17} Y_{18} Y_{19}\right)              \\
&\cdot\Psi_{q^2}\!\left(q^{-2} Y_{5}^{-1} Y_{6}^{-1} Y_{12}^{-2} Y_{13}^{-2}
                         Y_{17}^{-2} Y_{18}^{-2} Y_{19}^{-2} Y_{20}^{-2}\right)^{-1} \\
&\cdot\Psi_q\!\left(q^{-1} Y_{5}^{-1} Y_{6}^{-1} Y_{12}^{-1} Y_{13}^{-1}
                     Y_{17}^{-1} Y_{18}^{-1} Y_{19}^{-1} Y_{20}^{-1}\right)^{-1}
   \Psi_{q^2}\!\left(q^{2} Y_{4} Y_{5}\right)^{-1}                               \\
&\cdot\Psi_q\!\left(q^{2} Y_{4} Y_{5} Y_{11} Y_{12} Y_{13} Y_{16}
                     Y_{17} Y_{18} Y_{19}\right)
   \Psi_{q^2}\!\left(Y_{5}^{-1}\right)                                          \\
&\cdot\Psi_q\!\left(Y_{4} Y_{6}^{-1} Y_{11} Y_{17}^{-1} Y_{20}^{-1}\right)^{-1}
   \Psi_q\!\left(q^{-1} Y_{17}^{-1} Y_{18}^{-1}\right)^{-1}
   \Psi_q\!\left(q Y_{16} Y_{17}\right)
   \Psi_q\!\left(Y_{2} Y_{4}^{-1} Y_{9} Y_{13}^{-1} Y_{16} Y_{17} Y_{20}\right).
\end{split}
\end{equation}
Unlike the LHS \eqref{L46}, there are two sign-incoherent arguments in the bottom line.

The equations corresponding to \eqref{pnL} read
\begin{align}\label{pnR}
p_{1} &= -n_{15}-n_{17}+n_{19}-2
   n_{20}-n_{21}+n_{23}-n_{25}-n_{26}+n_{27}+n_{28},\nonumber \displaybreak[0] \\ 
   p_{2} &= -n_{16}+n_{18}-n_{20}-n_{21}+n_{22}+n_{23}-n_{24}-n_{25}-n_{26
   }+n_{27}+n_{28},\nonumber \displaybreak[0] \\ 
   p_{3} &= -n_5-n_7+n_9-2 n_{10}-n_{11}+n_{13}-n_{25}-n_{26}+n_{27}+n_{28}-n_{33}-n_{35}+n_{37}
\nonumber  \\
 & \quad -2n_{38}-n_{39}+n_{41},\nonumber \displaybreak[0] \\ 
   p_{4} &= -n_1+n_2-n_3+n_4-n_5-n_7+n_9-2
   n_{10}-n_{11}+n_{13}-n_{29}+n_{30}-n_{31}+n_{32}
   \nonumber \\
   & \quad -n_{33}-n_{35}+n_{37}-2n_{38}-n_{39}+n_{41},\nonumber \displaybreak[0] \\ 
   p_{5} &= -n_6+n_8-n_{10}-n_{11}+n_{12}+n_{13}-n_{14}-n_{34}+n_{36}-n_{38}-n_{39}+n_{40}+n_{41}-n_{4
   2},\nonumber \displaybreak[0] \\ 
   p_{6} &= -n_{10}-n_{11}-n_{38}-n_{39}-n_{43},\nonumber \\ 
   p_{7} &= n_4+n_9+n_{13}-n_{17}-2
   n_{20}-n_{21}+n_{23}-n_{25}-n_{26}+n_{27}+n_{28}+n_{32}+n_{37}+n_{41}+n_{43},\nonumber \displaybreak[0] \\ 
   p_{8} &= n_2+n_4+n_9+n_{13}-n_{29}+n_{30}-n
   _{31}+n_{32}-n_{33}-n_{35}+n_{37}-2 n_{38}-n_{39}+n_{41}-n_{44},\nonumber \displaybreak[0] \\ 
   p_{9} &= -n_7-2 n_{10}-n_{11}+n_{13}-n_{26}-n_{35}-2
   n_{38}-n_{39}+n_{41}-n_{46},\nonumber \displaybreak[0] \\ 
   p_{10} &= n_8+n_{12}+n_{13}-n_{20}-n_{21}-n_{25}-n_{26}+n_{36}+n_{40}+n_{41}+n_{43}-n_{46},
 \nonumber \displaybreak[0] \\
  p_{11} &= n_{19}+n_{23}+n_{27}+n_{28}+n_{46},\nonumber \displaybreak[0] \\ 
  p_{12} &= n_{18}+n_{22}+n_{23}+n_{27}+n_{28}+n_{46},\nonumber \displaybreak[0] \\ 
  p_{13} &= -n_3-n_5-n_7-2
   n_{10}-n_{11}+n_{28}-n_{31}-n_{33}-n_{35}-2
   n_{38}-n_{39}-n_{43}+n_{46},\nonumber \displaybreak[0] \\ 
   p_{14} &= n_{30}+n_{32}+n_{37}+n_{41}+n_{45}+n_{46},\nonumber \displaybreak[0] \\ 
   p_{15} &= -n_{29}+n_{30}-n_{31}+n_{32}-n_{33}-n_{35}+n_{37}-2 n_{38}-n_{39}+n_{41}-n_{43}-n_{44}+n_{45}+n_{46}.
   \end{align}
 As in \eqref{ppnL}, one can confirm the well-definedness along the following procedure:
\begin{equation}\label{ppnR}
\begin{split}
p&: 6,11,12,14,\\
n&:10,11,18,19,22,23,27,28,30,32,37,38,39,41,43,45,46,
\\
p&: 1,2,13,15,\\
n&: 3,5,7,15,16,17,20,21,24,25,26,29,31,33,35,44,
\\
p&: 3,7,8,9,10,\\
n&: 2,4,8,9,12,13,36,40,
\\
p&: 4,5,\\
n&: 1,6,14,34,42.
\end{split}
\end{equation}

\subsection{Well-definedness of the dilogarithm part of the 3DRE: $q$-Weyl variables}\label{dilog-p2}

According to \eqref{Ypw}, \eqref{Ypwb} and \eqref{YwC2}, 
the ring homomorphism from $\mathcal{Y}(B(C_3))$ to $\mathrm{Frac}\,\mathcal{W}(C_3)$ is given by
 \begin{equation}\label{yeuw}
 \begin{cases}
 Y_1 \mapsto  \e^{a_2+d_3-u_3+2 w_2-w_3}, & Y_{12} \mapsto  \e^{a_7+b_5+c_6+d_8-u_5-u_8-w_5+w_6+w_7-w_8}, \\
 Y_2 \mapsto  \e^{e_3+2 u_3}, & Y_{13} \mapsto  \e^{e_8+2 u_8}, \\
 Y_3 \mapsto  \e^{a_5+b_3+d_6-u_3-u_6-w_3+2 w_5-w_6}, & Y_{14} \mapsto  \e^{b_8+c_9-u_8-w_8+w_9}, \\
 Y_4 \mapsto  \e^{e_6+2 u_6}, & Y_{15} \mapsto  \e^{d_1-u_1-w_1}, \\
 Y_5 \mapsto \e^{ a_8+b_6+d_9-u_6-u_9-w_6+2 w_8-w_9}, &  Y_{16} \mapsto  \e^{e_1+2 u_1}, \\ 
 Y_6 \mapsto  \e^{e_9+2 u_9}, &  Y_{17} \mapsto  \e^{b_1+c_2+d_4-u_1-u_4-w_1+w_2-w_4}, \\
 Y_7 \mapsto  \e^{b_9-u_9-w_9}, &  Y_{18} \mapsto  \e^{e_4+2 u_4}, \\
 Y_8 \mapsto  \e^{a_1+d_2-u_2+w_1-w_2}, & Y_{19} \mapsto  \e^{b_4+c_5+d_7-u_4-u_7-w_4+w_5-w_7}, \\
 Y_9 \mapsto  \e^{e_2+2 u_2}, &  Y_{20} \mapsto  \e^{e_7+2 u_7}, \\
 Y_{10} \mapsto  \e^{a_4+b_2+c_3+d_5-u_2-u_5-w_2+w_3+w_4-w_5}, &  Y_{21} \mapsto  \e^{b_7+c_8-u_7-w_7+w_8}, \\
 Y_{11} \mapsto  \e^{e_5+2 u_5}, &  Y_{22} \mapsto  \e^{c_1+c_4+c_7+w_1+w_4+w_7}.
 \end{cases}
 \end{equation}
 
First we consider the LHS of the 3D  reflection equation.
Substituting \eqref{yeuw} into the series \eqref{CYp} and applying the $q$-commutativity of 
the generators on the $q$-Weyl algebra, one can express it as 
\begin{align}\label{anL}
\sum_{{\bf n}} \tilde{C}({\bf n}) \e^{\alpha_1u_1+\cdots +\alpha_9u_9+ 
\alpha_{10}w_1+\cdots +\alpha_{18}w_9},
\end{align}
 where the sum extends over ${\bf n} = (n_1,\ldots, n_{46}) \in (\Z_{\ge 0})^{46}$, and 
$\tilde{C}({\bf n})$ is  function of $q$ and the parameters in \eqref{yeuw}.
 The coefficients $\alpha_1,\ldots, \alpha_{18}$ are given by 
\begin{align}
 \alpha_{1} &= n_1+n_2+n_3+n_4+n_5+n_7+n_9+2 n_{10}+n_{11}+n_{13}
 +n_{15}+n_{16}+n_{17}+n_{18},\nonumber \displaybreak[0] \\ 
 \alpha_{2} &= 2 n_4+n_5+n_7+n_9+2
   n_{10}+n_{11}+n_{13}+n_{15}+n_{16}+n_{17}+n_{18}+n_{33}+n_{35}+n_{37}
   \nonumber \\
   &\quad +2n_{38}+n_{39}+n_{41}+n_{43}+n_{44}+n_{45}+n_{46},\nonumber \displaybreak[0] \\ 
   \alpha_{3} &= n_6+n_8+n_{10}+n_{11}+n_{12}+n_{13}+n_{14}+n_{15}+n_{16}+n_{17
   }+n_{18}+n_{34}+n_{36}+n_{38}
   \nonumber \\
   &\quad +n_{39}+n_{40}+n_{41}+n_{42}+n_{43}+n_{44}+n_{45}+n_{46},\nonumber \displaybreak[0] \\ 
   \alpha_{4} &= n_1-n_2-n_3-n_4-n_5-n_7-n
   _9-2 n_{10}-n_{11}-n_{13}-n_{15}-n_{16}-n_{17}-n_{18}
   \nonumber \\
   &\quad +n_{19}+n_{20}+n_{21}+n_{22}+n_{23}+n_{25}+n_{27}+2
   n_{28}+n_{29}+n_{31},\nonumber \displaybreak[0] \\
   \alpha_{5} &= n_5-n_7-n_9-2 n_{10}-n_{11}+n_{13}+2 n_{22}+n_{23}+n_{25}+n_{27}+2
   n_{28}+n_{29}+n_{31}
   \nonumber \\
   &\quad +n_{33}-n_{35}-n_{37}-2
   n_{38}-n_{39}+n_{41},\nonumber \displaybreak[0] \\ 
   \alpha_{6} &= n_6-n_8-n_{10}-n_{11}-n_{12}-n_{13}+n_{14}-n_{15}-n_{16}-n_{17}-n_{18}+n_{24}+n_{26}
   +n_{28} \nonumber \\
   &\quad +n_{29}+n_{30}+n_{31}+n_{32}+n_{34}-n_{36}-n_{38}-n_{39}-n_{40}
   -n_{41}+n_{42}-n_{43}-n_{44}-n_{45}-n_{46},\nonumber \displaybreak[0] \\ 
\alpha_{7} &= 2
   n_{18}+n_{19}-n_{20}-n_{21}-n_{22}-n_{23}-n_{25}-n_{27}-2 n_{28}-n_{29}-n_{31}+2
   n_{46},\nonumber \displaybreak[0] \\ 
   \alpha_{8} &= n_{15}-n_{16}-n_{17}-n_{18}+n_{23}-n_{25}-n_{27}-2
   n_{28}-n_{29}+n_{31}+n_{43}-n_{44}-n_{45}-n_{46},\nonumber \displaybreak[0] \\ 
   \alpha_{9} &= n_{24}-n_{26}-n_{28}-n_{29}-n_{30}-n_{31}+n_{32},\nonumber \displaybreak[0] \\ 
   \alpha_{10} &= n_1-n_2
   +n_3-n_4+n_5+n_7-n_9+2
   n_{10}+n_{11}-n_{13}+n_{15}+n_{16}-n_{17}-n_{18},\nonumber \displaybreak[0] \\ 
   \alpha_{11} &= -n_1+n_2-n_3+n_4+n_{33}+n_{35}-n_{37}+2
   n_{38}+n_{39}-n_{41}+n_{43}+n_{44}-n_{45}-n_{46},\nonumber \displaybreak[0] \\ 
   \alpha_{12} &= -n_5+n_6-n_7-n_8+n_9-n_{10}-n_{12}+n_{14}-n_{33}+n_{34}-n_{35
   }-n_{36}+n_{37}-n_{38}-n_{40}+n_{42},\nonumber \displaybreak[0] \\ 
   \alpha_{13} &= n_1-n_2+n_3-n_4+n_{19}-n_{20}+n_{21}-n_{22}+n_{23}+n_{25}-n_{27}+2
   n_{28}+n_{29}-n_{31}
   \nonumber \\
   &\quad -n_{33}-n_{35}+n_{37}-2 n_{38}-n_{39}+n_{41}-n_{43}-n_{44}+n_{45}+n_{46},\nonumber \displaybreak[0] \\ 
   \alpha_{14} &= n_5-2 n_6+n_7+2
   n_8-n_9-n_{11}+2 n_{12}+n_{13}-2 n_{14}-n_{19}+n_{20}-n_{21}+n_{22}
   \nonumber \\
   &\quad +n_{33}-2 n_{34}+n_{35}+2 n_{36}-n_{37}-n_{39}+2
   n_{40}+n_{41}-2n_{42},\nonumber \displaybreak[0]\\ 
   \alpha_{15} &= n_6-n_8+n_{10}+n_{11}-n_{12}-n_{13}+n_{14}-n_{23}+n_{24}-n_{25}-n_{26}+n_{27}-n_{28}-n_{30}
   \nonumber \\
   &\quad +n_{32}+n_{34}-n_{36}+n_{38}+n_{39}-n_{40}-n_{41}+n_{42},\nonumber \displaybreak[0] \\ 
   \alpha_{16} &= -n_{15}-n_{16}+n_{17}+n_{18}+n_{19}-n_{20}+n_{21}-n_{22}-n_{43}-n
   _{44}+n_{45}+n_{46},\nonumber \displaybreak[0] \\ 
   \alpha_{17} &= n_{15}+n_{16}-n_{17}-n_{18}+n_{23}-2 n_{24}+n_{25}+2 n_{26}-n_{27}-n_{29}+2
   n_{30}+n_{31}-2 n_{32}
   \nonumber \\
   &\quad +n_{43}+n_{44}-n_{45}-n_{46},\nonumber \displaybreak[0] \\ 
   \alpha_{18} &= n_{24}-n_{26}+n_{28}+n_{29}-n_{30}-n_{31}+n_{32}.
   \label{alL}
\end{align}
In the notation analogous to \eqref{ppnL}, 
one can show the well-definedness of \eqref{anL} only in two steps as
\begin{equation}\label{aalL}
\begin{split}
\alpha&: 1,2,3,\\
n&:1,2,3,4,5,6,7,8,9,10,11,12,13,14,15,16,17,18,33,34,35,36,37,38,39,40,
\\ &\;\; \;41,42,43,44,45,46,
\\
\alpha&: 4,5,6,\\
n&: 19,20,21,22,23,24,25,26,27,28,29,30,31,32.
\end{split}
\end{equation}

As for the RHS of the 3DRE, the equations corresponding to \eqref{alL} read  
\begin{align}
\nonumber \\ \alpha_{1} &= n_{29}+n_{30}+n_{31}+n_{32}+n_{33}+n_{35}+n_{37}+2
   n_{38}+n_{39}+n_{41}+n_{43}+n_{44}+n_{45}+n_{46},\nonumber \displaybreak[0] \\ 
   \alpha_{2} &= n_{15}+n_{17}+n_{19}+2
   n_{20}+n_{21}+n_{23}+n_{25}+n_{26}+n_{27}+n_{28}+2
   n_{46},\nonumber \displaybreak[0] \\ 
   \alpha_{3} &= n_{16}+n_{18}+n_{20}+n_{21}+n_{22}+n_{23}+n_{24}+n_{25}+n_{26}+n_{27}+n_{28}+2
   n_{46},\nonumber \displaybreak[0] \\ 
   \alpha_{4} &= n_1+n_2+n_3+n_4+n_5+n_7+n_9+2 n_{10}+n_{11}+n_{13}+n_{43}-n_{44}-n_{45}-n_{46},
   \nonumber \displaybreak[0] \\ 
   \alpha_{5} &= 2 n_4+n_5+n_7+n_9+2
   n_{10}+n_{11}+n_{13}+n_{15}-n_{17}-n_{19}-2 n_{20}-n_{21}+n_{23}+2 n_{32}
   \nonumber \\
   &\quad +n_{33}+n_{35}+n_{37}+2
   n_{38}+n_{39}+n_{41}+2
   n_{43},\nonumber \displaybreak[0] \\ 
   \alpha_{6} &= n_6+n_8+n_{10}+n_{11}+n_{12}+n_{13}+n_{14}+n_{16}-n_{18}-n_{20}-n_{21}-n_{22}-n_{23}+n_{24}
   \nonumber \\
   &\quad -n_{25}-n_{26
   }-n_{27}-n_{28}+n_{34}+n_{36}+n_{38}+n_{39}+n_{40}+n_{41}+n_{42}+2 n_{43}-2
   n_{46},\nonumber \displaybreak[0] \\ 
   \alpha_{7} &= n_1-n_2-n_3-n_4-n_5-n_7-n_9-2 n_{10}-n_{11}-n_{13}+2
   n_{28}+n_{29}-n_{30}-n_{31}
   \nonumber \\
   &\quad -n_{32}-n_{33}-n_{35}-n_{37}-2 n_{38}-n_{39}-n_{41}-2 n_{43}+2 n_{46},\nonumber \\ 
   \alpha_{8} &= n_5-n_7-n_9-2
   n_{10}-n_{11}+n_{13}+n_{25}-n_{26}-n_{27}-n_{28}+n_{33}-n_{35}-n_{37}-2 n_{38}
   \nonumber \\
   &\quad -n_{39}+n_{41}-2
   n_{46},\nonumber \displaybreak[0] \\ 
   \alpha_{9} &= n_6-n_8-n_{10}-n_{11}-n_{12}-n_{13}+n_{14}+n_{34}-n_{36}-n_{38}-n_{39}-n_{40}-n_{41}+n_{42}-2
   n_{43},\nonumber \displaybreak[0] \\
    \alpha_{10} &= n_{29}-n_{30}+n_{31}-n_{32}+n_{33}+n_{35}-n_{37}+2
   n_{38}+n_{39}-n_{41}+n_{43}+n_{44}-n_{45}-n_{46},\nonumber \displaybreak[0] \\ 
   \alpha_{11} &= n_{15}+n_{17}-n_{19}+2
   n_{20}+n_{21}-n_{23}+n_{25}+n_{26}-n_{27}-n_{28}-n_{29}+n_{30}-n_{31}
   \nonumber \\
   &\quad +n_{32}-n_{33}-n_{35}+n_{37}-2
   n_{38}-n_{39}+n_{41}-n_{43}-n_{44}+n_{45}+n_{46},\nonumber \displaybreak[0] \\ 
   \alpha_{12} &= -n_{15}+n_{16}-n_{17}-n_{18}+n_{19}-n_{20}-n_{22}+n_{24},\nonumber \displaybreak[0] \\ 
   \alpha_{13} &= n_1-n_2+n_3-n_4+n_5+n_7-n_9+2 n_{10}+n_{11}-n_{13}-n_{15}-n_{17}+n_{19}-2n_{20}
   \nonumber \\
   &\quad -n_{21}+n_{23}-n_{25}-n_{26}+n_{27}+n_{28}+2 n_{29}-2 n_{30}+2 n_{31}-2 n_{32}+2 n_{33}+2 n_{35}
   \nonumber \\
   &\quad -2 n_{37}+4
   n_{38}+2 n_{39}-2 n_{41}+n_{43}+n_{44}-n_{45}-n_{46},\nonumber \displaybreak[0] \\ 
   \alpha_{14} &= -n_1+n_2-n_3+n_4+n_{15}-2 n_{16}+n_{17}+2
   n_{18}-n_{19}-n_{21}+2 n_{22}+n_{23}-2
   n_{24}
   \nonumber \\
   &\quad -n_{29}+n_{30}-n_{31}+n_{32},\nonumber \displaybreak[0] \\ 
   \alpha_{15} &= -n_5+n_6-n_7-n_8+n_9-n_{10}-n_{12}+n_{14}+n_{16}-n_{18}+n_{20}+n_{21}-n_{22}-n_{23}
   \nonumber \\
   &\quad +n_{24}-n_{33}+n_{34}-n_{35}-n_{36}+n_{37}-n_{38}-n_{40}+n_{42},\nonumber \displaybreak[0] \\ 
   \alpha_{16} &= n_1-n_2+n_3-n_4-n_{25}-n_{26}+n_{27}+n
   _{28}+n_{29}-n_{30}+n_{31}-n_{32},\nonumber \displaybreak[0] \\ 
   \alpha_{17} &= n_5-2 n_6+n_7+2 n_8-n_9-n_{11}+2 n_{12}+n_{13}-2
   n_{14}+n_{25}+n_{26}-n_{27}-n_{28}+n_{33}
   \nonumber \\
   &\quad -2 n_{34}+n_{35}+2 n_{36}-n_{37}-n_{39}+2 n_{40}+n_{41}-2
   n_{42},\nonumber \displaybreak[0] \\ 
\alpha_{18} &= n_6-n_8+n_{10}+n_{11}-n_{12}-n_{13}+n_{14}+n_{34}-n_{36}+n_{38}+n_{39}-n_{40}-n_{41}+n_{42}.
   \label{alR}
\end{align}
This time, the  well-defineness is shown along the following:
\begin{equation}\label{aalR}
\begin{split}
\alpha&: 1,2,3,\\
n&:15,16,17,18,19,20,21,22,23,24,25,26,27,28,29,30,31,32,33,35,37,\\
&\;\;\; 38,39, 41,43,44,45,46,
\\
\alpha&: 4,5,6,\\
n&: 1,2,3,4,5,6,7,8,9,10,11,12,13,14,34,36,40,42.
\end{split}
\end{equation}
 
\section{Reduction to the $K$-operators for FG quiver}\label{sec:fg}

\subsection{$K$-operators for the FG quiver}
Let us recall the $K$-operator constructed for the FG quiver in \cite{IKT1}.
Corresponding to the wiring diagrams \eqref{SB-KC}, we have 
the transformation of FG quivers as 
\begin{align}
\label{quiver-d:C2}
\begin{split}
\begin{tikzpicture}
\begin{scope}[>=latex,xshift=0pt]
{\color{red}
\fill (2,0) circle(2pt) coordinate(A) node[below]{$1$};
\fill (4,0) circle(2pt) coordinate(C) node[below]{$3$};
\fill (3,1) circle(2pt) coordinate(B) node[above]{$2$};
\fill (5,1) circle(2pt) coordinate(D) node[above]{$4$};
\draw [->] (1,0.5) to [out = 0, in = 135] (A) to [out = -45, in = -135] (C)--(D) to [out = -45, in = 180] (6,0.5);
\draw [->] (1,-0.5) to [out = 0, in = -135] (A) -- (B) -- (C) to [out = -45, in = 180] (6,-0.5);
}
\draw[->] (6.5,0.3) -- (7.5,0.3);
\draw (7.1,0.3) circle(0pt) node[below] {$\qK_{1234}$};%
%
%
\path (2,1) node[circle]{2} coordinate(E) node[above=0.2em]{$1$};
\draw (2,1) circle[radius=0.15];
\path (4,1) node[circle]{2} coordinate(F) node[above=0.2em]{$2$};
\draw (4,1) circle[radius=0.15];
\path (6,1) node[circle]{2} coordinate(G) node[above=0.2em]{$3$};
\draw (6,1) circle[radius=0.15];
\draw (1,0) circle(2pt) coordinate(B) node[below]{$4$};
\draw (3,0) circle(2pt) coordinate(C) node[below]{$5$};
\draw (5,0) circle(2pt) coordinate(D) node[below]{$6$};
\qarrow{B}{C}
\qarrow{C}{D}
\draw[->,shorten >=4pt,shorten <=4pt] (E) -- (F) [thick];
\draw[->,shorten >=4pt,shorten <=4pt] (F) -- (G) [thick];
\draw[->,shorten >=2pt,shorten <=4pt] (F) -- (C) [thick];
\draw[->,shorten >=4pt,shorten <=2pt] (C) -- (E) [thick];
\draw[->,shorten >=4pt,shorten <=2pt] (D) -- (F) [thick];
\qdarrow{E}{B}
\qdarrow{G}{D} 
\path (3,-0.9) node {$B_{\mathrm{FG}}(C_2)$};
\end{scope}
\begin{scope}[>=latex,xshift=205pt]
{\color{red}
\fill (5,0) circle(2pt) coordinate(A) node[below]{$1$};
\fill (3,0) circle(2pt) coordinate(C) node[below]{$3$};
\fill (4,1) circle(2pt) coordinate(B) node[above]{$2$};
\fill (2,1) circle(2pt) coordinate(D) node[above]{$4$};
\draw [->] (1,0.5) to [out = 0, in = -135] (D)-- (C) to [out = -45, in = -135] (A) to [out = 45, in = 180] (6,0.5);
\draw [->] (1,-0.5) to [out = 0, in = -135] (C) -- (B) -- (A) to [out = -45, in = 180] (6,-0.5);
}
%
\path (1,1) node[circle]{2} coordinate(E) node[above=0.2em]{$1$};
\draw (1,1) circle[radius=0.15];
\path (3,1) node[circle]{2} coordinate(F) node[above=0.2em]{$2$};
\draw (3,1) circle[radius=0.15];
\path (5,1) node[circle]{2} coordinate(G) node[above=0.2em]{$3$};
\draw (5,1) circle[radius=0.15];
\draw (2,0) circle(2pt) coordinate(B) node[below]{$4$};
\draw (4,0) circle(2pt) coordinate(C) node[below]{$5$};
\draw (6,0) circle(2pt) coordinate(D) node[below]{$6$};
\qarrow{B}{C}
\qarrow{C}{D}
\draw[->,shorten >=4pt,shorten <=4pt] (E) -- (F) [thick];
\draw[->,shorten >=4pt,shorten <=4pt] (F) -- (G) [thick];
\draw[->,shorten >=4pt,shorten <=2pt] (C) -- (F) [thick];
\draw[->,shorten >=2pt,shorten <=4pt] (F) -- (B) [thick];
\draw[->,shorten >=2pt,shorten <=4pt] (G) -- (C) [thick];
\draw[->,dashed,shorten >=4pt,shorten <=2pt] (B) -- (E) [thick];
\draw[->,dashed,shorten >=4pt,shorten <=2pt] (D) -- (G) [thick];
\path (3,-0.9) node {$B'_{\mathrm{FG}}(C_2)$};
\end{scope}
\end{tikzpicture}
\end{split}
\end{align}
This consists of the mutation sequence $\mu_2 \mu_5 \mu_2$. 
For a sigh sequence $\ve=(\ve_1,\ve_2,\ve_3) \in \{1,-1\}^3$, 
the corresponding mutation sequence of quantum $Y$-seeds
\begin{align}\label{ms:C2}
(B_{\mathrm FG}(C_2),\scY) = (B^{(1)},\scY^{(1)}) \stackrel{\mu_2}{\longrightarrow} 
(B^{(2)},\scY^{(2)}) \stackrel{\mu_5}{\longrightarrow}
(B^{(3)},\scY^{(3)}) \stackrel{\mu_2}{\longrightarrow}
(B^{(4)},\scY^{(4)}) = (B'_{\mathrm FG}(C_2),Y')
\end{align}
induces the isomorphism 
$\widehat{K}_{1234}: \mathcal{Y}(B'_{\mathrm FG}(C_2)) \to \mathcal{Y}(B_{\mathrm FG}(C_2))$ 
expressed as
\begin{align}\label{Kve:C2}
\widehat{K}_{1234} = 
\mathrm{Ad}\bigl(\Psi_q((Y^{(1)}_2)^{\varepsilon_1})^{\varepsilon_1}\bigr)\tau_{2,\varepsilon_1}
\mathrm{Ad}\bigl(\Psi_q((Y^{(2)}_5)^{\varepsilon_2})^{\varepsilon_2}\bigr)\tau_{5,\varepsilon_2}
\mathrm{Ad}\bigl(\Psi_q((Y^{(3)}_2)^{\varepsilon_3})^{\varepsilon_3}\bigr)\tau_{2,\varepsilon_3}.
\end{align} 

Let $\scY_i$ and $\scY'_i$ denote the generators of 
$\mathcal{Y}(B_{\mathrm{FG}}(C_2))$ and $\mathcal{Y}(B'_{\mathrm{FG}}(C_2))$, respectively.  
Using the canonical variables \eqref{uw-def}, 
we define the embeddings
$\phi_{\mathrm{FG}}: \mathcal{Y}(B_{\mathrm{FG}}(C_2)) \hookrightarrow \mathrm{Frac}\,\mathcal{W}(C_2)$ and 
$\phi'_{\mathrm{FG}}: \mathcal{Y}(B'_{\mathrm{FG}}(C_2)) \hookrightarrow \mathrm{Frac}\,\mathcal{W}(C_2)$,  
involving the complex parameters $\theta_i ~(i=1,2,3,4)$  as follows:
\begin{align}\label{phi-K}
\phi_{\mathrm{FG}}: 
\begin{cases}
\scY_1 \mapsto \e^{-w_2-u_2+2 w_1-\theta_2},
\\
\scY_2 \mapsto \e^{-w_2+u_2-w_4-u_4+2 w_3+\theta_2-\theta_4},
\\
\scY_3 \mapsto \e^{-w_4+u_4+\theta_4},
\\
\scY_4 \mapsto \e^{-w_1-u_1-\theta_1},
\\
\scY_5 \mapsto \e^{-w_1+u_1-w_3-u_3+w_2+\theta_1-\theta_3},
\\
\scY_6 \mapsto \e^{-w_3+u_3+w_4+\theta_3},
\end{cases}
\qquad
\phi'_{\mathrm{FG}}:
\begin{cases}
\scY'_1 \mapsto \e^{-w_4-u_4-\theta_4},
\\
\scY'_2 \mapsto \e^{-w_4+u_4-w_2-u_2+2 w_3-\theta_2+\theta_4},
\\
\scY'_3 \mapsto \e^{-w_2+u_2+2 w_1+\theta_2},
\\
\scY'_4 \mapsto \e^{-w_3-u_3+w_4-\theta_3},
\\
\scY'_5 \mapsto \e^{-w_3+u_3-w_1-u_1+w_2-\theta_1+\theta_3},
\\
\scY'_6 \mapsto \e^{-w_1+u_1+\theta_1}. 
\end{cases}
\end{align} 
Here we adjust the definition of canonical pairs in \cite{IKT1} to those in \eqref{uw-def} in this article. 
Further, we define the isomorphism $\pi^K_{C_2}$ of $\mathcal{W}(C_2)$ 
given by the following affine transformation of canonical variables:
\begin{align}\label{K-pi}
\pi^K_{C_2}:
\begin{cases}
\;w_1 \mapsto w_1- \theta_{24}, 
\quad 
w_2 \mapsto w_4+ 2w_1- \theta_{24},
\\
\;w_3  \mapsto w_3+\theta_{24},
\quad
w_4 \mapsto w_2-2 w_1+ \theta_{24},
\\
\;u_1 \mapsto u_1+u_2-u_4,
\quad 
u_2  \mapsto u_4,
\\
\; u_3  \mapsto u_3,
\quad 
u_4 \mapsto u_2.
\end{cases}
\end{align}
Here we set $\theta_{24} = \theta_2-\theta_4$. 

Set
\begin{align}
\label{K++-C2}
\begin{split}
&{K}_{C_2:++-} 
= 
\Psi_{q^2}(\e^{\theta_{24}-w_2+u_2-w_4-u_4+2w_3}) \Psi_q(\e^{\theta_{13}-w_1+u_1-w_3-u_3+w_2}) 
\\
& \qquad \qquad \qquad \cdot\Psi_{q^2}(\e^{\theta_{24}-w_2+u_2-w_4-u_4+2w_3})^{-1} {P}_{C_2},  
\end{split}
\\
\label{FG-P-C2}
&{P}_{C_2}= \e^{\frac{1}{\hbar}\left(w_1(u_4-u_2)+\theta_{24}(u_3-u_1)\right)} \rho_{24}.
\end{align} 

\begin{proposition}\cite[\S 3]{IKT1}
\label{Kop-FG}
The isomorphism $\widehat{K}_{1234}$ in \eqref{Kve:C2}, with the sign
sequence $\ve=(\ve_1,\ve_2,\ve_3)=(1,1,-1)$, is realized, in its image
in $\mathrm{Frac}\,\mathcal{W}(C_2)$, by the adjoint action of
${K}_{C_2:++-}$ in \eqref{K++-C2}.  Namely, the following commutative
diagram holds:
\begin{align}\label{Kcom-FG}
\xymatrix{
\mathcal{Y}(B'_{\mathrm FG}(C_2)) \ar[r]^{\phi'_{\mathrm FG}} \ar[d]_{\widehat{K}_{1234}} 
& \mathrm{Frac}\,\mathcal{W}(C_2) \ar[d]_{\mathrm{Ad} \,{K}_{C_2:++-}}
\\
\mathcal{Y}(B_{\mathrm FG}(C_2)) \ar[r]^{\phi_{\mathrm FG}}& \mathrm{Frac}\,\mathcal{W}(C_2)
}
\end{align}
In particular, it holds that $\pi^K_{C_2}=\mathrm{Ad}\,\mathcal{P}_{C_2}$,
and the diagram below is also commutative:
\begin{align}\label{Kcom-FG2}
\xymatrix{
\mathcal{Y}(B'_{\mathrm FG}(C_2)) \ar[r]^{\phi'_{\mathrm FG}} \ar[d]_{\tau_{2,+}\tau_{5,+}\tau_{2,-}} 
& \mathrm{Frac}\,\mathcal{W}(C_2) \ar[d]_{\mathrm{Ad} \,\mathcal{P}_{C_2}}
\\
\mathcal{Y}(B_{\mathrm FG}(C_2)) \ar[r]^{\phi_{\mathrm FG}}& \mathrm{Frac}\,\mathcal{W}(C_2)
}
\end{align}
\end{proposition}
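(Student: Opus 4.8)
The plan is to mirror the proof of Proposition~\ref{prop-Rcom}: decompose the short mutation sequence $\mu_2\mu_5\mu_2$ of \eqref{ms:C2} into its monomial and dilogarithm parts via \eqref{mu-pm}, and then transport both parts to $\mathrm{Frac}\,\mathcal{W}(C_2)$ through the embeddings \eqref{phi-K}. As the proposition essentially restates \cite[\S 3]{IKT1} under the rescaled canonical pairs \eqref{uw-def}, every verification reduces either to a finite check on the six generators $\scY'_i$ or to a BCH computation that truncates at low order.

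First I would establish the monomial diagram \eqref{Kcom-FG2}. Using \eqref{torus-iso}, I would compute the composite $\tau:=\tau_{2,+}\tau_{5,+}\tau_{2,-}$ on each $\scY'_i$, obtaining Laurent monomials in the $\scY_j$, and then substitute \eqref{phi-K} to match $\phi_{\mathrm{FG}}(\tau(\scY'_i))$ termwise against $\pi^K_{C_2}(\phi'_{\mathrm{FG}}(\scY'_i))$ via the affine rule \eqref{K-pi}; the parameter shifts $\theta_i$ must cancel, which is precisely where $\theta_{24}=\theta_2-\theta_4$ enters. In parallel I would verify $\pi^K_{C_2}=\mathrm{Ad}\,{P}_{C_2}$ by applying the BCH formula to $\mathrm{Ad}\,{P}_{C_2}(u_i)$ and $\mathrm{Ad}\,{P}_{C_2}(w_i)$, with ${P}_{C_2}$ as in \eqref{FG-P-C2}. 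Since the exponent of ${P}_{C_2}$ is quadratic in the canonical variables and $\rho_{24}$ merely permutes indices, the commutator series terminates and reproduces \eqref{K-pi} exactly.

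Next I would assemble the dilogarithm part. By \eqref{mu-pm}, collecting the monomial factors to the right as in \eqref{ad-tau-decomp}, the isomorphism $\widehat{K}_{1234}$ with sign sequence $(+,+,-)$ factors as $\mathrm{Ad}(K^\Psi_{C_2})\circ\tau$, where $K^\Psi_{C_2}$ is a product of three quantum dilogarithms carrying the bases $q_2=q^2$, $q_5=q$, $q_2=q^2$ dictated by the vertex weights, with arguments given by the $\tau$-shifted variables in the convention following \eqref{ad-K}. Evaluating these three arguments under $\phi_{\mathrm{FG}}$ — namely $\phi_{\mathrm{FG}}(\scY_2)$, $\phi_{\mathrm{FG}}(\tau_{2,+}(\scY_5))$, and the argument of the $\ve_3=-$ branch $\phi_{\mathrm{FG}}(\tau_{2,+}\tau_{5,+}(\scY_2))$ — I would confirm that they reproduce exactly the three $q$-Weyl arguments of \eqref{K++-C2}, and in particular that the first and third coincide, so that the third factor is the inverse of a $\Psi_{q^2}$ with the same argument.

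Finally I would combine the steps. The monomial identity $\phi_{\mathrm{FG}}\circ\tau=\mathrm{Ad}\,{P}_{C_2}\circ\phi'_{\mathrm{FG}}$ from the first stage, substituted into $\phi_{\mathrm{FG}}\circ\widehat{K}_{1234}=\mathrm{Ad}(K^\Psi_{C_2})\circ\phi_{\mathrm{FG}}\circ\tau$ from the second, yields $\phi_{\mathrm{FG}}\circ\widehat{K}_{1234}=\mathrm{Ad}(K^\Psi_{C_2}\,{P}_{C_2})\circ\phi'_{\mathrm{FG}}=\mathrm{Ad}({K}_{C_2:++-})\circ\phi'_{\mathrm{FG}}$, which is \eqref{Kcom-FG}. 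I expect the main obstacle to lie in the bookkeeping of the dilogarithm arguments: correctly propagating the intermediate $\tau$-shifts that determine each $\beta_r$, keeping the two distinct bases $q$ and $q^2$ attached to the right factors, and checking that the $\ve_3=-$ branch of \eqref{mu-pm} yields an argument identical to the first factor. This last coincidence is what produces the characteristic $\Psi_{q^2}(\cdot)\,\Psi_q(\cdot)\,\Psi_{q^2}(\cdot)^{-1}$ shape of \eqref{K++-C2} rather than a version carrying a spurious inverted argument.
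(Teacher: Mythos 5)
Your proposal is correct and follows exactly the method the paper uses for the analogous statements: the paper itself defers the proof of Proposition~\ref{Kop-FG} to \cite[\S 3]{IKT1}, but its own proofs of Proposition~\ref{prop-Rcom} and Proposition~\ref{Kop-FGb} proceed precisely as you describe — a finite generator-by-generator check of the monomial diagram, a terminating BCH computation for $\pi^K_{C_2}=\mathrm{Ad}\,P_{C_2}$, and a verification that the $\tau$-shifted dilogarithm arguments under $\phi_{\mathrm{FG}}$ reproduce those of \eqref{K++-C2}. Your identification of the key bookkeeping point (that $\tau_{2,+}\tau_{5,+}$ sends $(Y^{(3)}_2)^{-1}$ back to $Y_2$, so the first and third $\Psi_{q^2}$ factors share the same argument) is exactly right and is what produces the shape of \eqref{K++-C2}.
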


We introduce an alternative realization of $\widehat{K}_{1234}$ associated with a sign sequence
different from that appearing in Proposition \ref{Kop-FG}.

\begin{proposition}\label{Kop-FGb}
The isomorphism $\widehat{K}_{1234}$ with the sign sequence
$\ve=(-1,1,1)$ is realized, in the same sense as in Proposition
\ref{Kop-FG}, by the operator $\mathcal{K}_{C_2:-++}$ defined by
\begin{align}
\begin{split}
\label{K-++C2}
&{K}_{C_2:-++} 
= 
\Psi_{q^2}(\e^{-\theta_{24}+w_2-u_2+w_4+u_4-2w_3})^{-1} 
\Psi_q(\e^{\theta_{13}+\theta_{24}+u_1-u_3+ u_2- u_4 -w_1+w_3-w_4}) 
\\
& \qquad \qquad \qquad \cdot \Psi_{q^2}(\e^{-\theta_{24}+w_2-u_2+w_4+u_4-2w_3}) P_{C_2},
\end{split}
\end{align}
where the monomial part ${P}_{C_2}$ is again given by \eqref{FG-P-C2}. 
Moreover, ${K}_{C_2:-++}$ coincides with ${K}_{C_2:++-}$.
\end{proposition}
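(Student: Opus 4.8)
The plan is to prove Proposition~\ref{Kop-FGb} in two parts, mirroring the structure of Proposition~\ref{Kop-FG}. First I would establish that $\mathcal{K}_{C_2:-++}$ indeed realizes $\widehat{K}_{1234}$ through the commutative diagram analogous to \eqref{Kcom-FG}, and second I would show the coincidence $\mathcal{K}_{C_2:-++}=\mathcal{K}_{C_2:++-}$. The natural route for the first part is to invoke the general decomposition \eqref{mu-pm} of a single mutation into a dilogarithm factor and a monomial part, but now choosing the opposite sign convention at each step dictated by $\ve=(-1,1,1)$ instead of $\ve=(1,1,-1)$. Concretely, starting from the expression \eqref{Kve:C2} for $\widehat{K}_{1234}$ with the new sign sequence, I would push each monomial part $\tau_{2,-},\tau_{5,+},\tau_{2,+}$ successively to the right past the preceding dilogarithm factors, using \eqref{ad-tau-decomp} to rewrite every $\Psi_{q_i}\bigl((Y^{(t)}_{i_t})^{\ve_t}\bigr)^{\ve_t}$ as $\Psi_{q_i}(\scY^{\ve_t\beta_t})^{\ve_t}$ for the appropriate exponent vector $\beta_t$, and then applying $\phi_{\mathrm{FG}}$ from \eqref{phi-K} to land in $\mathrm{Frac}\,\mathcal{W}(C_2)$. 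Because the monomial part $\tau_{2,+}\tau_{5,+}\tau_{2,-}$ coincides with $\tau_{2,-}\tau_{5,+}\tau_{2,+}$ at the level of the induced permutation and monomial transformation (the monomial parts depend on the sign choices only through an overall reshuffling that the commutative diagram \eqref{Kcom-FG2} already encodes), the same operator $\mathcal{P}_{C_2}$ serves as the monomial realization, which is exactly what the statement asserts.

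The cleaner strategy for the second part—the identity $\mathcal{K}_{C_2:-++}=\mathcal{K}_{C_2:++-}$—is to reduce it to a three-term quantum dilogarithm identity, since the monomial factors are literally the same $\mathcal{P}_{C_2}$ in both expressions. Writing $K^{\Psi}_{-++}$ and $K^{\Psi}_{++-}$ for the products of the three dilogarithm functions in \eqref{K-++C2} and \eqref{K++-C2} respectively, the claim is $K^{\Psi}_{-++}=K^{\Psi}_{++-}$ as elements of $\mathrm{Frac}\,\mathcal{W}(C_2)$. Both products arise from the \emph{same} mutation sequence $\mu_2\mu_5\mu_2$ applied to $(B_{\mathrm{FG}}(C_2),\scY)$ but with different admissible sign sequences; by the general theory around \eqref{s-per}, the full cluster isomorphism $\mu^{\ast}_{\mathbf{i}}$ is independent of the chosen signs, so $\mathrm{Ad}(K^{\Psi}_{-++})\circ\tau_{2,-}\tau_{5,+}\tau_{2,+}=\mathrm{Ad}(K^{\Psi}_{++-})\circ\tau_{2,+}\tau_{5,+}\tau_{2,-}$. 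Once I verify that the two monomial compositions agree (which follows from the explicit $\tau$-formulas \eqref{mu-mono} evaluated on this small seed), I conclude $\mathrm{Ad}\bigl((K^{\Psi}_{++-})^{-1}K^{\Psi}_{-++}\bigr)=\mathrm{id}$, so the ratio is central; comparing constant terms in the Laurent expansion then forces the ratio to be $1$.

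An alternative and perhaps more self-contained route for the second part is a direct verification using the pentagon-type and inversion relations for $\Psi_q$. The arguments appearing in \eqref{K++-C2} and \eqref{K-++C2} are governed by the same two underlying $q$-Weyl monomials—one of $\Psi_{q^2}$-type with argument $\e^{\theta_{24}-w_2+u_2-w_4-u_4+2w_3}$ and one of $\Psi_q$-type—so the two three-factor products differ only by moving a $\Psi_{q^2}$-factor from the outside to the inside while simultaneously inverting the middle $\Psi_q$-argument. I would check that the relevant pair of arguments $q$-commutes in the precise way that the inversion relation $\Psi_q(U)=\Psi_q(U^{-1})^{-1}\cdot(\text{Gaussian})$ together with the recursion \eqref{dilog-rec} requires, thereby transforming $K^{\Psi}_{-++}$ into $K^{\Psi}_{++-}$ term by term.

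The main obstacle I anticipate is not conceptual but bookkeeping: correctly tracking the exponent vectors $\beta_t$ through the intermediate seeds $(B^{(t)},\scY^{(t)})$ of the sequence \eqref{ms:C2} under the \emph{new} sign choices, and confirming that the images under $\phi_{\mathrm{FG}}$ reproduce exactly the arguments displayed in \eqref{K-++C2}. The signs $\ve=(-1,1,1)$ change which power $Y^{\pm 1}_k$ enters each $\Psi$ and hence flip several of the arguments to their inverses, and one must ensure the accompanying Gaussian prefactors (the powers of $q$ and the shifts absorbed into $\mathcal{P}_{C_2}$) match. The safest way to neutralize this obstacle is to lean on the sign-independence guaranteed by \eqref{s-per}, reducing the whole coincidence claim to the centrality-plus-constant-term argument rather than an explicit dilogarithm manipulation, and to relegate the verification that the constant terms equal $1$ to the well-definedness already established for such products in \S\ref{sec:wdK}.
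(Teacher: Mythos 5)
Your overall plan matches the paper's: the realization of $\widehat{K}_{1234}$ for $\ve=(-1,1,1)$ is obtained by exactly the same sign-decomposition construction as for $(1,1,-1)$, and the real content of the proposition is the coincidence of the two three-factor dilogarithm products in \eqref{K++-C2} and \eqref{K-++C2}. For that coincidence the paper's proof is a one-line application of Lemma~\ref{dilog-2}: writing $A=\e^{\theta_{24}-w_2+u_2-w_4-u_4+2w_3}$ and $B=\e^{\theta_{13}-w_1+u_1-w_3-u_3+w_2}$, the middle argument of \eqref{K-++C2} is (up to normal ordering) the product $AB$, so
$\Psi_{q^2}(A^{-1})^{-1}\,\mathrm{Ad}\bigl(\Psi_{q^2}(A)\Psi_{q^2}(A^{-1})\bigr)\bigl(\Psi_q(B)\bigr)\,\Psi_{q^2}(A^{-1})=\Psi_{q^2}(A)\Psi_q(B)\Psi_{q^2}(A)^{-1}$,
exactly as in the proof of Proposition~\ref{prop:K24}. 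This is the precise form of your second, ``direct verification'' route; no pentagon identity is needed, only the adjoint identity $\mathrm{Ad}(\Psi_q(X)\Psi_q(X^{-1}))(Y)=qYX$ from Lemma~\ref{dilog-2}. Your preferred route---sign-independence via \eqref{s-per}, then centrality of $(K^{\Psi}_{++-})^{-1}K^{\Psi}_{-++}$, then comparison of constant terms---would also work, but it is heavier and skips one step: $\mathrm{Ad}(F)=\mathrm{id}$ only forces $F$ to be central in the quantum torus of $B_{\mathrm{FG}}(C_2)$, and if that exchange matrix is degenerate the center contains non-constant elements, so you would additionally need the extension argument (as the paper invokes in the proof of Theorem~\ref{th:main} via Nakanishi's Extension Theorem) before the constant-term comparison closes the argument. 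Since only three dilogarithms are involved here, the Lemma~\ref{dilog-2} computation is both shorter and sidesteps that issue.
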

\begin{proof}
The operator ${K}_{C_2:-++}$ is constructed in the same manner as ${K}_{C_2:++-}$. 
One verifies that the dilogarithm parts of the two
operators agree by applying Lemma \ref{dilog-2}, in the same way as in
the proof of Proposition \ref{prop:K24}.
\end{proof}

Now we introduce the dual of \eqref{quiver-d:C2} associated with the Weyl group $W(B_2)$. 
We consider the following transformation of FG quivers:
\begin{align}
\label{quiver-d:B2}
\begin{split}
\begin{tikzpicture}
\begin{scope}[>=latex,xshift=0pt]
{\color{red}
\fill (2,0) circle(2pt) coordinate(A) node[below]{$1$};
\fill (4,0) circle(2pt) coordinate(C) node[below]{$3$};
\fill (3,1) circle(2pt) coordinate(B) node[above]{$2$};
\fill (5,1) circle(2pt) coordinate(D) node[above]{$4$};
\draw [-] (1,0.5) to [out = 0, in = 135] (A);%
\draw [-] (A) to [out = -45, in = -135] (C); 
\draw [-] (C) -- (D);
\draw [->] (D) to [out = -45, in = 180] (6,0.5);
\draw [-] (1,-0.5) to [out = 0, in = -135] (A); 
\draw [-] (A) -- (B);
\draw [-] (B) -- (C);
\draw [->] (C) to [out = -45, in = 180] (6,-0.5);
}
\draw[->] (6.5,0.3) -- (7.5,0.3);
\draw (7.1,0.3) circle(0pt) node[below] {$\qK_{1234}$};
%
%
\draw (2,1) circle(2pt) coordinate(E) node[above]{$1$};
\draw (4,1) circle(2pt) coordinate(F) node[above]{$2$};
\draw (6,1) circle(2pt) coordinate(G) node[above]{$3$};
\draw (1,0) node[circle]{2} coordinate(B) node[below=0.2em]{$4$};
\draw (1,0) circle[radius=0.15];
\path (3,0) node[circle]{2} coordinate(C) node[below=0.2em]{$5$};
\draw (3,0) circle[radius=0.15];
\path (5,0) node[circle]{2} coordinate(D) node[below=0.2em]{$6$};
\draw (5,0) circle[radius=0.15];
\qarrow{E}{F}
\qarrow{F}{G}
\draw[->,shorten >=4pt,shorten <=4pt] (B) -- (C) [thick];
\draw[->,shorten >=4pt,shorten <=4pt] (C) -- (D) [thick];
\draw[->,shorten >=4pt,shorten <=2pt] (F) -- (C) [thick];
\draw[->,shorten >=2pt,shorten <=4pt] (C) -- (E) [thick];
\draw[->,shorten >=2pt,shorten <=4pt] (D) -- (F) [thick];
\draw[->,dashed,shorten >=4pt,shorten <=2pt] (E) -- (B) [thick];
\draw[->,dashed,shorten >=4pt,shorten <=2pt] (G) -- (D) [thick];
\path (3,-0.9) node {$B_{\mathrm{FG}}(B_2)$};
\end{scope}
\begin{scope}[>=latex,xshift=205pt]
{\color{red}
\fill (5,0) circle(2pt) coordinate(A) node[below]{$1$};
\fill (3,0) circle(2pt) coordinate(C) node[below]{$3$};
\fill (4,1) circle(2pt) coordinate(B) node[above]{$2$};
\fill (2,1) circle(2pt) coordinate(D) node[above]{$4$};
\draw [-] (1,0.5) to [out = 0, in = -135] (D);
\draw [-] (D) -- (C); 
\draw [-] (C) to [out = -45, in = -135] (A);
\draw [->] (A) to [out = 45, in = 180] (6,0.5);
\draw [-] (1,-0.5) to [out = 0, in = -135] (C); 
\draw [-] (C) -- (B);
\draw [-] (B) -- (A);
\draw [->] (A) to [out = -45, in = 180] (6,-0.5);
}
%
\draw (1,1) circle(2pt) coordinate(E) node[above]{$1$};
\draw (3,1) circle(2pt) coordinate(F) node[above]{$2$};
\draw (5,1) circle(2pt) coordinate(G) node[above]{$3$};
\path (2,0) node[circle]{2} coordinate(B) node[below=0.2em]{$4$};
\draw (2,0) circle[radius=0.15];
\path (4,0) node[circle]{2} coordinate(C) node[below=0.2em]{$5$};
\draw (4,0) circle[radius=0.15];
\path (6,0) node[circle]{2} coordinate(D) node[below=0.2em]{$6$};
\draw (6,0) circle[radius=0.15];
\qarrow{E}{F}
\qarrow{F}{G}
\draw[->,shorten >=4pt,shorten <=4pt] (B) -- (C) [thick];
\draw[->,shorten >=4pt,shorten <=4pt] (C) -- (D) [thick];
\draw[->,shorten >=2pt,shorten <=4pt] (C) -- (F) [thick];
\draw[->,shorten >=4pt,shorten <=2pt] (F) -- (B) [thick];
\draw[->,shorten >=4pt,shorten <=2pt] (G) -- (C) [thick];
\draw[->,dashed,shorten >=2pt,shorten <=4pt] (B) -- (E) [thick];
\draw[->,dashed,shorten >=2pt,shorten <=4pt] (D) -- (G) [thick];
\path (3,-0.9) node {$B'_{\mathrm{FG}}(B_2)$};
\end{scope}
\end{tikzpicture}
\end{split}
\end{align}
It corresponds to the mutation sequence $\mu_2 \mu_5 \mu_2$.
The difference between \eqref{quiver-d:C2} and \eqref{quiver-d:B2} 
is that in \eqref{quiver-d:C2} the quiver vertices of weight $2$ are on the wall, 
whereas in \eqref{quiver-d:C2} the quiver vertices of weight $1$ are on the wall.   
Note that the quiver $B_{\mathrm{FG}}(B_2)$ (resp. $B'_{\mathrm{FG}}(B_2)$) 
coincides with $B'_{\mathrm{FG}}(C_2)$ (resp. $B_{\mathrm{FG}}(C_2)$) 
by identifying the vertices as $(1,2,3,4,5,6) \mapsto (4,5,6,1,2,3)$ and the crossings as $(1,2,3,4) \mapsto (4,3,2,1)$.
For $\qK_{1234}=\mu_2 \mu_5 \mu_2$, define a sequence of quantum seeds and the isomorphism 
$\widehat{K}_{1234} : \mathcal{Y}(B'_{\mathrm FG}(B_2)) \to \mathcal{Y}(B_{\mathrm FG}(B_2))$ 
in the same manner as \eqref{ms:C2} and \eqref{Kve:C2}.  

Let $\scY_i$ and $\scY'_i$ denote the generators of 
$\mathcal{Y}(B_{\mathrm{FG}}(B_2))$ and $\mathcal{Y}(B'_{\mathrm{FG}}(B_2))$, respectively.  
Set $\gamma=(2,1,2,1)$, and let $\mathcal{W}(B_2):= \mathcal{W}_\gamma$ 
be the $q$-Weyl algebra generated by $\e^{\pm u_i}, \e^{\pm w_i}$ with relations \eqref{qW-gamma}.
Define the embeddings $\psi_{\mathrm{FG}}: \mathcal{Y}(B_{\mathrm{FG}}(B_2)) \hookrightarrow \mathrm{Frac}\,\mathcal{W}(B_2)$ 
and $\psi'_{\mathrm{FG}}: \mathcal{Y}(B'_{\mathrm{FG}}(B_2)) \hookrightarrow \mathrm{Frac}\,\mathcal{W}(B_2)$ 
involving complex parameters $\theta_i ~(i=1,2,3,4)$ by
\begin{align}\label{phi-K-B}
\psi_{\mathrm{FG}}: 
\begin{cases}
\scY_1 \mapsto \e^{-w_2-u_2+ w_1-\theta_2},
\\
\scY_2 \mapsto \e^{-w_2+u_2-w_4-u_4+ w_3+\theta_2-\theta_4},
\\
\scY_3 \mapsto \e^{-w_4+u_4+\theta_4},
\\
\scY_4 \mapsto \e^{-w_1-u_1-\theta_1},
\\
\scY_5 \mapsto \e^{-w_1+u_1-w_3-u_3+2w_2+\theta_1-\theta_3},
\\
\scY_6 \mapsto \e^{-w_3+u_3+2w_4+\theta_3},
\end{cases}
\qquad
\psi'_{\mathrm{FG}}:
\begin{cases}
\scY'_1 \mapsto \e^{-w_4-u_4-\theta_4},
\\
\scY'_2 \mapsto \e^{-w_4+u_4-w_2-u_2+ w_3-\theta_2+\theta_4},
\\
\scY'_3 \mapsto \e^{-w_2+u_2+ w_1+\theta_2},
\\
\scY'_4 \mapsto \e^{-w_3-u_3+2w_4-\theta_3},
\\
\scY'_5 \mapsto \e^{-w_3+u_3-w_1-u_1+2w_2-\theta_1+\theta_3},
\\
\scY'_6 \mapsto \e^{-w_1+u_1+\theta_1}. 
\end{cases}
\end{align} 

We define the isomorphism $\pi^K_{B_2}$ of $\mathcal{W}(B_2)$ by
\begin{align}\label{K-pi-B}
\pi^K_{B_2}:
\begin{cases}
\;w_1 \mapsto w_1- 2 \theta_{24}, 
\quad 
w_2 \mapsto w_4+ w_1- \theta_{24},
\\
\;w_3  \mapsto w_3+2\theta_{24},
\quad
w_4 \mapsto w_2- w_1+ \theta_{24},
\\
\;u_1 \mapsto u_1+2u_2-2u_4,
\quad 
u_2  \mapsto u_4,
\\
\; u_3  \mapsto u_3,
\quad 
u_4 \mapsto u_2,
\end{cases}
\end{align}
in the sense of exponentials.
We obtain the following result, in parallel with Proposition \ref{Kop-FG} and \ref{Kop-FGb}. 

\begin{proposition}
For sign sequences $\ve=(1,1,-1)$ and $(-1,1,1)$, 
the transformation $\widehat{K}_{1234}$ has two expressions 
$\psi_{\mathrm{FG}} \circ \widehat{K}_{1234} = \mathrm{Ad}({K}_{B_2:++-})\circ 
\psi'_{\mathrm{FG}}= \mathrm{Ad}({K}_{B_2:-++}) \circ \psi'_{\mathrm{FG}}$ with
\begin{align}
\label{K++-B2}
\begin{split}
&{K}_{B_2:++-} 
= 
\Psi_{q}(\e^{\theta_{24}-w_2+u_2-w_4-u_4+w_3}) \Psi_{q^2}(\e^{\theta_{13}-w_1+u_1-w_3-u_3+2w_2}) 
\\
&\qquad \qquad \qquad \cdot
\Psi_{q}(\e^{\theta_{24}-w_2+u_2-w_4-u_4+w_3})^{-1} {P}_{B_2},
\end{split}
\\
\begin{split}
\label{K-++B2}
&{K}_{B_2:-++} 
= 
\Psi_{q}(\e^{-\theta_{24}+w_2-u_2+w_4+u_4-w_3})^{-1} \Psi_{q^2}(\e^{\theta_{13}+2\theta_{24}+u_1-u_3+2 u_2-2 u_4 -w_1+w_3-2w_4}), 
\\
& \qquad \qquad \qquad \cdot \Psi_{q}(\e^{-\theta_{24}+w_2-u_2+w_4+u_4-w_3}){P}_{B_2},
\end{split}
\\
\label{FG-P-B2}
&{P}_{B_2}= \e^{\frac{1}{\hbar}\left(w_1(u_4-u_2) + \theta_{24}(u_3-u_1)\right)} \rho_{24}.
\end{align} 
In particular, both monomial transformations $\tau_{2,+}\tau_{5,+} \tau_{2,-}$ 
and $\tau_{2,-}\tau_{5,+} \tau_{2,+}$  for $\widehat{K}_{1234}$ 
are realized as the adjoint action of ${P}_{B_2}$. Moreover, ${K}_{B_2:++-}$ coincides with ${K}_{B_2:-++}$.

\end{proposition}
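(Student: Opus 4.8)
The plan is to follow verbatim the strategy used for the $C_2$ case in Propositions~\ref{Kop-FG} and~\ref{Kop-FGb}, now carried out inside the $q$-Weyl algebra $\mathcal{W}(B_2)$ attached to $\gamma=(2,1,2,1)$. First I would decompose $\widehat{K}_{1234}$ along the mutation sequence $\mu_2\mu_5\mu_2$ by means of \eqref{mu-pm} and \eqref{ad-tau-decomp}, separately for the two sign sequences $\ve=(1,1,-1)$ and $\ve=(-1,1,1)$. In each case this expresses $\widehat{K}_{1234}$ as the adjoint action of a product of three quantum dilogarithms, postcomposed with the monomial part $\tau_{2,\varepsilon_1}\tau_{5,\varepsilon_2}\tau_{2,\varepsilon_3}$. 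The images of the three dilogarithm arguments under $\psi_{\mathrm{FG}}$ are then obtained from \eqref{phi-K-B}, after carrying along the intermediate mutation monomials, and this mechanical substitution produces exactly the explicit operators \eqref{K++-B2} and \eqref{K-++B2}.

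For the monomial part, I would first verify by the Baker--Campbell--Hausdorff formula that the affine transformation $\pi^K_{B_2}$ of \eqref{K-pi-B} is realized as $\mathrm{Ad}(P_{B_2})$ with $P_{B_2}$ given in \eqref{FG-P-B2}. The essential point, exactly as in Proposition~\ref{Kop-FG}, is that the two monomial transformations $\tau_{2,+}\tau_{5,+}\tau_{2,-}$ and $\tau_{2,-}\tau_{5,+}\tau_{2,+}$ induce the \emph{same} map on $\mathcal{Y}(B'_{\mathrm{FG}}(B_2))$; I would check this by evaluating on the six generators $\scY'_i$ and comparing with $\pi^K_{B_2}\circ\psi'_{\mathrm{FG}}$ through \eqref{phi-K-B} and \eqref{K-pi-B}. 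This reduces the whole statement to matching the dilogarithm parts of the two expressions.

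The remaining and genuinely nontrivial step is to prove that the two dilogarithm products coincide, so that $\mathrm{Ad}(K_{B_2:++-})$ and $\mathrm{Ad}(K_{B_2:-++})$ agree; since both monomial factors are $P_{B_2}$, it suffices to equate the dilogarithm parts. The key observation is that the outer factors in \eqref{K++-B2} and \eqref{K-++B2} carry mutually inverse arguments $A$ and $A^{-1}$, so the required identity reads $\mathrm{Ad}\bigl(\Psi_q(A^{-1})\Psi_q(A)\bigr)\bigl(\Psi_{q^2}(B)\bigr)=\Psi_{q^2}(D)$; conjugation by the function $\Psi_q(A^{-1})\Psi_q(A)$ of $A$ alone shifts the middle argument by a power of $A$, and indeed one finds $D=B\,A^2$ up to a power of $q$, which accounts precisely for the passage $\theta_{13}\mapsto\theta_{13}+2\theta_{24}$ and the doubling of the $u_2,u_4,w_4$ contributions visible in \eqref{K-++B2}. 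I expect this to be the main obstacle, and I would dispatch it exactly as in the proofs of Proposition~\ref{prop:K24} and Proposition~\ref{Kop-FGb}, namely by invoking Lemma~\ref{dilog-2}, whose hypotheses are met by the pair of $q$-Weyl monomials appearing as arguments. The identification $B_{\mathrm{FG}}(B_2)\cong B'_{\mathrm{FG}}(C_2)$ noted after \eqref{quiver-d:B2} serves as a useful consistency check, but because the weights $\gamma$ are interchanged relative to the $C_2$ case, the result does not follow formally from Propositions~\ref{Kop-FG}--\ref{Kop-FGb} and must be re-established with the $B_2$ parametrization \eqref{phi-K-B}.
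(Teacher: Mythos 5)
Your proposal follows essentially the same route the paper intends: construct both operators from the decomposition \eqref{mu-pm} along $\mu_2\mu_5\mu_2$, realize the common monomial part as $\mathrm{Ad}(P_{B_2})$ via BCH, and reduce the coincidence of the two operators to conjugating the middle dilogarithm by $\Psi_q(A)\Psi_q(A^{-1})$, exactly as in Propositions~\ref{Kop-FG}, \ref{Kop-FGb} and \ref{prop:K24}. One imprecision: the hypotheses of Lemma~\ref{dilog-2} are \emph{not} literally met here, since with $\gamma=(2,1,2,1)$ the outer argument $X=\e^{\theta_{24}-w_2+u_2-w_4-u_4+w_3}$ and the middle argument $Y$ satisfy $XY=q^4YX$ while the outer dilogarithms have base $q$ (in the $C_2$ case the outer base is $q^2$, so $q^4=(q^2)^2$ and the lemma applies verbatim); you need the easy extension $\mathrm{Ad}\bigl(\Psi_q(X)\Psi_q(X^{-1})\bigr)(Y)=q^{4}YX^{2}$ for $XY=q^{4}YX$, which is consistent with your (correct) observation that the middle argument shifts by $A^2$ rather than $A$. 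With that one-line generalization supplied, the argument closes.
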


\subsection{Limit of $K$-operators of type $\rho_{24}$}

In a manner parallel to \cite[\S 8.2]{IKSTY}, we define homomorphisms of
skewfields
$\alpha:\mathcal{Y}(B_{\mathrm{FG}}(C_2))\to\mathcal{Y}(B(C_2))$ and
$\alpha':\mathcal{Y}(B'_{\mathrm{FG}}(C_2))\to\mathcal{Y}(B'(C_2))$ given by the same formulata as
follows:
\begin{align}
\alpha, \alpha': \begin{cases}
\mathscr{Y}_1 \mapsto Y_1, & \mathscr{Y}_4 \mapsto Y_6,
\\
\mathscr{Y}_2 \mapsto q^2 Y_2Y_3, & \mathscr{Y}_5 \mapsto qY_7Y_8,
\\
\mathscr{Y}_3 \mapsto q^2 Y_4 Y_5, & \mathscr{Y}_6 \mapsto qY_9Y_{10}.
\end{cases}
\end{align}
We consider the following diagram   
\begin{align}\label{cd-C2}
\begin{CD}
\mathcal{Y}(B_\mathrm{FG}(C_2)) @> {\alpha}>> \mathcal{Y}(B(C_2)) 
\\
@V{\phi_\mathrm{FG}}VV @VV{\phi}V \\
\mathrm{Frac}\,\mathcal{W}(C_2) @>{\mathrm{id}}>> \mathrm{Frac}\,\mathcal{W}(C_2)\\
@V{\mathrm{Ad} {P}_{C_2}}VV @VV{\mathrm{Ad}P_{24}}V \\
\mathrm{Frac}\,\mathcal{W}(C_2) @>{\mathrm{id}}>> \mathrm{Frac}\,\mathcal{W}(C_2)\\
@A{\phi'_\mathrm{FG}}AA @AA{\phi'}A \\
\mathcal{Y}(B'_\mathrm{FG}(C_2)) @> {\alpha'}>> 
\mathcal{Y}(B'(C_2)) 
\end{CD}
\end{align}
where $\phi$, $\phi'$ and $\mathrm{Ad}(P_{24})$ are defined 
by \eqref{YwC2}, \eqref{YpwC2} and \eqref{tau24uw}, respectively.

When we impose the commutativity of the diagram \eqref{cd-C2}, 
the parameters are required to satisfy the following relations:
\begin{align}
&\theta_2= -a_1-d_2, 
\label{condi}\\
&\theta_2-\theta_4 = e_2+a_3+b_2+d_4,
\\
&\theta_4 = e_4+b_4,
\\
&\theta_1 = -d_1,
\\
&\theta_1-\theta_3 = e_1+b_1+c_2+d_3,
\\
&\theta_3 = e_3+b_3+c_4,
\label{condi2}
\\
&a_1+c_1-c_2+c_4=0,
\label{condro2}
\\
&b_2+d_2+2a_1=b_4+d_4,
\\
&2(\theta_2-\theta_4) = b_2-b_4-d_2+d_4.
\label{condro}
\end{align}
Here, the commutativity of the upper square (resp. the middle square) in
\eqref{cd-C2} corresponds to the relations
(\ref{condi})--(\ref{condi2}) (resp.\ (\ref{condro2})--(\ref{condro})),
and the commutativity of the lower square follows from these.
By taking into account the conditions \eqref{econ} and \eqref{ccon}, we get  
\begin{equation}\label{kai24}
\begin{split}
&a_1 = a_3 = -c_1 = -b_2+b_4+\theta_2-\theta_4,  
\\ 
&c_2 = c_4,
\\ 
& c_3 = b_2-b_4+ 2 c_4-\theta_2+\theta_4,
\\ 
& d_1 =  -\theta _1, 
\\ 
& d_2 =  b_2-b_4-2 \theta_2 + \theta_4, 
\\ 
& d_3 =  -c_4 - \theta_3,
\\ 
& d_4 =  -\theta_4, 
\\ 
& e_1 =  -b_1+\theta_1,
\\ 
& e_2 = e_4 =  -b_4 +\theta _4,
\\ 
& e_3 = -b_3 - c_4 + \theta _3.
\end{split}
\end{equation}

\begin{theorem}
\label{thm:type24}
In the limit 
\begin{align}\label{lim24}
b_i \rightarrow +\infty, \quad b_i+e_i = \text{fixed}\;\;(i=1,2,3,4),
\quad 
b_1-b_3 \rightarrow +\infty,\quad b_2-b_4 = \text{fixed},
\end{align} 
the operators $K_{-+}$ \eqref{sol243} and $K_{--}$ \eqref{sol244} (explicitly \eqref{K243} and \eqref{K244}) 
are reduced to
 ${K}_{C_2:++-}$ \eqref{K++-C2} and 
${K}_{C_2:-++}$  \eqref{K-++C2}, respectively.
\end{theorem}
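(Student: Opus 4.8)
The plan is to establish the reduction directly from the canonical-variable expressions \eqref{K243} and \eqref{K244} for $K_{-+}$ and $K_{--}$ by tracking each of the ten quantum-dilogarithm arguments under the limit \eqref{lim24}. First I would impose the parameter dictionary \eqref{kai24} forced by the commutative diagram \eqref{cd-C2}, which is precisely the bridge between the SB and FG data. A preliminary consistency check is that \eqref{kai24} is compatible with \eqref{lim24}: one reads off $b_1+e_1=\theta_1$, $b_4+e_4=\theta_4$, $b_3+e_3=\theta_3-c_4$, and $b_2+e_2=b_2-b_4+\theta_4$, all of which stay finite exactly when $b_2-b_4$ and $c_4$ are held fixed, as stipulated.

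The core step is an argument-by-argument analysis. After substituting \eqref{kai24}, every dilogarithm argument takes the form $\e^{L(u,w)}\cdot\e^{cB}$, where $L$ is linear in the canonical variables and $cB$ collects the dependence on the diverging parameters $b_i$. Using \eqref{kai24}, I would compute the coefficient $c$ for each factor and verify that exactly seven of them carry a strictly negative coefficient of $b_3$ or of $b_1-b_3$, so that $\e^{cB}\to 0$, while the remaining three have $c=0$. For $K_{-+}$ the survivors are the fourth, fifth, and eighth factors: the fourth and eighth collapse to $\e^{\theta_{24}-w_2+u_2-w_4-u_4+2w_3}$ and its inverse, and the fifth to $\e^{\theta_{13}-w_1+u_1-w_3-u_3+w_2}$, reproducing the dilogarithm part of ${K}_{C_2:++-}$ in \eqref{K++-C2}; the parallel computation on \eqref{K244} gives ${K}_{C_2:-++}$ of \eqref{K-++C2}. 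By the expansion \eqref{dilog-sum}, $\Psi_q(U),\Psi_{q^2}(U)\to 1$ as $U\to0$, so the seven vanishing factors tend to the identity and may be deleted, the three survivors retaining their order. I would also remark that this is why the forms $K_{-+}$ and $K_{--}$ are singled out: in, say, \eqref{sol241} the factors $\Psi_{q^2}(Y_2)^{\pm1}$ trivialize, so the termwise survivors are different, even though by Proposition~\ref{prop:K24} the operator is literally the same.

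It remains to treat the monomial part. Substituting \eqref{kai24} into \eqref{p24}, the simplifications $a_1+c_1-c_2+c_4=0$, $b_2-b_4-d_2+d_4=2\theta_{24}$, and $\tfrac12(-b_2+b_4-d_2+d_4)+c_1-c_2+c_4=0$ reduce $X_{24}$ to $2(u_4-u_2)w_1+2\theta_{24}(u_3-u_1)$, so that $P_{24}$ becomes exactly ${P}_{C_2}$ of \eqref{FG-P-C2}. Since $K_{-+}$ and $K_{--}$ are the same operator written in two guises, the content is that taking the limit of each guise termwise yields the two guises ${K}_{C_2:++-}$ and ${K}_{C_2:-++}$, which coincide by Proposition~\ref{Kop-FGb}; the two reductions are thus mutually consistent.

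The hard part will not be the algebra of the arguments, which is a finite bookkeeping exercise, but making the passage to the limit rigorous, namely justifying the interchange of the limit with the noncommutative product of dilogarithms. The clean way is to observe that a factor whose argument tends to $0$ tends to the identity in the completion in which $\mathcal{L}(C_2)$ lives, and an identity factor commutes with and can be removed from the product without disturbing the others; one then checks, as in \S\ref{sec:wdK}, that the three survivors are well defined elements of $\mathcal{L}(C_2)$, so their ordered product is the genuine limit. A secondary point requiring care is to confirm that no trivializing argument diverges to $\infty$ (which would send a factor to $0$ rather than to $1$); the sign analysis above shows that every such argument tends to $0$, so this pathology does not arise.
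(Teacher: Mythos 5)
Your proposal is correct and follows essentially the same route as the paper: substitute the dictionary \eqref{kai24}, observe that seven dilogarithm arguments acquire a divergent negative exponent so those factors tend to $1$, identify the three survivors with the dilogarithm part of ${K}_{C_2:++-}$ (resp. ${K}_{C_2:-++}$), and note that $P_{24}$ already equals ${P}_{C_2}$ after the substitution. The only slip is cosmetic: the vanishing arguments are controlled by negative coefficients of $b_3$, $b_4$ \emph{and} $b_1-b_3$ (e.g.\ the factors containing $-b_2$ and $-b_4$ diverge via $b_4$, not via $b_3$ or $b_1-b_3$), but this does not affect the conclusion.
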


\begin{proof}
Substitution of (\ref{kai24}) into (\ref{K243})  leads to
\begin{equation}\label{K243r}
\begin{split}
K_{-+}=&~\Psi_q\left(\e^{-b_1+\theta_3+u_1+u_3+w_1-w_2+w_3}\right)^{-1}  
\Psi _{q^2}\left(\e^{c_1-b_2 +\theta_4+u_2+u_4+w_2-2 w_3+w_4}\right)^{-1} 
\\ & \cdot
\Psi _q\left(\e^{-b_1+b_3+c_4+u_1-u_3+w_1-w_2+w_3}\right)^{-1}   
\underline{\Psi _{q^2}\left(\e^{\theta _2-\theta_4+u_2-u_4-w_2+2 w_3-w_4}\right)} 
\\ & \cdot \underline{\Psi _q\left(\e^{\theta _1-\theta
   _3+u_1-u_3-w_1+w_2-w_3}\right)} 
\\ & \cdot \Psi _{q^2}\left(\e^{-2 b_1+2 b_3+2 c_4-\theta_2+ \theta _4+2 u_1+u_2-2 u_3-u_4+2 w_1-w_2+w_4}\right)^{-1} 
\\ & \cdot \Psi_q\left(\e^{-b_1+b_3+c_4-\theta_2+ \theta_4+u_1+u_2-u_3-u_4+w_1-w_3+w_4}\right)^{-1} 
\\ & \cdot \underline{\Psi_{q^2}\left(\e^{\theta _2-\theta _4+u_2-u_4-w_2+2 w_3-w_4}\right)^{-1}} 
\\ & \cdot \Psi _q\left(\e^{-b_3-c_4+\theta _1+\theta_2-\theta _4+u_1+u_2+u_3-u_4-w_1+w_3-w_4}\right) 
\\ & \cdot \Psi_{q^2}\left(\e^{-b_4-\theta _2+u_2+u_4+w_2-2 w_3+w_4}\right) {P}_{C_2},
   \end{split}
\end{equation}
where $P_{24}$ reduces to ${P}_{C_2}$ \eqref{FG-P-C2} at this stage.
In the limit \eqref{lim24}, only the underlined quantum dilogarithms in \eqref{K243r} survive,
while all the remaining factors converge to~$1$.
Consequently, \eqref{K243r} reduces precisely to ${K}_{C_2:++-}$.

Substitution of (\ref{kai24}) into (\ref{K244}) leads to
\begin{equation}\label{K244r}
\begin{split}
K_{--}=&~\Psi _q\left(\e^{-b_1+\theta_3+u_1+u_3+w_1-w_2+w_3}\right)^{-1}
 \Psi _{q^2}\left(\e^{-b_4-d_4-\theta _2+\theta_4+u_2+u_4+w_2-2 w_3+w_4}\right)^{-1}
\\ & \cdot \Psi _q\left(\e^{-b_1+b_3+u_1-u_3+w_1-w_2+w_3}\right)^{-1}
\underline{\Psi _{q^2}\left(\e^{-\theta _2+\theta_4-u_2+u_4+w_2-2 w_3+w_4}\right)^{-1}}
\\ & \cdot \underline{\Psi _q\left(\e^{\theta_1+\theta _2-\theta _3-\theta _4+u_1+u_2-u_3-u_4-w_1+w_3-w_4}\right)}
\\ & \cdot \Psi _{q^2}\left(\e^{-2b_1+2 b_3-\theta _2+\theta _4+2 u_1+u_2-2 u_3-u_4+2w_1-w_2+w_4}\right)^{-1}
\\ & \cdot \Psi _q\left(\e^{-b_1+b_3+c_4-\theta_2+\theta_4+u_1+u_2-u_3-u_4+w_1-w_3+w_4}\right)^{-1}
\underline{\Psi _{q^2}\left(\e^{-\theta _2+\theta _4-u_2+u_4+w_2-2
   w_3+w_4}\right)}
\\ & \cdot \Psi _q\left(\e^{-b_3-c_4+\theta _1+\theta _2-\theta_4+u_1+u_2+u_3-u_4-w_1+w_3-w_4}\right)
\\ & \cdot \Psi_{q^2}\left(\e^{-b_4-\theta _2+2 \theta _4+u_2+u_4+w_2-2 w_3+w_4}\right)
{P}_{C_2}.
   \end{split}
\end{equation}
Similarly, in the limit \eqref{lim24}, \eqref{K244r} is reduced to ${K}_{C_2:-++}$ \eqref{K-++C2}. 
\end{proof}

\subsection{The limit of $K$-operators of type $\rho_{13}$}\label{sec:lim13}

In the same spirit as in the case of type $\rho_{24}$,  
we define homomorphisms of skewfields $\beta: \mathcal{Y}(B_\mathrm{FG}(B_2)) \to \mathcal{Y}(B(C_2))$
and $\beta': \mathcal{Y}(B'_\mathrm{FG}(B_2)) \to \mathcal{Y}(B'(C_2))$ in exactly the same way as follows:
\begin{align}
\beta, \beta': \begin{cases}
\mathscr{Y}_1 \mapsto Y_{10}, & \mathscr{Y}_4 \mapsto Y_5,
\\
\mathscr{Y}_2 \mapsto q Y_9Y_8, & \mathscr{Y}_5 \mapsto q^2Y_4Y_3,
\\
\mathscr{Y}_3 \mapsto q Y_7 Y_6, & \mathscr{Y}_6 \mapsto q^2Y_2Y_{1}.
\end{cases}
\end{align}
Define a ring homomorphism of skewfields $\iota: \mathrm{Frac}\,\mathcal{W}(C_2) \to \mathrm{Frac}\,\mathcal{W}(B_2)$ 
given by $u_i \mapsto u_{5-i}$ and $w_i \mapsto w_{5-i}$.   
We also use $\iota$ to interchange the label of parameters as
$\theta_i \leftrightarrow \theta_{5-i}$. 
When consider the commutativity of the diagram
\begin{align}\label{cd-B2}
\begin{CD}
\mathcal{Y}(B_\mathrm{FG}(B_2)) @> {\beta}>> \mathcal{Y}(B(C_2)) 
\\
@V{\psi_\mathrm{FG}}VV @VV{\phi}V \\
\mathrm{Frac}\,\mathcal{W}(B_2) @>{\iota}>> \mathrm{Frac}\,\mathcal{W}(C_2)\\
@V{\mathrm{Ad}{P}_{B_2}}VV @VV{\mathrm{Ad}P_{13}}V \\
\mathrm{Frac}\,\mathcal{W}(B_2) @>{\iota}>> \mathrm{Frac}\,\mathcal{W}(C_2)\\
@A{\psi'_\mathrm{FG}}AA @AA{\phi'}A \\
\mathcal{Y}(B'_\mathrm{FG}(B_2)) @> {\beta'}>> 
\mathcal{Y}(B'(C_2)) 
\end{CD}
\end{align}
\begin{align}
&\theta_3 = -b_3-c_4,  
\label{con1}\\ 
&\theta_1-\theta_3 =  -b_1-c_2-d_3-e_3,  \\ 
& \theta_1 = d_1+e_1, \\ 
& \theta_4 = b_4,  \\ 
   & \theta_2-\theta_4=-a_3-b_2-d_4-e_4,  \\ 
   & \theta_2 = a_1+d_2+e_2,  
   \label{con2}\\ 
   & a_1-a_3+b_1-b_3+c_1-c_3+d_1-d_3 = 0,  
   \label{con3}\\ 
   & -a_1+a_3+b_1-b_3-c_1+c_3-2 c_4-d_1+d_3+2 \theta_1-2\theta_3 = 0,  \\ 
   & b_1-b_3-d_1+d_3+2 \theta_1-2 \theta_3 =0.
   \label{con4}
   \end{align}
Here, the commutativity of the upper square (resp.\ the middle square) of
\eqref{cd-B2} corresponds precisely to the relations
(\ref{con1})--(\ref{con2}) (resp.\ (\ref{con3})--(\ref{con4})), and the
commutativity of the lower square follows from these.

By solving 13 conditions 
(\ref{econ}), (\ref{ccon}), (\ref{con1})--(\ref{con4}) in total, we get
\begin{equation}\label{kai13}
\begin{split}
&a_3=  a_1 = -c_1, \\ &c_2=  d_1-d_3-\theta_1+\theta_3, \\ &c_3=  -a_1+2 d_1-2 d_3-2
   \theta_1+2 \theta_3, \\ &c_4=  d_1-d_3-\theta_1+\theta_3, \\ &b_1=  -\theta_1, \\ &b_2= 
   -a_1-\theta_2, \\ &b_3=  -d_1+d_3+\theta_1-2 \theta_3, \\ &b_4=  -\theta_4, \\ &e_1= e_3 = \theta_1-d_1, 
\\ &e_2=  -a_1-d_2+\theta_2, \\ &e_4=  \theta_4-d_4.
\end{split}
\end{equation}
  
\begin{theorem}
In the limit 
\begin{align}\label{lim13}
d_i \rightarrow + \infty, \quad d_i+e_i = \text{fixed}\; (i=1,2,3,4),  \quad
d_2- d_4 \rightarrow -\infty,\quad d_1-d_3 = \text{fixed}, 
\end{align}
the operators $K_{-+}$ \eqref{sol133} and $K_{--}$ \eqref{sol134}
(explicitly \eqref{K133} and \eqref{K134}) are reduced to
the images under $\iota$ of
${K}^{B_2:++-}_{\mathrm{FG}}(B_2)$ \eqref{K++-B2} and
${K}^{B_2:-++}_{\mathrm{FG}}(B_2)$ \eqref{K-++B2}, respectively.
\end{theorem}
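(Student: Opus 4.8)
The plan is to mirror exactly the proof of Theorem~\ref{thm:type24}, with the $b_i$-limit there replaced by the $d_i$-limit~\eqref{lim13} and the $C_2$ data replaced by their $B_2$ counterparts under the relabelling $\iota$. First I would substitute the parameter solution~\eqref{kai13}, which solves the thirteen constraints~\eqref{econ}, \eqref{ccon}, \eqref{con1}--\eqref{con4}, into the explicit canonical-variable expressions~\eqref{K133} and~\eqref{K134} for $K_{-+}$ and $K_{--}$. Just as~\eqref{kai24} turns~\eqref{K243} and~\eqref{K244} into the reduced forms~\eqref{K243r} and~\eqref{K244r}, the substitution~\eqref{kai13} should rewrite each of the ten dilogarithm arguments as a sum of canonical variables plus a linear combination of $d_i$, $\theta_i$ and the fixed quantities $d_i+e_i$ and $d_1-d_3$. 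At the same time the monomial factor $P_{13}$ reduces to the $\iota$-image of $P_{B_2}$~\eqref{FG-P-B2} already at the level of this substitution, as guaranteed by the commutativity of the lower square of~\eqref{cd-B2}; this is the exact analogue of the reduction $P_{24}\to P_{C_2}$ noted in the proof of Theorem~\ref{thm:type24}, and I would also confirm it directly by comparing the coefficients of $X_{13}$ after the relabelling $u_i\mapsto u_{5-i}$, $w_i\mapsto w_{5-i}$.

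Next I would carry out the limit~\eqref{lim13}, tracking the divergent part of each dilogarithm argument. The mechanism is the same as in Theorem~\ref{thm:type24}: since $\Psi_q(U)\to1$ and $\Psi_q(U)^{-1}\to1$ as $U\to0$, every factor whose argument tends to $0$ converges to $1$, while the factors with finite arguments survive unchanged. The primary scaling $d_i\to+\infty$ (with $d_i+e_i$ fixed) sends to $0$ the arguments depending on the overall $d$-scale, and the secondary direction $d_2-d_4\to-\infty$ (with $d_1-d_3$ fixed) sends to $0$ the remaining arguments depending only on the difference $d_2-d_4$; this parallels the twofold limit $b_i\to+\infty$, $b_1-b_3\to+\infty$ of~\eqref{lim24}, except that the secondary direction here runs to $-\infty$ rather than $+\infty$. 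I expect exactly three dilogarithms to survive in each case, the remaining seven collapsing to $1$.

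Finally I would identify the surviving product with the $\iota$-image of~\eqref{K++-B2} (for $K_{-+}$) and of~\eqref{K-++B2} (for $K_{--}$). The key point is that $\iota$ fixes $q$ and merely relabels the canonical variables and parameters via $u_i\leftrightarrow u_{5-i}$, $w_i\leftrightarrow w_{5-i}$, $\theta_i\leftrightarrow\theta_{5-i}$; this is a well-defined isomorphism of $q$-Weyl algebras precisely because it matches $\gamma=(1,2,1,2)$ for $\mathcal{W}(C_2)$ with $\gamma=(2,1,2,1)$ for $\mathcal{W}(B_2)$, so that no interchange of $\Psi_q$ and $\Psi_{q^2}$ occurs. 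I would check that the three surviving factors carry the weight pattern $\Psi_q,\Psi_{q^2},\Psi_q$ matching the three factors of~\eqref{K++-B2} and~\eqref{K-++B2}, and that their arguments coincide, as elements of $\mathrm{Frac}\,\mathcal{W}(C_2)$, with the $\iota$-images of the arguments there, using~\eqref{con1}--\eqref{con2} to rewrite $\theta_1-\theta_3$ and $\theta_2-\theta_4$ in the forms displayed in those formulas.

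The hard part will be the sign-of-divergence bookkeeping in the double limit. For every non-surviving factor I must verify that its argument genuinely tends to $0$ — so that the factor, or its inverse, tends to $1$ rather than to $0$ or to $\infty$ — which amounts to confirming that the coefficient of each divergent parameter combination has the correct sign relative to the direction ($+\infty$ for the $d_i$, $-\infty$ for $d_2-d_4$); the reversed sign of the secondary direction makes this step genuinely different from, and slightly more delicate than, the type-$\rho_{24}$ computation. I must also ensure that no two divergent factors conspire to leave a finite residue, so that a factor surviving the primary limit is not accidentally revived or annihilated by the secondary one. Once this is settled, the matching of the three surviving arguments against~\eqref{K++-B2}--\eqref{K-++B2} is a routine, if lengthy, verification identical in spirit to the one already completed for type~$\rho_{24}$.
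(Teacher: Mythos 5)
Your proposal is correct and follows essentially the same route as the paper: substitute the parameter solution \eqref{kai13} into \eqref{K133} and \eqref{K134}, observe that $P_{13}$ becomes $\iota(P_{B_2})$ already at that stage, and then take the limit \eqref{lim13}, under which seven of the ten dilogarithms tend to $1$ and the surviving three (with weight pattern $\Psi_q,\Psi_{q^2},\Psi_q^{-1}$) reproduce the $\iota$-images of \eqref{K++-B2} and \eqref{K-++B2}. The sign-of-divergence bookkeeping you flag, including the reversed secondary direction $d_2-d_4\to-\infty$, is exactly the verification the paper performs implicitly in passing from \eqref{K133r} and \eqref{K134r} to the underlined surviving factors.
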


\begin{proof}
Substitution of (\ref{kai13}) into (\ref{K133}) leads to
\begin{equation}\label{K133r}
\begin{split}
K_{-+}=&~\Psi _q\left(\e^{-d_1+2 \theta_1-\theta
   _3+u_1+u_3+w_1-w_2+w_3}\right)^{-1}
\Psi _{q^2}\left(\e^{-d_4+\theta
   _2+u_2+u_4+w_2-2 w_3+w_4}\right)^{-1}
\\ & \cdot \underline{\Psi_q\left(\e^{-\theta_1+\theta_3-u_1+u_3-w_1+w_2-w_3}\right)}
\Psi_{q^2}\left(\e^{a_1+d_2-d_4-u_2+u_4+w_2-2 w_3+w_4}\right)^{-1}
\\ & \cdot \Psi_q\left(\e^{a_1+d_2-d_4+\theta_1-\theta
   _3-u_1-u_2+u_3+u_4+w_1-w_3+w_4}\right)^{-1}
\\ & \cdot \underline{\Psi _{q^2}\left(\e^{-\theta
   _2+\theta_4-u_2+u_4-w_2+2 w_3-w_4}\right)}
\\ & \cdot \underline{\Psi_q\left(\e^{-\theta_1+\theta_3-u_1+u_3-w_1+w_2-w_3}\right)^{-1}}
\\ & \cdot \Psi _{q^2}\left(\e^{a_1+d_2-d_4+2 \theta_1-2
   \theta_3-2 u_1-u_2+2 u_3+u_4+2 w_1-w_2+w_4}\right)^{-1}
\\ & \cdot \Psi_q\left(\e^{-d_1+2 \theta_1-\theta_3+u_1+u_3+w_1-w_2+w_3}\right)
\\ & \cdot \Psi_{q^2}\left(\e^{-a_1-d_2-2 \theta_1+2 \theta_3+\theta_4-2 u_1+u_2+2 u_3+u_4-2w_1+w_2-w_4}\right)\iota({P}_{B_2})
   \end{split}
\end{equation}
In the limit \eqref{lim13}, only the underlined quantum dilogarithms in \eqref{K133r} survive. 
Hence \eqref{K133r} is reduced to the image of $\iota$ of ${K}_{B_2:++-}$ \eqref{K++-B2}.

Substitution of (\ref{kai13}) into (\ref{K134}) leads to
\begin{equation}\label{K134r}
\begin{split}
K_{--} =&~\Psi _q\left(\e^{-d_1+2 \theta_1-\theta_3+u_1+u_3+w_1-w_2+w_3}\right)^{-1}
\Psi _{q^2}\left(\e^{-d_4+\theta_2+u_2+u_4+w_2-2 w_3+w_4}\right)^{-1}
\\ & \cdot \underline{\Psi_q\left(\e^{\theta_1-\theta_3+u_1-u_3+w_1-w_2+w_3}\right)^{-1}}
\Psi_{q^2}\left(\e^{a_1+d_2-d_4-u_2+u_4+w_2-2 w_3+w_4}\right)^{-1}
\\ & \cdot \Psi_q\left(\e^{a_1+d_2-d_4+\theta_1-\theta_3-u_1-u_2+u_3+u_4+w_1-w_3+w_4}\right)^{-1}
\\ & \cdot \underline{\Psi _{q^2}\left(\e^{-2 \theta_1-\theta_2+2 \theta_3+\theta_4-2 u_1-u_2+2 u_3+u_4-2 w_1+w_2-w_4}\right)}
\\ & \cdot \underline{\Psi _q\left(\e^{\theta_1-\theta_3+u_1-u_3+w_1-w_2+w_3}\right)}
\\ & \cdot \Psi_{q^2}\left(\e^{a_1+d_2-d_4+2 \theta_1-2 \theta_3-2 u_1-u_2+2 u_3+u_4+2w_1-w_2+w_4}\right)^{-1}
\\ & \cdot \Psi _q\left(\e^{-d_1+2 \theta_1-\theta_3+u_1+u_3+w_1-w_2+w_3}\right)
\\ & \cdot \Psi _{q^2}\left(\e^{-a_1-d_2-2 \theta_1+2 \theta_3+\theta
   _4-2 u_1+u_2+2 u_3+u_4-2 w_1+w_2-w_4}\right) \iota({P}_{B_2}).
\end{split}
\end{equation}
In the same manner, \eqref{K134r} is reduced to the image of $\iota$ of ${K}_{B_2:-++}$ \eqref{K-++B2} in the limit \eqref{lim13}.

\end{proof}

\begin{remark} 
In \cite{IKSTY}, the $R$-operators $R^{(\pm)}_{123}$ \eqref{R--++}, \eqref{R-+-+} are shown to be reduced to that for the FG-quiver.
We reformulate this reduction more clearly in Appendix \ref{app:R}.
\end{remark}

\appendix

\section{Proof of Proposition \ref{prop:K24}}\label{app:pK}

We use the following lemma,
which can be proved easily.
\begin{lemma}\label{dilog-2}
For the $q$-commuting variables $X$ and $Y$ as $XY = q^2 YX$, it holds that
\begin{align}
\mathrm{Ad}(\Psi_q(X) \Psi_q(X^{-1}))(Y) = q^{-1} XY = q YX.
\end{align}
\end{lemma}

\begin{proof}[Proof of Proposition \ref{prop:K24}]
First we check that $K_{++}$ coincides with $K_{+-}$.
Due to the $q$-commutativity of $Y$-variables \eqref{q-Y} with \eqref{Bhat-K},
the dilogarithm parts of $K_{++}$ \eqref{sol241} and $K_{+-}$ \eqref{sol242} are respectively rewritten as 
\begin{align}
&\Psi _q\left(Y_8^{-1}\right)^{-1}
\Psi_{q^2}\left(Y_3\right)
\Psi_q\left(qY_3^{-1}Y_8^{-1}Y_9^{-1}\right)^{-1}
 \cdot \underline{\Psi_{q^2}\left(Y_2\right) \Psi_q\left(qY_7Y_8\right) \Psi _{q^2}\left(Y_2\right)^{-1}}   
   \nonumber \\
& \cdot \Psi_{q^2}\left(Y_3^{-2}Y_4^{-1}Y_8^{-2}Y_9^{-2}\right)^{-1}
\Psi_q\left(q^{-1}Y_3^{-1}Y_4^{-1}Y_8^{-1}Y_9^{-1}\right)^{-1}
   \Psi_q\left(q^{2}Y_2Y_3Y_7Y_8Y_9\right)
   \Psi_{q^2}\left(Y_3\right)^{-1},
\label{K++d}
\\
&\Psi _q\left(Y_8^{-1}\right)^{-1}
\Psi_{q^2}\left(Y_3\right)
\Psi_q\left(qY_3^{-1}Y_8^{-1}Y_9^{-1}\right)^{-1}
 \cdot \underline{\Psi_{q^2}\left(Y_2^{-1}\right)^{-1} \Psi_q\left(q^{-1}Y_2Y_7Y_8\right) \Psi _{q^2}\left(Y_2^{-1}\right)}   
  \nonumber \\
& \cdot \Psi_{q^2}\left(Y_3^{-2}Y_4^{-1}Y_8^{-2}Y_9^{-2}\right)^{-1}
\Psi_q\left(q^{-1}Y_3^{-1}Y_4^{-1}Y_8^{-1}Y_9^{-1}\right)^{-1}
   \Psi_q\left(q^{2}Y_2Y_3Y_7Y_8Y_9\right)
   \Psi_{q^2}\left(Y_3\right)^{-1}.
\label{K+-d}
\end{align}
These are identical except for the underlined parts.  By applying
Lemma~\ref{dilog-2}, the underlined part of \eqref{K+-d} can be rewritten
as follows:
\begin{align*}
\Psi_{q^2}\left(Y_2^{-1}\right)^{-1} &\Psi_q\left(q^{-1} Y_2Y_7Y_8\right) \Psi _{q^2}\left(Y_2^{-1}\right)
\\
&= \Psi_{q^2}\left(Y_2^{-1}\right)^{-1} \cdot \mathrm{Ad}(\Psi_{q^2}\left(Y_2 \right) \Psi_{q^2}\left(Y_2^{-1}\right))(\Psi_q\left(qY_7Y_8\right)) \cdot \Psi _{q^2}\left(Y_2^{-1}\right)
\\
&=
\Psi_{q^2}\left(Y_2\right) \Psi_q\left(qY_7Y_8\right) \Psi _{q^2}\left(Y_2\right)^{-1}.
\end{align*} 
The last line coincides with the underlined part of \eqref{K++d}.
In a similar manner, one proves that $K_{-+}$ \eqref{sol243} coincides with $K_{--}$ \eqref{sol244}.

As the last step we show that $K_{+-}$ \eqref{sol242} coincides with $K_{-+}$ \eqref{sol243}.
Using \eqref{Bhat-K}, we rewrite the underlined part in \eqref{sol243} as follows:
\begin{align*}
&\Psi_{q^2}\left(Y_3^{-1}\right)^{-1}
   \Psi_q\left(q^{-1}Y_8^{-1}Y_9^{-1}\right)^{-1}
   \Psi_{q^2}\left(q^{2}Y_2Y_3\right)
   \\
& \qquad \cdot  \Psi_q\left(qY_7Y_8\right)
   \Psi_{q^2}\left(q^{-2}Y_3^{-1}Y_4^{-1}Y_8^{-2}Y_9^{-2}\right)^{-1}
   \Psi_{q^2}\left(q^{2}Y_2Y_3\right)^{-1} \Psi_{q^2}\left(Y_3^{-1}\right)
\\
&=\Psi_{q^2}\left(Y_3^{-1}\right)^{-1}
\cdot 
\mathrm{Ad}(\Psi_{q^2}\left(Y_3\right) \Psi_{q^2}\left(Y_3^{-1}\right)) 
(\Psi_q\left(q Y_3^{-1} Y_8^{-1}Y_9^{-1}\right)^{-1}
 \Psi_{q^2}\left(Y_2\right) 
\\
&\hspace{4cm}\Psi_q\left(qY_7Y_8\right)
   \Psi_{q^2}\left(Y_3^{-2}Y_4^{-1}Y_8^{-2}Y_9^{-2}\right)^{-1}
   \Psi_{q^2}\left(Y_2\right)^{-1})
\cdot \Psi_{q^2}\left(Y_3^{-1}\right)
\\
&= \Psi_{q^2}\left(Y_3\right) \Psi_{q^2}\left(Y_2^{-1}\right)^{-1}
\\
& \qquad 
\cdot \mathrm{Ad}(\Psi_{q^2}\left(Y_2\right) \Psi_{q^2}\left(Y_2^{-1}\right))
 (  \Psi_q\left(q Y_3^{-1} Y_8^{-1}Y_9^{-1}\right)^{-1}
\Psi_q\left(qY_7Y_8\right)
   \Psi_{q^2}\left(Y_3^{-2}Y_4^{-1}Y_8^{-2}Y_9^{-2}\right)^{-1})
\\
& \qquad 
   \cdot \Psi_{q^2}\left(Y_2^{-1}\right) \Psi_{q^2}\left(Y_3\right)^{-1}
\\
&= \Psi_{q^2}\left(Y_3\right) \Psi_{q^2}\left(Y_2^{-1}\right)^{-1}
 \Psi_q\left(q Y_3^{-1} Y_8^{-1}Y_9^{-1}\right)^{-1}
\Psi_q\left(q^{-1}Y_2Y_7Y_8\right)
   \Psi_{q^2}\left(Y_3^{-2}Y_4^{-1}Y_8^{-2}Y_9^{-2}\right)^{-1}
\\
& \qquad 
   \cdot \Psi_{q^2}\left(Y_2^{-1}\right) \Psi_{q^2}\left(Y_3\right)^{-1}.
\end{align*}
This coincides with the underlined part in \eqref{sol242} using \eqref{Bhat-K}. 
\end{proof}

\section{Explicit formulas for $K_{1234|\ve}$}
\label{app2}

\subsection{Type $\rho_{24}$}

We present the explicit formulas for $K_{\ve_2,\ve_4}$ \eqref{sol241}--\eqref{sol244} in terms of canonical variables.
We eliminate  $e_1,e_2, e_3, e_4$ from (\ref{econ}) and  (\ref{ccon}).

\begin{equation}\label{K241}
\begin{split}
\ve=(&-1, 1, -1, 1, 1, -1, -1, -1, 1, -1);
\\
K_{++}=& ~\Psi _q\left(\e^{-b_1-c_2-d_3+u_1+u_3+w_1-w_2+w_3}\right)^{-1}
\\ & \cdot
  \Psi_{q^2}\left(\e^{a_3+b_2+d_4-u_2-u_4-w_2+2 w_3-w_4}\right)
  \\ & \cdot 
\Psi_q\left(\e^{-b_1-b_2+b_3-c_2+c_3-d_4+u_1+u_2-u_3+u_4+w_1-w_3+w_4}\right)^{-1}\\ & \cdot 
  \Psi_{q^2}\left(\e^{-b_2+c_1-2 c_2+c_3-d_2+2 u_2}\right) 
\\ & \cdot \Psi _q\left(\e^{-a_1-c_1+c_2-d_1+d_3+u_1-u_3-w_1+w_2-w_3}\right)
\\ & \cdot \Psi_{q^2}\left(\e^{-2 b_1-2 b_2+2 b_3+b_4+c_1-2 c_2+c_3+2 c_4-d_4+2 u_1+2 u_2-2 u_3+2w_1-2 w_3+2 w_4}\right)^{-1}
\\ & \cdot \Psi_q\left(\e^{-b_1-b_2+b_3+b_4+c_1-c_2+2c_4+u_1+u_2-u_3-u_4+w_1-w_3+w_4}\right)^{-1}
\\ & \cdot \Psi _{q^2}\left(\e^{-b_2+c_1-2c_2+c_3-d_2+2 u_2}\right)^{-1}
\\ & \cdot \Psi_q\left(\e^{-a_1-b_3-c_2-d_1-d_2+d_4+u_1+u_2+u_3-u_4-w_1+w_3-w_4}\right)
\\ & \cdot \Psi_{q^2}\left(\e^{a_3+b_2+d_4-u_2-u_4-w_2+2 w_3-w_4}\right)^{-1}P_{24}.
    \end{split}
\end{equation}

\begin{equation}
\begin{split}
\ve =(& -1, 1, -1, -1, 1, -1, -1, 1, 1, -1);
\\
K_{+-}= & ~\Psi _q\left(\e^{-b_1-c_2-d_3+u_1+u_3+w_1-w_2+w_3}\right)^{-1}
\\ & \cdot 
\Psi_{q^2}\left(\e^{a_3+b_2+d_4-u_2-u_4-w_2+2 w_3-w_4}\right)
\\ & \cdot 
\Psi_q\left(\e^{-b_1-b_2+b_3-c_2+c_3-d_4+u_1+u_2-u_3+u_4+w_1-w_3+w_4}\right)^{-1}
\\ & \cdot 
\Psi_{q^2}\left(\e^{b_2-c_1+2 c_2-c_3+d_2-2 u_2}\right)^{-1}
\\ & \cdot 
\Psi _q\left(\e^{-a_1-b_2-c_2+c_3-d_1-d_2+d_3+u_1+2u_2-u_3-w_1+w_2-w_3}\right)
\\ & \cdot 
\Psi _{q^2}\left(\e^{-2 b_1-2 b_2+2 b_3+b_4+c_1-2
   c_2+c_3+2 c_4-d_4+2 u_1+2 u_2-2 u_3+2 w_1-2 w_3+2 w_4}\right)^{-1}
\\ & \cdot 
\Psi_q\left(\e^{-b_1-b_2+b_3+b_4+c_1-c_2+2c_4+u_1+u_2-u_3-u_4+w_1-w_3+w_4}\right)^{-1}
\\ & \cdot 
\Psi_{q^2}\left(\e^{b_2-c_1+2 c_2-c_3+d_2-2 u_2}\right)
\\ & \cdot \Psi_q\left(\e^{-a_1-b_3-c_2-d_1-d_2+d_4+u_1+u_2+u_3-u_4-w_1+w_3-w_4}\right)
\\ & \cdot
\Psi_{q^2}\left(\e^{a_3+b_2+d_4-u_2-u_4-w_2+2 w_3-w_4}\right)^{-1}P_{24}.
    \end{split}
\end{equation}

\begin{equation}\label{K243}
\begin{split}
\ve=(&-1, -1, -1, 1, 1, -1, -1, -1, 1, 1);
\\
K_{-+}&=~\Psi _q\left(\e^{-b_1-c_2-d_3+u_1+u_3+w_1-w_2+w_3}\right)^{-1}
\\ & \cdot 
\Psi_{q^2}\left(\e^{-a_3-b_2-d_4+u_2+u_4+w_2-2 w_3+w_4}\right)^{-1}
\\ & \cdot \Psi_q\left(\e^{a_3-b_1+b_3-c_2+c_3+u_1-u_3+w_1-w_2+w_3}\right)^{-1}
\\ & \cdot \Psi_{q^2}\left(\e^{a_3+c_1-2 c_2+c_3-d_2+d_4+u_2-u_4-w_2+2 w_3-w_4}\right)
\\ & \cdot \Psi_q\left(\e^{-a_1-c_1+c_2-d_1+d_3+u_1-u_3-w_1+w_2-w_3}\right)
\\ & \cdot \Psi_{q^2}\left(\e^{a_3-2 b_1-b_2+2 b_3+b_4+c_1-2 c_2+c_3+2 c_4+2 u_1+u_2-2 u_3-u_4+2w_1-w_2+w_4}\right)^{-1}
\\ & \cdot \Psi_q\left(\e^{-b_1-b_2+b_3+b_4+c_1-c_2+2c_4+u_1+u_2-u_3-u_4+w_1-w_3+w_4}\right)^{-1}
\\ & \cdot \Psi _{q^2}\left(\e^{a_3+c_1-2c_2+c_3-d_2+d_4+u_2-u_4-w_2+2 w_3-w_4}\right)^{-1}
\\ & \cdot \Psi_q\left(\e^{-a_1-b_3-c_2-d_1-d_2+d_4+u_1+u_2+u_3-u_4-w_1+w_3-w_4}\right)
\\ & \cdot \Psi_{q^2}\left(\e^{-a_3-b_2-d_4+u_2+u_4+w_2-2 w_3+w_4}\right)P_{24}.
    \end{split}
\end{equation}

\begin{equation}\label{K244}
\begin{split}
\ve=(&-1, -1, -1, -1, 1, -1, -1, 1, 1, 1);
\\
K_{--}=& ~\Psi _q\left(\e^{-b_1-c_2-d_3+u_1+u_3+w_1-w_2+w_3}\right)^{-1}
\\ & \cdot \Psi_{q^2}\left(\e^{-a_3-b_2-d_4+u_2+u_4+w_2-2 w_3+w_4}\right)^{-1}
\\ & \cdot \Psi_q\left(\e^{a_3-b_1+b_3-c_2+c_3+u_1-u_3+w_1-w_2+w_3}\right)^{-1}
\\ & \cdot \Psi_{q^2}\left(\e^{-a_3-c_1+2 c_2-c_3+d_2-d_4-u_2+u_4+w_2-2
   w_3+w_4}\right)^{-1}
\\ & \cdot \Psi_q\left(\e^{-a_1+a_3-c_2+c_3-d_1-d_2+d_3+d_4+u_1+u_2-u_3-u_4-w_1+w_3-w_4}\right)
\\ & \cdot \Psi_{q^2}\left(\e^{a_3-2 b_1-b_2+2 b_3+b_4+c_1-2 c_2+c_3+2 c_4+2 u_1+u_2-2 u_3-u_4+2w_1-w_2+w_4}\right)^{-1}
\\ & \cdot \Psi_q\left(\e^{-b_1-b_2+b_3+b_4+c_1-c_2+2c_4+u_1+u_2-u_3-u_4+w_1-w_3+w_4}\right)^{-1}
\\ & \cdot \Psi _{q^2}\left(\e^{-a_3-c_1+2c_2-c_3+d_2-d_4-u_2+u_4+w_2-2 w_3+w_4}\right)
\\ & \cdot \Psi_q\left(\e^{-a_1-b_3-c_2-d_1-d_2+d_4+u_1+u_2+u_3-u_4-w_1+w_3-w_4}\right)
\\ & \cdot 
\Psi_{q^2}\left(\e^{-a_3-b_2-d_4+u_2+u_4+w_2-2 w_3+w_4}\right)P_{24}.
    \end{split}
\end{equation}

\subsection{Type $\rho_{13}$}
We present the explicit formulae for $K_{\ve_1,\ve_3}$ \eqref{sol131}--\eqref{sol134} in terms of canonical variables.
We eliminate $e_1,e_2, e_3,e_4$ from (\ref{econ}) and (\ref{ccon}).

\begin{equation}\label{K131}
\begin{split}
\ve=(&1, -1, 1, -1, -1, 1, -1, -1, -1, 1);
\\
K_{++}=&~\Psi _q\left(\e^{b_1+c_2+d_3-u_1-u_3-w_1+w_2-w_3}\right)
\\ & \cdot \Psi_{q^2}\left(\e^{-a_3-b_2-d_4+u_2+u_4+w_2-2 w_3+w_4}\right)^{-1}
\\ & \cdot \Psi _q\left(\e^{-a_3-b_3-c_3-d_3+2u_3}\right)
\\ & \cdot \Psi_{q^2}\left(\e^{-a_3-2 b_1-c_1-c_3+d_2-2 d_3-d_4+2 u_1-u_2+2 u_3+u_4+2w_1-w_2+w_4}\right)^{-1}
\\ & \cdot \Psi _q\left(\e^{a_1-a_3-b_1-c_3+d_1+d_2-2 d_3-d_4-u_2+2 u_3+u_4+2w_1-w_2+w_4}\right)^{-1}
\\ & \cdot \Psi _{q^2}\left(\e^{a_3+b_2-b_4-c_1+c_3-2c_4-u_2+u_4-w_2+2 w_3-w_4}\right)
\\ & \cdot \Psi_q\left(\e^{-a_3-b_3-c_3-d_3+2 u_3}\right)^{-1}
\\ & \cdot \Psi _{q^2}\left(\e^{2 a_1-a_3+c_1-c_3+2 d_1+d_2-2d_3-d_4-2 u_1-u_2+2 u_3+u_4+2 w_1-w_2+w_4}\right)^{-1}
\\ & \cdot \Psi_q\left(\e^{b_1+c_2+d_3-u_1-u_3-w_1+w_2-w_3}\right)^{-1}
\\ & \cdot \Psi_{q^2}\left(\e^{-a_3+2 b_1-2 b_3-b_4-2 c_4-d_2-2 u_1+u_2+2 u_3+u_4-2w_1+w_2-w_4}\right)P_{13}.
\end{split}
\end{equation}

\begin{equation}
\begin{split}
\ve=(&1, -1, -1, -1, -1, 1, 1, -1, -1, 1);
\\
K_{+-}=&~\Psi _q\left(\e^{b_1+c_2+d_3-u_1-u_3-w_1+w_2-w_3}\right)
\\ & \cdot \Psi_{q^2}\left(\e^{-a_3-b_2-d_4+u_2+u_4+w_2-2 w_3+w_4}\right)^{-1}
\\ & \cdot \Psi _q\left(\e^{a_3+b_3+c_3+d_3-2u_3}\right)^{-1}
\\ & \cdot \Psi_{q^2}\left(\e^{-a_3-2 b_1-c_1-c_3+d_2-2 d_3-d_4+2 u_1-u_2+2 u_3+u_4+2w_1-w_2+w_4}\right)^{-1}
\\ & \cdot \Psi _q\left(\e^{a_1-a_3-b_1-c_3+d_1+d_2-2d_3-d_4-u_2+2 u_3+u_4+2 w_1-w_2+w_4}\right)^{-1}
\\ & \cdot \Psi_{q^2}\left(\e^{-a_3+b_2-2 b_3-b_4-c_1-c_3-2 c_4-2 d_3-u_2+4 u_3+u_4-w_2+2w_3-w_4}\right)
\\ & \cdot \Psi_q\left(\e^{a_3+b_3+c_3+d_3-2 u_3}\right)
\\ & \cdot \Psi _{q^2}\left(\e^{2 a_1-a_3+c_1-c_3+2 d_1+d_2-2 d_3-d_4-2u_1-u_2+2 u_3+u_4+2 w_1-w_2+w_4}\right)^{-1}
\\ & \cdot \Psi_q\left(\e^{b_1+c_2+d_3-u_1-u_3-w_1+w_2-w_3}\right)^{-1}
\\ & \cdot \Psi_{q^2}\left(\e^{-a_3+2 b_1-2 b_3-b_4-2 c_4-d_2-2 u_1+u_2+2 u_3+u_4-2w_1+w_2-w_4}\right)P_{13}.
    \end{split}
\end{equation}

\begin{equation}\label{K133}
\begin{split}
\ve=(&-1, -1, 1, -1, -1, 1, -1, -1, 1, 1);
\\
K_{-+}=&~\Psi _q\left(\e^{-b_1-c_2-d_3+u_1+u_3+w_1-w_2+w_3}\right)^{-1}
\\ & \cdot \Psi_{q^2}\left(\e^{-a_3-b_2-d_4+u_2+u_4+w_2-2 w_3+w_4}\right)^{-1}
\\ & \cdot \Psi_q\left(\e^{-a_3+b_1-b_3+c_2-c_3-u_1+u_3-w_1+w_2-w_3}\right)
\\ & \cdot \Psi_{q^2}\left(\e^{-a_3-c_1+2 c_2-c_3+d_2-d_4-u_2+u_4+w_2-2
   w_3+w_4}\right)^{-1}
\\ & \cdot \Psi_q\left(\e^{a_1-a_3+c_2-c_3+d_1+d_2-d_3-d_4-u_1-u_2+u_3+u_4+w_1-w_3+w_4}\right)^{-1}
\\ & \cdot \Psi _{q^2}\left(\e^{a_3+b_2-b_4-c_1+c_3-2 c_4-u_2+u_4-w_2+2w_3-w_4}\right)
\\ & \cdot \Psi_q\left(\e^{-a_3+b_1-b_3+c_2-c_3-u_1+u_3-w_1+w_2-w_3}\right)^{-1}
\\ & \cdot \Psi _{q^2}\left(\e^{2 a_1-a_3+c_1-c_3+2 d_1+d_2-2 d_3-d_4-2u_1-u_2+2 u_3+u_4+2 w_1-w_2+w_4}\right)^{-1}
\\ & \cdot \Psi_q\left(\e^{-b_1-c_2-d_3+u_1+u_3+w_1-w_2+w_3}\right)
\\ & \cdot \Psi _{q^2}\left(\e^{-a_3+2 b_1-2b_3-b_4-2 c_4-d_2-2 u_1+u_2+2 u_3+u_4-2 w_1+w_2-w_4}\right)P_{13}.
   \end{split}
\end{equation}
  
  \begin{equation}\label{K134}
\begin{split}
\ve=(& -1, -1, -1, -1, -1, 1, 1, -1, 1, 1);
   \\
K_{--}=&~\Psi _q\left(\e^{-b_1-c_2-d_3+u_1+u_3+w_1-w_2+w_3}\right)^{-1}
\\ & \cdot \Psi_{q^2}\left(\e^{-a_3-b_2-d_4+u_2+u_4+w_2-2 w_3+w_4}\right)^{-1}
\\ & \cdot \Psi_q\left(\e^{a_3-b_1+b_3-c_2+c_3+u_1-u_3+w_1-w_2+w_3}\right)^{-1}
\\ & \cdot \Psi_{q^2}\left(\e^{-a_3-c_1+2 c_2-c_3+d_2-d_4-u_2+u_4+w_2-2w_3+w_4}\right)^{-1}
\\ & \cdot \Psi_q\left(\e^{a_1-a_3+c_2-c_3+d_1+d_2-d_3-d_4-u_1-u_2+u_3+u_4+w_1-w_3+w_4}\right)^{-1}
\\ & \cdot \Psi _{q^2}\left(\e^{-a_3+2 b_1+b_2-2 b_3-b_4-c_1+2 c_2-c_3-2 c_4-2u_1-u_2+2 u_3+u_4-2 w_1+w_2-w_4}\right)
\\ & \cdot \Psi_q\left(\e^{a_3-b_1+b_3-c_2+c_3+u_1-u_3+w_1-w_2+w_3}\right)
\\ & \cdot \Psi _{q^2}\left(\e^{2 a_1-a_3+c_1-c_3+2d_1+d_2-2 d_3-d_4-2 u_1-u_2+2 u_3+u_4+2 w_1-w_2+w_4}\right)^{-1}
\\ & \cdot \Psi_q\left(\e^{-b_1-c_2-d_3+u_1+u_3+w_1-w_2+w_3}\right)
\\ & \cdot \Psi _{q^2}\left(\e^{-a_3+2 b_1-2b_3-b_4-2 c_4-d_2-2 u_1+u_2+2 u_3+u_4-2 w_1+w_2-w_4}\right)P_{13}.
   \end{split}
\end{equation}

\section{Well-definedness of $(F^\Psi_R)^{-1} F^\Psi_L$}\label{app:ff}

\subsection{$(F^\Psi_R)^{-1} F^\Psi_L$ in $Y$-variables}\label{dilog-p3}
 
Using the summation indices $n_i$ $(i=1,\ldots,92)$ for the
$i$th quantum dilogarithm appearing from the left in
$(F^\Psi_R)^{-1}F^\Psi_L$, the expression can be expanded in the
form~\eqref{CYp}, with
\begin{align}
p_{1} &= n_1+n_{19}+n_{20}+n_{24}+n_{25}+n_{29}+n_{54}+n_{58}+n_{59}+n_{63}+n_{64}+n_{82}+n_{86}
\nonumber \\
& \quad +n_{87}+n_{91}+n_{92},\nonumber  \displaybreak[0] \\ 
p_{2} &= n_{19}+n_{20}-n_{21}-n_{22}-n_{23}+n_{24}+n_{25}-n_{26}-n_{27}+n_{29}-n_{31}-n_{52}+n_{54}
\nonumber \\
&  \quad -n_{56}-n_{57}+n_{5
   8}+n_{59}-n_{60}-n_{61}-n_{62}+n_{63}+n_{64}-n_{80}+n_{82}-n_{84}-n_{85}
   \nonumber \\
   & \quad +n_{86}+n_{87}-n_{88}-n_{89}-n_{90}+n_{91}+n
   _{92},\nonumber  \displaybreak[0]  \\ 
   p_{3} &= -n_1+n_4+n_6+n_7+n_{11}-n_{21}-n_{22}-n_{26}-n_{27}+n_{34}+n_{35}+n_{39}-n_{56}-n_{57}
   \nonumber \\
   & \quad -n_{61}-n_{62}+n_{72}+n_{76}+n_{77}-n_{84}-n_{85}-n_{89}-n_{90},\nonumber  \displaybreak[0] \\ 
   p_{4} &= -n_5+n_6+n_7-n_8-n_9+n_{11}-n_{13}-n_{33}+n_{34}+n_{35}-n_{36}-n_{37}+n_{39}-n_{41}
   \nonumber \\
   & \quad -n_{70}+n_{72}-n_{74}-n_{75}+n_{76}+n_{77}-n_{78},\nonumber \\ 
   p_{5} &= -n_4-n_8-n_9-n_{36}-n_{37}-n_{74}-n_{75},\nonumber  \displaybreak[0] \\ 
   p_{6} &= n_1+n_{19}+n_{20}+n_{24}+n_{28}+n_{50}+n_{55}+n_{59}+n_{63}+n_{64}+n_{83}+n_{87}+n_{91}+n_{92},\nonumber  \displaybreak[0] \\ 
   p_{7} &= n_{19}+n_{20}-
   n_{21}-n_{22}+n_{24}-n_{26}-2 n_{27}+n_{28}-n_{30}-n_{32}-n_{51}-n_{53}+n_{55}
   \nonumber  \displaybreak[0] \\
   & \quad -2n_{56}-n_{57}+n_{59}-n_{61}-n_{62}+n_{63}+n_{64}-n_{79}-n_{81}+n_{83}-2n_{84}-n_{85}+n_{87}
   \nonumber \\
   & \quad -n_{89}-n_{90}+n_{91}+n_{92},\nonumber  \displaybreak[0] \\ 
   p_{8} &= n_4+n_6+n_{10}+n_{15}+n_{19}+n_{20}-n_{21}-n_{22}+n_{24}-n_{26}-2 n_{27}-n_{30}+n_{34}+n_{38}
   \nonumber \\
   & \quad +n_{43}-n_{53}-2n_{56}-n_{57}+n_{59}-n_{61}-n_{62}+n_{63}+n_{64}+n_{68}+n_{73}+n_{77}-n_{81}
   \nonumber \\
   & \quad -2n_{84}-n_{85}+n_{87}-n_{89}-n_{90}+n_{91}+n_{92},\nonumber  \displaybreak[0] \\ 
   p_{9} &= n_6-n_8-2
   n_9+n_{10}-n_{12}-n_{14}+n_{19}+n_{20}-n_{21}-n_{22}+n_{34}-n_{36}-2
   n_{37}+n_{38}
   \nonumber \\
   & \quad -n_{40}-n_{42}-n_{61}-n_{62}+n_{63}+n_{64}-n_{69}-n_{71}+n_{73}-2n_{74}-n_{75}+n_{77}-n_{89}-n_{90}
   \nonumber \\
   & \quad +n_{91}+n_{92},\nonumber  \displaybreak[0] \\ 
   p_{10} &= -n_1+n_6-n_8-2 n_9-n_{12}-n_{21}+n_{34}-n_{36}-2
   n_{37}-n_{40}-n_{62}-n_{71}-2n_{74}
   \nonumber \\
   & \quad -n_{75}+n_{77}-n_{90},\nonumber \\
    p_{11} &= n_1+n_2+n_6+n_{10}+n_{15}+n_{17}+n_{48}+n_{50}+n_{55}+n_{59}+n_{63}+n_{64},\nonumber \\ 
    p_{12} &= n_1+n_2-n_3-n_4+n_6-n_8-2
   n_9+n_{10}-n_{12}-n_{14}+n_{15}-n_{16}+n_{17}-n_{18}
   \nonumber  \displaybreak[0] \\
   & \quad -n_{47}+n_{48}-n_{49}+n_{50}-n_{51}-n_{53}+n_{55}-2
   n_{56}-n_{57}+n_{59}-n_{61}-n_{62}+n_{63}+n_{64},\nonumber \\ 
   p_{13} &= -n_3+n_6-n_8-2
   n_9+n_{10}-n_{12}-n_{14}+n_{15}-n_{16}+n_{17}-n_{18}+n_{34}+n_{38}+n_{43}
   \nonumber \\
   & \quad +n_{45}-n_{49}-n_{51}-n_{53}-2
   n_{56}-n_{57}-n_{61}-n_{62}+n_{66}+n_{68}+n_{73}+n_{77},\nonumber  \displaybreak[0] \\ 
   p_{14} &= n_6-n_8-2
   n_9+n_{10}-n_{12}-n_{14}+n_{15}-n_{16}+n_{17}-n_{18}+n_{34}-n_{36}-2n_{37}+n_{38}
   \nonumber \\
   & \quad -n_{40}-n_{42}+n_{43}-n_{44}+n_{45}-n_{46}-n_{65}+n_{66}-n_{67}+n_{68}-n_{69}-n_{71}+n_{73}
   \nonumber \\
   & \quad -2n_{74}-n_{75}+n_{77},\nonumber  \displaybreak[0] \\ 
   p_{15} &= n_1-n_4-n_8-2 n_9-n_{12}-n_{14}-n_{16}+n_{19}-n_{36}-2
   n_{37}-n_{40}-n_{42}-n_{44}+n_{64}
   \nonumber \\
   & \quad -n_{67}-n_{69}-n_{71}-2 n_{74}-n_{75}+n_{92}.
   \label{FFY}
\end{align}
In the notation \eqref{ppnL}, the well-definedness of 
$(F^\Psi_R)^{-1}F^\Psi_L$ as a formal Laurent series in the
$Y$-variables is verified along the following procedure:
\begin{equation}\label{ppnRL}
\begin{split}
p&: 1,5,6,11,\\
n&: 1,2,4,6,8,9,10,15,17,19,20,24,25,28,29,36,37,48,50,54,55,58,59,63,64,
\\ &\quad 74,75,82,83,86,87,91,92,
\\
p&: 2,7,12,15,\\
n&:3,12,14,16,18,21,22,23,26,27,30,31,32,40,42,44,47,49,51,52,53,56,57,\\
& \quad 60,61,62,67,69,71,79,80,81,84,85,88,89,90,
\\
p&: 3,8,9,10,13,\\
n&: 7,11,34,35,38,39,43,45,66,68,72,73,76,77,
\\
p&: 4,14,\\
n&: 5,13,33,41,46,65,70,78.
\end{split}
\end{equation}

\subsection{$(F^\Psi_R)^{-1} F^\Psi_L$ in $q$-Weyl variables}\label{dilog-p4}
 
 Using the summation indices $n_i$ $(i=1,\ldots,92)$ for the
$i$th quantum dilogarithm appearing from the left in
$(F^\Psi_R)^{-1}F^\Psi_L$, and making the substitution \eqref{yeuw}, the expression can be expanded in the
form~\eqref{anL}, with
\begin{align}
\nonumber \\ \alpha_{1} &= n_1+n_2+n_3+n_4+n_6+n_8+2
   n_9+n_{10}+n_{12}+n_{14}+n_{15}+n_{16}+n_{17}+n_{18}
   \nonumber \\
   & \quad +n_{47}+n_{48}+n_{49}+n_{50}+n_{51}+n_{53}+n_{55}+2
   n_{56}+n_{57}+n_{59}+n_{61}+n_{62}+n_{63}+n_{64},\nonumber  \displaybreak[0]  \\ 
   \alpha_{2} &= 2 n_1+n_{19}+n_{20}+n_{21}+n_{22}+n_{24}+n_{26}+2
   n_{27}+n_{28}+n_{30}+n_{32}+2 n_{50}+n_{51}
   \nonumber \\
   & \quad +n_{53}+n_{55}+2
   n_{56}+n_{57}+n_{59}+n_{61}+n_{62}+n_{63}+n_{64}+n_{79}+n_{81}+n_{83}+
   2n_{84}
   \nonumber \\
   & \quad +n_{85}+n_{87}+n_{89}+n_{90}+n_{91}+n_{92},\nonumber  \displaybreak[0] \\ 
   \alpha_{3} &= 2
   n_1+n_{19}+n_{20}+n_{21}+n_{22}+n_{23}+n_{24}+n_{25}+n_{26}+n_{27}+n_{29}+n_{31}+n_{52}
   \nonumber \\
   & \quad +n_{54}+n_{56}+n_{57}+n_{58}+
   n_{59}+n_{60}+n_{61}+n_{62}+n_{63}+n_{64}+n_{80}+n_{82}+n_{84}
   \nonumber \\
   & \quad +n_{85}+n_{86}+n_{87}+n_{88}+n_{89}+n_{90}+n_{91}+n_{9
   2},\nonumber  \displaybreak[0] \\ 
   \alpha_{4} &= -n_1-n_2-n_3+n_4+n_{34}+n_{36}+2
   n_{37}+n_{38}+n_{40}+n_{42}+n_{43}+n_{44}+n_{45}+n_{46}
   \nonumber \\
   & \quad +n_{47}-n_{48}-n_{49}-n_{50}-n_{51}-n_{53}-n_{55}-2
   n_{56}-n_{57}-n_{59}-n_{61}-n_{62}-n_{63}-n_{64}
   \nonumber \\
   & \quad +n_{65}+n_{66}+n_{67}+n_{68}+n_{69}+n_{71}+n_{73}+2
   n_{74}+n_{75}+n_{77},\nonumber  \displaybreak[0] \\ 
   \alpha_{5} &= 2 n_4+n_6+n_8+2 n_9+n_{10}+n_{12}+n_{14}+2 n_{15}+n_{24}-n_{26}-2
   n_{27}-n_{28}-n_{30}+n_{32}
   \nonumber \\
   & \quad +n_{34}+n_{36}+2 n_{37}+n_{38}+n_{40}+n_{42}+2 n_{43}+n_{51}-n_{53}-n_{55}-2
   n_{56}-n_{57}+n_{59}
   \nonumber \\
   & \quad +2 n_{68}+n_{69}+n_{71}+n_{73}+2 n_{74}+n_{75}+n_{77}+n_{79}-n_{81}-n_{83}-2
   n_{84}-n_{85}+n_{87},\nonumber \\ 
   \alpha_{6} &= -2 n_1+2
   n_4+n_5+n_6+n_7+n_8+n_9+n_{11}+n_{13}-n_{19}-n_{20}-n_{21}-n_{22}+n_{23}
   \nonumber \\
   & \quad -n_{24}-n_{25}-n_{26}-n_{27}-n_{29}+n_{31}+n_{33}+n_{34}+n_{35}+n_{36}+n_{37}+n_{39}+n_{41}+n_{52}
   \nonumber \\
   & \quad -n_{54}-n_{56}-n_{57}-n_{58}-n_{59}+n_{60}-n_{61}-n_{62}-n_{63}-n_{64}+n_{70}+n_{72}+n_{74}+n_{75}
   \nonumber \\
   & \quad +n_{76}+n_{77}+n_{78}+n_{80}-n_{82}-n_{84}-n_{85}-n_{86}-n_{87}+n_{88}-n_{89}-n_
   {90}-n_{91}-n_{92},\nonumber  \displaybreak[0] \\ 
   \alpha_{7} &= 2 n_1-2 n_4-n_6-n_8-2 n_9-n_{10}-n_{12}-n_{14}-n_{15}-n_{16}-n_{17}+n_{18}+2n_{19}
   \nonumber \\
   & \quad -n_{34}-n_{36}-2 n_{37}-n_{38}-n_{40}-n_{42}-n_{43}-n_{44}-n_{45}+n_{46}+2n_{64}+n_{65}-n_{66}
   \nonumber \\
   & \quad -n_{67}-n_{68}-n_{69}-n_{71}-n_{73}-2 n_{74}-n_{75}-n_{77}+2 n_{92},\nonumber  \displaybreak[0] \\ 
   \alpha_{8} &= -2 n_1+n_6-n_8-2
   n_9-n_{10}-n_{12}+n_{14}-n_{19}-n_{20}-n_{21}+n_{22}+n_{34}-n_{36}
   \nonumber \\
   & \quad -2n_{37}-n_{38}-n_{40}+n_{42}+n_{61}-n_{62}-n_{63}-n_{64}+n_{69}-n_{71}-n_{73}-2n_{74}-n_{75}
   \nonumber \\
   & \quad +n_{77}+n_{89}-n_{90}-n_{91}-n_{92},\nonumber  \displaybreak[0] \\ 
   \alpha_{9} &= -2
   n_4+n_5-n_6-n_7-n_8-n_9-n_{11}+n_{13}+n_{33}-n_{34}-n_{35}-n_{36}-n_{37}-n_{39}
   \nonumber \\
   & \quad +n_{41}+n_{70}-n_{72}-n_{74}-n_{75}-n
   _{76}-n_{77}+n_{78},\nonumber  \displaybreak[0] \\ 
   \alpha_{10} &= -n_1-n_2+n_3+n_4-n_6+n_8+2
   n_9-n_{10}+n_{12}+n_{14}-n_{15}+n_{16}-n_{17}+n_{18}
   \nonumber \\
   & \quad +n_{47}-n_{48}+n_{49}-n_{50}+n_{51}+n_{53}-n_{55}+2
   n_{56}+n_{57}-n_{59}+n_{61}+n_{62}-n_{63}-n_{64},\nonumber  \displaybreak[0] \\ 
   \alpha_{11} &= n_1+n_2-n_3-n_4+n_6-n_8-2
   n_9+n_{10}-n_{12}-n_{14}+n_{15}-n_{16}+n_{17}-n_{18}-n_{19}
   \nonumber \\
   & \quad -n_{20}+n_{21}+n_{22}-n_{24}+n_{26}+2n_{27}-n_{28}+n_{30}+n_{32}-n_{47}+n_{48}-n_{49}+n_{50}+n_{79}
   \nonumber \\
   & \quad +n_{81}-n_{83}+2
   n_{84}+n_{85}-n_{87}+n_{89}+n_{90}-n_{91}-n_{92},\nonumber  \displaybreak[0] \\ 
   \alpha_{12} &= n_{23}-n_{25}-n_{27}+n_{28}-n_{29}-n_{30}+n_{31}-n_{32}-n_{51}+n_{52}-n_{53}-n_{54}+n_{55}-n_{56}
   \nonumber \\
   & \quad -n_{58}+n_{60}-n_{79}+n_{80}-n_{81}-n_{82}+n_{83}-n_{84}-n_{86}+n_{88},\nonumber  \displaybreak[0] \\ 
   \alpha_{13} &= -n_
   1-n_2+n_3+n_4-2 n_6+2 n_8+4 n_9-2 n_{10}+2 n_{12}+2 n_{14}-2 n_{15}+2 n_{16}-2 n_{17}
   \nonumber \\
   & \quad +2n_{18}+n_{19}+n_{20}-n_{21}-n_{22}+n_{24}-n_{26}-2 n_{27}+n_{28}-n_{30}-n_{32}-n_{34}+n_{36}
   \nonumber \\
   & \quad +2n_{37}-n_{38}+n_{40}+n_{42}-n_{43}+n_{44}-n_{45}+n_{46}+n_{47}-n_{48}+n_{49}-n_{50}+n_{65}-n_{66}
   \nonumber \\
   & \quad +n_{67}-n_{68}+n_{69}+n_{71}-n_{73}+2 n_{74}+n_{75}-n_{77}-n_{79}-n_{81}+n_{83}-2
   n_{84}-n_{85}+n_{87}
   \nonumber \\
   & \quad -n_{89}-n_{90}+n_{91}+n_{92},\nonumber  \displaybreak[0] \\ 
   \alpha_{14} &= n_{15}-n_{16}+n_{17}-n_{18}-2 n_{23}+n_{24}+2
   n_{25}-n_{26}-n_{28}+2 n_{29}+n_{30}-2 n_{31}+n_{32}
   \nonumber \\
   & \quad +n_{43}-n_{44}+n_{45}-n_{46}+n_{51}-2 n_{52}+n_{53}+2
   n_{54}-n_{55}-n_{57}+2 n_{58}+n_{59}-2 n_{60}
   \nonumber \\
   & \quad -n_{65}+n_{66}-n_{67}+n_{68}+n_{79}-2 n_{80}+n_{81}+2
   n_{82}-n_{83}-n_{85}+2 n_{86}+n_{87}-2
   n_{88},\nonumber  \displaybreak[0] \\ 
   \alpha_{15} &= n_5-n_7-n_9+n_{10}-n_{11}-n_{12}+n_{13}-n_{14}+n_{23}-n_{24}-n_{25}+n_{26}+n_{27}-n_{29}
   \nonumber \\
   & \quad +n_{31}+n_{33}-n_{35}-n_{37}+n_{38}-n_{39}-n_{40}+n_{41}-n_{42}+n_{52}-n_{54}+n_{56}+n_{57}-n_{58}
   \nonumber \\
   & \quad -n_{59}+n_{60}-n_{69}+n_{70}-n_{7
   1}-n_{72}+n_{73}-n_{74}-n_{76}+n_{78}+n_{80}-n_{82}+n_{84}+n_{85}
   \nonumber \\
   & \quad -n_{86}-n_{87}+n_{88},\nonumber  \displaybreak[0] \\ 
   \alpha_{16} &= -n_{15}+n_{16}-n_{17}+n_{18}+n_{19}+n_{20}-n_{21}-n_{22}-n_{43}+n_{44}-n_{45}+n_{46}-n_{61}
   \nonumber \\
   & \quad -n_{62}+n_{63}+n_{64}+n_{65}-n_{66}+n_{67}-n_{68}-n_{89}-n_{90}+n_{91}+n_{92},\nonumber  \displaybreak[0] \\ 
   \alpha_{17} &= -2 n_5+n_6+2 n_7-n_8-n_{10}+2 n_{11}+n_{12}-2
   n_{13}+n_{14}-n_{19}-n_{20}+n_{21}+n_{22}
   \nonumber \\
   & \quad -2 n_{33}+n_{34}+2 n_{35}-n_{36}-n_{38}+2 n_{39}+n_{40}-2
   n_{41}+n_{42}+n_{61}+n_{62}-n_{63}-n_{64}
   \nonumber \\
   & \quad +n_{69}-2 n_{70}+n_{71}+2 n_{72}-n_{73}-n_{75}+2 n_{76}+n_{77}-2
   n_{78}+n_{89}+n_{90}-n_{91}-n_{92},\nonumber  \displaybreak[0] \\ 
   \alpha_{18} &= n_5-n_6-n_7+n_8+n_9-n_{11}+n_{13}+n_{33}-n_{34}-n_{35}+n_{36}+n_{37}-n_{39}
   +n_{41}
   \nonumber \\
   & \quad +n_{70}-n_{72}+n_{74}+n_{75}-n_{76}-n_{77}+n_{78}.
   \label{FFuw}
 \end{align}
 Similarly to \eqref{aalL} and \eqref{aalR},
 The well-definedness of 
$(F^\Psi_R)^{-1}F^\Psi_L$ as a formal Laurent series in 
$\e^{u_i}$ and $\e^{w_i}$ is shown in two steps as
\begin{equation}\label{FFuw2}
\begin{split}
\alpha&: 1,2,3,\\
n&:1,2,3,4,6,8,9,10,12,14,15,16,17,18,19,20,21,22,23,24,25,26,27,28,29,\\
&\quad 30,31,32,47,48,49,50,51,52,53,54,55,56,57,58,59,60,61,62,63,64,79,80,\\
&\quad 81,82,83,84,85,86,87,88,89,90,91,92,
\\
\alpha&: 4,5,6,\\
n&: 5,7,11,13,33,34,35,36,37,38,39,40,41,42,43,44,45,46,65,66,67,68,69,70,\\
&\quad 71,72,73,74,75,76,77,78.
\end{split}
\end{equation}

\section{Reduction to the $R$-operators for FG quiver}\label{app:R}

\subsection{$R$-operators for the FG quiver}

Recall the transformation $\qR_{123}$ of  the Fock-Goncharov (FG) quivers and the corresponding wiring diagrams:
\begin{align}
\label{quiver-d:A2}
\begin{tikzpicture}
\begin{scope}[>=latex,xshift=0pt]
{\color{red}
\fill (1,0.5) circle(2pt) coordinate(A) node[below]{$1$};
\fill (2,1.5) circle(2pt) coordinate(B) node[above]{$2$};
\fill (3,0.5) circle(2pt) coordinate(C) node[below]{$3$};
\draw [-] (0,2) to [out = 0, in = 135] (B) --(C) to [out = -45, in = 180] (4,0);
\draw [-] (0,1) to [out = 0, in = 135] (A) to [out = -45, in = -135] (C) to [out = 45, in = 180] (4,1);
\draw [-] (0,0) to [out = 0, in = -135] (A) -- (B) to [out = 45, in = 180] (4,2);
}
\coordinate (P1) at (4.5,1);
\coordinate (P2) at (5.5,1);
\draw[->] (4.5,1) -- (5.5,1);
\draw (5,1) circle(0pt) node[below]{$\qR_{123}$};
%
%
\draw (0,0.5) circle(2pt) coordinate(B1) node[below]{$3$};
\draw (2,0.5) circle(2pt) coordinate(C1) node[below]{$4$};
\draw (4,0.5) circle(2pt) coordinate(F1) node[below]{$5$};
\draw (1,1.5) circle(2pt) coordinate(D1) node[above]{$1$};
\draw (3,1.5) circle(2pt) coordinate(E1) node[above]{$2$};
\qarrow{B1}{C1}
\qarrow{C1}{D1}
\qarrow{D1}{E1}
\qarrow{E1}{C1}
\qarrow{C1}{F1}
\qdarrow{D1}{B1}
\qdarrow{F1}{E1}
\end{scope}
\begin{scope}[>=latex,xshift=175pt]
{\color{red}
\fill (3,1.5) circle(2pt) coordinate(A) node[above]{$1$};
\fill (2,0.5) circle(2pt) coordinate(B) node[below]{$2$};
\fill (1,1.5) circle(2pt) coordinate(C) node[above]{$3$};
\draw [-] (0,0) to [out = 0, in = -135] (B);
\draw [-] (B) -- (A); 
\draw [->] (A) to [out = 45, in = 180] (4,2);
\draw [-] (0,1) to [out = 0, in = -135] (C); 
\draw [-] (C) to [out = 45, in = 135] (A);
\draw [->] (A) to [out = -45, in = 180] (4,1);
\draw [-] (0,2) to [out = 0, in = 135] (C); 
\draw [-] (C) -- (B);
\draw [->] (B) to [out = -45, in = 180] (4,0);
}
%
%
\draw (0,1.5) circle(2pt) coordinate(B1) node[above]{$1$};
\draw (2,1.5) circle(2pt) coordinate(C1) node[above]{$4$};
\draw (4,1.5) circle(2pt) coordinate(D1) node[above]{$2$};
\draw (1,0.5) circle(2pt) coordinate(E1) node[below]{$3$};
\draw (3,0.5) circle(2pt) coordinate(F1) node[below]{$5$};
\qarrow{B1}{C1}
\qarrow{C1}{D1}
\qarrow{E1}{F1}
\qarrow{F1}{C1}
\qarrow{C1}{E1}
\qdarrow{E1}{B1}
\qdarrow{D1}{F1}
\end{scope}
\end{tikzpicture}
\end{align}
where the cluster transformation is given by $\qR_{123} = \mu_4$. 
The induced transformation $\widehat{R}_{123} = \mu_4^\ast$ of quantum $y$-variables is decomposed in two ways:
$$
  \mu_4^\ast = \Ad(\Phi_q(\mathcal{Y}_4)) \circ \tau_{4,+} = \Ad(\Phi_q(\mathcal{Y}_4^{-1})^{-1}) \circ \tau_{4,-}. 
$$

Recall the embeddings $\phi_\mathrm{FG}\colon \mathcal{Y}(B_\mathrm{FG}) \hookrightarrow
\mathrm{Frac}\,\mathcal{W}(A_2)$ and 
$\phi'_\mathrm{FG}\colon \mathcal{Y}(B'_\mathrm{FG}) \hookrightarrow
\mathrm{Frac}\,\mathcal{W}(A_2)$ involving the parameters $\theta_i \in \C~(i=1, 2, 3)$ \cite[eq.\ (3.6)]{IKT1} given by 
\begin{align}
&\phi_\mathrm{FG}:
\begin{cases}
\mathscr{Y}_1 \mapsto \e^{-\theta_2-w_2-{u}_2+w_1},
\\
\mathscr{Y}_2 \mapsto \e^{\theta_2-w_2+{u}_2+w_3},
\\
\mathscr{Y}_3 \mapsto  \e^{-\theta_1-w_1-{u}_1},
\\
\mathscr{Y}_4 \mapsto  \e^{\theta_1-\theta_3-w_1
+{u}_1-w_3-{u}_3+w_2},
\\
\mathscr{Y}_5 \mapsto  \e^{\theta_3-w_3+{u}_3},
\end{cases}
\qquad 
\phi'_\mathrm{FG}:
\begin{cases}
\mathscr{Y}'_1 \mapsto  \e^{-\theta_3 -w_3-{u}_3},
\\
\mathscr{Y}'_2 \mapsto  \e^{\theta_1-w_1+{u}_1},
\\
\mathscr{Y}'_3 \mapsto  \e^{-\theta_2-w_2-{u}_2+w_3},
\\
\mathscr{Y}'_4 \mapsto  \e^{-\theta_1+\theta_3-w_3
+{u}_3-w_1-{u}_1+w_2},
\\
\mathscr{Y}'_5 \mapsto  \e^{\theta_2-w_2+{u}_2+w_1}.
\end{cases}
\label{kapp}
\end{align}

Let $\pi_{+}$ denote the isomorphism $\pi_{123}$
of $\mathcal{W}(A_2)$ \cite[eq.\ (3.7)]{IKT1} (in the sense
of exponentials), and define another isomorphism $\pi_{-}$ as follows:
\begin{align}
&\pi_{+}:
\begin{cases}
w_1 \mapsto w_1 -  \theta_2+\theta_3,
\quad
w_2  \mapsto w_1+w_3,
\quad
w_3 \mapsto w_2-w_1+ \theta_2-\theta_3, 
\\
{u}_1  \mapsto {u}_1+{u}_2-{u}_3,
\quad 
{u}_2 \mapsto {u}_3,
\quad 
{u}_3  \mapsto {u}_2,
\end{cases}
\\
&\pi_{-}:
\begin{cases}
w_1 \mapsto w_2-w_3+ \theta_1-\theta_2, 
\quad
w_2  \mapsto w_1+w_3,
\quad
w_3 \mapsto w_3-  \theta_1+\theta_2,
\\
{u}_1 \mapsto {u}_2,
\quad 
{u}_2 \mapsto {u}_1,
\quad 
{u}_3 \mapsto -{u}_1+{u}_2+{u}_3. 
\end{cases}
\end{align}

We now consider two $R$-operators ${R}_+$ and ${R}_-$.
The operator ${R}_+$ is that introduced in \cite[eq.\ (4.14)]{IKT1}:
\begin{align}\label{r+fg}
{R}_+ = 
\Psi_q(\e^{\theta_1-\theta_3-w_1+{u}_1-w_3-{u}_3+w_2}) {P}_+,
\qquad 
{P}_{+} = \e^{\tfrac{1}{\hbar}w_1({u}_3-{u}_2)}
\e^{\tfrac{\theta_2-\theta_3}{\hbar}({u}_2-{u}_1)} \rho_{23},
\end{align}
and we introduce a new operator $R_-$, defined analogously  by
\begin{align}\label{r-fg}
{R}_- = 
\Psi_q(\e^{-\theta_1+\theta_3+w_1-{u}_1+w_3+{u}_3-w_2})^{-1} {P}_-,
\qquad 
{P}_- = \e^{\tfrac{1}{\hbar}w_3({u}_1-{u}_2)}
\e^{\tfrac{\theta_1-\theta_2}{\hbar}({u}_1-{u}_2)}\rho_{12}.
\end{align}

\begin{proposition} 
For $\ve\in\{+,-\}$, the following statements hold.\\
(i) The following diagram is commutative:	
\begin{align*}
\xymatrix{
\mathcal{Y}(B'_\mathrm{FG}(A_2)) \ar[r]^{\phi'_\mathrm{FG}} \ar[d]_{\tau_{4,\ve}} 
& \mathrm{Frac}\,\mathcal{W}(A_2) \ar[d]_{\pi_{\ve}}
\\
\mathcal{Y}(B_\mathrm{FG}(A_2)) \ar[r]^{\phi_\mathrm{FG}}& \mathrm{Frac}\,\mathcal{W}(A_2)
}
\end{align*}
(ii) The isomorphism $\pi_{\ve}$ is realized by the adjoint action $\Ad({P}_\ve)$. 
In particular, we have $\phi_\mathrm{FG} \circ \widehat{R}_{123} = \mathrm{Ad}({R}_\ve) \circ \phi'_\mathrm{FG}$.
\end{proposition}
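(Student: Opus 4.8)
The plan is to mirror the two-step scheme used in Proposition~\ref{prop-Rcom}. First I would prove the commutativity in (i) by comparing the monomial transformation $\tau_{4,\ve}$ directly with the affine substitution $\pi_\ve$; then I would verify in (ii) that $\pi_\ve=\Ad(P_\ve)$ by a Baker--Campbell--Hausdorff computation. The concluding identity $\phi_\mathrm{FG}\circ\widehat{R}_{123}=\Ad(R_\ve)\circ\phi'_\mathrm{FG}$ then follows formally from these two facts together with the decomposition of $\mu_4^\ast$ displayed after \eqref{quiver-d:A2}.

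For (i), I would first read off the exchange matrix of $B_\mathrm{FG}(A_2)$ from the left quiver in \eqref{quiver-d:A2}. The only mutable vertex is $4$, all vertices have weight $1$, and the relevant column entries are $b_{14}=-1$, $b_{24}=1$, $b_{34}=1$, $b_{54}=-1$; in particular $d_4=1$ so $q_4=q$, and the two dashed edges (joining the frozen vertices $1,3$ and $5,2$) do not enter $\tau_{4,\ve}$, so every prefactor $q^{-\widehat{b}_{i4}[\ve b_{i4}]_+}$ is an integral power of $q$. Using \eqref{mu-mono} I would write out $\tau_{4,\ve}(\mathscr{Y}'_i)$ for $i=1,\dots,5$ as monomials in the $\mathscr{Y}_j$ with definite $q$-powers, apply $\phi_\mathrm{FG}$ from \eqref{kapp} to land in $\mathrm{Frac}\,\mathcal{W}(A_2)$, and compare, generator by generator, with the result of applying $\phi'_\mathrm{FG}$ followed by $\pi_\ve$. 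This reduces to matching two exponents that are linear in $u_i,w_i$ and in the parameters $\theta_i$, exactly as in the case of $\phi\circ\tau$ treated in the proof of Proposition~\ref{prop-Rcom}(i).

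For (ii), I would evaluate $\Ad(P_\ve)$ on each canonical generator. Since $P_+=\e^{\frac1\hbar w_1(u_3-u_2)}\e^{\frac{\theta_2-\theta_3}{\hbar}(u_2-u_1)}\rho_{23}$, the permutation $\rho_{23}$ first exchanges the indices $2,3$ and the two exponential factors produce the shears $u_1\mapsto u_1+u_2-u_3$, $w_3\mapsto w_2-w_1+\theta_2-\theta_3$, and so on, reproducing $\pi_+$; because the brackets $[u_i,w_j]$ are central, BCH terminates at first order and the check is immediate, and likewise $\Ad(P_-)=\pi_-$. The final clause is then pure formalism: as $\phi_\mathrm{FG}$ is a ring homomorphism, $\phi_\mathrm{FG}\circ\Ad\bigl(\Psi_q(\mathcal{Y}_4^{\ve})^{\ve}\bigr)=\Ad\bigl(\Psi_q(\phi_\mathrm{FG}(\mathcal{Y}_4)^{\ve})^{\ve}\bigr)\circ\phi_\mathrm{FG}$, where $\phi_\mathrm{FG}(\mathcal{Y}_4)=\e^{\theta_1-\theta_3-w_1+u_1-w_3-u_3+w_2}$ is precisely the dilogarithm argument in \eqref{r+fg}--\eqref{r-fg}. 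Substituting $\mu_4^\ast=\Ad\bigl(\Psi_q(\mathcal{Y}_4^{\ve})^{\ve}\bigr)\circ\tau_{4,\ve}$ and using (i) and (ii) collapses the composite to $\Ad\bigl(\Psi_q(\phi_\mathrm{FG}(\mathcal{Y}_4)^{\ve})^{\ve}\,P_\ve\bigr)\circ\phi'_\mathrm{FG}=\Ad(R_\ve)\circ\phi'_\mathrm{FG}$.

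The step I expect to require the most care is (i) in the case $\ve=-$, since $R_-$ is genuinely new (only $R_+$ appears in \cite{IKT1}). Switching from $\ve=+$ to $\ve=-$ flips each factor $[\ve b_{i4}]_+$, so the surviving $q$-powers and the locations of the inverted generators change, and the monomial $P_-$ carries $\rho_{12}$ rather than $\rho_{23}$. Conceptually this is routine, but it is exactly where a stray sign or $q$-exponent would first surface, so I would carry out the $\ve=-$ comparison of $\phi_\mathrm{FG}\circ\tau_{4,-}$ with $\pi_-\circ\phi'_\mathrm{FG}$ generator by generator and confirm that the surviving data reorganize into the single shear realized by $P_-$.
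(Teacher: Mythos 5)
Your proposal is correct and follows essentially the same route as the paper: the paper establishes the $\ve=+$ case by citing \cite[Proposition 3.1]{IKT1} (which is exactly the generator-by-generator comparison of $\phi_\mathrm{FG}\circ\tau_{4,+}$ with $\pi_+\circ\phi'_\mathrm{FG}$ plus the terminating BCH check for $\Ad(P_+)$), and the new $\ve=-$ case is handled by the identical computation with the flipped $[\ve b_{i4}]_+$ data, just as you describe. Your reading of the exchange matrix, the reduction of the final identity to the decomposition $\mu_4^\ast=\Ad(\Psi_q(\mathcal{Y}_4^{\ve})^{\ve})\circ\tau_{4,\ve}$, and the identification of $\phi_\mathrm{FG}(\mathcal{Y}_4)$ with the dilogarithm argument in \eqref{r+fg}--\eqref{r-fg} all match the paper's argument.
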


The case $\ve = +$ was proved in \cite[Proposition 3.1]{IKT1}.
Note that the $R$-operators \eqref{r+fg} and \eqref{r-fg} may be written in the unified form
\begin{align}
{R}_\ve = 
\Psi_q(\phi_\mathrm{FG}(\mathscr{Y}_4^\ve))^\ve {P}_{\ve}~ \text{ for } \ve \in \{+,-\}. 
\end{align}

\subsection{Limit of $R$-operators}

Let $\alpha$ and $\alpha'$ be the ring homomorphisms of skewfields,
$\alpha\colon \mathcal{Y}(B_{\mathrm{FG}}) \rightarrow
\mathcal{Y}(B_{\mathrm{SB}})$ and
$\alpha'\colon \mathcal{Y}(B'_{\mathrm{FG}}) \rightarrow
\mathcal{Y}(B'_{\mathrm{SB}})$, given by
\begin{align}\label{Ymor}
\alpha: \begin{cases}
\mathscr{Y}_1 \mapsto Y_1,
\\
\mathscr{Y}_2 \mapsto q Y_2Y_3,
\\
\mathscr{Y}_3 \mapsto Y_4,
\\
\mathscr{Y}_4 \mapsto q Y_5Y_6,
\\
\mathscr{Y}_5 \mapsto qY_7Y_8,
\end{cases}
\qquad 
\alpha': \begin{cases}
\mathscr{Y}'_1 \mapsto Y'_1,
\\
\mathscr{Y}'_2 \mapsto q Y'_5Y'_3,
\\
\mathscr{Y}'_3 \mapsto Y'_4,
\\
\mathscr{Y}'_4 \mapsto q Y'_7Y'_6,
\\
\mathscr{Y}'_5 \mapsto qY'_2Y'_8.
\end{cases}
\end{align}

We consider diagrams
\begin{align}\label{cd3}
\xymatrix{
\mathcal{Y}(B_\mathrm{FG}(A_2)) \ar[d]_{\phi_\mathrm{FG}} \ar[r]^{\alpha} & \mathcal{Y}(B(A_2)) \ar[d]_{\phi}
\\
\mathrm{Frac}\,\mathcal{W}(A_2) \ar[d]_{\pi_+} \ar[r]^{\mathrm{id}} & \mathrm{Frac}\,\mathcal{W}(A_2) \ar[d]_{\eta_{123}^{(+)}}
\\
\mathrm{Frac}\,\mathcal{W}(A_2) \ar[r]^{\mathrm{id}} & \mathrm{Frac}\,\mathcal{W}(A_2)
\\
\mathcal{Y}(B'_\mathrm{FG}(A_2)) \ar[r]^{\alpha'} \ar[u]^{\phi'_\mathrm{FG}} & \mathcal{Y}(B'(A_2)) \ar[u]^{\phi'}
}
\qquad 
\xymatrix{
\mathcal{Y}(B_\mathrm{FG}(A_2)) \ar[d]_{\phi_\mathrm{FG}} \ar[r]^{\alpha} & \mathcal{Y}(B(A_2)) \ar[d]_{\phi}
\\
\mathrm{Frac}\,\mathcal{W}(A_2) \ar[d]_{\pi_{-}} \ar[r]^{\mathrm{id}} & \mathrm{Frac}\,\mathcal{W}(A_2) \ar[d]_{\eta_{123}^{(-)}}
\\
\mathrm{Frac}\,\mathcal{W}(A_2) \ar[r]^{\mathrm{id}} & \mathrm{Frac}\,\mathcal{W}(A_2)
\\
\mathcal{Y}(B'_\mathrm{FG}(A_2)) \ar[r]^{\alpha'} \ar[u]^{\phi'_\mathrm{FG}} & \mathcal{Y}(B'(A_2)) \ar[u]^{\phi'}
}
\end{align}
See \S 3 for the definitions of $\phi$ \eqref{Yw}, $\phi'$ \eqref{Ypw}, $\eta_{123}^{(-)}$ \eqref{uw-1} 
and $\eta_{123}^{(+)}$ \eqref{uw-2}.
\\

\begin{proposition}\label{pr:cd3}
(i) (Cf. \cite[Remark 8.3]{IKSTY}) The left diagram of \eqref{cd3} is commutative if and only if the parameters $\theta_i$ ($i=1$, $2$, $3$) 
and $(a_i,b_i, c_i, d_i, e_i)$ subject to \eqref{econ-a} further satisfy the relations
\begin{align}
&e_2 = e_3, 
\label{pare1}
\\
\begin{split}
&a_1=-a_3=c_3=-c_1, \quad a_2=c_2=0,
\label{pare1-2}
\\
&b_1 + e_1 = -d_1 = \theta_1, \quad b_2+e_2-a_1 = -a_1-d_2 = \theta_2, \quad b_3+e_3 = -d_3 = \theta_3. 
\end{split} 
\end{align}
(ii) The right diagram of \eqref{cd3} is commutative if and only if the parameters $\theta_i$ ($i=1$, $2$, $3$) 
and $(a_i,b_i, c_i, d_i, e_i)$ subject to \eqref{econ-a} satisfy \eqref{pare1-2} together with
\begin{align}\label{pare1-3}
e_1=e_2.
\end{align}
\end{proposition}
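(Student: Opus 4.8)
The final statement to prove is Proposition~\ref{pr:cd3}, which characterizes exactly when the two diagrams in \eqref{cd3} commute. Let me plan a proof.

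\medskip

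\textbf{Overall strategy.}
The plan is to establish commutativity of each diagram by tracking the images of the generators $\mathscr{Y}'_i$ ($i=1,\ldots,5$) along both paths and comparing the resulting elements of $\mathrm{Frac}\,\mathcal{W}(A_2)$. Since all maps in sight ($\alpha,\alpha',\phi,\phi',\phi_{\mathrm{FG}},\phi'_{\mathrm{FG}},\pi_{\pm},\eta^{(\pm)}_{123}$) are ring homomorphisms of skewfields, and since each $\mathscr{Y}'_i$ is sent to an exponential $\e^{(\text{linear form in }u_j,w_j)+(\text{constant})}$, the commutativity condition reduces to a system of \emph{linear} equations: equating the exponents of the two images of each $\mathscr{Y}'_i$. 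Thus the whole proof is a finite, mechanical exponent-matching computation, and the real content is to identify precisely which relations among the $\theta_i$ and $(a_i,b_i,c_i,d_i,e_i)$ are forced.

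\medskip

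\textbf{Key steps in order.}
First I would fix the diagram to be checked, say the left one in \eqref{cd3} for part (i). Starting from $\mathscr{Y}'_i \in \mathcal{Y}(B'_{\mathrm{FG}}(A_2))$, I would compute the \emph{down-then-right} composite $\eta^{(+)}_{123}\circ \phi'\circ\alpha'(\mathscr{Y}'_i)$, using the explicit formula \eqref{Ymor} for $\alpha'$, the parametrization \eqref{Ypw} for $\phi'$, and the transformation \eqref{uw-2} for $\eta^{(+)}_{123}$ (together with \eqref{kad} for the $\kappa_r$). Second, I would compute the \emph{right-then-down} composite $\pi_+\circ\phi'_{\mathrm{FG}}(\mathscr{Y}'_i)$, using \eqref{kapp} for $\phi'_{\mathrm{FG}}$ and the definition of $\pi_+$ from the excerpt. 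Because $\alpha'$ maps some generators to products such as $qY'_5Y'_3$, I must be careful to carry the $q$-powers correctly when passing to exponentials via the rule $\e^{u}\e^{w}=\e^{\frac12[u,w]}\e^{u+w}$ from \eqref{fLs}; the $\frac12[u,w]$ corrections will contribute $q$-dependent scalar prefactors, and these must also match. Third, equating the two exponent vectors (the linear-in-$u_j,w_j$ parts automatically, and the constant parts as the constraints) for all five generators yields a linear system in the parameters; solving it gives exactly \eqref{pare1} and \eqref{pare1-2}. The "if" direction is then immediate by substitution, and the "only if" direction follows because the five generators generate the skewfield, so matching on them is necessary and sufficient. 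Finally, I would repeat the identical procedure for the right diagram using $\pi_-$ and $\eta^{(-)}_{123}$ \eqref{uw-1} with \eqref{lad}, obtaining \eqref{pare1-2} together with \eqref{pare1-3}; here I can shorten the write-up by pointing out that the computation is parallel, differing only in which simple reflection is resolved and hence in the roles of the indices $1$ and $3$ (cf.\ Remark~\ref{re:ac}).

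\medskip

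\textbf{Main obstacle.}
The computation itself is routine linear algebra, so the genuine difficulty is \emph{bookkeeping}: there are five generators, each an exponential of a form in eight variables $u_1,u_2,u_3,w_1,w_2,w_3$ (and the parameters), and the two diagrams differ subtly. The main risk of error lies in the scalar $q$-prefactors generated when $\alpha'$ sends a generator to a product of two $Y'$-variables that $q$-commute; I expect the cleanest approach is to verify that all such prefactors cancel or reproduce the expected normalization \emph{before} reading off the parameter constraints, so that the constant-term matching is not contaminated. A secondary subtlety is that condition \eqref{pare1-2} is shared by both parts, while \eqref{pare1} and \eqref{pare1-3} distinguish them; I would make sure the derivation shows transparently \emph{why} the left diagram forces $e_2=e_3$ whereas the right forces $e_1=e_2$, tracing this back to which wire is reflected in the corresponding $\eta^{(\pm)}_{123}$. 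I would also invoke \cite[Remark~8.3]{IKSTY} to cross-check part~(i)(the "+"\,case), which is already known, and use that agreement as a consistency test for my handling of the normalizations before trusting the new "$-$" computation.
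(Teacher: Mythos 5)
Your overall strategy --- generator-by-generator exponent matching, reducing commutativity to a linear system in the parameters, with part (ii) obtained from part (i) by the parallel computation that exchanges the roles of the indices $1$ and $3$ --- is exactly the approach the paper takes (the paper itself gives no details, deferring part (i) to \cite[Remark 8.3]{IKSTY} and declaring part (ii) identical), and your points about the $q$-prefactors arising from $\e^{u}\e^{w}=\e^{\frac12[u,w]}\e^{u+w}$ and about using the known ``$+$'' case as a consistency check are the right ones.

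There is one concrete omission to repair. The diagram \eqref{cd3} consists of \emph{three} squares, and your plan only tracks the primed generators $\mathscr{Y}'_i$, which tests the lower square ($\phi'\circ\alpha'=\phi'_{\mathrm{FG}}$) and, through it, the middle square ($\eta^{(\pm)}_{123}=\pi_{\pm}$) on the image of $\phi'_{\mathrm{FG}}$. You never examine the upper square $\phi\circ\alpha=\phi_{\mathrm{FG}}$ on the unprimed generators $\mathscr{Y}_i$. In the paper's parallel treatment of the type-$C_2$ diagram \eqref{cd-C2}, it is precisely the upper square that produces the conditions \eqref{condi}--\eqref{condi2}, the middle square that produces \eqref{condro2}--\eqref{condro}, and only the \emph{lower} square that is then automatic; so for both the ``only if'' direction (extracting all necessary constraints) and the ``if'' direction (verifying full commutativity) you must either run the same matching on the $\mathscr{Y}_i$ or argue that the upper square follows from the other two --- e.g.\ via the intertwining relations $\eta^{(\pm)}_{123}\circ\phi'=\phi\circ\tau_{123|\ve}$ of Proposition~\ref{prop-Rcom} together with the compatibility $\alpha\circ\tau_{4,\pm}=\tau_{123|\ve}\circ\alpha'$, which itself requires a (parameter-free) check. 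A related caution: since $\phi'_{\mathrm{FG}}$ embeds only five independent exponentials into the six-dimensional lattice of monomials in $\mathrm{Frac}\,\mathcal{W}(A_2)$, agreement of $\eta^{(\pm)}_{123}$ and $\pi_{\pm}$ on that image does not by itself force them to coincide as affine transformations of all the canonical variables, so the middle square should be verified directly on $u_i,w_i$ rather than inferred from the composite. With these additions the argument is complete and coincides with the paper's intended proof.
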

The proof of {\rm(ii)} is identical to that of {\rm(i)}, as carried out
in \cite{IKSTY}.

\begin{theorem}(Cf. \cite[\S 8]{IKSTY})
\label{th:fg}
(i) Assume \eqref{pare1} and \eqref{pare1-2}.
The $R$-operator ${R}_+$ \eqref{r+fg} is reproduced from the specialized $R$-operator $R_{123}^{(+)}$ \eqref{R-+-+} as
\begin{align}
\lim R_{123}^{(+)} = {R}_+,
\end{align}
where the limit is taken as
\begin{align}\label{elim}
e_1 \rightarrow -\infty, \;\;
e_2=e_3 \rightarrow -\infty, \;\;
e_1-e_3 \rightarrow -\infty,
\;\;
e_i+b_i= \mathrm{finite} \ (i=1,2,3).
\end{align}
(ii) Assume \eqref{pare1} and \eqref{pare1-3}. The $R$-operator ${R}_-$  \eqref{r-fg} 
is reproduced from the specialized $R$-operator $R_{123}^{(-)}$ \eqref{R--++} as
\begin{align}
\lim R_{123}^{(-)} = {R}_-,
\end{align}
where the limit is taken as
\begin{align}\label{elim2}
e_3 \rightarrow -\infty, \;\;
e_1=e_2 \rightarrow -\infty, \;\;
e_3-e_1 \rightarrow -\infty,
\;\;
e_i+b_i= \mathrm{finite} \ (i=1,2,3).
\end{align}
\end{theorem}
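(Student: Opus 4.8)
The plan is to prove both parts by substituting the parameter identification of Proposition~\ref{pr:cd3} into the explicit product form of the $R$-operator and then analysing the prescribed limit factor by factor, in exact parallel with the reduction of the $K$-operators carried out in Theorem~\ref{thm:type24}. I treat part (i) in detail; part (ii) is handled identically after replacing $R^{(+)}_{123}$, $P^{(+)}_{123}$, $\pi_+$ by $R^{(-)}_{123}$, $P^{(-)}_{123}$, $\pi_-$ and the coefficients \eqref{kad} by \eqref{lad}. The starting point is the expression \eqref{R-+-+} of $R^{(+)}_{123}$ as a product of four quantum dilogarithms times the monomial factor $P^{(+)}_{123}$ of \eqref{P-+-+}. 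First I would insert the relations of Proposition~\ref{pr:cd3}(i), namely \eqref{pare1} and \eqref{pare1-2}, which express $b_i$, $d_i$, $a_i$, $c_i$ through $\theta_1,\theta_2,\theta_3$ and $e_1,e_2,e_3$, so that each dilogarithm argument becomes an explicit affine function of the canonical variables, the $\theta_i$ and the $e_i$.

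The core of the argument is to read off the $e_i$-dependence of the four arguments. A direct substitution shows that the first, third and fourth arguments acquire the surviving terms $+e_1$, $+(e_1-e_3)$ and $+e_2$ respectively, whereas in the second argument the contribution $+e_1$ coming from $b_1=\theta_1-e_1$ is cancelled by the explicit $+e_1$, leaving exactly $\theta_1-\theta_3+u_1-u_3-w_1+w_2-w_3$. Along the limit \eqref{elim}, in which $e_1\to-\infty$, $e_1-e_3\to-\infty$ and $e_2=e_3\to-\infty$, the exponentials in the first, third and fourth arguments tend to $0$, so by the power-series expansion \eqref{dilog-sum} the associated factors $\Psi_q(\,\cdot\,)^{\pm1}$ tend to $1$. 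The only surviving dilogarithm is the second one, and its argument coincides with the single dilogarithm of $R_+$ in \eqref{r+fg}, appearing with the same power $+1$.

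The monomial prefactor requires no limit. Evaluating the coefficients \eqref{kad} under \eqref{pare1}, \eqref{pare1-2} gives $\kappa_0=0$ and $\kappa_3=c_1-c_2+c_3=0$, together with $\kappa_1=\theta_3-\theta_2$ and $\kappa_2=\theta_2-\theta_3$; substituting these into \eqref{P-+-+} and using that $u_2,u_3$ commute with $w_1$, the factorised form of $P^{(+)}_{123}$ matches $P_+$ term by term. Equivalently, this identity is precisely the commutativity of the middle square of the left diagram in \eqref{cd3}, already recorded in Proposition~\ref{pr:cd3}(i); if one prefers, it suffices to observe that $\mathrm{Ad}(P^{(+)}_{123})=\eta^{(+)}_{123}=\pi_+=\mathrm{Ad}(P_+)$, so the two monomials agree up to a scalar which is fixed to $1$ by comparing a single coefficient. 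Combining the two steps yields $\lim R^{(+)}_{123}=R_+$. For part (ii) the same computation applied to \eqref{R--++} and \eqref{P--++}, now along \eqref{elim2} where $e_3\to-\infty$, $e_1=e_2\to-\infty$ and $e_3-e_1\to-\infty$, leaves only the second dilogarithm, whose argument $-\theta_1+\theta_3-u_1+u_3+w_1-w_2+w_3$ (raised to the power $-1$) matches that of $R_-$ in \eqref{r-fg}, and reduces $P^{(-)}_{123}$ to $P_-$.

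The step I expect to be the main obstacle is the justification that the limit may be interchanged with the operator product, i.e.\ that sending the three non-surviving factors to the identity genuinely collapses the whole product onto the surviving dilogarithm times the monomial factor. Because $\mathcal{L}_\gamma$ is not closed under multiplication, this cannot be argued purely formally inside $\mathcal{L}(A_2)$. Instead I would expand every dilogarithm by \eqref{dilog-sum}, organise the product as a joint power series in $\e^{u_i}$ and $\e^{w_i}$ exactly as in the well-definedness arguments of \S\ref{sec:wdK}, and verify that each monomial coefficient converges along the limit to the corresponding coefficient of $R_+$ (resp.\ $R_-$), the contributions carrying a positive power of a vanishing argument being suppressed. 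This is the same mechanism that underlies the proof of Theorem~\ref{thm:type24}, so that argument should transfer essentially verbatim.
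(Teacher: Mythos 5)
Your proposal follows the paper's proof essentially verbatim: substitute the constraints of Proposition~\ref{pr:cd3} into \eqref{R-+-+}, observe that the first, third and fourth dilogarithm arguments carry $e_1$, $e_1-e_3$ and $e_2$ and hence tend to $1$ in the limit \eqref{elim} while the second survives with the argument of $R_+$, and check $\kappa_0=\kappa_3=0$, $\kappa_1=-\kappa_2=\theta_3-\theta_2$ so that $P^{(+)}_{123}$ collapses to $P_+$ (and similarly for case (ii)). The only difference is your closing paragraph making explicit the interchange of the limit with the product via the Laurent-series expansion, which the paper leaves implicit; this is a welcome but inessential refinement of the same argument.
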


\begin{proof}
We present the proof for the first case. The second case is done in the similar manner.
Recall the parameters $\kappa_i$ \eqref{kad} for $P^{(+)}_{123}$ \eqref{P-+-+}.
From \eqref{pare1} it follows that $\kappa_0=0$, 
and from \eqref{pare1-2} it follows that $\kappa_1 = - \kappa_2 = \theta_3-\theta_2$ and $\kappa_3= 0$.
Hence $P^{(+)}_{123}$ reduces to $P_+$ \eqref{r+fg}.
By applying the relations \eqref{pare1} and \eqref{pare1-2}
to the dilogarithm part of the $R$-operator ${R}_{123}^{(+)}$ \eqref{R-+-+},
we obtain 
\begin{align}
{R}_{123}^{(+)}
&= \Psi_q(\e^{-\theta_{13}+e_1+u_1+u_3+w_1-w_2+w_3})^{-1}
\underline{\Psi_q(\e^{\theta_{13}+u_1-u_3-w_1+w_2-w_3})}
\nonumber \\
& \qquad \cdot 
\Psi_q(\e^{-\theta_{13}+e_1-e_3+u_1-u_3+w_1-w_2+w_3})^{-1}
\Psi_q(\e^{\theta_{13}+e_2-u_3+2u_2+u_1-w_1+w_2-w_3})\underline{{P}_+}.
\end{align}
In the limit \eqref{elim} the underlined parts survive. 
\end{proof}


\end{document}